\documentclass[12pt]{amsart}
\usepackage{amsfonts,amsthm,amsmath,amssymb,amscd}
\usepackage{graphicx}
\usepackage{color}
\usepackage[backref,bookmarks=false]{hyperref}
\usepackage{pst-node}
\usepackage{tikz-cd} 
\setlength{\topmargin}{1.5cm}
\evensidemargin 0.4truein\oddsidemargin 0.4truein
\textheight8.8truein
\textwidth6.9truein

\usepackage{enumerate}
\theoremstyle{definition}
\newtheorem{dfn}{Definition}
\newtheorem{rem}{Remark}
\newtheorem*{fixing*}{Fixing the constants}
\theoremstyle{plain}
\newtheorem{lem}[dfn]{Lemma}
\newtheorem{thm}[dfn]{Theorem}
\newtheorem{prop}[dfn]{Proposition}

\newtheorem{cor}[dfn]{Corollary}

\newtheorem*{Condition $W_n$}{Condition $W_n$}
\newtheorem*{thm*}{Theorem}

\newtheorem{obs}[dfn]{Observation}

\def\C{{\mathbb C}}
\def\Z{{\mathbb Z}}
\def\N{{\mathbb N}}
\def\R{{\mathbb R}}
\def\Re{{\rm Re}}
\def\Im{{\rm Im}}
\def\diam{{\rm diam}}
\def\d{{\rm d}}
\def\P{{\mathcal P}}
\def\L{{\mathcal L}}
\def\hL{{\hat{\mathcal L}}}
\def\supp{{\rm supp}}
\def\dist{{\rm dist}}
\def\HD{{\rm HD}}
\def\sbt{\subset}
\def\spt{\supset}
\def\sms{\setminus}
\def\es{\emptyset}
\def\Pr{{\rm P}}

\def\om{\omega} \def\Om{\Omega}
\def\th{\theta}
\def\b{\beta}
\def\alp{\alpha}
\def\lam{\lambda}
\def\La{\Lambda}
\def\ve{\varepsilon}
 \def\Ga{\Gamma}
\def\Sg{\Sigma}
\def\ba{\bf{a}}
\def\({\big(}
\def\){\big)}
\def\lt{\left}
\def\rt{\right}
\def\bu{\bigcup}
\def\bi{\bigcap}

\def\lra{\longrightarrow}

\def\un{\underline}
\def\ov{\overline}
\def\ld{\ldots}

\numberwithin{equation}{section}

\newcommand{\1}{{\sf 1\hspace*{-0.8ex}\sf 1 }}
\title{Random non-hyperbolic exponential maps}
\author{Mariusz Urba\'nski}
\author{Anna Zdunik}
\thanks{M. Urba\'nski was supported in part by the NSF Grant DMS 1361677. A. Zdunik was supported in part by the Grant NCN grant 2014/13/B/ST1/04551.}
\begin{document}
\maketitle

\begin{abstract}
We consider random iteration of exponential entire functions, i.e. of the form $\C\ni z\mapsto f_\lam(z):=\lam e^z\in\C$, $\lam\in\C\sms \{0\}$. Assuming that $\lam$ is in a bounded closed interval $[A,B]$ with $A>1/e$, we deal with random iteration of the maps $f_\lam$ governed by an invertible measurable map $\theta:\Om\to\Om$ preserving a probability ergodic measure $m$ on $\Om$, where $\Om$ is a measurable space. The link from $\Om$ to exponential maps is then given by an arbitrary measurable function $\eta:\Omega\longmapsto [A,B]$. 
We in fact work on the cylinder space $Q:=\C/\sim$, where $\sim$ is the natural equivalence relation: $z\sim w$ if and only if $w-z$ is an integral multiple of $2\pi i$. We prove that then for every $t>1$ there exists a unique random conformal measure 
$\nu^{(t)}$ for the random conformal dynamical system on $Q$. We further prove that this measure is supported on the, appropriately defined, radial Julia set. Next, we show that there exists a unique random probability invariant measure $\mu^{(t)}$ absolutely continuous with respect to $\mu^{(t)}$. In fact $\mu^{(t)}$ is equivalent with $\nu^{(t)}$. Then we turn to geometry. We define an expected topological pressure $\mathcal E \Pr(t)\in\R$ and show that its only zero $h$ coincides with the Hausdorff dimension of $m$--almost every fiber radial Julia set $J_r(\omega)\sbt Q$, $\om\in\Om$. We show that $h\in (1,2)$ and that the omega--limit set of Lebesgue almost every point in $Q$ is contained in the real line $\R$. Finally, we entirely transfer our results to the original random dynamical system on $\C$. As our preliminary result, we show that all fiber Julia sets coincide with the entire complex plane $\C$.
\end{abstract}

\tableofcontents

\section{Introduction}

The study of the dynamics of entire transcendental functions of the complex plane has begun with the foundational research of Pierre Fatou (\cite{Fatou}) in the third decade of 20th century. For some decades since then I. N. Baker (\cite{Baker1}, \cite{Baker2}and \cite{Baker3} for example), was the sole champion of the research in this field. The breakthrough has come in 1981 when Michal Misiurewicz (\cite{mi}) proved that the Julia set of the exponential function $\C\ni z\mapsto e^z\in\C$ is the whole complex plane $\C$. This positively affirmed Fatou's Conjecture from \cite{Fatou} and opened up the gates for new extensive research. Indeed, the early papers such as \cite{Rees}, \cite{Lyubich}, \cite{McMullen}, \cite{DevaneyKrych}, \cite{EremenkoLyubich} have appeared. These concerned topological and measurable (Lebesgue) aspects of the dynamics of entire functions. However, already McMullen's paper \cite{McMullen} also touched on Hausdorff dimension, providing deep and unexpected results. The theme of Hausdorff dimension for entire 
functions was taken up in a series of paper s by G. Stallard (see for ex. \cite{Stallard1}--\cite{Stallard5} and in \cite{UZ1} and \cite{UZ2}. These two latter papers concerned hyperbolic exponential functions, i.e. those of the form 
$$
\C\ni z\longmapsto f_\lambda(z):=\lambda e^z\in\C,
$$
where $\lambda$ is such that the map $f_\lam$ is hyperbolic, i.e. it has an attracting periodic orbit. Although it did not concern entire functions but meromorophic ones (tangent family in fact), we would like to mention here the seminal paper of K. Bara\'nski (\cite{Baranski}) where for the first time the thermodynamic formalism was applied to study transcendental functions. The papers \cite{UZ1} and \cite{UZ2} also used the ideas of thermodynamic formalism and, particularly, of conformal measures. This is in these papers where the concept of a radial (called also conical) Julia set, denoted by $J_r(f)$ occurred. This is the set of points $z$ in the Julia set $J(f)$ for which infinitely many holomorphic pullbacks from $f^n(z)$ to $z$ are defined on balls centered at points $f^n(z)$ and having radii larger than zero independently of $n$. For hyperbolic functions $f_\lam$ this is just the set of points that do not escape to infinity under the action of the map $f_\lam$. What we have discovered in \cite{UZ1} and \cite{UZ2} is that $\HD(J_r(f_\lam))<2$ for hyperbolic exponential functions $f_\lam$ defined above. This is in stark contrast with McMullen's results from \cite{McMullen} asserting that $\HD(J_r(f_\lam))=2$ for all $\lam\in\C\sms \{0\}$. Note that the set $J_r(f_\lam)$ is dynamically significant as for example, because of Poincar\'e's Recurrence Theorem, every finite Borel $f_\lam$--invariant measure on $\C$ is supported on this set. In addition we proved in \cite{UZ1} and \cite{UZ2} that its Hausdorff dimension $\HD(J_r(f_\lam))$ is equal to the unique zero of the pressure function $t\mapsto \Pr(t)$ defined absolutely independently of $J_r(f_\lam)$. 

The study of geometric (Hausdorff dimension) and ergodic  (invariant measures absolutely continuous with respect to conformal ones) properties of transcendental entire functions by means of thermodynamic formalism followed. It is impossible to cite here all of them, we just mention only \cite{KotusU}, \cite{MayerU_1}, \cite{MayerU_2}, \cite{MayerU_3}, \cite{BKZdunik1} and \cite{BKZdunik2}. Related papers include \cite{Rempe}, \cite{Bergweiler}, \cite{BaranskiFagella} and many more. 

We would like to pay particular attention to the paper \cite{uz_nonhyp}, where a fairly full account of ergodic theory and conformal measures was provided for a large class of non-hyperbolic exponential functions $f_\lam$, namely those for which the number $0$ escapes to infinity ``really fast''; it includes all maps for which $\lam$ is real and larger than $1/e$. Our current work stems from this one and provides a systematic account of ergodic theory and conformal measures for randomly iterated functions $f_\lam$, where $\lam>1/e$. The theory of random dynamical systems is a large fast developing subfield of dynamical systems with a specific variety of methods, tools, and goals. We just mention the classical works of Yuri Kifer, \cite{Kifer1}, \cite{Kifer2} and of Ludwig Arnold (\cite{Arnold}), see also \cite{Kifer3}. Our present work in this respect stems from \cite{MSU}, \cite{MayerU4}, and \cite{MayerU5}. 

\medskip Our first result, whose proof occupies Section~\ref{sec:julia}, is that if a sequence $(a_n)_{n=1}^\infty$ of real numbers in $[A,B]$ is taken, where $A>1/e$, then the Julia set of the compositions
$$
f_{a_n}\circ f_{a_{n-1}}\circ\ldots\circ f_{a_2}\circ f_{a_1}:\C\to\C
$$
is equal to the entire complex plane $\C$. Tis is a far going generalization of, already mentioned above, Misiurewicz's result from \cite{mi}. In addition, our proof is a simplification of Misiurewicz's one, even in the autonomous (just one map) case. The above compositions are said to constitute a non--autonomous dynamical system as the ``rule of evolution'' depends on time.

\medskip Our main focus in this paper are random dynamical systems. These are objects lying somewhat in between autonomous and non--autonomous systems, sharing many dynamical and geometrical features with both of them. As in \cite{Arnold}, \cite{crauel}, \cite{MSU}, \cite{MayerU_3}, and \cite{MayerU4} the randomness for us is modeled by a measure preserving invertible 
dynamical system $\theta:\Omega\to\Omega$, where $(\Omega,\mathcal F, m)$ is a complete probability measurable space, and $\theta$ is a measurable invertible map, with $\theta^{-1}$ measurable, preserving the measure $m$.
Fix some real constants $B>A>1/e$ and let 
$$
\eta:\Omega\longmapsto [A,B]
$$
be measurable function. Furthermore, to each $\omega\in\Omega$ associated is the exponential map $f_\omega:=f_{\eta(\omega)}:\C\to\C$; precisely
$$
f_\omega(z):=\eta(\om)e^z.
$$ 
Consequently, for every $z\in \C$, the map 
$$
\Omega\ni\omega\longmapsto f_{\eta(\omega)}(z)\in\C
$$ 
is measurable.

We consider the dynamics of random iterates of exponentials:
$$
f^n_\omega
:=f_{\theta^{n-1}\omega}\circ\cdots\circ f_{\theta\omega}\circ f_\omega:\C\longrightarrow\C.
$$
The sextuple
$$
f:=\(\Omega,\mathcal F,m;\, \theta:\Omega\to \Omega;\, \eta:\Om\to[A,B]; f_\eta:\C\to\C\)
$$
and induced by it random dynamics 
$$
\(f^n_\omega:\C\longrightarrow\C\)_{n=0}^\infty, \  \  \om\in \Om,
$$
will be referred to in the sequel as \emph{random exponential dynamical system}. Following \cite{crauel} we consider random measures (with respect to the measure $m$). 

\medskip We define the equivalence relation $\sim$ on the complex plane $\C$ by saying that $Z\sim W$ if there exists $k\in\mathbb Z$ such that 
$$
Z-W=2\pi i k.
$$
We denote the quotient space $\C/\sim$ by $Q$. So, $Q$ is conformally an infinite cylinder. We denote by $\pi$ the natural projection $\pi:\C\to Q$, i.e., 
$$
\pi(Z)=[Z]
$$ 
is the equivalence class of $z$ with respect to relation $\sim$.
Since both maps $f_\eta:\C\to \C$ and $\pi\circ f_\eta:\C\to Q$, $\eta\in\C^*$, are constant on equivalence classes, they canonically induce conformal maps $f_\eta:Q\to \C$ and 
$$
F_\eta:Q\to Q.
$$
So, $F_\eta$ can be represented as 
$$
F_\eta=\pi\circ f_\eta\circ \pi^{-1}.
$$
Throughout the whole paper in Sections 4--11 we will be exclusively interested in the sextuple
$$
F:=\(\Omega,\mathcal F,m;\, \theta:\Omega\to \Omega;\, \eta:\Om\to[A,B];F_\eta:Q\to Q\)
$$
and induced by it random dynamics 
$$
\(F^n_\omega:=F_{\theta^{n-1}\omega}\circ\cdots\circ F_{\theta\omega}\circ F_\omega:Q\longrightarrow Q\)_{n=0}^\infty, \  \  \om\in \Om.
$$
All our technical work and in Sections  4--11 will concern the sextuple $F$ acting on the cylinder $Q$. The main results for this sextuple, truly interesting on their own, will be obtained in will be obtained in Sections 8--11. In Sections 12 and 13 we fully transfer them for the case of
sextuple
$$
f:=\(\Omega,\mathcal F,m;\, \theta:\Omega\to \Omega;\, \eta:\Om\to[A,B]; f_\eta:\C\to\C\)
$$
and induced by it random dynamics. 

We now describe our results for the sextuple $F$. 
Let $X=\Omega\times\C$ and let
$$
\pi_1:X\to \Omega
$$
be the projection onto the first coordinate, i.e.,
$$
\pi_1(\om,z)=\om.
$$
Let $\mathcal {M}_m\subset \mathcal M(X)$ be the set of all non-negative probability measures on $X$ that project onto $m$ under the map $\pi_1:X\to \Omega$, i.e. 
$$
\mathcal {M}_m=\big\{\mu\in\mathcal M (X): \mu\circ\pi_1^{-1} =m\big\}.
$$
The members of $\mathcal {M}_m$ are called random measures with respect to $m$. Their disintegration measures $\mu_\om$, $\om\in \Om$, with respect to the partition of $X$ into sets $\{\om\}\times \C$, are called fiberwise random measures, and frequently, abusing slightly terminology, these are (also) called just random measures. We are interested in conformal random measures, their existence, uniqueness, and geometrical and dynamical properties. Such measures are characterized by the property that
$$
\nu_{\theta\omega}(F_\om(A))
=\lambda_{t,\omega}\int_A\big|\(F_\om\)'\big|^t\,d\nu_{\omega}
$$
for $m$--a.e. $\om\in\Om$ and for every Borel set $A\sbt Q$ such that $F_\om|_A$ is 1--to--1,where $\lam_t:\Om\to(0,+\infty)$ is some measurable function. Our first main result is about the existence of conformal random measures. Indeed, we proved the following.

\begin{thm}[Existence of conformal measures]\label{ECM_Intro}
For every $t>1$ there exists $\nu^{(t)}$, a random $t$--conformal measure, for the map $F:Q\to Q$. 
\end{thm}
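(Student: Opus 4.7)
The natural approach, paralleling the deterministic construction in \cite{uz_nonhyp} and the random framework of \cite{MSU,MayerU4}, is to realize $\nu^{(t)}$ fiber-wise as a weak-$*$ cluster point of iterated normalized pullbacks under the dual of the fiber transfer operator. For each $\om\in\Om$ and each bounded Borel $g:Q\to\C$ set
$$
\L_{t,\om}g(w):=\sum_{z\in F_\om^{-1}(w)}|F_\om'(z)|^{-t}g(z).
$$
Lifting $w\in Q$ to $\tilde w\in\C$ with $\Im\tilde w\in[0,2\pi)$, the preimages of $w$ are parameterized by $k\in\Z$ with $|F_\om'(z_k)|=|\tilde w+2\pi ik|$; hence for $t>1$ the series is absolutely convergent, locally uniformly in $w$ and uniformly in $\om$ (using $\eta(\Om)\sbt[A,B]$). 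The required conformality of $\nu^{(t)}$ is equivalent, after disintegration, to the fiber equation
$$
\L_{t,\om}^*\nu_{\th\om}=\lam_{t,\om}\nu_\om\qquad\text{for $m$-a.e. }\om.
$$

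\medskip
Fix a Borel probability measure $\sigma$ on $Q$ supported on a compact set (for instance, a compact interval of $\R\sbt Q$), and define the approximating fiber measures
$$
\nu_{n,\om}:=\frac{\(\L_{t,\om}^n\)^*\sigma}{\big\|\(\L_{t,\om}^n\)^*\sigma\big\|},\qquad \L_{t,\om}^n:=\L_{t,\th^{n-1}\om}\circ\cdots\circ\L_{t,\om}.
$$
A direct computation gives the finite-$n$ conformality relation
$$
\L_{t,\om}^*\nu_{n,\th\om}=c_{n,\om}\,\nu_{n+1,\om}
$$
with measurable normalizing scalars $c_{n,\om}\in(0,\infty)$. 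Any weak-$*$ cluster point $\nu_\om$ of $(\nu_{n,\om})_{n\ge 1}$, selected measurably in $\om$, will therefore solve the fiber equation in the limit.

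\medskip
The heart of the argument is \emph{tightness} of $\{\nu_{n,\om}\}$ on the non-compact cylinder $Q$: one must rule out escape of mass as $n\to\infty$. Here the hypothesis $A>1/e$ is decisive. Because the singular value $0$ escapes rapidly to $+\infty$ along $\R$ under every composition $f_{a_n}\circ\cdots\circ f_{a_1}$ with $a_i\in[A,B]$, the pullback branches contributing mass to $\nu_{n,\om}$ in the region of large $|\Im z|$ or large negative $\Re z$ carry extremely small weights $|F_\om'|^{-t}=|\tilde w+2\pi ik|^{-t}$. Combining this with the quantitative non-autonomous Koebe-type pullback control developed in Section~\ref{sec:julia} for the proof that all fiber Julia sets equal $\C$, one obtains a compact set $K\sbt Q$ and a sequence $\ve_n\to 0$ with $\nu_{n,\om}(Q\sms K)<\ve_n$ for $m$-a.e. $\om$. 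This uniform tightness step is the main technical obstacle.

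\medskip
Granting tightness, a Prokhorov-type argument performed in the space $\mathcal M_m$ of random probability measures produces a cluster point of $(\nu_n)_{n\ge 1}$, bypassing fiber-by-fiber subsequence-selection issues; a measurable selection theorem as in \cite{crauel,MSU} then delivers measurable fiber disintegrations. Passing to the limit along a common subsequence in the finite-$n$ relation, and identifying $\lam_{t,\om}$ either as $\big\|\L_{t,\om}^*\nu_{\th\om}\big\|$ or as the limit of $c_{n,\om}$, yields the desired equation $\L_{t,\om}^*\nu_{\th\om}=\lam_{t,\om}\nu_\om$ for $m$-a.e. $\om$, which is precisely the asserted conformality.
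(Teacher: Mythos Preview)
Your scheme of iterating $(\L_{t,\om}^n)^*\sigma$ and extracting a cluster point is the natural first attempt, but it has a genuine gap at the tightness step, and this gap is exactly why the paper takes a different route.

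First, your intuition about the weights is backwards on the left end of the cylinder. A preimage $z$ with $\Re z\to -\infty$ satisfies $|F_\om'(z)|=|F_\om(z)|=\eta(\om)e^{\Re z}\to 0$, so the weight $|F_\om'(z)|^{-t}$ is \emph{large}, not small; equivalently $\L_{t,\om}\1(w)\asymp |w|^{1-t}\to\infty$ as $w\to 0$ (Lemma~\ref{lower}). Thus the operator $\L_{t,\om}^*$ is not continuous on all of $\mathcal M_m$, and iterated pullbacks can pump mass towards $-\infty$ and towards the (rapidly escaping) postsingular orbit $\{F_{\th^{-j}\om}^j(0)\}$. Tightness of $\{\nu_{n,\om}\}$ therefore requires quantitative control of the measure near $0$ and near every postsingular point, uniformly in $n$ and $\om$; nothing in Section~\ref{sec:julia} provides this --- that section gives topological/normality information, not the measure estimates you need.

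Second, even granting tightness, passing to the limit in $\L_{t,\om}^*\nu_{n,\th\om}=c_{n,\om}\nu_{n+1,\om}$ is delicate: you need $\nu_n$ and $\nu_{n+1}$ to converge along the \emph{same} subsequence in $\mathcal M_m$, and you need $\L_t^*$ to be continuous at the limit, which again fails without a priori bounds of the type $\int_{B(0,\varepsilon)}\L_{t,\om}\1\,d\nu_{\th\om}\le\varphi(\varepsilon)$ (Proposition~\ref{prop:phi_well_defined}).

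The paper circumvents both difficulties simultaneously by \emph{not} iterating a single seed. Instead it designs a closed convex set $\hat{\mathcal P}\subset\mathcal M_m$ encoding precisely the missing a priori estimates --- tail decay on the right (condition~\eqref{3.1}), tail decay on the left (Proposition~\ref{prop:left_tight}), and, crucially, the inductive conditions $W_n$ bounding $\nu_{\th^j\om}(F^{-n}_{\th^j\om,*}(B(F^{n+j}_\om(0),r_0)))$ along the postsingular orbit (Definition~\ref{def:p}). On $\hat{\mathcal P}$ the single-step map $\Phi(\nu)_\om=\L_{t,\om}^*\nu_{\th\om}/\L_{t,\om}^*\nu_{\th\om}(\1)$ is well defined and narrow-continuous (Proposition~\ref{prop:phi_well_defined}), and one checks $\Phi(\hat{\mathcal P})\subset\hat{\mathcal P}$ (Section~\ref{s9_2017_12_05}). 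Compactness of $\hat{\mathcal P}$ then follows from tightness built into its definition, and a direct application of Schauder--Tychonoff gives the fixed point, i.e.\ the conformal measure. The conditions $W_n$ are the technical heart: without them neither continuity of $\Phi$ nor invariance of the domain can be verified, and your proposal offers no substitute for them.
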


The proof of this theorem is much more involved than its deterministic counterpart of \cite{uz_nonhyp}; the whole Sections~5--8 are entirely devoted to this task. There are many reasons for that. One of them, notorious for random dynamics, is the difficulty to control upper and lower bounds of the measurable function $\lambda_t$. In the deterministic case there is just one number $e^{\Pr(t)}$. Here, we have an apriori uncontrolled function $\lambda_t$. We overcome this difficulty by starting of with good class of random measures: the sets $\mathcal P$ and $\hat{\mathcal P}$ of Sections~5--8. We also must carefully control the trajectories of $0$, the singularity of $f_\eta^{-1}$ for every $\eta\in[A,B]$ and points approaching these trajectories. This is the more difficult in the random case that we now have the trajectory of $0$ for every $\om\in\Om$. There are more subtle and involved issues.

\medskip We then turned our attention to the problem of $F$-invariant random measures absolutely continuous with respect to the random conformal measure of Theorem~\ref{ECM_Intro}. This was done in Section~\ref{inv_meas}. Its full outcome is contained in the following.  

\begin{thm}\label{invariant_Intro}
For every $t>1$ there exists a unique Borel probability $F$--invariant random measure $\mu^{(t)}$ absolutely continuous with respect to $\nu^{(t)}$, the random $t$--conformal measure of Theorem~\ref{ECM_Intro}. In fact, $\nu^{(t)}$ is equivalent with $\nu^{(t)}$ and ergodic.
\end{thm}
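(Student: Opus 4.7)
The natural strategy is to obtain $\mu^{(t)}$ as $d\mu_\om=\rho_\om\,d\nu_\om$ for a density $\rho_\om$ constructed as a fixed point of the fiberwise normalized Perron--Frobenius operator
$$
\hL_\om g(y):=\lam_{t,\om}^{-1}\sum_{x\in F_\om^{-1}(y)}|F_\om'(x)|^{-t}g(x),
$$
which by the conformality identity of Theorem~\ref{ECM_Intro} satisfies $\hL_\om^*\nu_{\th\om}=\nu_\om$ and preserves the cone of densities with total $\nu$-mass one. The plan is to build $\rho_\om$ as a Cesàro limit of $\hL$-iterates, verify invariance and finiteness, and then deduce equivalence, ergodicity, and uniqueness in that order.

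\emph{Step 1: Transfer operator and Cesàro averages.} Set
$$
\rho_\om^{(N)}:=\frac{1}{N}\sum_{n=0}^{N-1}\hL^{(n)}_{\th^{-n}\om}\1,\qquad \hL^{(n)}_\om:=\hL_{\th^{n-1}\om}\circ\cdots\circ\hL_\om.
$$
Using the Koebe-type bounded distortion estimates and the tail bounds on the conformal measure near the random escaping orbits of $0$ established in Sections 5--8, show that $(\rho_\om^{(N)})$ is uniformly tight and bounded on compact subsets of $Q$ lying off those escaping orbits. A diagonal/weak-$*$ extraction then produces a measurable accumulation point $\rho_\om$ satisfying $\hL_\om\rho_\om=\rho_{\th\om}$, which translates back, via conformality, into $F_\om^*\mu_{\th\om}=\mu_\om$; normalizing gives a random probability measure.

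\emph{Step 2: Equivalence.} Absolute continuity $\mu^{(t)}\ll\nu^{(t)}$ is built into the construction. For the converse, show $\rho_\om>0$ on a set of full $\nu_\om$-measure by combining strict positivity preservation of $\hL_\om$ with a random topological transitivity property: for each Koebe disk $D$ in $Q$ and each compact $K\sbt Q$ avoiding the $\om$-escaping set, there exists $n(\om,D,K)$ such that $F_{\th^{-n}\om}^n(D)\spt K$, so $\hL$-iterates spread mass uniformly and the Cesàro limit is nowhere negligible.

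\emph{Step 3: Ergodicity and uniqueness.} Let $E\sbt X$ be $F$-invariant with $\mu^{(t)}(E)>0$, and consider its fibers $E_\om$. The $F$-invariance gives $F_\om^{-1}(E_{\th\om})=E_\om$ mod $\nu_\om$-null sets, hence $\hL_\om(\rho_\om\1_{E_\om})=\rho_{\th\om}\1_{E_{\th\om}}$; the spreading argument of Step~2 applied to $\1_{E_\om}$ then forces $\nu_\om(E_\om)\in\{0,1\}$ for $m$-a.e.\ $\om$, so $\mu^{(t)}$ is ergodic. Uniqueness of $\mu^{(t)}$ within the class of absolutely continuous $F$-invariant random probabilities follows in the standard way: any second such measure would produce a non-trivial convex combination decomposing $\mu^{(t)}$ into ergodic components, contradicting ergodicity.

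The main obstacle I expect is Step~1, specifically checking that the Cesàro limit is non-trivial and that the resulting $\rho_\om$ lies in $L^1(\nu_\om)$. Since $F_\om$ has infinite degree and fiberwise preimages accumulate on the $\om$-dependent escaping orbit of $0$ where $|F'|$ blows up, the uniform $L^1$ control on $\hL^{(n)}_{\th^{-n}\om}\1$ requires a genuinely random version of the non-hyperbolic tail estimates of \cite{uz_nonhyp}, measurably coupled to $\om$ through the function $\eta$—this is the analogue of the difficulty in controlling $\lam_{t,\om}$ already flagged by the authors in the discussion following Theorem~\ref{ECM_Intro}.
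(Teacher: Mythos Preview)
Your outline is a reasonable alternative strategy, but the paper proceeds quite differently in every step. For existence, rather than building a density $\rho_\om$ from Ces\`aro averages of $\hL^{(n)}_{\th^{-n}\om}\1$, the paper works globally on the measure side: it defines $\mu^{(t)}(A):=\ell_B\big((\nu^{(t)}\circ F^{-n}(A))_{n\ge 0}\big)$ via a Banach limit, and the entire burden shifts to proving \emph{uniform absolute continuity} of the sequence $\nu\circ F^{-n}$ with respect to $\nu$. This is done through a H\"older--type estimate $\nu_\om(F_\om^{-n}(A))\preceq \nu_{\th^n\om}^{\beta}(A)$ (Proposition~\ref{prop:invariant}), whose proof decomposes $A$ according to whether it lies in $Q_{M_1}$ or $Y_{M_1}$ and, in the former case, according to ``good'' versus ``bad'' components of preimages near the escaping orbit of $0$ (Lemmas~\ref{AinQ}--\ref{prop:estimate_in YM}). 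This is exactly the random analogue of the non--hyperbolic tail control you anticipated, but it is formulated as a uniform measure estimate rather than as pointwise bounds on $\hL^{(n)}\1$.

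Equivalence and ergodicity are also handled by different mechanisms. Equivalence (Theorem~\ref{t1im3}) is proved by exploiting the explicit Banach--limit formula: one first finds a set $\Gamma$ of uniformly ``Koebe--controllable'' balls with $\mu(\Gamma)>0$, shows directly that $\nu(A)>0\Rightarrow\mu(A)>0$ for $A\sbt\Gamma$, and then spreads this to all of $Q$ by topological exactness. Ergodicity (Theorem~\ref{t1im3B}) does \emph{not} go through a density argument at all; instead the paper introduces the random radial Julia set $J_r(F)$, shows every $F$--invariant random probability is supported there (Proposition~\ref{p1im8}), and then compares two hypothetical conditional $t$--conformal measures $\hat\nu_A,\hat\nu_B$ associated to an alleged invariant splitting, proving via Lemma~\ref{l3im8} that one must be absolutely continuous with respect to the other on $J_r$, a contradiction. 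Your Perron--Frobenius approach to ergodicity would likely work in principle, but the paper's route via $J_r(F)$ has the additional payoff of feeding directly into the Bowen formula in Section~\ref{bowen}.
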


\noindent Note that in terms of fiberwise invariant measures, 
$F$--invariance of the measure $\mu^{(t)}$ means that 
$$
\mu^{(t)}_\omega\circ F_\omega^{-1}=\mu^{(t)}_{\theta\omega}
$$
for $m$--a.e. $\omega\in\Omega$.

\noindent Note that we do not claim that the measure $\mu^{(t)}$ is absolutely continuous with respect to any random $t$--conformal measure for the map $F$. We claim this only for the measure $\nu^{(t)}$ resulting from the proof of Theorem~\ref{ECM_Intro}, i.e. Theorem~\ref{t1_2016_10_08}. The proof of Theorem~\ref{invariant_Intro} is done ``globally'' and requires very subtle estimates of fiberwise random conformal measures of various balls and inverse images of measurable sets under all iterates. 

\medskip Turning eventually to geometry, we have defined random counterpart of radial (conical) Julia sets. These are defined as follows.
\begin{equation}
J_r(\om)
:=\big\{z\in Q: \lim_{N\to\infty}\un\rho(N_\om(z,N))=1\big\},
\end{equation}
where $\un\rho(A)$ is the lower (asymptotic) density of $A$, a subset of natural numbers $\N$, and $N_\om(z,N)$ is the set of all integers $n\ge 0$ such that there exists a (unique) holomorphic inverse branch 
$$
F_{\om,z}^{-n}:B(F_\om^n(z),2/N)\to Q
$$
of $F_\om^n:Q\to Q$ sending $F_\om^n(z)$ to $z$ and such that $|F_\om^n(z)|\le N$. 
$J_r(\om)$ is said to be the set of radial (or conical) points of $F$ at $\om$. We further denote:
$$
J_r(F):=\bu_{\om\in\Om}\{\om\}\times J_r(\om),
$$
With HD denoting Hausdorff dimension, we proved in Section~\ref{bowen} the following theorem about the geometric structure of the random radial Julia sets $J_r(\om)$.

\begin{thm}\label{thm:bowen_Intro}
For $t>1$  put 
$$
\mathcal E \Pr(t):=\int_\Om\log \lambda_{t,\omega}d m(\omega).
$$
Then 

\begin{enumerate}
\item $\mathcal E \Pr(t)<+\infty$ for all $t>1$,

\smallskip\item The function $(1,+\infty)\ni t\mapsto \mathcal E \Pr(t)$ is strictly decreasing, convex, and thus continuous, 

\smallskip\item $\lim_{t\to 1}\mathcal E \Pr(t)=+\infty$ and $ \mathcal E \Pr(2))\le 0$. 

\smallskip\item (Bowen's formula) Let $h>1$ be the unique value $t>1$ for which
$ \mathcal E \Pr(t)=0$. Then 
$$
\HD(J_{r,\omega})=h
$$ 
for $m$--a.e.$\omega\in\Omega$.
\end{enumerate}
\end{thm}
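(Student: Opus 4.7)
The plan is to derive an explicit formula for $\lambda_{t,\om}$, use it to prove (1)--(3) directly, and then leverage the conformal and invariant measures from Theorems \ref{ECM_Intro} and \ref{invariant_Intro} to estimate fiberwise measures of balls along radial pullbacks, yielding both dimension bounds in (4).

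First, partitioning $Q$ into injective sheets $\{A_k\}_{k\in\Z}$ of $F_\om$ and applying the conformality relation on each sheet, the change of variables $y=F_\om(x)$ gives
$$
\lambda_{t,\om} = \int_Q \mathcal L_\om^{(t)} \mathbf 1\, d\nu^{(t)}_{\theta\om},
\qquad
\mathcal L_\om^{(t)}\mathbf 1(y) = \sum_{k\in\Z} |y+2\pi i k|^{-t},
$$
where the explicit form of the transfer operator follows from $F_\om'(z_k) = \eta(\om) e^{z_k} = y + 2\pi i k$ at the $k$-th preimage of $y\in Q$. This sum converges precisely for $t>1$, behaves like $|y|^{-t}$ near $y=0$, and is uniformly bounded on compact subsets of $Q\setminus\{0\}$. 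Property (1) follows from the integrability of $|y|^{-t}$ against $\nu^{(t)}_{\theta\om}$ established in the construction of $\nu^{(t)}$. For (2), convexity of $\mathcal E \Pr(t)$ is inherited from log-convexity of $t\mapsto \mathcal L_\om^{(t),n}\mathbf 1(y)$ (a sum of log-convex exponentials) via the Birkhoff-type identification $\mathcal E \Pr(t) = \lim_n \frac 1 n \log \lambda_{t,\om}^{(n)}$ applied to the cocycle $\log \lambda_{t,\om}^{(n)}=\sum_{k=0}^{n-1}\log\lambda_{t,\theta^k\om}$; strictness comes from the fact that $|F_\om^n{}'|$ is not essentially constant along orbits. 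For (3), the series $\mathcal L_\om^{(t)}\mathbf 1(y)$ diverges pointwise as $t\downarrow 1$, so monotone convergence forces $\lambda_{t,\om}\to+\infty$; the bound $\mathcal E \Pr(2)\leq 0$ follows from comparison with two-dimensional Lebesgue on $Q$, since $2$-conformal measures for holomorphic dynamics are area-like (recall $|F_\om'|^2$ is the Jacobian), so $\lambda_{2,\om}\leq 1$ on average.

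For Bowen's formula (4), fix $\om$ in the $m$-full-measure set where Birkhoff's theorem gives $\frac 1 n \log \lambda_{h,\om}^{(n)} \to \mathcal E \Pr(h) = 0$. Let $z \in J_r(\om)$; for each $n \in N_\om(z, N)$ the univalent inverse branch $F^{-n}_{\om,z}$ on $B(F_\om^n(z), 2/N)$, combined with Koebe's distortion theorem, produces a near-disk $D_n \ni z$ of diameter $r_n \asymp |(F_\om^n)'(z)|^{-1}$ with bounded distortion. Iterating the conformal relation along this pullback yields
$$
\nu^{(h)}_\om(D_n) \asymp (\lambda_{h,\om}^{(n)})^{-1} |(F_\om^n)'(z)|^{-h}\, \nu^{(h)}_{\theta^n\om}\bigl(B(F_\om^n(z), 1/N)\bigr).
$$
Since $|F_\om^n(z)| \leq N$ by the radial condition, measurable lower bounds on fiber conformal measures of balls of fixed radius in compact subsets of $Q$ (obtained during the proof of Theorem \ref{ECM_Intro}) give a positive lower bound on the last factor, while $\lambda_{h,\om}^{(n)} = e^{o(n)}$ is absorbed by the exponential decay of $r_n$. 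Hence $\liminf_n \nu^{(h)}_\om(B(z,r_n))/r_n^h > 0$, and a standard covering argument yields $\HD(J_r(\om)) \leq h$. For the reverse inequality, use the invariant measure $\mu^{(h)}$ from Theorem \ref{invariant_Intro}: equivalent to $\nu^{(h)}$ and ergodic, so $\mu^{(h)}_\om(J_r(\om)) = 1$ by Poincar\'e recurrence in the skew product. The reverse Koebe computation, now with matching \emph{upper} bounds on $\nu^{(h)}_{\theta^n\om}$ of balls of radius $2/N$ around orbit points, furnishes $\nu^{(h)}_\om(B(z, r_n)) \leq C r_n^h$ on a set of full $\mu^{(h)}$-measure, whence Frostman's mass distribution principle delivers $\HD(J_r(\om)) \geq h$.

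The main obstacle is making the $e^{o(n)}$ error terms uniform along radial return times $n \in N_\om(z, N)$. Both the cocycle $\lambda_{h,\om}^{(n)}$ and the fiber measures $\nu^{(h)}_{\theta^n\om}$ of balls depend delicately on the orbit, and Frostman's principle requires quantitative rather than a.e. bounds. The essential ingredient should be tempered distortion estimates combined with the confinement $|F_\om^n(z)| \leq N$ built into $N_\om(z,N)$, which keeps $F_\om^n(z)$ in a compact region of $Q$ where the fiber conformal measures are quasi-uniformly comparable. I expect this to be the place where the fine integrability properties of the conformal measures produced in Sections 5--8 are used in an essential way.
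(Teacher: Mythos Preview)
Your outline captures the general architecture of the paper's proof, but several steps are substantially more delicate than your sketch suggests, and in a few places your stated conclusions are stronger than what is actually provable.

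\textbf{Strict monotonicity.} Your one-line ``strictness comes from the fact that $|F_\om^n{}'|$ is not essentially constant along orbits'' is not a proof. In the paper, strict decrease (Lemma~\ref{prop:decreasing}) is established by contradiction: assuming $\mathcal E\Pr(t)\ge\mathcal E\Pr(s)$ for some $s<t$, one compares the two conformal measures $\nu^{(s)}$ and $\nu^{(t)}$ along radial pullbacks and uses the positive Lyapunov exponent (Proposition~\ref{prop:lyapunov}) together with Birkhoff to force $\nu^{(t)}_\om(J_r^0(\om))=0$, contradicting Corollary~\ref{c15_2017_06_19}. There is no variational principle available here that would let you read off $\partial_t\mathcal E\Pr(t)=-\chi$ directly.

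\textbf{The inequality $\mathcal E\Pr(2)\le 0$.} Your heuristic that ``$2$-conformal measures are area-like, so $\lambda_{2,\om}\le 1$ on average'' is not a proof; the random conformal measure $\nu^{(2)}$ is not Lebesgue. The paper argues by contradiction: if $\mathcal E\Pr(2)>0$, then $\lambda_{2,\om}^n\to\infty$, and along radial pullbacks one gets $\liminf_{r\to 0}\nu^{(2)}_\om(B(z,r))/r^2=0$, which via a standard density lemma forces $\nu^{(2)}_\om(J_r(\om))=0$, again contradicting Corollary~\ref{c15_2017_06_19}. Similarly, your monotone convergence argument for $t\downarrow 1$ is suspect because $\nu^{(t)}$ itself depends on $t$; the paper instead uses the uniform lower bound $\lambda_{t,\om}\ge d(t)/(2M_0^{t-1})$ from Lemma~\ref{measurelower} and shows $d(t)\to\infty$ via the elementary estimate $\sum_{k\ge 0}(a+k)^{-t}\ge a^{1-t}/(t-1)$.

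\textbf{Bowen's formula.} Your claimed conclusion $\liminf_n \nu^{(h)}_\om(B(z,r_n))/r_n^h>0$ is \emph{not} what one proves, and likely fails: the factor $(\lambda_{h,\om}^{(n)})^{-1}=e^{o(n)}$ is not absorbed by anything in the ratio $\nu/r^h$. The paper works instead with the logarithmic local dimension $\log\nu^{(h)}_\om(B(z,r))/\log r$, where the $o(n)$ error disappears after dividing by $\log(1/r)\asymp n$. For the upper bound $\HD\le h$ one needs, in addition, a deterministic exponential lower bound on $|(F_\om^n)'(z)|$ along a positive-density set of radial times (Lemma~\ref{lem:wykl}), since the Lyapunov exponent alone is only an a.e.\ statement. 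For the lower bound $\HD\ge h$, your Frostman argument along a subsequence $r_n$ is insufficient: one must handle \emph{all} small $r$, and the paper does this by choosing $k=k(z,r)$ maximal with a certain inclusion, then controlling the gap $s_k-k$ to the next radial time via the density condition $\underline\rho(N_\om(z,N))>1-\eta$. The parameter $\eta$ is sent to $0$ only at the end. Your final paragraph correctly identifies that uniformity of the $e^{o(n)}$ terms is the crux, but the resolution is to work logarithmically, not to seek genuine uniformity.
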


A remarkable fact of this theorem is that the Hausdorff dimension of random radial Julia sets $J_{r,\omega}$, $\om\in\Om$, is expressed in terms (zero of the expected pressure $\mathcal E \Pr(t)$) that have nothing to do with these sets. Another remarkable observation about these sets, is their dynamical significance, which follows from the fact, which we proved, that 
$$
\mu(J_r(F))=1
$$
for every $F$--invariant random measure on $Q$.

As a matter of fact, we proved even more about geometry of random radial Julia sets $J_{r,\omega}$ than Theorem~\ref{thm:bowen_Intro}. Namely:

\begin{thm}
The Hausdorff dimension $h=\HD(J_{r,\omega})$ of the random radial Julia set $J_{r,\omega}$, is constant for $m$--a.e. $\om\in\Om$ and satisfies $1<h<2$. In particular, the $2$--dimensional Lebesgue measure of $m$--a.e. $\om\in\Om$ set $J_{r,\omega}$ is equal to zero.
\end{thm}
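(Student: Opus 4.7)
The $m$-a.s.\ constancy of $h=\HD(J_{r,\om})$ is already built into Theorem~\ref{thm:bowen_Intro}(4): the scalar $h$ is defined there as the unique zero of the expected-pressure function $\mathcal E\Pr(t)=\int_\Om\log\lam_{t,\om}\,dm(\om)$, a number depending only on the system and not on the individual fiber, and Bowen's identity $\HD(J_{r,\om})=h$ already asserts fiberwise constancy. The lower bound $h>1$ is equally immediate from parts (2)-(3) of the same theorem: $\mathcal E\Pr(\cdot)$ is continuous and strictly decreasing on $(1,\infty)$ with $\mathcal E\Pr(t)\to+\infty$ as $t\to 1^+$, hence strictly positive on some right neighborhood of $1$, which forces its unique zero to exceed $1$.

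The substantive content is the strict inequality $h<2$. Theorem~\ref{thm:bowen_Intro}(3) only yields the weak $\mathcal E\Pr(2)\le 0$, so by strict monotonicity the task reduces to upgrading this to $\mathcal E\Pr(2)<0$. The plan is by contradiction: assume $\mathcal E\Pr(2)=0$. The clash will be with the companion escape-to-$\R$ statement of this paper, which asserts that for planar Lebesgue-a.e.\ $z\in Q$ the $\om$-limit set of $(F_\om^n(z))_{n\ge 0}$ is contained in $\R$; this forces $|F_\om^n(z)|\to\infty$, so $z\notin J_{r,\om}$ by definition of the radial set, and hence $\ell(J_{r,\om})=0$ for $m$-a.e.\ $\om$, where $\ell$ denotes planar Lebesgue on the cylinder $Q$. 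On the other hand, the random $2$-conformal measure $\nu^{(2)}$ of Theorem~\ref{ECM_Intro} is a probability measure supported on the fibered radial Julia set $\bigcup_\om\{\om\}\times J_{r,\om}$. The measure $\ell$ itself satisfies the $2$-conformal transformation rule with constant scaling factor $1$ (holomorphic Jacobians on $Q$ being $|F_\om'|^2$). Under the hypothesis $\mathcal E\Pr(2)=0$ the log-scaling factor of $\nu^{(2)}$ has the same mean as that of $\ell$, and running the random Perron-Frobenius scheme of Sections~5--8 with $\ell$ as a reference measure --- using Koebe distortion along the fiberwise inverse branches to bound densities --- would identify $\nu^{(2)}_\om$ with an $\ell$-absolutely continuous probability measure on compact windows of $Q$ for $m$-a.e.\ $\om$. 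This contradicts $\ell(J_{r,\om})=0$, so $\mathcal E\Pr(2)<0$ and $h<2$.

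The concluding clause $\ell(J_{r,\om})=0$ for $m$-a.e.\ $\om$ is now immediate from $\HD(J_{r,\om})=h<2$, since any set of Hausdorff dimension strictly less than $2$ is $\ell$-null. The main obstacle in the plan is the comparison step: given only the averaged equality $\int_\Om\log\lam_{2,\om}\,dm=0$, producing a bounded density of $\nu^{(2)}_\om$ against $\ell$ on compact windows of $Q$. The delicate points are reconciling the $\sigma$-finiteness of $\ell$ with the probability normalization of $\nu^{(2)}_\om$, fiberwise balancing so that the transfer-operator scheme closes up as a genuine random measure, and controlling the accumulation of both candidate measures near the random trajectories $(F_\om^n(0))_n$ where the inverse branches degenerate --- exactly the regime where the intricate trajectory-of-zero estimates underpinning Sections~5--8 would have to be revisited quantitatively.
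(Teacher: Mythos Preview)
Your handling of the constancy of $h$ and the inequality $h>1$ is fine and matches the paper: both are read off directly from Bowen's formula (Theorem~\ref{thm:bowen}).

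The argument for $h<2$, however, has a fatal circularity. You invoke the ``companion escape-to-$\R$ statement'' (that Lebesgue-a.e.\ $z\in Q$ has $\omega$-limit set contained in $\R$) to conclude $\ell(J_{r,\om})=0$. But in the paper this escape statement is Theorem~\ref{t320180410} / Corollary~\ref{c420180410}, and it is \emph{deduced from} the theorem you are trying to prove: the proof of Theorem~\ref{t320180410} begins by observing that the complement of $J_{r,\om}$ contains the set of points satisfying \eqref{bad}, and then invokes the Lebesgue-nullity of $J_{r,\om}$ established in Theorem~\ref{t1_2017-09_04}. So you are assuming the conclusion. Even setting this aside, the second step --- comparing $\nu^{(2)}_\om$ with $\ell$ under the hypothesis $\mathcal E\Pr(2)=0$ --- is not a proof but a programme: you yourself flag the density comparison as ``the main obstacle'' and list several unresolved difficulties ($\sigma$-finiteness of $\ell$, fiberwise normalization, behavior near the orbit of $0$). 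Nothing here is actually carried out.

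The paper's route is entirely different and avoids pressure at $t=2$ altogether. It proves Proposition~\ref{prop:low_lim}: for $m$-a.e.\ $\om$ and $\nu_{\om,h}$-a.e.\ $z$,
\[
\liminf_{r\to 0}\frac{\nu_{\om,h}(B(z,r))}{r^h}=0.
\]
This is established by a case split on whether the Birkhoff sums $\log\lambda^n_{h,\om}$ are bounded above on a positive-measure set of $\om$ (Case~1, handled via Lemma~\ref{sums_bounded} and the tail estimate \eqref{3.1}) or not (Case~2, handled by locating return times $n_j$ with $\lambda^{n_j}_{h,\om}\to\infty$ and using Lemma~\ref{lem:est_zero} near the singular orbit). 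The density statement implies that the $h$-dimensional packing measure of $Q$ is locally infinite; since $2$-dimensional packing measure is (up to a constant) Lebesgue measure and hence locally finite, one concludes $h<2$ in a single line.
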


As its, almost immediate, corollary, we obtain the following result about trajectories of (Lebesgue) typical points.

\begin{thm}[Trajectory of a (Lebesgue) typical point I]
For $m$--almost every $\omega\in\Omega$ there exists a subset $Q_\omega\subset Q$ with full Lebesgue measure such that for all $z\in Q_\omega$  the following holds. 
\begin{equation}\label{bad}
\begin{aligned}
\forall \delta>0 \ \exists n_z(\delta)\in\mathbb N \ &\forall n\ge n_z(\delta)\  \exists k=k_n(z)\ge 0
\\  
&|F^n_\omega(z)-F^k_{\theta^{n-k}\omega}(0)|<\delta\  \  \text{or}\  \  |F^n_\omega(z)|\ge 1/\delta.
\end{aligned}
\end{equation}
In addition, $\limsup_{n\to\infty}k_n(z)=+\infty$.
\end{thm}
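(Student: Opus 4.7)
The plan is to deduce the theorem from the preceding Lebesgue-null statement: since $\HD(J_r(\omega))=h<2$, we have $\operatorname{Leb}(J_r(\omega))=0$. For each rational $\delta>0$ set
$$
B_\delta(\omega):=\Big\{z\in Q:|F^n_\omega(z)|<1/\delta\ \text{and}\ \min_{0\le k\le n}|F^n_\omega(z)-F^k_{\theta^{n-k}\omega}(0)|\ge\delta\ \text{for infinitely many }n\Big\}.
$$
A point $z$ violates (4.1) iff $z\in B_\delta(\omega)$ for some rational $\delta>0$, so by a countable union it suffices to show $\operatorname{Leb}(B_\delta(\omega))=0$ for every rational $\delta>0$.

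The geometric core is the following. If $n$ is a witness for $z\in B_\delta(\omega)$ and $\delta<\pi$, then $B(F^n_\omega(z),\delta/2)$ is simply connected in $Q$, contained in $\overline{B(0,1/\delta+1)}$, and disjoint from every post-singular value $F^k_{\theta^{n-k}\omega}(0)$ for $0\le k\le n-1$. Since the only local obstruction to a univalent pullback of $F^n_\omega$ over a simply connected domain in $Q$ is hitting the asymptotic value $0$ of $F_\eta$ at some intermediate step, this yields a univalent inverse branch of $F^n_\omega$ on the ball, placing $n\in N_\omega(z,4/\delta)$. By Koebe's $\tfrac{1}{4}$-theorem the pulled-back neighborhoods $V_n:=F^{-n}_{\omega,z}(B(F^n_\omega(z),\delta/2))$ contain round balls of definite size about $z$ with uniformly bounded eccentricity, so each $z\in B_\delta(\omega)$ is a Lebesgue density point of $J_r(\omega)$; combined with $\operatorname{Leb}(J_r(\omega))=0$ this forces $\operatorname{Leb}(B_\delta(\omega))=0$.

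For the $\limsup$ clause, the orbit $F^k_{\theta^j\omega}(0)$ escapes to $+\infty$ along the real axis for every fiber (since $\eta\ge A>1/e$), so $\{F^k_{\theta^{n-k}\omega}(0):0\le k\le K_0\}$ is uniformly bounded in $n$. If $z\in Q_\omega$ satisfied $k_n(z)\le K_0$ for all large $n$ along some valid choice of witnesses, then for each $\delta>0$ the orbit $F^n_\omega(z)$ would eventually be either of modulus $\ge 1/\delta$ or within $\delta$ of this uniformly bounded finite set; the first case forces $F^n_\omega(z)\to\infty$, so $F^{k_n(z)}_{\theta^{n-k_n(z)}\omega}(0)$ must also diverge, contradicting $k_n(z)\le K_0$, while the second case produces a bounded accumulation point of the orbit and hence, by the Koebe argument of the preceding paragraph, places $z$ in a Lebesgue null set. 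A countable intersection over rational $\delta>0$ and $K_0\in\N$ completes the proof.

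The main obstacle will be the Koebe-density upgrade sketched above: passing from "$n\in N_\omega(z,4/\delta)$ infinitely often along a potentially sparse subsequence" to "$z$ is a Lebesgue density point of $J_r(\omega)$". I expect this to be handled by the Koebe $\tfrac{1}{4}$-theorem together with the standard Lebesgue density criterion, in analogy with the classical Lyubich--Rees / McMullen-type arguments for deterministic exponentials; once this is combined with the dimension bound $h<2$ from the preceding theorem the conclusion is immediate, which presumably explains the authors' description of the result as an "almost immediate" corollary.
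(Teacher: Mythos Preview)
Your overall strategy coincides with the paper's one-line proof: both reduce the first assertion to the containment of the set of points failing \eqref{bad} inside $J_r(\omega)$, and then invoke $\operatorname{Leb}(J_r(\omega))=0$ from the preceding theorem. The paper simply asserts this containment and stops; you try to justify it, but the justification you give does not work.

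The gap is your claim that ``each $z\in B_\delta(\omega)$ is a Lebesgue density point of $J_r(\omega)$''. You have correctly shown that for $z\in B_\delta(\omega)$ there are infinitely many $n\in N_\omega(z,4/\delta)$, and that the Koebe pullbacks $V_n=F^{-n}_{\omega,z}(B(F^n_\omega(z),\delta/2))$ are bounded-eccentricity neighborhoods of $z$. But nothing in your argument connects the $V_n$ to $J_r(\omega)$: you have not shown that any portion of $V_n$ lies in $J_r(\omega)$. Recall that membership in $J_r(\omega)$ demands $\underline\rho(N_\omega(z,N))\to 1$ as $N\to\infty$, not merely that $N_\omega(z,N)$ be infinite for some fixed $N$. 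An infinite but sparse set of good return times is fully compatible with $z\notin J_r(\omega)$ --- for instance if the orbit is unbounded along the complementary subsequence, so that $|F^n_\omega(z)|>N$ on a set of positive density for every $N$ --- and hence neither the containment $B_\delta(\omega)\subset J_r(\omega)$ nor your density-point statement follows. If you want a genuine Lebesgue-density argument in the Lyubich--Rees style you invoke, the correct direction is the opposite one: show that $z\in B_\delta(\omega)$ is \emph{not} a density point of $B_\delta(\omega)$ itself, by exhibiting inside each $V_n$ a subset of definite Lebesgue proportion lying \emph{outside} $B_\delta(\omega)$. Your $\limsup$ paragraph inherits the same defect, since it again appeals to ``the Koebe argument of the preceding paragraph''.
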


\noindent As an immediate consequence of this theorem we get the following. 

\begin{cor}[Trajectory of a (Lebesgue) typical point II]
For $m$--almost every $\omega\in\Omega$ there exists a subset $Q_\omega\subset Q$ with full Lebesgue measure such that for all $z\in Q_\omega$, the set of accumulation points of the sequence
$$
\(F_\om^n(z)\)_{n=0}^\infty
$$
is contained in $[0,+\infty]\cup\{-\infty\}$ and contains $+\infty$. 
\end{cor}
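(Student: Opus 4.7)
The plan is to extract the corollary essentially verbatim from the preceding theorem once one isolates a single elementary fact: for every $\om\in\Om$ and every $j\ge 0$, the forward orbit of $0$ along the fiber, $x_k:=F^k_{\theta^j\om}(0)$, lies on the positive real axis and escapes to $+\infty$ uniformly in $j$. Indeed, $x_{k+1}=\eta(\theta^{j+k}\om)e^{x_k}\ge Ae^{x_k}$, and since $A>1/e$ the function $x\mapsto Ae^x-x$ has minimum $1+\log A>0$ at $x=-\log A$; hence $x_{k+1}\ge x_k+(1+\log A)$ and $x_k\to+\infty$ at least linearly, with the rate independent of $\om$ and $j$.

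With this in hand, I would view the cylinder $Q$ together with its two ends $+\infty$ and $-\infty$, corresponding to $\Re(z)\to\pm\infty$; note that since the imaginary part of any representative can be chosen in $[0,2\pi)$, the condition $|F^n_\om(z)|\to\infty$ is the same as $\Re(F^n_\om(z))\to\pm\infty$. Take $\om$ in the full--measure set supplied by the preceding theorem, the associated set $Q_\om\sbt Q$, and fix $z\in Q_\om$. Let $w$ be any accumulation point of $(F^n_\om(z))_{n\ge 0}$ in this compactification. If $w\in\{+\infty,-\infty\}$, it automatically lies in $[0,+\infty]\cup\{-\infty\}$. Otherwise $w\in Q$ is finite; choose $n_j\to\infty$ with $F^{n_j}_\om(z)\to w$. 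For any $\delta>0$ with $1/\delta>|w|+1$, the second alternative $|F^{n_j}_\om(z)|\ge 1/\delta$ of the preceding theorem fails for all large $j$, so the first alternative forces the existence of $k_j\ge 0$ with
$$
|F^{n_j}_\om(z)-F^{k_j}_{\theta^{n_j-k_j}\om}(0)|<\delta.
$$
The first paragraph shows $F^{k_j}_{\theta^{n_j-k_j}\om}(0)\in[0,+\infty)$, so passing to the limit in $j$ places $w$ within distance $2\delta$ of $[0,+\infty)$. Letting $\delta\to 0$ gives $w\in[0,+\infty)$, as required.

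To produce $+\infty$ itself as an accumulation point, I invoke the last sentence of the preceding theorem, $\limsup_{n\to\infty}k_n(z)=+\infty$. I pick a subsequence $n_j$ along which $k_{n_j}\to\infty$; since the clause involving $k_n$ is meaningful only when the first alternative (proximity to an iterate of $0$) is the one that actually holds, one may restrict to $n_j$ at which that alternative is operative. Applying the preceding theorem along this subsequence with a vanishing sequence $\delta_j\to 0$ gives
$$
|F^{n_j}_\om(z)-F^{k_{n_j}}_{\theta^{n_j-k_{n_j}}\om}(0)|<\delta_j,
$$
and combining with $F^{k_{n_j}}_{\theta^{n_j-k_{n_j}}\om}(0)\to+\infty$ from the first paragraph yields $F^{n_j}_\om(z)\to+\infty$. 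Hence $+\infty$ is an accumulation point, completing the corollary.

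The only nontrivial bookkeeping, and thus the main potential obstacle, is a careful reading of the quantifier structure of the preceding theorem: specifically, verifying that the statement $\limsup_n k_n(z)=\infty$ can indeed be realized along indices at which the first (rather than the second) alternative is the relevant one. Once that is granted, the rest of the argument is a direct transcription of the preceding theorem combined with the elementary escape estimate for iterates of $0$.
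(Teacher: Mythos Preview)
Your proof is correct and is precisely the argument the paper has in mind; the paper itself gives no proof at all, merely declaring the corollary an ``immediate consequence'' of the preceding theorem. Your write-up supplies the missing details (the uniform escape estimate for $F^k_{\theta^j\om}(0)$ along the real axis, the two-point compactification of $Q$, and the extraction of $+\infty$ via $\limsup_n k_n(z)=+\infty$), and the concern you flag about the quantifier structure is a legitimate one that the paper glosses over: the assertion $\limsup_n k_n(z)=+\infty$ is only meaningful if $k_n$ is taken to witness the \emph{first} alternative, which is indeed the intended reading.
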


\noindent These last two properties are truly astonishing and were first time observed for the exponential map $\C\ni z\mapsto e^z\in\C$ in \cite{Rees} and \cite{Lyubich} and then extended to many other exponential functions in \cite{uz_nonhyp}. Our approach to establish these two properties is different than those of \cite{Rees} and \cite{Lyubich} and relies on investigation of $h$--dimensional packing measure $Q$. 

\medskip As it is explained in detail in Sections~\ref{apendix1} and Section~\ref{apendix2}, dealing with the sextuple
$$
F:=\(\Omega,\mathcal F,m;\, \theta:\Omega\to \Omega;\, \eta:\Om\to[A,B];F_\eta:Q\to Q\)
$$
and induced by it random dynamics 
$$
\(F^n_\omega:Q\longrightarrow Q\)_{n=0}^\infty, \  \  \om\in \Om.
$$
is entirely equivalent to dealing with the sextuple 
$$
f:=\(\Omega,\mathcal F,m;\, \theta:\Omega\to \Omega;\, \eta:\Om\to[A,B];f_\eta:\C\to \C\)
$$
and induced by it random dynamics 
$$
\(f^n_\omega:\C\longrightarrow C\)_{n=0}^\infty, \  \  \om\in \Om.
$$
if the derivatives of the maps $f_\om^n$ are calculated with respect to the conformal Riemannian metric 
$$
|dz|/|z|.
$$
This metric pops up naturally in Section~\ref{apendix1} and coincides with the metric dealt with in \cite{MayerU_1} and \cite{MayerU_2}.

In Sections 12 and 13 we fully transfer all the main results proven for the sextuple $F$ to the case of the sextuple $f$.

\

\section{Preliminaries}

\subsection{The Quotient Cylinder and the Quotient Maps}

We define the equivalence relation $\sim$ on the complex plane $\C$ by saying that $Z\sim W$ if there exists $k\in\mathbb Z$ such that 
$$
Z-W=2\pi i k.
$$
We denote the quotient space $\C/\sim$ by $Q$. So, $Q$ is conformally an infinite cylinder. We denote by $\pi$ the natural projection $\pi:\C\to Q$, i.e., 
$$
\pi(Z)=[Z]
$$ 
is the equivalence class of $z$ with respect to relation $\sim$.
Since both maps $f_\eta:\C\to \C$ and $\pi\circ f_\eta:\C\to Q$ are constant on equivalence classes, they canonically induce conformal maps $f_\eta:Q\to \C$ and $F_\eta:Q\to Q$. So, $F_\eta$ can be represented as 
$$
F_\eta=\pi\circ f_\eta\circ \pi^{-1},
$$
precisely meaning that for every point in $Q$, its image under $\pi\circ f_\eta\circ \pi^{-1}$ is a singleton and the above equality holds.
Although, formally, $Q$ is the set of equivalence classes $[z]$, we shall often use  the notation $z\in Q$, whenever this does not lead to a confusion.

We will also use occasionally the natural identification
$$
Q\sim\{Z\in \C: 0\le \Im Z <2\pi\},
$$
when this does not lead to a confusion.
For $z\in Q$ we denote 
$$
|z|:=\inf\{|Z|:Z\in \pi^{-1}(z)\}.
$$
Similarly, for $z\in Q$ we denote by $\Re z$ the common value $\Re Z$ for $Z\in\pi^{-1}(z)$.

\medskip We denote by $Y_M$ the set 
$$
Y_M:=\{z\in Q:|\Re(z)|>M\}.
$$ 
This set splits naturally as $Y_M^+\cup Y_M^-$ ,where 
$$
Y_M^+:=\{z\in Q:\Re(z)>M\}\quad\text{and}\quad Y_M^-:=\{z\in Q:\Re(z)<M\}.
$$
We also denote:
$$
Q_M:=\{z\in Q:|\Re z|\le M\}.
$$

For positive variables $A, B$, depending on a collection of parameters, we write $A\preceq B$ if there exists a constant $C$ independent of the parameters such that $$
A\le C\cdot B.
$$
Similarly, we write $A\succeq B$ if $B\preceq A$. 
We write 
$$
A\asymp B \text{{\rm \  \  \  if and only if \ \ }} A\preceq B \  \  {\rm and } \ \ A\succeq B.
$$

\medskip
\subsection{Koebe's Distortion Theorems}
For every $\xi\in \C$ and every $r>0$ let
$$
B(\xi,r):=\{z\in\C:|z-\xi|<r\}
$$
be the open disk (ball) centered at the point $\xi$ with radius $r$. We abbreviate
$$
\ov B(\xi,r):=\ov{B(\xi,r)}.
$$
We record the following classical Koebe's distortion theorems; for proofs see e.g., \cite{Hille_2}.

\begin{thm*}[Koebe's Distortion Theorem]
Let $\xi\in \C$ and let $r>0$.  Let $g: \ov B(\xi,r) \to \C$ be a univalent holomorphic map. Then for every $t\in [0,1)$ and every $z \in \ov{B}(\xi,tr)$ we have 
\begin{gather*}
\frac{1 - t}{(1 + t)^3} \leq \frac{|g'(z)|}{|g'(\xi)|} 
\leq \frac{1 + t}{(1 - t)^3},\\
\frac{t r}{(1 + t)^2}|g'(\xi)| \leq |g(z) - g(\xi)| \leq \frac{t r}{(1 - t)^2}|g'(\xi)|.
\end{gather*}
\end{thm*}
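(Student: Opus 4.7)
The plan is to reduce the problem to the normalized class of schlicht functions on the unit disk and then apply the standard growth and distortion estimates via the Bieberbach coefficient bound. First I would replace $g$ by the affinely normalized map $h(w):=(g(\xi+rw)-g(\xi))/(rg'(\xi))$, which is univalent on a neighborhood of $\ov B(0,1)$, with $h(0)=0$ and $h'(0)=1$. Thus $h$ belongs to the class $S$ of schlicht functions on the open unit disk $\mathbb D$, and it suffices to prove for $f\in S$ the two-sided estimates
\[
\frac{1-|w|}{(1+|w|)^3}\le |f'(w)|\le \frac{1+|w|}{(1-|w|)^3}, \qquad \frac{|w|}{(1+|w|)^2}\le |f(w)|\le \frac{|w|}{(1-|w|)^2},
\]
since setting $w=(z-\xi)/r$ and unwinding gives exactly the claimed inequalities (with $t=|w|$) after multiplying through by $|g'(\xi)|$ and $r|g'(\xi)|$ respectively.

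The main engine is the \emph{Bieberbach inequality} $|a_2|\le 2$ for $f(w)=w+a_2 w^2+a_3 w^3+\cdots \in S$. To obtain it, I would invoke Gronwall's Area Theorem: for a univalent $\phi(z)=z+\sum_{n\ge 0} b_n z^{-n}$ on $\{|z|>1\}$, the area of the complement of its image yields $\sum_{n\ge 1} n|b_n|^2\le 1$. Applying this to the odd square-root transform $\phi(z):=1/\sqrt{f(1/z^2)}=z-\tfrac{a_2}{2}z^{-1}+\cdots$, which lies in $S$ after a standard argument, gives $|a_2|/2\le 1$.

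The next step is the classical disk-automorphism trick: for each $\zeta\in\mathbb D$ form
\[
F_\zeta(w):=\frac{f\!\left(\frac{w+\zeta}{1+\bar\zeta w}\right)-f(\zeta)}{(1-|\zeta|^2)f'(\zeta)},
\]
which also belongs to $S$, and compute its second Taylor coefficient to be $\tfrac12(1-|\zeta|^2)f''(\zeta)/f'(\zeta)-\bar\zeta$. Bieberbach's bound therefore forces
\[
\left|(1-|\zeta|^2)\frac{f''(\zeta)}{f'(\zeta)}-2\bar\zeta\right|\le 4.
\]
Taking real parts along the radial direction $\zeta=\rho e^{i\theta}$, this becomes a differential inequality for $\log|f'(\rho e^{i\theta})|$ in $\rho$. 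Integrating it from $0$ to $|w|$ along a radius produces precisely the one-sided and two-sided bounds for $|f'(w)|$ stated above; integrating $|f'|$ along the segment from $0$ to $w$ (using the upper bound) then gives the upper estimate for $|f(w)|$, while the lower estimate for $|f(w)|$ follows by combining the $|f'|$ lower bound with the Koebe one-quarter theorem (a direct corollary of the $|f|\ge |w|/(1+|w|)^2$ bound proved by the same technique applied at $\zeta=w$).

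The main obstacle is the technical verification that the square-root transform $\phi$ is genuinely univalent on $\{|z|>1\}$, and the bookkeeping in extracting a usable radial differential inequality from the complex bound on $(1-|\zeta|^2)f''/f'-2\bar\zeta$; once these steps are set up, the remainder is direct integration and affine unwinding. Since the result is classical and the paper merely cites \cite{Hille_2}, I would simply outline this route rather than reproduce the full computations.
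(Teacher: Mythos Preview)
The paper does not supply a proof of this theorem at all; it simply records the statement as classical and refers the reader to Hille. Your outline is the standard textbook route (normalization to the schlicht class $S$, Bieberbach's $|a_2|\le 2$ via the area theorem and square-root transform, the Koebe/M\"obius renormalization to extract a differential inequality for $\log|f'|$, then radial integration), and it is correct.

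Two small points of bookkeeping worth cleaning up if you actually write this out. First, your auxiliary function $\phi(z)=1/\sqrt{f(1/z^2)}$ lives in the class $\Sigma$ of univalent maps on $\{|z|>1\}$ with expansion $z+b_0+b_1 z^{-1}+\cdots$, not in $S$; this is where Gronwall's area theorem applies. Second, your final sentence has a mild circularity: you invoke the Koebe one-quarter theorem to obtain the lower growth bound, and then call Koebe $1/4$ a corollary of that same bound. The clean order is to deduce $|w|\ge 1/4$ for omitted values directly from $|a_2|\le 2$ (via $f(z)/(1-f(z)/w)\in S$), and only then derive the lower estimate for $|f(w)|$.
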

and
\begin{thm*}[Koebe's $1/4$ Theorem]
Let $\xi\in \C$ and let $r>0$. If $g:\mathbb D(\xi,r) \to \C$ is a univalent holomorphic map, then
$$
g(\mathbb D(z_0,r))\spt \mathbb D\lt(g(\xi),\frac14|g'(\xi)|\cdot r\rt).
$$
\end{thm*}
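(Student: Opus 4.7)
The plan is to reduce Koebe's $1/4$ Theorem to the normalized schlicht case and then apply Bieberbach's coefficient inequality twice. First I would normalize: given a univalent $g:\mathbb{D}(\xi,r)\to\C$, define
$$
\tilde g(z):=\frac{g(\xi+rz)-g(\xi)}{r\,g'(\xi)}
$$
for $z$ in the unit disk $\mathbb{D}=\mathbb{D}(0,1)$. Then $\tilde g$ is univalent on $\mathbb{D}$ with $\tilde g(0)=0$ and $\tilde g'(0)=1$, i.e. it belongs to the class $\mathcal{S}$ of schlicht functions. Since $g(\mathbb{D}(\xi,r))=r\,g'(\xi)\,\tilde g(\mathbb{D})+g(\xi)$, the statement $g(\mathbb{D}(\xi,r))\supset\mathbb{D}\!\bigl(g(\xi),\tfrac14|g'(\xi)|r\bigr)$ is equivalent to $\tilde g(\mathbb{D})\supset\mathbb{D}(0,\tfrac14)$, so it suffices to prove the result for schlicht functions.

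Next I would establish Bieberbach's inequality $|a_2|\le 2$ for any $f(z)=z+a_2 z^2+a_3 z^3+\cdots\in\mathcal{S}$. The standard route is through the Area Theorem: if $h(w)=w+\sum_{n\ge 0}b_n w^{-n}$ is univalent on $\{|w|>1\}$, then $\sum_{n\ge 1} n|b_n|^2\le 1$. One then forms the odd schlicht square-root transform $s(z)=\sqrt{f(z^2)}=z+\tfrac12 a_2 z^3+\cdots$, inverts to get $1/s(1/w)=w-\tfrac12 a_2 w^{-1}+\cdots$ on the exterior disk, and the Area Theorem applied to this function gives $\tfrac14|a_2|^2\le 1$, hence $|a_2|\le 2$.

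Now I would run the omitted-value argument. Suppose some $w_0\in\C\setminus\tilde g(\mathbb{D})$. Define
$$
\phi(z):=\frac{w_0\,\tilde g(z)}{w_0-\tilde g(z)}.
$$
Because $\tilde g$ is schlicht and never equals $w_0$, and because $w\mapsto w_0 w/(w_0-w)$ is a Möbius map of $\C\setminus\{w_0\}$ onto $\C\setminus\{-w_0\}$ fixing $0$, $\phi$ is again schlicht. Expanding $\tilde g(z)=z+a_2 z^2+\cdots$ gives
$$
\phi(z)=z+\bigl(a_2+\tfrac{1}{w_0}\bigr)z^2+\cdots.
$$
Applying Bieberbach's inequality to both $\tilde g$ and $\phi$ yields $|a_2|\le 2$ and $|a_2+1/w_0|\le 2$, whence by the triangle inequality $|1/w_0|\le 4$, i.e. $|w_0|\ge 1/4$. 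Thus every omitted value lies outside $\mathbb{D}(0,1/4)$, so $\tilde g(\mathbb{D})\supset\mathbb{D}(0,1/4)$, which unwinds to the desired inclusion.

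The only genuinely substantive step is Bieberbach's inequality, and within that the Area Theorem itself; the latter is proved by a direct Green's-theorem computation applied to the complement of the image of $\{|w|=R\}$ for $R>1$ and then letting $R\downarrow 1$, yielding a nonnegativity inequality on the area that rearranges into $\sum n|b_n|^2\le 1$. Once these classical ingredients are in hand, the normalization and the Möbius trick above are essentially bookkeeping.
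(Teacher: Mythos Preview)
Your argument is correct and is the classical proof of Koebe's $1/4$ Theorem via Bieberbach's inequality and the Area Theorem. The paper itself does not supply a proof of this statement at all; it merely records the theorem as a standard distortion estimate and refers the reader to Hille's textbook, so there is no in-paper argument to compare against.
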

\noindent We shall often refer to these results as to \emph{standard distortion estimates}. From now on throughout the paper, for every $t\in[0,1)$ we set
$$
K_t:=\max\lt\{\frac{1 + t}{(1 - t)^3},\frac{(1 + t)^3}{1 - t}\rt\}\ge 1
$$
and
$$
K:=K_{1/2}.
$$
We often make use of Bloch's theorem, which does not require that the map is univalent:
\begin{thm*}[Bloch Theorem] Let $f$ be a holomorphic map on the unit disc $\mathbb D$; assume that $|f'(0)|=1$. Then there is a region $U\subset \mathbb D$ which is 
mapped by $f$  univalently onto a disc of radius $b\ge 1/72$.
\end{thm*}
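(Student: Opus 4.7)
The plan is to locate a good center in the disc, use a Schwarz-type estimate to gain derivative control in a smaller disc around it, deduce univalence from that control, and finally apply the Koebe $1/4$ theorem to extract the required round disc in the image.

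First, I would produce a point $a \in \mathbb D$ at which the derivative is large relative to the distance to the boundary. The natural candidate is a maximum of the continuous function $\psi(z) = (1-|z|)|f'(z)|$ on $\overline{\mathbb D}$: since $\psi$ vanishes on $\partial\mathbb D$ and $\psi(0)=1$, its maximum is attained at some interior $a\in\mathbb D$ with $\psi(a)\ge 1$. Set $R := (1-|a|)/2$, so that $\overline{B(a,R)}\subset \mathbb D$ and $1-|z|\ge R$ for every $z\in B(a,R)$. Maximality of $\psi$ then yields
\[
|f'(z)| \le \frac{\psi(a)}{1-|z|} \le \frac{\psi(a)}{R} = 2|f'(a)|\qquad\text{for all }z\in B(a,R),
\]
and in particular $R|f'(a)| = \psi(a)/2 \ge 1/2$.

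Next, I would rescale to the unit disc by defining
\[
\phi(w) := \frac{f(a+Rw)-f(a)}{R\,f'(a)},\qquad w\in\mathbb D.
\]
Then $\phi(0)=0$, $\phi'(0)=1$, and $|\phi'(w)|\le 2$ on $\mathbb D$. Applying the Schwarz lemma to the holomorphic map $h(w) := \phi'(w)-1$, which vanishes at $0$ and satisfies $|h|\le 3$ on $\mathbb D$, yields $|\phi'(w)-1|\le 3|w|$. On the convex disc $B(0,1/3)$ this gives $|\phi'(w)-\phi'(0)| < |\phi'(0)| = 1$, so integrating $\phi'$ along the straight segment joining any two distinct points of $B(0,1/3)$ shows $\phi$ is univalent there. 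This univalence step is where the proof actually happens: the earlier choice of $a$ is precisely what lets the Schwarz lemma do its work, and it is the only point at which one must be genuinely careful.

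Finally, the Koebe $1/4$ theorem applied to $\phi\big|_{B(0,1/3)}$ shows that $\phi(B(0,1/3))$ contains the disc $B\!\left(0,\tfrac{1}{4}|\phi'(0)|\cdot \tfrac{1}{3}\right)=B(0,1/12)$. Transferring back through $f(a+Rw)-f(a)=R\,f'(a)\,\phi(w)$, the open set $U := B(a, R/3)\subset \mathbb D$ is mapped univalently by $f$, and
\[
f(U)\supset B\!\left(f(a),\, \tfrac{R|f'(a)|}{12}\right) \supset B\!\left(f(a),\, \tfrac{1}{24}\right),
\]
which is comfortably stronger than the claimed bound $b\ge 1/72$. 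The main potential obstacle is justifying injectivity of $\phi$ on $B(0,1/3)$ from the derivative estimate; everything else reduces to combining the extremal choice of $a$ with two classical one-line inequalities (Schwarz and Koebe $1/4$).
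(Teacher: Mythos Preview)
Your proof is correct, and in fact yields the stronger constant $b\ge 1/24$. The paper itself does not prove Bloch's theorem at all: it is quoted as a classical result (alongside Koebe's theorems) and used throughout Section~\ref{sec:julia} without any argument, so there is no ``paper's own proof'' to compare against. What you have supplied is the standard Landau-type argument---maximize $(1-|z|)|f'(z)|$ to find a good center, use the Schwarz lemma to control $\phi'-1$, deduce univalence on a subdisc, and finish with Koebe $1/4$---and every step is sound, including the injectivity step you flagged as the delicate one.
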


\section{Julia Sets of Non--Autonomous Iterations of  exponential Maps}\label{julia_random}\label{sec:julia}
As in the introduction for $\eta\neq 0$ we denote by $f_\eta:\C\to\C$ the entire map defined by
$$
f_\eta(z)=\eta e^z.
$$
Fix two real numbers $A\le B$ with $A>1/e$. 
Put 
$$
{\bf A}:=[A,B]^\mathbb N.
$$
For every infinite sequence of numbers in $[A,B]$, i.e., every element ${\ba}=\{a_1,a_2,\ldots \}$ of the infinite product $[A,B]^\mathbb N$,
define the non--autonomous dynamical system by the following formula:
$$
f^n_{\bf a}
:=f_{a_n}\circ f_{a_{n-1}}\circ\cdots\circ f_{a_2}\circ f_{a_1}:\C\longrightarrow\C.
$$
For every $\ba\in{\bf A}$ the respective Fatou and Julia sets $F_{\ba}$ and  $J_{\ba}$ are then defined analogously as in the deterministic case:
$$
F_{\ba}:=\Big\{z\in \C: f^n_{\ba}\rvert_U\ \text{ is normal for some neighborhood}\ U\ \text{of}\ z\Big\}
$$
and 
$$
J_{\ba}:=\mathbb{C}\setminus F_{\ba}.
$$
Denote by 
$$
\sigma:{\bf A}\longrightarrow{\bf A}
$$
the left shift, i.e., the map 
$$
\sigma(a_1,a_2,a_3\ldots)=(a_2,a_3,a_4\ldots).
$$
Note that both these sets $F_{\ba}$ and $J_{\ba}$ are invariant by the dynamics. More precisely:
$$
f_{\ba}^1(J_{\ba})
=f_{a_1}(J_{\ba})
\subset J_{\sigma(\ba)} 
\  \  \  {\rm and } \  \   \
f_{\ba}^1(F_{\ba})
=f_{a_1}(F_{\ba})
\subset F_{\sigma(\ba)}.
$$ 
Our next theorem extends to the non--autonomous case the celebrated result of Michal Misiurewicz (see \cite{mi}) which was conjectured by Pierre Fatou already in 1926 (see \cite{Fatou}). The proof we provide is simple and it constitutes a substantial simplification also for deterministic maps.
\begin{thm}\label{CC} 
For every $\ba\in {\bf A}$, we have that
$$
J_{\ba}=\mathbb{C}.
$$
\end{thm}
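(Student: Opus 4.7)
The plan is to argue by contradiction. Suppose $F_{\ba} \ne \es$ and choose an open disk $D = B(z_0, r) \sbt F_{\ba}$ on which $\{f_{\ba}^n|_D\}$ is a normal family. The aim is to exhibit, for some $n$, points in $f_{\ba}^n(D)$ arbitrarily close to both $0$ and $\infty$ on the Riemann sphere, contradicting equicontinuity on a suitably small subdisk. Three tools drive the argument: the chain-rule identity
$$(f_{\ba}^n)'(z) = \prod_{k=1}^n f_{\ba}^k(z),$$
Bloch's theorem, and the $2\pi i$-periodicity of each $f_{a_k}$.

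The first preliminary step is to observe $\R \sbt J_{\ba}$, so $D \cap \R = \es$. The hypothesis $A > 1/e$ gives the uniform bound $f_\lam(x) - x \ge 1 + \log A > 0$ for all $x \in \R, \lam \in [A, B]$; hence real orbits escape to $+\infty$ at a positive linear rate, and $(f_{\ba}^n)'(x) = \prod_{k=1}^n f_{\ba}^k(x)$ grows super-exponentially. The general Bloch-plus-periodicity mechanism below then applies at any real point to rule out normality there. The main technical step is to establish, at the candidate Fatou point $z_0$, that the orbit of $B(z_0, r)$ produces derivative growth compatible with Bloch --- namely, there is a point $\zeta_N \in B(z_0, r/2)$ such that Bloch's theorem applied at $\zeta_N$ gives a univalent image $B(w_N, R_N) \sbt f_{\ba}^N(D)$ with $R_N \to \infty$ and $\Re w_N + R_N \to +\infty$. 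One argues this via normality on $D$ together with $\R \sbt J_{\sigma^k \ba}$ for every $k$: if no such $\zeta_N$ existed, orbits in $D$ would be trapped in a region forcing their limit points into the Julia set, contradicting normality.

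Once such a Bloch disk is secured, argue as follows. For $R_N \ge \pi$, the rectangle $w_N + \big([-(R_N-\pi), R_N-\pi] \times [-\pi, \pi]\big)$ sits inside $B(w_N, R_N)$; exponentiating shows that $f_{a_{N+1}}(B(w_N, R_N))$ contains the full annulus around $0$ of outer radius $r_{\mathrm{out}} := a_{N+1} e^{\Re w_N + R_N - \pi}$. This annulus contains the two real points $\pm r_{\mathrm{out}}$, and one further iterate $f_{a_{N+2}}$ sends them to $a_{N+2} e^{\pm r_{\mathrm{out}}}$: one arbitrarily close to $0$, the other arbitrarily close to $\infty$ on the Riemann sphere (since $r_{\mathrm{out}} \to \infty$ by our choice of Bloch center). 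Hence the spherical diameter of $f_{\ba}^{N+2}(D)$ tends to $\pi$, contradicting equicontinuity of the normal family on any compact subdisk of $D$ after $r$ is shrunk.

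The main obstacle is the derivative-growth-with-escape step: producing a Bloch disk whose parameters satisfy $R_N \to \infty$ and $\Re w_N + R_N \to +\infty$. One must rule out bounded-orbit or $\Re \to -\infty$ scenarios that could prevent the divergence. This is precisely where $A > 1/e$ is essential: for $\lam \le 1/e$ the map $f_\lam$ has a real attracting or parabolic fixed point producing genuine bounded-orbit Fatou components. The simplification over Misiurewicz's original argument in \cite{mi} comes from the clean Bloch-plus-two-iterates mechanism, which turns escape directly into spherical-diameter explosion without the delicate hyperbolic-metric or cross-ratio computations.
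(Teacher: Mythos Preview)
Your overall contradiction scheme is sound and is essentially the paper's endgame: once you have a Bloch disk of radius at least $\pi$ in some $f_{\ba}^N(D)$ whose real part is large, one more iterate produces a full annulus, and a second iterate sends the two real boundary points to neighborhoods of $0$ and $\infty$, killing normality. That part is fine.

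The genuine gap is exactly where you flag ``the main obstacle'': you never actually prove the derivative-growth step. Your sentence ``if no such $\zeta_N$ existed, orbits in $D$ would be trapped in a region forcing their limit points into the Julia set, contradicting normality'' is not an argument. Normality is perfectly compatible with bounded orbits and with orbits drifting to $\Re z \to -\infty$; nothing about $\R \sbt J_{\sigma^k\ba}$ by itself forces derivative growth at a non-real point. Concretely, you have not ruled out that $f_{\ba}^n(D)$ stays forever inside a horizontal strip $\{|\Im z| < \pi\}$ with uniformly bounded derivatives --- and this is precisely the scenario that takes real work to exclude.

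The paper handles this with a chain of lemmas you have skipped: first the pointwise inequality $|(f_{\ba}^n)'(z)| \ge |\Im f_{\ba}^n(z)|$ (a direct computation), which forces derivative blowup whenever the orbit leaves the strip $S = \{|\Im z| < \pi\}$ infinitely often; combined with Bloch and the fact that $f_{\ba}^n(D)$ can never meet $\bigcup_k(\R + k\pi i)$, this reduces to the case where the orbit of $D$ is eventually confined to $S$. That remaining case is then eliminated by a hyperbolic-contraction argument on the half-strip $S^+$ together with the observation that once $\Im z$ is small and $\Re z$ is bounded, the map pushes $\Re z$ to the right by a definite amount (here $A > 1/e$ enters), so the orbit cannot stay bounded. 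None of this is captured by ``forcing limit points into the Julia set.'' Your proposal identifies the right finishing move but leaves the substantive dynamical analysis undone.
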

\noindent The proof of Theorem~\ref{CC} will consist of several lemmas.

\begin{lem} 
For every ${\ba}\in {\bf A}$,
$$
J_{\ba}\spt \R.
$$
\end{lem}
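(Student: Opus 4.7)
The plan is to argue by contradiction. Fix $x_0\in\R$ and suppose $x_0\in F_\ba$, so $\{f_\ba^n\}_{n\ge 0}$ is normal on some disc $D:=B(x_0,r)$. First I would verify that $x_n:=f_\ba^n(x_0)\to+\infty$: since $A>1/e$ and $\max_{x\in\R}xe^{-x}=1/e$, we have $ae^x>x$ for every $a\ge A$ and every $x\in\R$, so $(x_n)$ is strictly increasing, and any finite limit $x^*$ would satisfy $Ae^{x^*}\le x^*$, a contradiction. By normal-family theory on the Riemann sphere, every subsequential limit of $\{f_\ba^n\}$ on $D$ is either holomorphic (hence $\C$--valued) or the constant $\infty$; since $f_\ba^n(x_0)\to\infty$ rules out the first case, some subsequence $f_\ba^{n_k}$ converges to $\infty$ locally uniformly. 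After shrinking $r$ if necessary, I may assume $\min_{z\in\overline D}|f_\ba^{n_k}(z)|\to\infty$.

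Next, Bloch's theorem applied (after the routine rescaling) to $f_\ba^n|_D$ furnishes, for every $n$, a univalent subdomain $U_n\sbt D$ whose image is a Euclidean disc $B_n=B(\zeta_n,\rho_n)\sbt f_\ba^n(D)$ of radius
$$
\rho_n\;\ge\;\frac{r\,|(f_\ba^n)'(x_0)|}{72}\;=\;\frac{r}{72}\prod_{i=1}^n x_i\longrightarrow\infty.
$$
As soon as $\rho_n>2\pi$, the imaginary projection of $B_n$ is a real interval of length $2\rho_n>4\pi$ and therefore contains some odd multiple of $\pi$, say $(2m+1)\pi$. The intermediate value theorem on the convex disc $B_n$ then yields $w_n\in B_n$ with $\Im w_n=(2m+1)\pi$; pulling back through the univalent restriction produces $z_n\in U_n\sbt D$ with
$$
f_\ba^n(z_n)\;=\;u_n+i(2m+1)\pi,\qquad u_n:=\Re w_n\in\R.
$$

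The crux is that, irrespective of $u_n$, the next four iterates of $z_n$ are trapped in a region depending only on $A,B$: one has $f_\ba^{n+1}(z_n)=-a_{n+1}e^{u_n}<0$, so $f_\ba^{n+2}(z_n)=a_{n+2}e^{-a_{n+1}e^{u_n}}\in(0,B)$, then $f_\ba^{n+3}(z_n)\in(A,Be^B)$, and finally $|f_\ba^{n+4}(z_n)|\le T:=Be^{Be^B}$. Taking $n=n_k-4$ (still large for large $k$) exhibits points of $\overline D$ on which $|f_\ba^{n_k}|\le T$, contradicting $\min_{\overline D}|f_\ba^{n_k}|\to\infty$. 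The chief technical obstacle is that Koebe's $1/4$--theorem cannot be invoked on all of $D$: because $0\notin f_\ba^n(D)$ for $n\ge 1$, the univalence radius of $f_\ba^n$ at $x_0$ must shrink to $0$ and the Koebe disc around $x_n$ in the image is only of size $\lesssim x_n$. Bloch's theorem bypasses this by supplying the much larger disc $B_n$ of size $\asymp r|(f_\ba^n)'(x_0)|$ without demanding global univalence, and this is precisely what is needed to cross the horizontal line $\R+i(2m+1)\pi$.
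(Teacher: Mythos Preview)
Your proof is correct and follows essentially the same route as the paper's: normality plus $f_\ba^n(x_0)\to\infty$ forces locally uniform convergence to $\infty$, Bloch's theorem (with the derivative identity $(f_\ba^n)'(x_0)=\prod_{i=1}^n x_i$) produces a large disc in the image meeting a line $\R+i(2m+1)\pi$, and the next couple of iterates of the corresponding preimage stay bounded, contradicting that convergence. The only cosmetic differences are that the paper works with the full sequence rather than a subsequence and stops at $n+2$ (where already $|f_\ba^{n+2}(z_n)|<B$) instead of iterating to $n+4$; your extra remark contrasting Koebe and Bloch is a nice addition not present in the paper.
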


\begin{proof}
First, observe that if $x\in\mathbb{R}$, then
$$
\lim_{n\to\infty}f^n_{\ba}(x)\to+\infty.
$$
Now, if $w\in \mathbb{R}\sms J_{\ba}$, then there exists a neighborhood $V\sbt\C$ of the point $w$ in $\C$ such that the family $\(f^n_{\ba}|_V\)_{n=0}^\infty$ is normal.  So, since also $f^n_{\ba}|_{\mathbb{R}\cap V}\to\infty$, as $n\to\infty$, we conclude that $f^n_{\ba}$ converges to infinity uniformly on compact subsets 
of $V$ as $n\to\infty$.  Remember that for this specific family $f_\eta$ we have $f_\eta=f'_\eta$. So, if $\overline{B}(w,r)\subset V$, then 
$$
|(f^n_{\ba})'|\big|_{B(w,r)}\to\infty
$$
uniformly as $n\to\infty$. Thus, by virtue of
Bloch's Theorem, the image $f^n_{\ba}(B(w,r))$ contains a ball of radius $2\pi$ for all $n\ge 0$ sufficiently large. This implies that there 
exists a sequence of points $z_n\in B(w, r)$, $n\ge 0$ large enough, such that
$$
\lim_{n\to\infty}\big|\Re(f^n_{\ba}(z_n))\big|=+\infty,
$$
and
$$
\Im f^n_{\ba}(z_n))\in\pi+2\pi\Z.
$$
Then $f^{n+1}_{\ba}(z_n)\in(-\infty,0)$ and, consequently, $|f^{n+2}_{\ba}(z_n)|<B$. Thus,  $f^n_{\ba}|_{B(w,r)}$ does not tend to infinity as $n\to\infty$. This contradiction finishes the proof.
\end{proof}

\noindent As an immediate consequence of this lemma we get the following.

\begin{cor}\label{cor1}
If $V\subset \C$ is an open set and $V\cap J_{\ba}=\emptyset$, then $V\cap\mathbb{R}=\emptyset$. Furthermore,
\begin{enumerate}
\item 
$$
\mathbb{R}\cap \bigcup_{n=0}^\infty f^n_{\ba}(V)=\emptyset,
$$

and, more generally,

\item
$$
\left(\bigcup _{k\in\mathbb{Z}}\mathbb{R}+k\pi i\right)\cap \bigcup_{n=0}^\infty f^n_{\ba}(V)=\emptyset.
$$
\end{enumerate}
\end{cor}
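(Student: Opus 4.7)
The plan is to derive this corollary almost immediately from the preceding lemma that $J_{\ba}\supset\R$, together with the forward-invariance relation $f_{a_1}(J_{\ba})\subset J_{\sigma(\ba)}$ (equivalently $f_{a_1}(F_{\ba})\subset F_{\sigma(\ba)}$) stated just before Theorem~\ref{CC}. The first sentence is instantaneous: since $V$ is open and $V\cap J_{\ba}=\es$, we have $V\sbt F_{\ba}$, and $F_{\ba}\cap\R=\es$ by the lemma, so $V\cap\R=\es$.

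For part (1), I would iterate the invariance. Set $\sigma^n\ba:=(a_{n+1},a_{n+2},\ldots)$. A straightforward induction using $f_{a_1}(F_{\ba})\sbt F_{\sigma\ba}$ gives
$$
f^n_{\ba}(V)\sbt f^n_{\ba}(F_{\ba})\sbt F_{\sigma^n\ba}
$$
for every $n\ge 0$. Applying the lemma now to the shifted sequence $\sigma^n\ba\in\bf{A}$ yields $F_{\sigma^n\ba}\cap\R=\es$, and hence $f^n_{\ba}(V)\cap\R=\es$. Taking the union over $n$ proves (1).

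For part (2), I would use the one-step reduction to (1). If some $z\in V$ satisfied $f^n_{\ba}(z)\in\R+k\pi i$ for some $k\in\Z$, write $f^n_{\ba}(z)=x+k\pi i$ with $x\in\R$. Then
$$
f^{n+1}_{\ba}(z)=a_{n+1}e^{x+k\pi i}=(-1)^k a_{n+1}e^x\in\R,
$$
since $a_{n+1}\in[A,B]\sbt(1/e,\infty)$ is real. This contradicts (1) applied to the index $n+1$, which proves (2).

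The argument is entirely routine once the previous lemma is in hand; the only mild point to get right is the indexing of the shifted sequences $\sigma^n\ba$ so that the forward-invariance statement for random Fatou sets can be iterated, and the observation that because $\eta(\om)$ is real the horizontal lines $\R+k\pi i$ are mapped by $f_{a_{n+1}}$ into $\R$, which is what collapses case (2) to case (1). I do not anticipate any genuine obstacle here; the only thing worth emphasizing in the write-up is that the invariance is between sextuples with different driving sequences, hence the use of $\sigma^n\ba$ rather than $\ba$ itself.
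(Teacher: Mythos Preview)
Your proof is correct and follows exactly the approach the paper intends; indeed, the paper states the corollary as ``an immediate consequence'' of the preceding lemma and gives no separate proof, so your write-up simply spells out the routine details (iterated forward-invariance of the Fatou set and the observation that $f_{a_{n+1}}(\R+k\pi i)\subset\R$ since $a_{n+1}$ is real).
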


The next lemma and its proof come as minor modifications from \cite{mi}.

\begin{lem}\label{lem1} 
For every $z\in\C$ and every integer $n\ge 1$,
$$
|(f^n_{\ba})'(z)|\ge|\Im f^n_{\ba}(z)|.
$$
\end{lem}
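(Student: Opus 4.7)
The plan is to prove the inequality by induction on $n$, using repeatedly the crucial identity $f_\eta'=f_\eta$ (since $f_\eta(z)=\eta e^z$) together with the elementary bound $|\sin y|\le |y|$.

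For the base case $n=1$: direct computation gives $|(f^1_{\ba})'(z)|=|f'_{a_1}(z)|=a_1 e^{\Re z}$, while
$$|\Im f^1_{\ba}(z)|=a_1 e^{\Re z}|\sin \Im z|\le a_1 e^{\Re z},$$
which establishes the inequality.

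For the inductive step, assume $|(f^n_{\ba})'(z)|\ge |\Im f^n_{\ba}(z)|$ for every $z\in\C$. Writing $f^{n+1}_{\ba}=f_{a_{n+1}}\circ f^n_{\ba}$ and applying the chain rule together with $f'_{a_{n+1}}=f_{a_{n+1}}$, I get
$$|(f^{n+1}_{\ba})'(z)|=|f'_{a_{n+1}}(f^n_{\ba}(z))|\cdot|(f^n_{\ba})'(z)|=a_{n+1}e^{\Re f^n_{\ba}(z)}\cdot|(f^n_{\ba})'(z)|.$$
On the other hand, $f^{n+1}_{\ba}(z)=a_{n+1}e^{\Re f^n_{\ba}(z)}\bigl(\cos \Im f^n_{\ba}(z)+i\sin \Im f^n_{\ba}(z)\bigr)$, so
$$|\Im f^{n+1}_{\ba}(z)|=a_{n+1}e^{\Re f^n_{\ba}(z)}\cdot|\sin \Im f^n_{\ba}(z)|.$$
Comparing the two expressions, the desired inequality reduces to
$$|(f^n_{\ba})'(z)|\ge |\sin \Im f^n_{\ba}(z)|,$$
which follows from the inductive hypothesis combined with the elementary bound $|\sin y|\le |y|$ valid for all $y\in\R$.

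There is no real obstacle here; the whole argument is essentially a one-line use of the chain rule plus $|\sin y|\le |y|$, leveraging the special feature of exponential maps that the derivative equals the map itself. The only thing to be slightly careful with is keeping track of the composition order when writing the chain rule.
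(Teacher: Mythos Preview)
Your proof is correct and takes essentially the same approach as the paper: both rely on $f_\eta'=f_\eta$ and the bound $|\sin y|\le |y|$. The paper presents the argument as a telescoping product $|\Im f^n_{\ba}(z)| = \prod_{k=1}^{n}\frac{|\Im f^k_{\ba}(z)|}{|\Im f^{k-1}_{\ba}(z)|}\cdot|\Im z|$ bounded termwise by $\prod_{k=1}^{n}|f^k_{\ba}(z)| = |(f^n_{\ba})'(z)|$, which is just your induction unrolled.
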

\begin{proof}
$f_\eta(z)=\eta e^z=\eta e^x\cos y+i\eta e^x\sin y$. Since $|\sin y|\le |y|$, we thus have that $|\Im f_\eta(z)|\le\eta e^x|y|=|f_\eta(z)||\Im (z)|$. So,
\begin{equation}\label{contraction}
\frac{|\Im f_\eta(z)|}{|\Im(z)|}\le |f_\eta(z)|.
\end{equation}
Therefore,
$$
\aligned
|\Im f^n_{\ba}(z)|
&=\frac{|\Im f^n_{\ba}(z)|}{\Im f^{n-1}_{\ba}(z)|}\cdot \frac{|\Im f^{n-1}_{\ba}(z)|}{\Im f^{n-2}_{\ba} (z)|}\cdot \ldots \cdot \frac{|\Im f^2_{\ba}(z)|}{|\Im f_{\ba}(z)|}\cdot |\Im f_{\ba} (z)|\\
&\le |f^n_{\ba}(z)|\cdot |f^{n-1}_{\ba}(z)|\cdot \ldots \cdot |f^2_{\ba}(z)|\cdot |\Im f_{\ba}(z)|\\
&\le |f^n_{\ba}(z)|\cdot |f^{n-1}_{\ba}(z)|\cdot \ldots \cdot |f^2_{\ba}(z)|\cdot |f_{\ba}(z)|\\
&=|(f^n_{\ba})'(z)|.
\endaligned
$$
\end{proof}

\begin{rem}
The above computation, although very simple, reflects the following phenomenon: Denoting by $\mathbb{H}^+$ and $\mathbb{H}^-$, respectively, the upper and lower halfplane, we see that the branches $f^{-1}_\eta$ of the inverse map  are well-defined in $\mathbb{H}^+$ and $\mathbb{H}^-$, and each of them map $\mathbb{H}^\pm$  into $\mathbb{H}^+$ or  $\mathbb{H}^-$. Since the hyperbolic metric in $\mathbb{H}^\pm$ 
is given by $\frac{|dz|}{|\Im (z)|}$, the inequality \eqref{contraction} just expresses the fact that $f^{-1}_\eta$ are contractions in the hyperbolic metric.
\end{rem}

\begin{lem}
If $V\sbt\C$ is an open connected set and $V\subset\overline{V}\subset \C\setminus J_{\ba}$, then there exists an integer $N\ge 0$ such that for all $n\ge N$,
$$
f^n_{\ba}(V)\sbt S:=\{z\in\C: |\Im (z)|<\pi\}.
$$
\end{lem}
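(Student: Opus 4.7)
My approach combines the connectivity of each image $f^n_{\ba}(V)$, the grid-avoidance from Corollary~\ref{cor1}, Bloch's theorem, and Lemma~\ref{lem1}. The plan is first to confine each $f^n_{\ba}(V)$ to a single horizontal strip of width $\pi$, then to bound its index uniformly in $n$, and finally to show the index eventually stabilizes in $\{-1,0\}$. Since $S=S_{-1}\cup\R\cup S_0$ and, by Corollary~\ref{cor1}, $f^n_{\ba}(V)$ misses $\R$, this is exactly the conclusion of the lemma.

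\emph{Strip assignment and Bloch bound.} The set $f^n_{\ba}(V)$ is the continuous image of the connected set $V$, hence connected. By Corollary~\ref{cor1}(2) it avoids $\bigcup_{k\in\Z}(\R+k\pi i)$, so there is a unique $k_n\in\Z$ with
\[
f^n_{\ba}(V)\sbt S_{k_n}:=\{z\in\C:k_n\pi<\Im z<(k_n+1)\pi\}.
\]
Fix $w\in V$ and $r>0$ with $\ov B(w,r)\sbt V$. Since $S_{k_n}$ contains no open disc of radius larger than $\pi/2$, Bloch's theorem applied to $f^n_{\ba}|_{B(w,r)}$ (rescaled to the unit disc and normalized at $w$) yields $|(f^n_{\ba})'(w)|\le 36\pi/r$. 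Combining with Lemma~\ref{lem1}, we obtain $|\Im f^n_{\ba}(w)|\le|(f^n_{\ba})'(w)|\le 36\pi/r$, and therefore $|k_n|\le 36/r+1$ uniformly in $n$.

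\emph{Pinning $k_n$ to $\{-1,0\}$.} Assume for contradiction that $k_{n_j}=k\notin\{-1,0\}$ along some subsequence; by the $z\mapsto\ov z$ symmetry of the dynamics we may take $k\ge 1$. The normality of $(f^n_{\ba}|_V)_n$ (which holds because $\ov V\sbt\C\sms J_{\ba}$), together with compactness of $[A,B]^{\N}$, allows a diagonal extraction that produces $a^*_\ell:=\lim_j a_{n_j+\ell}\in[A,B]$ and $g_\ell:=\lim_j f^{n_j+\ell}_{\ba}$ locally uniformly on $V$ for every $\ell\ge 0$, with $g_{\ell+1}=a^*_{\ell+1}e^{g_\ell}$. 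The limit $g_0$ cannot be $\equiv\infty$: the Bloch/$\R+\pi i$ mechanism used in the previous proof that $\R\sbt J_{\ba}$ would otherwise place a point of $V$ into $J_{\ba}$, contradicting $V\sbt\C\sms J_{\ba}$. The limit non--autonomous system inherits the previous step, so $g_\ell(V)\sbt S_{k^*_\ell}$ with $k^*_0=k\ge 1$ and $(k^*_\ell)_\ell$ uniformly bounded. Writing $g_\ell(w)=k^*_\ell\pi i+h_\ell(w)$ with $\Im h_\ell(w)\in(0,\pi)$, the identity
\[
g_{\ell+1}(w)=a^*_{\ell+1}(-1)^{k^*_\ell}e^{h_\ell(w)}
\]
together with the sign of $\sin\Im g_\ell(w)$ (fixed by the parity of $k^*_\ell$), the boundedness of the orbit $(g_\ell(w))_\ell$, and the uniform strip bound of the previous step, produces the contradiction: either $|k^*_\ell|$ must eventually exceed its bound, or some iterate $g_\ell(w)$ must land on the forbidden grid $\bigcup_m(\R+m\pi i)$, which is ruled out by the analogue of Corollary~\ref{cor1} for the limit system.

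The main obstacle is precisely this last step: the combinatorial bookkeeping with $\sin$-signs and parities of $k^*_\ell$ that converts the boundedness of the limit orbit into a strict contradiction with $k^*_0\ge 1$. The first two steps are immediate consequences of Corollary~\ref{cor1}, Bloch's theorem, and Lemma~\ref{lem1}; only the final pinning down of $k_n$ is delicate and forces one to pass to the limit non-autonomous system.
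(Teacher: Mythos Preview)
Your Steps~1 and~2 are correct, and in fact contain almost everything you need. The genuine gap is Step~3: the limit--system argument is incomplete (as you acknowledge), and it is also unnecessary.

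The missing ingredient is the Chain Rule application of Lemma~\ref{lem1}, which is precisely how the paper finishes the proof in two lines. Suppose $k_{n_j}\notin\{-1,0\}$ along an infinite sequence $n_1<n_2<\cdots$. Fix your point $w\in V$ and set $n_0:=0$. For each $j\ge 1$ apply Lemma~\ref{lem1} to the map $f^{\,n_j-n_{j-1}}_{\sigma^{n_{j-1}}{\ba}}$ at the point $f^{n_{j-1}}_{\ba}(w)$: its derivative in modulus is at least $|\Im f^{n_j}_{\ba}(w)|\ge\pi$, since $f^{n_j}_{\ba}(w)\in S_{k_{n_j}}$ with $|k_{n_j}|\ge 1$. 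Multiplying these bounds via the Chain Rule gives
\[
\bigl|(f^{n_J}_{\ba})'(w)\bigr|
=\prod_{j=1}^{J}\bigl|(f^{\,n_j-n_{j-1}}_{\sigma^{n_{j-1}}{\ba}})'(f^{n_{j-1}}_{\ba}(w))\bigr|
\ge \pi^{J}\longrightarrow\infty,
\]
which directly contradicts your Bloch bound $|(f^n_{\ba})'(w)|\le 36\pi/r$. (The paper phrases the contradiction slightly differently: once the derivative blows up, Bloch's theorem produces a ball of radius $2\pi$ in some $f^n_{\ba}(V)$, violating Corollary~\ref{cor1}; this is the same application of Bloch viewed from the other side.)

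So your elaborate Step~3 can be replaced by this one--line chain estimate. The only idea you were missing is that Lemma~\ref{lem1} is not just a one--shot bound $|\Im f^n_{\ba}(z)|\le |(f^n_{\ba})'(z)|$, but can be iterated along the orbit: each visit to a strip with $|\Im|\ge\pi$ contributes a multiplicative factor $\ge\pi$ to the derivative.
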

\begin{proof}
By Corollary \ref{cor1}, for every $n\in\mathbb{N}$, either the set $f^n(V)$ is contained in $S$, or it is disjoint from $S$.
If $f^n_{\ba}(V)\cap S=\emptyset$ for infinitely many integers $n\ge 1$ then, using Lemma ~\ref{lem1} and the Chain Rule, we conclude that
$$
\limsup_{n\to\infty}|(f^n_{\ba})'|_{|V}=+\infty.
$$
This (using e.g. Bloch's Theorem) implies that for infinitely many integers $n\ge 1$ the set $f^n_{\ba}(V)$ contains a ball of radius $2\pi$. So, for all such $n$, $f^n_{\ba}(V)\cap  \left (\bigcup _{k\in\mathbb{Z}}\mathbb{R}+k\pi i\right )\neq\emptyset$. This however contradicts Corollary~\ref{cor1}, and we are done.
\end{proof}

Write $S$ as 
$$
S=S^+\cup S^-\cup \mathbb{R},
$$ 
where
$$
S^+:=\{z\in\C: 0<\Im(z)<\pi\} \  \  
{\rm and} \  \
S^-:=\{z\in\C: -\pi<\Im (z)<0\}.
$$ 
For $\ba\in{\bf A}$ denote by $g_{\ba}$ the holomorphic branch of $f^{-1}_{\ba}$ defined on $S^+$ and mapping $S^+$ into $S^+$. More generally, for every $\eta\in [A,B]$, the map $g_\eta$ denotes the holomorphic branch of $f_\eta^{-1}$ mapping $S^+$ into $S^+$. Denote by $\rho$ the hyperbolic metric in $S^+$. 

\begin{lem} For every $\eta\in [A,B]$ and for all $z,w\in S^+$, we have that
\begin{equation}\label{two_B}
\rho(g_\eta(z), g_\eta(w))\le \rho(z,w).
\end{equation}
Also, for every compact subset $K\subset S^+$ there exists $\kappa\in(0,1)$ such that for every $\eta\in [A,+\infty)$ and for all $z, w\in K$, we have that
\begin{equation}\label{one_B}
\rho(g_\eta(z),g_\eta(w))\le \kappa \rho(z,w).
\end{equation}
\end{lem}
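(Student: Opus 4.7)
The plan is to factor $g_\eta|_{S^+}$ through the inclusion of $S^+$ into the upper half plane and apply Schwarz--Pick; uniformity in $\eta$ will then come for free. For every $\eta>0$, the map $f_\eta:S^+\to\mathbb{H}^+$ is a conformal bijection onto the open upper half--plane $\mathbb{H}^+=\{z:\Im z>0\}$, because $e^{x+iy}=e^x(\cos y+i\sin y)$ has strictly positive imaginary part precisely when $y\in(0,\pi)$. Hence $g_\eta:\mathbb{H}^+\to S^+$ is a biholomorphism and therefore a hyperbolic isometry between $(\mathbb{H}^+,\rho_{\mathbb{H}^+})$ and $(S^+,\rho)$. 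Writing $g_\eta|_{S^+}=g_\eta\circ\iota$, where $\iota:S^+\hookrightarrow\mathbb{H}^+$ is the inclusion, Schwarz--Pick yields for all $z,w\in S^+$
$$
\rho(g_\eta(z),g_\eta(w))=\rho_{\mathbb{H}^+}(\iota(z),\iota(w))\le \rho(z,w),
$$
which is \eqref{two_B}. The key observation is that the ratio $\rho(g_\eta(z),g_\eta(w))/\rho(z,w)=\rho_{\mathbb{H}^+}(z,w)/\rho(z,w)$ does not depend on $\eta$ at all, so uniformity in $\eta$ reduces to a single comparison of two hyperbolic metrics on $S^+$.

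For \eqref{one_B}, fix a compact $K\subset S^+$ and consider
$$
\phi(z,w):=\frac{\rho_{\mathbb{H}^+}(z,w)}{\rho(z,w)}\quad (z\ne w),\qquad \phi(z,z):=\frac{\sin(\Im z)}{\Im z}.
$$
I will verify that $\phi$ is continuous on $K\times K$ and satisfies $\phi<1$ everywhere there; compactness then yields $\kappa:=\max_{K\times K}\phi<1$, which suffices. Pulling back $\rho_{\mathbb{H}^+}$ by $w\mapsto e^w$ identifies $\rho$ with the conformal density $|dw|/\sin(\Im w)$ on $S^+$, whereas $\rho_{\mathbb{H}^+}$ has density $|dw|/\Im w$. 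Since $0<\sin y<y$ for $y\in(0,\pi)$, integrating along the $\rho$--geodesic $\gamma^*$ from $z$ to $w$ (which stays inside $S^+$) gives
$$
\rho_{\mathbb{H}^+}(z,w)\le \int\frac{|\dot\gamma^*|}{\Im \gamma^*}<\int\frac{|\dot\gamma^*|}{\sin\Im\gamma^*}=\rho(z,w),
$$
so $\phi(z,w)<1$ for $z\ne w$; on the diagonal $\phi(z,z)=\sin(\Im z)/\Im z<1$ directly. Continuity of $\phi$ off the diagonal is clear, and continuity at each point $(z_0,z_0)$ follows from the standard local asymptotic $\rho_\Omega(z_n,w_n)=(1+o(1))\lambda_\Omega(z_0)|z_n-w_n|$ applied to both metrics, which yields $\phi(z_n,w_n)\to\sin(\Im z_0)/\Im z_0=\phi(z_0,z_0)$ as $z_n,w_n\to z_0$.

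The main subtlety is that a naive pointwise bound $\sin(\Im w)/\Im w\le c<1$ on $K$ cannot simply be integrated along the $\rho$--geodesic connecting $z,w\in K$, because that geodesic may exit $K$ and dip close to $\partial S^+=\R$, where the density ratio tends to $1$. The compactness argument above circumvents this by working with the integrated distance ratio $\phi$ directly on the compact set $K\times K$. Because $\phi$ does not depend on $\eta$, the resulting $\kappa<1$ is valid for every $\eta\in[A,+\infty)$, indeed for every $\eta>0$.
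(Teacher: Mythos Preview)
Your proof is correct. The overall architecture matches the paper's: Schwarz--Pick gives the non-strict contraction \eqref{two_B}, and a continuous strictly-less-than-one function on the compact set $K\times K$ (the distance ratio off the diagonal, the infinitesimal density ratio on the diagonal) yields the uniform $\kappa<1$ by compactness.

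The one genuine difference is in how you obtain uniformity in $\eta$. The paper observes that $g_{\eta_2}=g_{\eta_1}-\log(\eta_2/\eta_1)$ and uses horizontal--translation invariance of $\rho$ on $S^+$ to reduce to a single value $\eta=A$, then runs the compactness argument for that fixed map. You instead factor $g_\eta|_{S^+}=g_\eta\circ\iota$ with $g_\eta:\mathbb H^+\to S^+$ an isometry, so that $\rho(g_\eta(z),g_\eta(w))=\rho_{\mathbb H^+}(z,w)$ is visibly independent of $\eta$ from the outset; your ratio $\phi$ is then intrinsically $\eta$-free. These two reductions are equivalent (the translation is exactly the composition of two such isometries), but your formulation is arguably cleaner and yields the explicit diagonal value $\sin(\Im z)/\Im z$, which the paper leaves implicit. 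Your closing remark about why one cannot simply bound the density ratio along the geodesic (which may leave $K$) is a useful clarification; the paper's argument, like yours, sidesteps this by working directly with the integrated ratio on $K\times K$.
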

\begin{proof}
The formula \eqref{two_B} is  a straightforward consequence of Schwarz Lemma. Since the map $g_\eta:S^+\to S^+$ is not bi--holomorphic, it also follows from Schwarz Lemma that 
\begin{equation}\label{2_2017_01_18}
\rho(g_\eta(z), g_\eta(w))< \rho(z,w)
\end{equation}
whenever $z, w\in S^+$ and $z\ne w$, and in addition,
\begin{equation}\label{1_2017_01_18}
\limsup_{z,w\to\xi\atop z\ne w} \frac{\rho(g_\eta(z),g_\eta(w)}{\rho(z,w)}<1
\end{equation}
for every $\xi\in S^+$. 
In order to prove \eqref{one_B}, fix $\eta_2>\eta_1\ge A$. Since
$g_{\eta_2}(z)=g_{\eta_1}(z)-\log\frac{\eta_2}{\eta_1}$ and $g_{\eta_2}(w)=g_{\eta_1}(w)-\log\frac{\eta_2}{\eta_1}$, and since the metric $\rho$ is invariant under the horizontal translation, we have
$$
\rho(g_{\eta_2}(z), g_{\eta_2}(w))=\rho (g_{\eta_1}(z), g_{\eta_2}(w)).
$$
So, it is enough to  check the estimate \eqref{one_B} for $f_A$. But this follows immediately from \eqref{2_2017_01_18}, \eqref{1_2017_01_18}, and compactness of the set $K$.
Indeed, denote by $|f'|_\rho$ the derivative with respect to the metric $\rho$ and  consider the function $G:K\times K\to \mathbb R$ defined by:
$$G(z,w)=
\begin{cases}
\frac{\rho(f_A(z),f_A(w))}{\rho(z,w)}\ &\text{for}\quad z\neq w\\
|f'|_\rho(z)\ &\text{for}\quad z=w
\end{cases}
$$
Then $G$ is continuous in $K\times K$ and  $G(z,w)<1$ for all $(z,w)\in K\times K$, and \eqref{one_B} follows.
\end{proof}





Lemma~\ref{prop1} below will complete the proof of Theorem~\ref{CC}.
\begin{lem}\label{prop1}
The interior of the set 
$$
\Lambda:=\bigcap_{n=0}^\infty f^{-n}_{\ba}(S)
$$
is empty.
\end{lem}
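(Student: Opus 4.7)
Assume, for contradiction, that $V=B(z_0,r)$ is a non-empty open disk contained in $\Lambda$. Since $f^n_{\ba}(V)\subset S$ for every $n\ge 0$, the family $\{f^n_{\ba}|_V\}$ omits the horizontal lines $\{z\in\C:\Im z=(2k+1)\pi\}$ for each $k\in\Z$, hence is normal on $V$ by Montel's theorem, and in particular $V\subset F_{\ba}$. By Corollary~\ref{cor1}, $f^n_{\ba}(V)\cap\R=\emptyset$ for every $n$, so, each being connected and contained in $S$, either $f^n_{\ba}(V)\subset S^+$ or $f^n_{\ba}(V)\subset S^-$. Assume without loss of generality that $V\subset S^+$; since $f_\eta(S^+)\subset\mathbb H^+$ and $\mathbb H^+\cap S=S^+$, an easy induction yields $f^n_{\ba}(V)\subset S^+$ for all $n\ge 0$.

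Fix distinct points $z,w\in V$ and let $g_{a_k}:S^+\to S^+$ denote the principal inverse branches from the preceding lemma, so that the pullback composition
\[
\Psi_n \;:=\; g_{a_1}\circ g_{a_2}\circ\cdots\circ g_{a_n}:S^+\longrightarrow S^+
\]
satisfies $\Psi_n(f^n_{\ba}(z))=z$ and $\Psi_n(f^n_{\ba}(w))=w$. That lemma gives, for any compact $K\subset S^+$, a $\kappa_K\in(0,1)$, independent of $\eta\in[A,B]$, such that each $g_\eta$ is $\kappa_K$-Lipschitz on $K$ with respect to the Poincar\'e metric $\rho=\rho_{S^+}$. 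Hence if the orbits of $z$ and $w$ eventually enter and remain in some fixed compact $K\subset S^+$, iterating the uniform contraction yields
\[
\rho(z,w)\;\le\;\kappa_K^{\,n-N}\,\diam_\rho(K)\;\xrightarrow[n\to\infty]{}\;0,
\]
contradicting $z\ne w$. It therefore suffices to rule out orbits that escape every compact subset of $S^+$.

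The remaining, and main, obstacle is to handle the accumulation scenarios $\Im f^{n_k}_{\ba}(z_0)\to 0$, $\Im f^{n_k}_{\ba}(z_0)\to \pi$, or $|f^{n_k}_{\ba}(z_0)|\to\infty$. This is where the hypothesis $A>1/e$ enters crucially: for every $a\in[A,B]$ and every $x\in\R$ one has $ae^x-x\ge 1+\log A>0$, so the non-autonomous real iteration $x\mapsto a_k e^x$ escapes to $+\infty$ at linear rate per step. In each of the three scenarios, one or two iterations carry the orbit near the positive real axis, after which this real escape takes over; extracting a diagonal subsequence (by normality together with Koebe distortion, to transfer behaviour from $z_0$ to all of $V$, and by compactness of $[A,B]$, to pass to convergent subsequences of the parameters $a_n$), one obtains a subsequence $m_k\to\infty$ along which $\Re f^{m_k}_{\ba}(z_0)\to +\infty$ uniformly on $V$ and $\Im f^{m_k}_{\ba}(z_0)\to 0^+$ (the latter being forced by $\Im f^{m_k+1}_{\ba}(z_0)<\pi$). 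The final contradiction arises from comparing Bloch's bound
\[
|(f^n_{\ba})'(z_0)|\;\le\;\tfrac{72\pi}{r},\qquad n\ge 0
\]
(valid because $f^n_{\ba}(V)\subset S$ has width $2\pi$) with the chain-rule identity $|(f^n_{\ba})'(z_0)|=\prod_{k=1}^n|f^k_{\ba}(z_0)|$ stemming from $f'_\eta=f_\eta$: for $l=0,1,2,\ldots$ the factors $|f^{m_k+l}_{\ba}(z_0)|=a_{m_k+l}\,e^{\Re f^{m_k+l-1}_{\ba}(z_0)}$ grow iteratively exponentially in $l$ (since $\Im$ is forced super-exponentially close to $0$, making $\cos\Im\approx 1$ and $\Re$ grow like $a\,e^{\Re}$), so $\prod_{l=1}^j|f^{m_k+l}_{\ba}(z_0)|$ exceeds any prescribed constant for suitable $k,j$, contradicting Bloch. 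The most delicate step is ensuring rigorously that $\Im$ remains close to $0$, rather than oscillating toward $\pi$ and allowing the orbit to turn back, along the escaping subsequence; I expect this to require exploiting the open set $V$ itself, via univalence and Koebe distortion, rather than merely the single orbit of $z_0$.
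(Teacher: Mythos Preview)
Your setup is correct: the reduction to $f^n_{\ba}(V)\subset S^+$ for all $n$, the Bloch bound $|(f^n_{\ba})'(z_0)|\le 72\pi/r$, and the hyperbolic-contraction mechanism via $\Psi_n=g_{a_1}\circ\cdots\circ g_{a_n}$ are all sound and match the paper's ingredients. The genuine gap is exactly where you flag it: the ``escape'' alternative. Your case split ``eventually in a fixed compact $K$'' versus ``escapes every compact'' is not yet an argument, because in the second branch you have only a sketch, you yourself note that $\Im f^{m_k}_{\ba}(z_0)$ could tend to $\pi$ rather than $0$, and you do not treat $\Re\to -\infty$ (after which the next iterate lands near $0\in\partial S^+$). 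Controlling all of this at the level of a single orbit, while the remaining iterates may oscillate, is awkward.

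The paper avoids this difficulty by reversing the order of the two halves of your plan: it first forces the whole orbit of an open subset $W\subset V$ into a fixed bounded rectangle, and only then invokes the hyperbolic contraction. Concretely: (i) a Bloch counting argument shows that $f^n_{\ba}(W)\subset\{0<\Im z<\pi/2\}$ for all large $n$ (each visit to $\{\Im z>\pi/2\}$ contributes a factor $\ge\pi/2$ to the derivative, so only finitely many visits are possible); (ii) from this strip one has $\Re f^{n+1}>0$, and if ever $\Re\ge M$ for suitable $M$ then $\Re$ increases by at least $1$ per step, forcing uniform escape and again contradicting Bloch---so $f^n_{\ba}(W)\subset\{0<\Re z<M,\ 0<\Im z<\pi/2\}$ for all large $n$. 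Now normality yields subsequential limits; a non-constant limit $g$ gives a compact $K\supset g(W)$ on which your $\kappa_K$-contraction applies verbatim to reach a contradiction, while a limit with value in $S^+$ is excluded because each $f^n_{\ba}$ expands $\rho$. Hence every subsequential limit is a constant on $\partial S^+$, and since the orbit sits in the bounded rectangle this constant lies on $\{\Im z=0,\ 0\le\Re z\le M\}$; but once $\Im$ is small, the elementary estimate $\Re f_\eta(z)\ge \Re z + Ae(1-\delta)$ (using $A>1/e$) pushes $\Re$ to $+\infty$, contradicting boundedness.

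In short, your contraction argument is right but only becomes usable after the orbit has been trapped in a bounded region; the two Bloch-based confinement steps (i)--(ii) are the missing idea that makes the case analysis disappear.
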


\begin{proof}

Since 
$$
f_{\ba}(S^+)=\{z\in\C: \Im (z)>0\}, \  f_{\ba}(S^-)=\{z\in\C: \Im(z)<0\},
$$
and
$$
f_{\ba}(\mathbb{R})=(0,+\infty),
$$
it follows that
$$\bigcap_{n=0}^\infty f^{-n}_{\ba}(S)=\bigcap_{n=0}^\infty f^{-n}_{\ba}(S^+)\cup\bigcap_{n=0}^\infty f^{-n}_{\ba}(S^-)\cup \mathbb{R}.$$
We shall prove that the set $\bigcap_{n=0}^\infty f^{-n}_{\ba}(S^+)$ has empty interior; the set  $\bigcap_{n=0}^\infty f^{-n}_{\ba}(S^-)$ can be dealt with in the same way.

So, seeking contradiction, suppose that there exists $V\sbt\C$, a nonempty, open, connected, bounded with 
$$
V\subset\overline{V}\subset\bigcap_{n=0}^\infty f^{-n}_{\ba}(S^+).
$$
Then, obviously, the family $\(f^n_{\ba}|_V\)_{n=0}^\infty$ is normal. 
Now, fix a non-empty open connected set $W$ (e.g.: a disk) contained, together with its closure, in $V$. Put  
$$
\delta:=\dist(W,\partial V)>0.
$$
Let $N\ge 1$ be so large integer that
$$
\lt(\frac{\pi}{2}\rt)^N\cdot\frac{\delta}{72}>2 \pi.
$$

Now, seeking a contradiction, assume that there exists $\xi\in W$ such that for at least $N$ integers  $n_1, \dots,n_N\ge 0$ we have that  
$$
f_\om^n(\xi)\in \{z\in\C: \Im z> \pi/2\}.
$$
Then $|(f_\om^{n_N})'(\xi)|>(\pi/2)^N$, and again Bloch's Theorem implies that $f_\om^n(W)$ contains some ball of radius  $2\pi$. Since  $f^{n_N}(W)$ does not intersect the Julia set $J_{\theta^n{\ba}}$, this is a contradiction, as $J_{\theta^n{\ba}}\spt \R+2\pi i\Z$. 

\medskip We therefore conclude that the trajectory  $f^n_{\ba}(z)$ of every point $z\in W$ visits the domain $\{z\in\C: \Im z>\pi/2\}$ at most  $N$ times.
  
\medskip Thus, for all integers $n\ge 0$ large enough, say $n\ge q_1\ge 0$, we have that
\begin{equation}\label{1_2017_02_07}
f^n_{\ba}(W)\sbt \{z\in\C: 0<\Im z<\pi/2\}.
\end{equation}
Consequently, 
$$
f_{\bf a}^n(W)\sbt \{z\in\C:\Re z>0\}
$$
for all $n\ge q_1+1$.
Moreover, observe that there exists a constant $M>0$ such that, if  $\Re z\ge M$, $\Im z\in (0,\pi/2)$, and $f_\eta(z)\in S$, $\eta\in [A,B]$, then
$$
\Re f_\eta(z)>\Re z +1.
$$ 
This, in turn, implies that if $f^j_{\ba}(W)\cap \{z\in\C: \Re z \ge M\}\neq\emptyset$ for some integer $j\ge q_1+1$, then the sequence $\(f_\om^n|_W\)_{n=q_1+1}^\infty$ converges uniformly to $\infty$. But since then the sequence $\((f^n)'|_W\)_{n=q_1+1}^\infty$ also converges uniformly to $\infty$, this possibility is again excluded by the conjunction of Bloch's Theorem and  \eqref{1_2017_02_07}.

So, we have conclude that
$$
f_\om^n(W)\sbt \{z\in\C:0<\Re z<M \ {\rm and } \  0<\Im z<\pi/2\}
$$
for all integers $n\ge q_1$.
Since the family $\(f_\om^n|_W\)_{n=q_1+1}^\infty$ is normal, its every subsequence contains a subsequence converging uniformly in $W$ to some limit holomorphic function. Since all the maps $f_\om^n|_W$, $n\ge q_1+1$, expand the hyperbolic metric $\rho$, there are no constant limits of subsequences $\(f^{n_k}_{\ba}|_W\)_{k=1}^\infty$ with values in $S^+$.

So, let $g$ be a non-constant limit of some subsequence $\(f^{n_k}_{\ba}|_W\)_{k=1}^\infty$ converging uniformly.
Shrinking $W$ if necessary, one can assume that $g(W)$ is contained in some compact subset $K\subset S^+$.
Putting 
$$
\tilde K:=\{z\in S^+:\rho(z,K)\le 1\},
$$ 
we see that there is $q_2\ge q_1+1$ such that for every $k\ge q_2$
$$
f_\om^{n_k}(W)\subset \tilde K.
$$
Note that $\tilde K$ has finite hyperbolic diameter, in fact is compact, and put $D:=\diam_\rho (\tilde K)<\infty$. Record that for all $k>q_2$, we have that
$$
f^{n_k}_{\ba}=f^{n_k-n_{k-1}}_{\theta^{n_{k-1}}{\ba}}\circ\dots \circ f^{n_{q_2+1}-n_{q_2}}_{\theta^{n_{q_2}}{\ba}}\circ f^{n_{q_2}}_{\ba}.
$$
Let $z, w\in W$ with $z\neq w$.
Then, using \eqref{one_B} and \eqref{two_B}, we see that
$\rho(z,w)\le \kappa^{k-q_2}D$ for every $k\ge q_2$, which is a contradiction.

\medskip
So, the sequence $\(f^n_{\ba}|_W\)_{n=0}^\infty$ has no subsequence with a non--constant limit.

Since  all limit functions of subsequences of the sequence $\(f^n_{\ba}|_W\)_{n=0}^\infty$ with values 
in $S^+$ have been also already excluded, we arrive at the following conclusion:

\medskip For every $\theta>0$ there exists an integer $n_\theta\ge 0$ such that 
$$
f^n_{\ba}(W)\sbt \{z\in\C: 0<\Im z<\theta\}\cap \{z\in\C:0<\Re z<M\}. 
$$
for every $n\ge n_\theta$.
\medskip

In order to complete the proof, we now shall show that the above is impossible. This can be deduced immediately  from  the following lemma. 
Its proof is an easy calculation and will be omitted.

\begin{lem}\label{lem6}
Let $\delta>0$ be so small that $(1-\delta)>\frac{1}{A e}$. 
Then for every $\eta\ge A$ and for every $z\in\C$ with $\cos \Im z>1-\delta$, we have that
$$
\Re f_\eta(z)>\Re(z)+A e(1-\delta).
$$
In particular, the map $f_\eta$ moves the region $\{z\in S^+: \cos \Im z>1-\delta)\}$ by $\varepsilon$ to the right.
\end{lem}
\end{proof}

\

\section{Random Exponential Dynamics and Random Measures in $Q$}

As in \cite{Arnold}, \cite{crauel}, \cite{MSU}, \cite{MayerU_3}, and \cite{MayerU4} the randomness is modeled by a measure preserving invertible 
dynamical system 
$$
\theta:\Omega\to\Omega,
$$
where $(\Omega,\mathcal F, m)$ is a complete probability measurable space, and $\theta$ is a measurable invertible map, with $\theta^{-1}$ measurable, preserving the measure $m$. As in the previous section, fix some real constants $A, B$ with $A>1/e$. Let 
$$
\eta:\Omega\longmapsto [A,B]
$$
be a measurable function. Furthermore, to each point $\omega\in\Omega$ associate the exponential map 
$$
f_\omega:=f_{\eta(\omega)}:\C\longrightarrow\C
$$
given by the formula
$$
f_\omega(z)=\eta(\om)e^z.
$$ 
Consequently, for every $z\in \C$, the map 
$$
\Omega\ni\omega\longmapsto f_{\eta(\omega)}(z)\in\C
$$ 
is measurable.

We consider the dynamics of random iterates of exponentials:
$$
f^n_\omega
:=f_{\theta^{n-1}\omega}\circ\cdots\circ f_{\theta\omega}\circ f_\omega:\C\longrightarrow\C.
$$
The quintuple 
$$
\(\Omega,\mathcal F,m;\, \theta:\Omega\to \Omega;\, \eta:\Om\to[A,B]\)
$$
and induced by it random dynamics 
$$
\(f^n_\omega:\C\to\C\)_{n=0}^\infty, \  \  \om\in \Om,
$$
will be referred to in the sequel as \emph{random exponential dynamical system}.
As we have explained it in the introduction, we will in fact do all of our investigations for the maps projected to the cylinder $Q$. More precisely, for every $\om\in \Om$, we consider the map
$$
F_\om=\pi\circ f_\om\circ\pi^{-1},
$$
and the corresponding random dynamical system
$$
F^n_\omega
:=F_{\theta^{n-1}\omega}\circ\cdots\circ F_{\theta\omega}\circ F_\omega:Q\longrightarrow Q.
$$
As it was indicated in the introduction, and explained in detail in Section~\ref{apendix1}, which can be read now with full understanding, and in Section~\ref{apendix2}, this is entirely equivalent to dealing with the random dynamical system $(f_\om^n)$ with derivatives calculated with respect to the conformal Riemannian metric 
$$
\frac{|dz|}{|z|}.
$$
This metric pops up naturally in Section~\ref{apendix1} and coincides with the metric dealt with in \cite{MayerU_1} and \cite{MayerU_2}.

\medskip Recall from \cite{crauel} that a function $g:\Omega\times Q\to\C$, $g(\omega, z)=g_\omega(z)$, is called a random continuous function if, for every $\omega\in\Omega$ the function 
$$
Q\ni z\longmapsto g_\omega(z)\in \C
$$ 
is continuous and bounded, and, in addition, for every $z\in Q$ the function
$$
\Omega\ni\omega\longmapsto g(\om,z)\in\C
$$
is measurable. It then follows ( see, e.g., Lemma~1.1 in\cite{crauel}) that every random continuous function is measurable with respect to the product $\sigma$--algebra $\mathcal F\otimes \mathcal B$, where  $\mathcal B$ is the Borel $\sigma$--algebra in $Q$. Moreover, the map
$$
\Omega\ni\omega\longmapsto \|g_\omega\|_\infty\in\R
$$ 
is measurable and, $m-$ integrable.
The vector space of all real--valued random continuous functions is denoted by $C_b(\Omega\times Q)$. Equipped with the norm 
$$
\|g\|:=\int_\Omega\|g_\omega\|_\infty dm(\om)
$$
it becomes a Banach space.

The simplest example of such a random map is obtained  just by putting 
$
\Omega:=\Pi_{-\infty}^\infty [A,B]$, equipped with some (completed) product measure, and putting, for $\omega=(\dots\eta_{-1},\eta_0,\eta_1\dots)$ $\eta(\omega):=\eta_0$.

Put 
$$
X:=\Omega\times Q.
$$
Denote by $\mathcal M(X)$ the space of all those signed measures $\nu$ defined on the $\sigma$-algebra $\mathcal F\otimes \mathcal B$ for which 
$$
\|\nu\|_\infty:={\rm esssup}\{|\nu_\omega|:\om\in\Om\}<+\infty,
$$
where $\nu_\omega$, $\om\in\Om$, is the corresponding disintegration of $\nu$ and, for each $\om\in\Om$ the number $|\nu_\omega|$ is the total variation norm of $\nu_\om$.

These measures, i.e. the members of $\mathcal M(X)$, can be canonically identified with linear continuous functionals on the Banach space $C_b(\Omega\times Q)$.

Let 
$$
\pi_1:X\to \Omega
$$
be the projection onto the first coordinate, i.e.,
$$
\pi_1(\om,z)=\om.
$$
Let $\mathcal {M}_m\subset \mathcal M(X)$ be the set of all non-negative probability measures on $X$ that project onto $m$ under the map $\pi_1:X\to \Omega$, i.e. 
$$
\mathcal {M}_m=\big\{\mu\in\mathcal M (X): \mu\circ\pi_1^{-1} =m\big\}.
$$

A map $\mu:\Omega\times \mathcal B\to[0,1]$, $(\omega, B)\longmapsto \mu_\omega(B)$, is called a random probability measure on $Q$ if 
\begin{itemize}
\item{} For every set $B\in\mathcal B$ the function $\Om\ni\omega\longmapsto \mu_\omega(B)\in[0,1]$ is measurable,
\item{} For $m$-almost every $\omega\in\Omega$ the map $\mathcal B\ni B\mapsto \mu_\omega(B)\in[0,1]$ is a Borel probability measure.
\end{itemize}

A random measure $\mu$ will be frequently denoted as $\{\mu_\omega\}_{\om\in\Om}$ or $\{\mu_\omega:\om\in\Om\}$.

The set $\mathcal M_m(X)$ can be canonically identified with the collection of all random probability measures on $Q$ as follows.

\begin{prop}[see Propositions 3.3 and 3.6 in\cite{crauel}]
With the above notation, for every measure $\mu\in\mathcal M_m(X)$ there exists a unique random measure $\{\mu_\omega\}_{\om\in\Om}$ on $Q$ such that 
$$
\int_{\Omega\times Q}h(\omega,z)\,d\mu(\omega,z)
=\int_\Omega\left (\int_Q h(\omega,z)\,d\mu_\omega(z)\right)dm(\omega)
$$
for every bounded measurable function $h:\Omega\times Q\to \mathbb R$.

Conversely, if $\{\mu_\omega\}_{\om\in\Om}$ is a random measure on $Q$, then for every bounded measurable function $h:\Omega\times Q\to \mathbb R$ the function $\Om\ni\omega\longmapsto \int_Qh(\omega,z)d\mu_\omega(z)$ is measurable, and the assignment
$$
\mathcal F\otimes \mathcal B\ni A\longmapsto\int_\Omega\int_Q\1_A(\omega,z)d\mu_\omega(z) dm(\omega),
$$
defines a probability measure $\mu\in\mathcal M_m(Q)$.
\end{prop}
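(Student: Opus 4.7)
The plan is to establish the stated bijective correspondence between $\mathcal M_m(X)$ and random probability measures on $Q$ by constructing each direction and verifying the integration identity. Both directions rest on the fact that $Q$ is a Polish space (it is conformally an infinite cylinder, hence homeomorphic to $\R\times S^1$), which together with completeness of $(\Om,\mathcal F,m)$ guarantees existence of a regular disintegration.

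For the forward direction $\mu\in\mathcal M_m(X)\mapsto\{\mu_\omega\}_{\om\in\Om}$, I would invoke the disintegration theorem for the projection $\pi_1:(X,\mathcal F\otimes\mathcal B,\mu)\to(\Om,\mathcal F,m)$. This yields a family of Borel probability measures $\mu_\omega$ on $Q$, defined uniquely up to an $m$-null set, such that $\om\mapsto\mu_\omega(B)$ is measurable for every $B\in\mathcal B$ and such that the integration identity holds for indicator functions of rectangles $h=\1_{A\times B}$. Linearity extends this to simple functions, and monotone convergence extends it to all bounded measurable $h:\Om\times Q\to\R$. Uniqueness is obtained by applying the $\pi$--$\lambda$ theorem to the $\pi$-system of measurable rectangles, noting that two disintegrations agreeing on all rectangles must agree on $\mathcal F\otimes\mathcal B$.

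For the reverse direction, starting from a random measure $\{\mu_\omega\}_{\om\in\Om}$, the essential technical step is to show that for every bounded $\mathcal F\otimes\mathcal B$-measurable $h$, the map $\om\mapsto\int_Qh(\om,z)\,d\mu_\omega(z)$ is measurable. For $h=\1_{A\times B}$ this integral equals $\1_A(\om)\mu_\omega(B)$, which is measurable directly from the definition of a random measure. Let $\mathcal D\sbt\mathcal F\otimes\mathcal B$ denote the collection of sets $E$ for which $\om\mapsto\mu_\omega(E_\omega)$ is measurable, where $E_\omega:=\{z\in Q:(\om,z)\in E\}$. Then $\mathcal D$ contains all measurable rectangles; since each $\mu_\omega$ is a probability measure, $\mathcal D$ is closed under complements and countable disjoint unions, hence is a $\lambda$-system, so by Dynkin's lemma $\mathcal D=\mathcal F\otimes\mathcal B$. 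Approximating a bounded measurable $h$ by simple functions then extends the claim. Finally, defining $\mu(E):=\int_\Om\mu_\omega(E_\omega)\,dm(\om)$ yields a probability measure on $X$ with $\mu\circ\pi_1^{-1}=m$, and the integration identity for indicators upgrades to all bounded measurable $h$ by linearity and monotone convergence.

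The principal obstacle is arranging the Dynkin argument cleanly in the presence of the exceptional $m$-null set on which $B\mapsto\mu_\omega(B)$ is not required to be a probability measure; here completeness of $(\Om,\mathcal F,m)$ lets one redefine each $\mu_\omega$ on this null set to be a fixed reference probability measure without affecting any integrals, after which $\mathcal D$ is a genuine $\lambda$-system and the argument proceeds directly. The fact that the two constructions are mutual inverses then follows immediately from uniqueness of disintegration combined with the integration identity tested on indicators $\1_{A\times B}$.
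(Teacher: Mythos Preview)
The paper does not prove this proposition; it simply cites Propositions~3.3 and~3.6 in Crauel's monograph \cite{crauel} and moves on. Your sketch is a correct outline of the standard disintegration argument underlying those cited results, so there is nothing to compare against in the paper itself.
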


\

Both sets $\mathcal M(X)$ are $\mathcal M_m$ are equipped in \cite{crauel} with a topology called therein as a narrow topology. 
This  topology is on $\mathcal M(X)$ generated by the following local bases of neighborhoods of elements $\nu\in\mathcal M(X)$:
$$
U_{g_1,\dots g_k;\delta}(\nu):=\left\{\mu\in \mathcal M: \Big|\int g_j d\mu-\int g_jd\nu\Big|<\delta\right\},
$$ 
where $g_1,\dots g_k$ is an arbitrary collection of random continuous functions and $\delta$ is some positive number. The space $\mathcal M_m$ is then endowed with the subspace topology of the narrow topology on $\mathcal M(X)$. This topology is in general non--metrizable neither on $\mathcal M(X)$ nor on $\mathcal M_m$.

A subset $\mathcal R\subset\mathcal M_m$ is said to be tight if for every $\varepsilon>0$ there exists $M>0$ such that for every $\nu\in \mathcal R$ we have that
$$
\nu(\Omega\times Q_M)\ge1-\varepsilon.
$$
We recall Theorem 4.4 in \cite{crauel}:

\begin{thm}[Crauel's  Prokhorov Compactness Theorem]\label{crauel-prokhorow} A set $\mathcal R\subset \mathcal M_m$ is tight if an only if it is relatively compact with respect to the narrow topology. In this case, $\mathcal R$ is also relatively sequentially compact. 
\end{thm}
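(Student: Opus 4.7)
The plan is to treat the two implications separately, reducing each to the classical Prokhorov theorem on the Polish space $Q$ (whose closed balls $Q_M$ are compact) and then handling the $\Omega$ variable by integration against $m$.

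For the easy direction, I would assume that $\mathcal R$ is relatively compact and suppose for contradiction that it is not tight. Then there exist $\ve>0$ and a sequence $\nu^{(n)}\in \mathcal R$ with $\nu^{(n)}(\Om\times Q_n)<1-\ve$ for every $n$. Choose continuous cutoff functions $\varphi_M:Q\to[0,1]$ depending only on $\Re z$ with $\varphi_M\equiv 0$ on $Q_M$ and $\varphi_M\equiv 1$ off $Q_{M+1}$, and view them as random continuous functions by $g_M(\om,z):=\varphi_M(z)$. Then $\int g_M\,d\nu^{(n)}\ge\ve$ whenever $n\ge M+1$. Passing to a narrowly convergent subnet $\nu^{(n_\alpha)}\to\mu\in\mathcal M_m$, and fixing $M$, this forces $\int g_M\,d\mu\ge\ve$ for every $M$, contradicting $\mu(\Omega\times Q)=1$ and dominated convergence.

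For the sufficiency direction I would proceed by a diagonal construction. Fix a sequence $(\nu_n)\subset\mathcal R$. For each integer $k$, pick $M_k$ with $\nu_n(\Om\times Q_{M_k})\ge 1-1/k$ uniformly in $n$, and pick a countable dense subset $\{\varphi_{k,j}\}_{j\in\N}$ of $C(Q_{M_k})$. Next, pick a countable family $\{\psi_i\}_{i\in\N}\subset L^1(m)$ that separates measures on $\Omega$ absolutely continuous with respect to $m$. Using a diagonal extraction, pass to a subsequence along which $\int\psi_i(\om)\varphi_{k,j}(z)\,d\nu_n(\om,z)$ converges for every $(i,j,k)$. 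Tightness lets me transfer this countable convergence to a bounded positive linear functional $\Lambda$ on the whole Banach space $C_b(\Om\times Q)$ via a three-$\ve$ argument: outside $\Om\times Q_{M_k}$ every bounded random continuous $g$ contributes less than $\|g\|/k$, while on $\Om\times Q_{M_k}$ it can be approximated in the appropriate norm by finite linear combinations of products $\psi_i\otimes \varphi_{k,j}$ using separability of $C(Q_{M_k})$.

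The remaining task, which I expect to be the main obstacle, is to realize $\Lambda$ as integration against a measure $\mu\in\mathcal M_m$. The natural route is a fiberwise Riesz representation: for $m$-a.e.\ $\om$ the restricted functional $g_\om\mapsto\Lambda(\psi\otimes g_\om)$ is a positive bounded linear functional on $C_0(Q)$ (by tightness), hence represents a Borel measure $\mu_\om$, and one checks measurability of $\om\mapsto\mu_\om(B)$ via a monotone class argument using the countable dense families above. That $\mu\circ\pi_1^{-1}=m$ follows by testing $\Lambda$ against functions of the form $(\om,z)\mapsto\psi(\om)$, which is possible since such constants-in-$z$ live in $C_b(\Om\times Q)$ whenever $\psi$ is bounded measurable (approximable in the narrow-generating sense). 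The relative sequential compactness is automatic from this construction, since the entire argument extracts an ordinary subsequence rather than a subnet, which is crucial because the narrow topology is not metrizable in general.
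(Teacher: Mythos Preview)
The paper does not prove this theorem; it is simply quoted as Theorem~4.4 of Crauel's monograph \cite{crauel} and used as a black box. There is therefore no proof in the paper to compare your attempt against.

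As to the soundness of your sketch, the necessity direction is fine, but the sufficiency argument has a genuine gap. Your diagonal extraction relies on choosing a \emph{countable} family $\{\psi_i\}\sbt L^1(m)$ that separates measures on $\Om$. The paper places no countability hypothesis on $(\Om,\mathcal F,m)$, and without $\mathcal F$ being countably generated modulo $m$-null sets the space $L^1(m)$ need not be separable, so such a family need not exist. The same obstruction reappears if you try to bypass it via weak-$*$ compactness of the functions $\om\mapsto\int\varphi_{k,j}\,d\nu^{(n)}_\om$ in $L^\infty(m)=(L^1(m))^*$: Banach--Alaoglu gives compactness for nets, but \emph{sequential} compactness of the closed unit ball requires the predual $L^1(m)$ to be separable. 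Since the whole point of the second assertion of the theorem is to produce genuine subsequences in a topology that is in general not metrizable, this is exactly the place where one cannot be casual. Crauel's argument handles this by exploiting the Polish structure of the fibre $Q$ (a countable convergence-determining family there) together with an $L^1$-compactness device tailored to parametrized measures; if you want to write out a proof you should either reproduce that mechanism or add the standing assumption that $\mathcal F$ is countably generated mod null sets, under which your outline becomes essentially correct.
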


\

\section{Random Conformal Measures for Random Exponential Functions; a Preparatory Step}
In this section, after short preparation, we define random $t$--conformal measures, and our main goal in it is to prove their existence for every $t>1$. In order to do this we introduce a subspace  of random  measures for our random dynamics of exponentials. After defining a properly chosen convex and compact subset  $\mathcal P\subset \mathcal M_m$, with respect to the narrow topology, we will check that this set is invariant under an appropriate continuous map. The existence of a random conformal measure will be then deduced from the Schauder--Tichonov Fixed Point Theorem.

\begin{dfn}\label{PF}
We define a family of  operators $\mathcal L_{t,\omega}$, $t>1$, $\om\in\Om$, by 
$$
\L_{t,\omega}(g)(z):=\sum_{w\in F_\omega^{-1}(z)}g(w)\cdot |F'_\omega(w)|^{-t}\in\R,
$$
where $g:Q\to\R$ ranges over bounded continuous functions. Note that the series above converges indeed since $t>1$; this is not difficult to check and can be done in exactly the same way as in \cite{uz_nonhyp}.

Furthermore, we define 
the global transfer operator $\mathcal L_t$ on the space $C_b(\Omega\times Q)$ as follows:  for $(\omega,z)\in X=\Omega\times Q$ and a random continuous function $g$, we put
$$
(\L_t g(z))_\omega:=\L_{t,\theta^{-1}\omega}( g_{\theta^{-1}\omega})(z).
$$
\end{dfn}

Note that $\mathcal L_{t}$ does not act on the space $C_b(\Omega\times Q)$, i.e. its image is not contained in  $C_b(\Omega\times Q)$. The point is that for each $\om\in \Om$ the function  $\L_{t,\omega}(\1)$ is unbounded.
However, we shall check that for each random continuous function $g:X\to\R$ and suitably chosen family of random measures $\nu$, the integral 
$$
\int\L_{t,\omega}(g_{\theta\omega})d\nu_{\theta\omega}
$$
is well defined. This will follow from integrability of the functions 
$$
Q\ni z\longmapsto\L_{t,\omega}(\1)(z)\in\R,
$$
$\om\in\Om$, with respect to the measures $\nu_{\theta\omega}$.   
Verifying this will allow us to define formally the measures $\mathcal L^*_{t,\omega}\nu_{\theta\omega}$, $\om\in\Om$,  as
$$
\mathcal L^*_{t,\omega}\nu_{\theta\omega}(g):=\int \L_{t,\omega}g_{\omega}d\nu_{\theta\omega}.
$$
The random measure $\(\nu_\om\)_{\om\in\Om}$ is then said to be 
$t$--conformal if 
$$
\L_{t,\omega}^*(\nu_{\theta\omega})=\lambda_{t,\omega}\nu_{\omega}
$$
for $m$--a.e. $\om\in\Om$, where $\lam_t:\Om\to(0,+\infty)$ is some measurable function. A straightforward calculation shows that
$t$--conformality is also characterized by the property that
$$
\nu_{\theta\omega}(F_\om(A))
=\lambda_{t,\omega}\int_A\big|\(F_\om\)'\big|^t\,d\nu_{\omega}
$$
for $m$--a.e. $\om\in\Om$ and for every Borel set $A\sbt Q$ such that $F_\om|_A$ is 1--to--1, where $\lam_t:\Om\to(0,+\infty)$ is some measurable function. 

\medskip Our task now, in the upcoming sections, is to prove the existence of  random $t$--conformal measures for every $t>1$. Let $\mathcal P\subset \mathcal M_m$. We want to  define a map
$\Phi:\mathcal P \to\mathcal M_m$ by the following formula/requirement:

\begin{equation}\label{defphi}
(\Phi(\nu))_\omega
:=\frac{\L_{t,\omega}^*(\nu_{\theta\omega})}{\L^*_{t,\omega}(\nu_{\theta\omega})(\1)},
\end{equation}
i.e., the measure $\Phi(\nu)$ is the only measure in $\mathcal M_m$, with disintegration $\Phi(\nu)_\omega$ given by \eqref{defphi}.
We look for a sufficient condition under which the map $\Phi$ is well defined on $\mathcal P$. We first prove a technical lemma and then provide such sufficient condition in Proposition~\ref{prop:phi_well_defined} following it.

\begin{lem}\label{prop_przyblizanie}
Fix $\varepsilon>0$ arbitrary. Let $\mathcal C_\varepsilon \subset C_b(\Omega\times Q)$ be the set of all random continuous functions defined on $\Om\times Q$ that vanish in 
$$
\Om\times\{z\in Q:\Re(z)<\log\varepsilon\}.
$$
Then 
$$
\mathcal L_t g\in C_b(\Omega\times Q)
$$ 
for each $g\in \mathcal C_\varepsilon$.
\end{lem}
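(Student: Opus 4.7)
The plan is to compute $(\mathcal L_t g)_\omega$ explicitly from the definition and reduce everything to bounding an absolutely convergent Dirichlet-type series. For $\omega\in\Omega$ and $\zeta\in Q$, lift $\zeta$ locally to $Z\in\mathbb C$ with $\Im Z\in[0,2\pi)$. The preimages of $\zeta$ under $F_{\theta^{-1}\omega}$ are in bijection with $\mathbb Z$: for each $k\in\mathbb Z$ the equation $\eta(\theta^{-1}\omega)e^{W_k}=Z+2\pi ik$ determines a unique $w_k=\pi(W_k)\in Q$. Since in the cylinder metric on $Q$ one has $|F'_{\theta^{-1}\omega}(w_k)|=\eta(\theta^{-1}\omega)e^{\Re W_k}=|Z+2\pi ik|$, the explicit formula
$$
(\mathcal L_t g)_\omega(\zeta)=\sum_{k\in\mathbb Z}g_{\theta^{-1}\omega}(w_k)\,|Z+2\pi ik|^{-t}
$$
holds. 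The support condition $g\in\mathcal C_\varepsilon$ forces $g_{\theta^{-1}\omega}(w_k)=0$ unless $\Re w_k\ge\log\varepsilon$, equivalently unless $|Z+2\pi ik|\ge\eta(\theta^{-1}\omega)\varepsilon\ge A\varepsilon$.

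Next I would establish the uniform bound. With $\Im Z\in[0,2\pi)$, a direct calculation gives $|Z+2\pi ik|\ge|\Im Z+2\pi k|\ge 2\pi(|k|-1)$ for all $|k|\ge 2$, so the tail of the series is dominated by
$$
2(2\pi)^{-t}\sum_{n\ge 1}n^{-t}=:C_t<\infty,
$$
which is finite precisely because $t>1$ and is independent of $Z$ and $\omega$. The only indices whose denominators can be small are $k=0$ and $k=-1$, but for those the cutoff guarantees $|Z+2\pi ik|^{-t}\le(A\varepsilon)^{-t}$. Combining,
$$
\|(\mathcal L_t g)_\omega\|_\infty\le\|g_{\theta^{-1}\omega}\|_\infty\bigl(2(A\varepsilon)^{-t}+C_t\bigr),
$$
which is finite for every $\omega$ and, being a constant multiple of $\|g_{\theta^{-1}\omega}\|_\infty$, is $m$-integrable in $\omega$ since $g\in C_b(\Omega\times Q)$.

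For continuity of $(\mathcal L_t g)_\omega$ on $Q$, observe that on any open set of $Q$ admitting a continuous choice of lift $\zeta\mapsto Z(\zeta)$ each summand $\zeta\mapsto g_{\theta^{-1}\omega}(w_k(\zeta))\,|Z+2\pi ik|^{-t}$ is continuous, and the majorization above provides uniform convergence of the series on all of $Q$, so the limit is continuous. For measurability in $\omega$, fix $\zeta$ and any lift $Z$: each summand $\omega\mapsto g_{\theta^{-1}\omega}(w_k(\omega))\,|Z+2\pi ik|^{-t}$ is measurable because $\theta^{-1}$ and $\eta$ are measurable, $w_k$ depends continuously on $\eta(\theta^{-1}\omega)$, and $g$ is jointly measurable; a countable sum of measurable functions is measurable. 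Together with the uniform bound this gives $\mathcal L_t g\in C_b(\Omega\times Q)$.

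The only substantive point worth highlighting is bookkeeping: the labeling of preimages by $k$ depends on the choice of fundamental strip, but since the sum ranges over all of $\mathbb Z$ it is invariant under such a shift. The role of the cutoff $\varepsilon$ is precisely to excise the two "singular" preimages $k=0$ and $k=-1$, which correspond to preimages approaching the puncture $[0]\in Q$ and are exactly the source of the unboundedness of $\mathcal L_{t,\omega}(\mathbf 1)$ noted in the preceding discussion.
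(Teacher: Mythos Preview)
The proposal is correct and takes essentially the same approach as the paper: both write out the explicit preimage formula, obtain a Dirichlet-type series indexed by $\mathbb Z$, use $t>1$ for absolute convergence of the tail, and use the vanishing condition $g\in\mathcal C_\varepsilon$ to neutralize the finitely many indices whose denominators can be small near $[0]$. Your bound $\|(\mathcal L_t g)_\omega\|_\infty\le\|g_{\theta^{-1}\omega}\|_\infty\bigl(2(A\varepsilon)^{-t}+C_t\bigr)$ is a bit more explicit than the paper's $D_\varepsilon\|g_\omega\|_\infty$, and your single uniform-convergence argument for continuity is marginally cleaner than the paper's separate treatment of $z=0$ versus $z\neq 0$, but the substance is identical.
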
 

\begin{proof}
In order to prove that $\mathcal L_t g\in C_b(\Omega\times Q)$, we need to  check 

\begin{itemize}
\item Measurability of the function $\Om\ni\omega\longmapsto\mathcal L_{t,\omega} (g_\omega)(z)$, with fixed $z\in Q$, 

\item Continuity of the function $Q\ni z\longmapsto\mathcal L_{t,\omega} (g_\omega)(z)$ with fixed $\omega\in\Omega$, 

and finally, 

\item The bound 
$$
\int _\Omega \|\mathcal L_{t,\omega} (g_\omega)\|_\infty dm(\omega)<\infty. 
$$ 
\end{itemize}

Recall the definition:
$$\L_{t,\omega}(g_\omega)(z)=\sum_{w\in F_\omega^{-1}(z)}g_\omega(w)\cdot |F'_\omega(w)|^{-t}.$$
The preimages $w\in F_\omega^{-1}(z)$ can be easily calculated, using the equation
$\eta(\omega)\exp(w_k)=z+2k\pi i$,
so, $$w_k=w_k(\omega)={\rm Log}\left (\frac{z+2k\pi i}{\eta(\omega)}\right )$$ where we denoted by ${\rm Log}(Z)$ the only $W\in Q$ such that $\exp(W)=Z$. 

With $z$ fixed, the measurability with respect to $\omega$ is now easily seen from the above explicit formula.
Note also, that we can write even more explicitly:
\begin{equation}\label{eq:def_L}
\L_{t,\omega}(g_\omega)(z)=\sum_{w_k(\omega)}g_\omega(w_k)\cdot \left |\frac{\eta(\omega)}{z+2k\pi i}\right |^t.
\end{equation}
Since $t>1$, the above series of continuous functions  converges uniformly in a neighborhood of any point $z\in Q$, $z\neq 0$, thus defining a continuous function. It remains to prove continuity at $0$. But, since we assumed that $g\in \mathcal C_\varepsilon$, it follows that in a sufficiently small neighborhood of $z=0$ the summand corresponding to the integer $k=0$ vanishes, and the sum in \eqref{eq:def_L} is taken only over all $k\neq 0$; then the previous argument, i.e. the one for points $z\neq 0$ applies.

Finally,  the formula \eqref{eq:def_L} also shows that in some neighborhood $U_\varepsilon$ of $0$ we have the following bound:
$$
|\L_{t,\omega}(g_\omega)(z)|\le \sum_{k\in\Z, k\neq 0}\left |\frac{\eta(\omega)}{z+2k\pi i}\right |^t\cdot ||g_\omega||_\infty
$$
while, outside $U_\varepsilon$,
$$|\L_{t,\omega}(g_\omega)(z)|\le \sum_{k\in\Z}\left |\frac{\eta(\omega)}{z+2k\pi i}\right |^t\cdot ||g_\omega||_\infty
$$
Thus, there exists a constant $D_\varepsilon$ such that 
$$
\|\L_{t,\omega}(g_{\omega})\|_\infty\le D_\varepsilon\cdot ||g_\omega||_\infty.
$$ 
We conclude that  $\L_tg=(\L_{t,\omega}(g_{\omega}))_{\omega\in\Omega}$ is a random continuous function.
\end{proof} 

\medskip\begin{prop}\label{prop:phi_well_defined}
Let $\mathcal P\subset \mathcal M_m(X)$. 
Assume that there exist $\rho>0$ and a monotone increasing continuous function $\varphi:(0,\rho)\to [0,+\infty)$ such that $\lim_{\varepsilon\to 0^+}\varphi(\varepsilon)=0$ and
for each $\nu\in\P$,  every $\om\in\Om$ and $\varepsilon\in (0,\rho)$ we have that
\begin{equation}\label{bounds_on_integrals}
\int_{B(0,\varepsilon)}\L_{t,\omega}(\1)(z)\, d\nu_{\theta\omega}(z)\le\varphi(\varepsilon).
\end{equation}
Assume also that there are constants $P\ge p>0$ such that
\begin{equation}\label{5_2017_12_05}
p\le \int \L_{t,\omega}(\1)(z)\, d\nu_{\theta\omega}(z)\le P
\end{equation}
for all $\nu\in\P$ and each $\om\in\Om$. 

Then, the map $\Om\ni\om\longmapsto \mathcal L_{t,\omega}^*\nu_{\theta\omega}$, given by the formula
\begin{equation}\label{6_2017_12_05}
\mathcal L_{t,\omega}^*\nu_{\theta\omega}(g)
:=\nu_{\theta\omega}\(\mathcal L_{t,\omega} g),
\end{equation}
where $g\in C_b(Q)$, is well defined, making $\mathcal L_{t,\omega}^*\nu_{\theta\omega}$ a finite Borel measure on $Q$. Moreover, the global measure 
\begin{equation}\label{7_2017_12_05}
\mathcal L_t^*\nu:=\(\mathcal L_{t,\omega}^*\nu_{\theta\om}\)_{\om\in\Om}
\end{equation}
is well defined and it belongs to $\mathcal M(X)$.

\, Furthermore, the map $\Phi$, given by \eqref{defphi}, is well defined, meaning that

\begin{itemize}
\item The collection $\{\Phi(\nu)_\omega\}_{\om\in\Om}$ forms a random measure on $Q$; equivalently:

\, \item $\Phi(\P)\sbt \mathcal M_m$;

\, \item Furthermore, the map $\Phi:\P\longrightarrow \mathcal M_m$ is continuous with respect to the narrow topology on $\mathcal M_m$. 
\end{itemize}
\end{prop}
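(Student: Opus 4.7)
The plan is to treat separately (a) well-definedness of $\Phi(\nu)$ as a member of $\mathcal M_m$ and (b) narrow continuity of $\Phi$. For (a), the starting observation is that $|\L_{t,\omega}(g)(z)|\le\|g\|_\infty\L_{t,\omega}(\1)(z)$ for any $g\in C_b(Q)$, so by the mass bound \eqref{5_2017_12_05} the map $g\mapsto\nu_{\theta\omega}(\L_{t,\omega}g)$ is a positive bounded linear functional on $C_b(Q)$; Riesz representation produces the finite Borel measure $\L_{t,\omega}^*\nu_{\theta\omega}$ on $Q$, with total mass $c(\omega):=\L_{t,\omega}^*\nu_{\theta\omega}(\1)\in[p,P]$, establishing \eqref{6_2017_12_05}.

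To upgrade the fiber measures $\L_{t,\omega}^*\nu_{\theta\omega}$ to a global element of $\mathcal M(X)$, and then to the probability measure $\Phi(\nu)\in\mathcal M_m$, I need measurability of $\omega\mapsto\nu_{\theta\omega}(\L_{t,\omega}g)$ for each $g\in C_b(Q)$. I would choose a continuous cutoff $h_\varepsilon:Q\to[0,1]$ that vanishes on $\{\Re z<\log\varepsilon\}$ and equals $1$ on $\{\Re z\ge\log\varepsilon+1\}$. Since $gh_\varepsilon\in\mathcal C_\varepsilon$, Lemma~\ref{prop_przyblizanie} makes $\L_t(gh_\varepsilon)$ a random continuous function, rendering $\omega\mapsto\nu_{\theta\omega}(\L_{t,\omega}(gh_\varepsilon))$ measurable. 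A direct analysis of the preimages $w_k={\rm Log}((z+2k\pi i)/\eta(\omega))$ shows that for small $\varepsilon$ only the branch $k=0$ contributes to the residual, confining the support of $\L_{t,\omega}(g(1-h_\varepsilon))$ to $B(0,eB\varepsilon)$ with pointwise bound $\|g\|_\infty\L_{t,\omega}(\1)$; hypothesis \eqref{bounds_on_integrals} then gives
$$
\big|\nu_{\theta\omega}(\L_{t,\omega}(g(1-h_\varepsilon)))\big|\le\|g\|_\infty\varphi(eB\varepsilon),
$$
uniformly in $\omega\in\Omega$ and $\nu\in\P$. Letting $\varepsilon\to 0$ yields the required measurability; the strict positivity $c(\omega)\ge p$ then legitimises the normalisation $\Phi(\nu)_\omega=c(\omega)^{-1}\L_{t,\omega}^*\nu_{\theta\omega}$, giving $\Phi(\nu)\in\mathcal M_m$.

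For (b), I would first verify that the linear step $T:\nu\mapsto\L_t^*\nu$ is narrow continuous on $\P$: for $\phi\in C_b(\Omega\times Q)$, the identity $T(\nu)(\phi)=\nu(\L_t\phi)$ (via $\theta$-invariance of $m$) presents $\nu\mapsto T(\nu)(\phi)$ as the uniform limit on $\P$ of the narrow-continuous maps $\nu\mapsto\nu(\L_t(\phi h_\varepsilon))$, with uniform error $\|\phi\|_\infty\varphi(eB\varepsilon)$. In particular, testing against products $\alpha(\omega)\psi(z)$ with $\alpha\in L^\infty(m)$ and $\psi\in C_b(Q)$ shows that $\omega\mapsto c(\omega,\nu^{(n)})$ converges weak-$*$ in $L^\infty(m)$ to $\omega\mapsto c(\omega,\nu)$ whenever $\nu^{(n)}\to\nu$ narrowly in $\P$.

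The main obstacle is the fiberwise division by $c(\omega,\nu)$ in $\Phi(\nu)_\omega=c(\omega,\nu)^{-1}T(\nu)_\omega$: this operation is nonlinear, and narrow convergence only controls integrals against test functions, not pointwise-in-$\omega$ values of $c$. I would close this via compactness and uniqueness: boundedness $c\le P$ together with tightness of $\P$ yields tightness of $\{\Phi(\nu^{(n)})\}$, and Crauel's Prokhorov theorem (Theorem~\ref{crauel-prokhorow}) extracts a narrow limit $\mu\in\mathcal M_m$ along any subsequence. To identify $\mu=\Phi(\nu)$ I would extract a further subsequence $n_k$ along which $c(\cdot,\nu^{(n_k)})\to c(\cdot,\nu)$ pointwise $m$-almost everywhere (drawing on the weak-$*$ convergence, the uniform bounds $p\le c\le P$, and Mazur-type convex combinations), and then invoke dominated convergence on the ratio $T(\nu^{(n_k)})_\omega(\phi_\omega)/c(\omega,\nu^{(n_k)})$ against random continuous $\phi$. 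Uniqueness of the subsequential limit then yields $\Phi(\nu^{(n)})\to\Phi(\nu)$ narrowly, completing the proof.
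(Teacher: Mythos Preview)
Your treatment of part (a) and of narrow continuity of the linear map $T:\nu\mapsto\L_t^*\nu$ mirrors the paper: the cutoff $h_\varepsilon$ together with Lemma~\ref{prop_przyblizanie} for measurability, and the uniform tail bound \eqref{bounds_on_integrals} to control the residual, are exactly the paper's argument. The paper then simply asserts that continuity of $T$ is ``enough'' for continuity of $\Phi$, and offers no further word on the fiberwise normalization.

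You are right to flag the division $\Phi(\nu)_\omega=c(\omega,\nu)^{-1}T(\nu)_\omega$ as the nontrivial step; fiberwise normalization by a $\nu$-dependent mass is not automatically narrow-continuous, and the paper does not justify this. But your proposed fix has concrete gaps. First, you invoke ``tightness of $\P$'' to get relative compactness of $\{\Phi(\nu^{(n)})\}$ via Theorem~\ref{crauel-prokhorow}, yet tightness of $\P$ is nowhere among the hypotheses of the proposition. Second, and more seriously, your endgame needs pointwise $m$-a.e.\ convergence (along a subsequence) of both $T(\nu^{(n_k)})_\omega(\phi_\omega)=(\nu^{(n_k)})_{\theta\omega}(\L_{t,\omega}\phi_\omega)$ and $c(\omega,\nu^{(n_k)})$, whereas narrow convergence only controls $m$-integrals of such quantities, never their fiberwise values. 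Weak-$*$ convergence of $c(\cdot,\nu^{(n)})$ in $L^\infty(m)$ does not give pointwise-a.e.\ convergent subsequences; a Mazur step produces convex combinations, which neither stay in $\P$ (convexity is not assumed) nor help identify the limit of the \emph{original} sequence $\Phi(\nu^{(n)})$. Finally, the narrow topology is in general non-metrizable, so a sequential argument would not establish continuity even if it went through. In short, you have located a point the paper glosses over, but the compactness-plus-pointwise-extraction route you propose does not close it.
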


\begin{proof}
The first part of the proposition, i.e. the one pertaining to the formula \eqref{6_2017_12_05}, is immediate from \eqref{5_2017_12_05}. Passing to the second part, i.e. the one pertaining to the formula \eqref{7_2017_12_05},
let $\nu\in\mathcal P$. First, we need to check that for every random continuous function $g:\Om\times Q\to\R$, the function
$$
\Om\ni\omega\longmapsto\int_Qg_\omega d\(\mathcal L_{t,\omega}^*\nu_{\theta\omega}\)\in\R
$$ 
is measurable.
Equivalently, we need to check the measurability of the function:
\begin{equation}\label{meas}
\Om\ni\omega\longmapsto\int\L_{t,\omega}(g_\omega)\, d\nu_{\theta\omega}.
\end{equation}
Since $\nu$ is a random measure, for every random continuous function 
$$
h(\omega,z)=h_\omega(z),
$$  
the function $\Om\ni\omega\longmapsto \int h_\omega d\nu_{\theta\omega}\in\R$ is measurable. However,  the 
function $\Om\ni\omega\longmapsto \L_{t,\omega}(g_\omega)$, particularly the function $\Om\ni\omega\longmapsto \L_{t,\theta\omega}(\1)$, is not a random continuous function. This is so because the function $\Om\ni\omega\longmapsto \L_{t,\omega}(g_\omega)$ is unbounded unless 
$g_\omega(z)\to 0$ as $\Re z\to -\infty$.

In order to overcome this difficulty, we invoke Lemma~\ref{prop_przyblizanie}. Indeed, it follows from this  lemma that for every $g\in \mathcal C_\varepsilon$ the function
$$
\Om\ni\omega\longmapsto \int\L_{t,\omega}(g_\omega)\, d\nu_{\theta\omega}
$$ 
is measurable and finite. Since the 
constant function $\1$ is a pointwise limit of a monotone (increasing) sequence of functions in $\mathcal 
C_\varepsilon$ (with $\varepsilon$ converging to $0$), and since  the integrals $\int\L_{t,\omega}(\1)d\nu_{\theta\omega}$ are 
uniformly bounded with respect to  $\nu\in\mathcal P$, we conclude that the function
$$
\Om\ni\omega\longmapsto \int\L_{t,\omega}(\1)d\nu_{\theta\omega}
$$ 
is measurable and bounded, as a pointwise limit of an increasing sequence of measurable and uniformly bounded functions.
In fact the monotonicity property (increasing sequence) and boundedness were inessential in this argument, and the same reasoning shows that the function
$$
\Om\ni\omega\longmapsto\int \mathcal L_{t,\omega}(g_\omega) d\nu_{\theta\omega}
$$ 
is measurable for every $\nu\in \mathcal P$ and any random continuous function $g:X\to\R$. Measurability of the function defined by \eqref{meas} is thus proved, and the part of \eqref{7_2017_12_05} is established.

Then, the assignment
$$
C_b(\Omega\times Q)\ni g\longmapsto \lt(\frac{\int\L_{t,\omega}(g_\omega)d\nu_{\theta\omega}}{\int\L_{t,\omega}(\1)d\nu_{\theta\omega}}\rt)_{\om\in\Om}
$$
defines a linear function from the Banach space $C_b(\Omega\times Q)$ into $\R$, and moreover, by virtue of \eqref{5_2017_12_05}, this function is continuous.

We thus conclude  that $\Phi(\nu)\in\mathcal M(X)$, and, since for each $\om\in\Om$, $(\Phi(\nu))_\omega$, is a probability measure, we get that
$$
\Phi(\nu)\in\mathcal M_m,
$$
i.e., $\Phi(\nu)$ is a random probability measure.

In order to prove continuity of the map $\Phi$, it is enough to show that the map 
$$
\P\ni\nu\longmapsto \L_t^*\nu\in\mathcal M(X)
$$ 
is continuous with respect to the narrow topology.

So, let $V\subset\mathcal M(X)$ be an open set, and assume that
$$\tilde\nu:=\L_t^*\nu\in V$$ where $\nu$ is some measure in $\P$.
We need to show that $\Phi^{-1}(V)$ contains some neighborhood of $\nu$ in the narrow topology on $\mathcal M_m$.

We can assume without loss of generality that $V$ is taken from the the local base of neighborhoods of $\tilde\nu$, i.e. 
$$
V=\{\tilde\mu\in \mathcal M:|\tilde\mu(g_i)-\tilde\nu(g_i)|<\delta,\, i=1,\dots k\}
$$
with some integer $k\ge 1$, some $\d>0$, and $g_i$, $i=1, 2,,\ld,k$ some functions from the space $C_b(\Omega\times Q)$. Now, we can further assume with no loss of generality that $k=1$, so that
$$
V=\{\tilde\mu\in \mathcal M:|\tilde\mu(g)-\tilde\nu(g)|<\delta\}.
$$
where $g$ is some function in $C_b(\Omega\times Q)$.
Thus,
$$
\Phi^{-1}(V)=\big\{\mu\in \mathcal M_m:|\mu(\L_tg)-\nu(\L_tg)|<\delta\big\}.
$$
By the assumptions of Proposition~\ref{prop:phi_well_defined} there exists $\varepsilon>0$ so small that 
$$
\int_{B(0,\varepsilon)}\L_{t,\omega}\1d\nu_{\theta\omega}<\delta/4.
$$
Next, let $h_\varepsilon:Q\to [0,1]$ be a continuous function such that

\begin{itemize}
\item $h_\varepsilon(z)=1$ whenever $z\in Q$ and $|B\exp(z)|<\varepsilon/2$, 

and 

\item $h_\varepsilon(z)=0$ whenever $z\in Q$ and $|B\exp(z)|>\varepsilon$.
\end{itemize}
Then, for every $\omega\in \Om$, the function $\L_{t,\omega}(h_\varepsilon)$ is non-zero only in the ball $B(0,\varepsilon)$.

Define an auxiliary  function  
$$
g_\varepsilon:=(1-h_\varepsilon)g.
$$
Then $g_\varepsilon\in\mathcal C_\varepsilon$. Put
$$
U:=\big\{\mu\in \P: |\mu(\L_tg_\varepsilon)-\nu(\L_t g_\varepsilon)|<\delta/4\big\}.
$$
Then $U$ is an open neighborhood of $\nu$ in $\P$. If $\mu\in U$, then
$$
|\L^*_t\mu(g)-\L_t^*\nu(g)|
\le |\L_t^*\mu(g)-\L_t^*\mu(g_\varepsilon)|+|\L_t^*\mu(g_\varepsilon)-\L_t^*\nu(g_\varepsilon)|+|\L_t^*\nu(g_\varepsilon)-\L_t^*\nu(g)|.
$$
The second summand is just equal to $|\mu(\L_tg_\varepsilon)-\nu(\L_t g_\varepsilon)|$, so it can be estimated above by $\delta/4$.

The third summand is equal to 
$$
\int_\Omega\int_Q\L_{t,\omega}(h_\varepsilon\cdot g)\, d\nu_\omega dm(\omega),
$$
so its absolute value can be estimated above by
$$
\int_\Omega\|g_\omega\|_\infty\int_Q \L_{t,\omega}(h_\varepsilon)\,d\nu_\omega dm(\omega)
\le \int_\Omega\|g_\omega\|_\infty\int_{B(0,\varepsilon)}\L_{t,\omega}(\1)\,d\nu_\omega dm(\omega) 
\le\varphi(\varepsilon)\|g\|.
$$
Since $\mu\in\P$, exactly the same estimate applies to the first summand. Summing up, we conclude that  $U\subset \Phi^{-1}(V)$.
Since $U$ is open in $\P$, the proof is complete.
\end{proof}

\

Our goal is to  apply the general scheme described above, to a properly chosen set $\mathcal P$. This set will be shown in Section~\ref{sec:inv} to be compact, convex,
and invariant under the map $\Phi$.

\

First, we  fix a number
\begin{equation}\label{fixr_0}
r_0\in\lt(0, \frac1{2K}\rt).
\end{equation}

\medskip

Next, we formulate  the following straightforward estimate. Its proof is omitted. 

\begin{lem}\label{lower}
There exist constants $D>d>0$ (depending on $t>1$) such that, for every $z\in Q$, 
$$
\frac{d}{|z|^{t-1}}\le\L_{t,\omega}(\1)(z)\le\frac{D}{|z|^{t-1}}.
$$
\end{lem}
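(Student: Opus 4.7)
The plan is to apply the explicit formula \eqref{eq:def_L} with $g_\omega\equiv\1$, which yields
$$
\L_{t,\omega}(\1)(z)=\eta(\omega)^t\sum_{k\in\Z}\frac{1}{|z+2k\pi i|^t}.
$$
Since $\eta(\omega)\in[A,B]$, the prefactor $\eta(\omega)^t$ lies in the fixed compact interval $[A^t,B^t]$ and can be absorbed into the constants $d$ and $D$. The problem therefore reduces to bounding the sum
$$
S(z):=\sum_{k\in\Z}\frac{1}{|z+2k\pi i|^t}
$$
above and below by constants times $|z|^{-(t-1)}$, with constants depending only on $t$.

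I would then pick the lift $\tilde z=x+iy$ of $z\in Q$ with $y\in[-\pi,\pi]$, so that $|z|=\sqrt{x^2+y^2}$ and
$$
S(z)=\sum_{k\in\Z}\bigl(x^2+(y+2k\pi)^2\bigr)^{-t/2}.
$$
The main tool is the standard sum--integral comparison
$$
\sum_{k\in\Z}\bigl(x^2+(y+2k\pi)^2\bigr)^{-t/2}\asymp\int_{-\infty}^{\infty}\frac{ds}{(x^2+s^2)^{t/2}}=\frac{C_t}{|x|^{t-1}},
$$
where $C_t:=\int_{-\infty}^{\infty}(1+u^2)^{-t/2}\,du<\infty$, finiteness being guaranteed by $t>1$. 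The comparison is justified in the usual way: the function $s\mapsto(x^2+(y+s)^2)^{-t/2}$ is piecewise monotone with a single maximum at $s=-y$, so aside from at most one boundary term each summand is sandwiched between the values of the integral on adjacent unit--length intervals.

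Finally I would relate $|x|$ to $|z|$ using the fact that $|y|\le\pi$: we have $|x|\le|z|\le|x|+\pi$, so $|x|$ and $|z|$ are comparable up to universal constants in the relevant regime. Combining with the previous step yields $S(z)\asymp|z|^{-(t-1)}$, and incorporating the bounded prefactor $\eta(\omega)^t\in[A^t,B^t]$ then gives the claimed two--sided bound with constants depending only on $A$, $B$, and $t$. The one technical step is the sum--integral comparison; the remaining bookkeeping is routine and, crucially, uniform in $\omega\in\Omega$ since the $\omega$--dependence enters only through the bounded factor $\eta(\omega)^t$.
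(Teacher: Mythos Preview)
The paper omits the proof of this lemma, so there is nothing to compare against directly. Your sum--integral comparison is the right idea and yields the two-sided bound cleanly once $|z|$ is bounded away from zero.

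The gap is in the passage from $|x|^{-(t-1)}$ to $|z|^{-(t-1)}$. The inequalities $|x|\le|z|\le|x|+\pi$ do \emph{not} make $|x|$ and $|z|$ comparable when $|z|$ is small: take $x=0$ and $y\in(0,\pi]$, so $|x|=0$ while $|z|=|y|>0$. In that regime your comparison integral $\int_{-\infty}^{\infty}(x^2+s^2)^{-t/2}\,ds$ diverges, so the sum--integral estimate degenerates. More to the point, for small $|z|$ the sum $S(z)$ is dominated by the single term $k=0$ and satisfies $S(z)\asymp|z|^{-t}$ rather than $|z|^{-(t-1)}$. The lower bound of the lemma therefore survives (since $|z|^{-t}\ge|z|^{-(t-1)}$ for $|z|\le 1$), but the upper bound as literally stated fails near $z=0$. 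Your argument is correct on the range $|z|\ge c>0$; to cover small $|z|$ one must replace the upper bound by $D'|z|^{-t}$ there, which follows at once by isolating the $k=0$ term and bounding the remaining tail by a constant.
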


\medskip

Put $c:=d/2$, where $d$ comes from Lemma~\ref{lower}.
For $M_0>0$ define the constants:

\begin{equation}\label{eq:C(M0)}
C(M_0):=\frac{M_0^{t-1}}{c}
\end{equation}
and

\begin{equation}\label{eq:c(M0)}
c(M_0):=2D C(M_0)
\end{equation}
where $D$ comes from Lemma~\ref{lower}.

\begin{dfn}[Definition of the space $\mathcal P$]\label{def:p}
Fix some $t>1$. 
Suppose that $\mathcal P\subset \mathcal M_m$ is such a set for which there exists 
$M_0>0$, with   $c(M_0)>0$, and $C(M_0)>0$ defined as in \eqref{eq:C(M0)}, \eqref{eq:c(M0)}, such that the
the following are satisfied: 

\begin{equation}\label{2.1}
\nu_\omega(Q_{M_0})\ge1/2 \quad\text{for all}\quad \omega\in\Omega,
\end{equation} 
\begin{equation}\label{3.1}
\nu_\omega(Y_M^+)\le c(M_0)e^{\frac{M}{2}(1-t)} \quad\text{for all } \omega\in\Omega \text{ and all } M>0,
\end{equation}
and, for every integer $n\ge 0$ the following \emph{Condition $W_n$} holds: 
\begin{Condition $W_n$}
For every $\om\in \Om$ and every $j\in \mathbb{N}\cup\{0\}$ the following bounds hold:
\begin{equation}\label{1.1}
\nu_{\theta^j(\omega)}
(F^{-n}_{\theta^j\omega,*}(B(F^{n+j}_\omega(0),r_0))\le a_{j,n}(\omega)\cdot b_{n+j}(\omega),
\end{equation}
\end{Condition $W_n$}
where 
\begin{equation}\label{ajn}
a_{j,n}(\omega):=(K\cdot C(M_0))^n |F^{j+1}_\omega(0)|^{-t}\cdot |F^{j+2}_\omega(0)|^{-t}\cdot\dots\cdot |F^{j+n}_\omega(0)|^{-t},
\end{equation}
$$
a_{j,0}(\omega):=1,
$$
and
\begin{equation}\label{b_k}
b_k(\omega):=\left (\frac{Kr_0}{2\pi}\right ) c(M_0)\cdot C(M_0)\cdot |F^{k+1}_\omega(0)|^{1-t}\cdot e^{-\frac{(t-1)}{4}|F^{k+1}_\omega(0)|},
\end{equation}
where $F^{-n}_{\theta^j\omega,*}$ is the holomorphic branch of $F^{-n}_{\theta^j\omega}$, defined in $B(F^{n+j}_\omega(0),r_0)$ and mapping $F^{n+j}_\omega(0)$ back to $F^{j}_\omega(0)$.
\end{dfn}

\

\section{The Map $\Phi$ is Well Defined on $\P$}
\label{sec:inv}
Our goal in this section is to show that if the constant $M_0>0$, together with  $c(M_0)>0$, and $C(M_0)>0$ as in \eqref{eq:c(M0)}, \eqref{eq:C(M0)}, is properly selected, then there exist numbers $\rho>0$, $P>0$, and $p>0$ and a function $\varphi(\varepsilon)$ such that for any set $\P\sbt \mathcal M_m$ fulfilling the requirements of Definition~\ref{def:p}, the hypothesis of Proposition~\ref{prop:phi_well_defined} are satisfied. In particular, the map $\Phi$ is well defined on $\P$. In the next section, we will show that 
\begin{equation}\label{5_2017_12_01}
\Phi(\mathcal P)\subset \mathcal P.
\end{equation}
So, our strategy is to fix a non--empty set $\P\sbt \mathcal M_m$ fulfilling the requirements of Definition~\ref{def:p} with some, undetermined yet, constant $M_0>0$, 
and to work out such sufficient conditions for this constant that the hypothesis of Proposition~\ref{prop:phi_well_defined} will be satisfied,  and later, in the next section, to show that formula \eqref{5_2017_12_01} holds.

Now, given $\om\in\Om$, we define a sequence of radii $\(r_n(\omega)\)_{n=1}^\infty$, converging to $0$ as $n\to\infty$. Put
$$
r_n=r_n(\omega):=\frac14r_0\(|F_\omega(0)|\dots |F^n_\omega(0)|\)^{-1}.
$$
Then, by Koebe's $\frac14$--Theorem,
$$
\tilde B_{n,\omega}:=F^{-n}_\omega\(B(F^n_\omega(0),r_0)\)
\spt B\left(0,\frac14r_0\(|F_\omega(0)|\dots |F^n_\omega(0)|\)^{-1}     \right)=B(0,r_n).
$$

\begin{lem}\label{kula_wokol_zera}
Put $s=3t+7$. Then there exist a constant $C\in (0,+\infty)$, independent of $M_0$, such that if $\nu$ is any random measure in $\mathcal P$, then for every radius $r\in (0,r_0/4)$ we have that
\begin{equation}\label{wokol_zera}
\nu_\omega(B(0,r))\le C\cdot(M_0^{t-1})^{n_\om(r)+2}r^s,
\end{equation}
where $n_\om(r)$ is the unique integer $n\ge 0$ for which
$r_{n+1}(\om)\le r<r_n(\om)$.
\end{lem}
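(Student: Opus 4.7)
My plan is to apply Condition $W_n$ with $j=0$ to a Koebe pre-image that contains $B(0,r)$, and then verify that the super-exponential decay built into the factor $b_n(\omega)$ absorbs all other terms. Given $r\in(0,r_0/4)$, set $n:=n_\omega(r)$, so that $r_{n+1}(\omega)\le r<r_n(\omega)$. Since $F^{-n}_{\omega,*}$ is univalent on $B(F^n_\omega(0),r_0)$ with derivative of modulus $\prod_{k=1}^n|F^k_\omega(0)|^{-1}$ at the center point $F^n_\omega(0)$, Koebe's $1/4$ Theorem yields $\tilde B_{n,\omega}\supset B(0,r_n(\omega))$, hence
\[
B(0,r)\subset B(0,r_n(\omega))\subset \tilde B_{n,\omega}.
\]
Condition $W_n$ of Definition~\ref{def:p} with $j=0$ then immediately gives
\[
\nu_\omega(B(0,r))\le \nu_\omega(\tilde B_{n,\omega})\le a_{0,n}(\omega)\,b_n(\omega).
\]

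The next task is purely algebraic. Substituting $C(M_0)=M_0^{t-1}/c$ and $c(M_0)=2DM_0^{t-1}/c$ into the definitions of $a_{0,n}$ and $b_n$ extracts the factor
\[
C(M_0)^{n+1}c(M_0)=\frac{2D}{c^{n+2}}(M_0^{t-1})^{n+2},
\]
so that, after using $r\ge r_{n+1}(\omega)$ together with $r_{n+1}(\omega)^s=(r_0/4)^s\prod_{k=1}^{n+1}|F^k_\omega(0)|^{-s}$, the desired inequality $a_{0,n}(\omega)\,b_n(\omega)\le C\,(M_0^{t-1})^{n+2}\,r^s$ reduces, after cancellation, to the uniform estimate
\[
\frac{K^{n+1}}{c^{n+2}}\prod_{k=1}^n |F^k_\omega(0)|^{s-t}\cdot |F^{n+1}_\omega(0)|^{s-t+1}\,e^{-\frac{t-1}{4}|F^{n+1}_\omega(0)|}\le C'
\]
for some absolute constant $C'$ independent of $\omega\in\Omega$, $n\ge 0$ and $M_0$.

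The main obstacle is establishing this last uniform estimate, and I expect it to rest entirely on the tower-type growth of the orbit of $0$. For $k\ge 1$, the iterate $F^k_\omega(0)$ is a positive real (since $\eta(\cdot)>0$ pushes $F_\omega(0)=\eta(\omega)$ into $\R_+$, and $F$-invariance of the positive real axis does the rest), so the recursion $|F^{k+1}_\omega(0)|\ge A\exp(|F^k_\omega(0)|)$ holds; because $A>1/e$, iteration of this drives $|F^k_\omega(0)|\to+\infty$ at tower-type speed. Inverting this, $|F^k_\omega(0)|\le\log|F^{n+1}_\omega(0)|+O(1)$ for every $k\le n$, and in fact $|F^k_\omega(0)|$ is bounded by an $(n-k)$-fold iterated logarithm of $|F^{n+1}_\omega(0)|$, so the product $\prod_{k=1}^n|F^k_\omega(0)|^{s-t}$ is at most polylogarithmic in $|F^{n+1}_\omega(0)|$. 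Combined with the polynomial prefactor $|F^{n+1}_\omega(0)|^{s-t+1}$, this is crushed by the exponential decay $e^{-(t-1)|F^{n+1}_\omega(0)|/4}$ as soon as $|F^{n+1}_\omega(0)|$ exceeds a threshold $N=N(A,B,t)$; for $n\ge N$ the tower growth forces $|F^{n+1}_\omega(0)|$ to be so enormous that the exponential factor $(K/c)^n$ is swamped. The finitely many remaining small-$n$ cases are handled by direct inspection, since $|F^k_\omega(0)|$ is then bounded above by an explicit constant depending only on $A$, $B$ and $N$, and their finite contributions are absorbed into $C$. The exponent $s=3t+7$ is comfortably beyond the threshold the argument requires; any sufficiently large fixed $s$ would work, and the precise value is matched to the applications in later sections.
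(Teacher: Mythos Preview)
Your proof is correct and follows essentially the same route as the paper's: apply Condition~$W_n$ with $j=0$ to the Koebe pre-image containing $B(0,r_n)$, extract the factor $(M_0^{t-1})^{n+2}$ algebraically, and then argue that the tower-type growth of $|F^{n+1}_\omega(0)|$ makes the exponential term $e^{-(t-1)|F^{n+1}_\omega(0)|/4}$ overwhelm all remaining factors, with finitely many small $n$ handled by enlarging the constant. The only cosmetic difference is that the paper first splits off half of the exponential decay to absorb the $n$-dependent prefactors (obtaining an intermediate bound $\text{Const}\,(M_0^{t-1})^{n+2}e^{-(t-1)F^{n+1}_\omega(0)/8}$) and then separately compares $e^{-(t-1)F^{n+1}_\omega(0)/8}$ with $r_{n+1}^s$, whereas you carry out the cancellation in a single step; the underlying estimate is identical.
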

\begin{proof}
Denote $n_\om(r)$ by $n$. Using condition $W_n$ we get
\begin{equation}\label{tilde}
\nu_\omega(F^{-n}_{\omega,*}\(B(F^n_\omega(0),r_0))\)
\le a_{0,n}(\omega)b_{n}(\omega).
\end{equation}
Using this condition again one can easily deduce that
$$
\nu_\om(B(0,r))\le \nu_\om(B(0,r_n))\le a_{0,n}(\omega) b_n(\omega)
\le {\rm Const}(M_0^{t-1})^{n+2}\exp\left (\frac{F^{n+1}_\omega(0)} {8}(1-t)\right ),
$$
where the constants are independent of $\omega, n$ and $M_0$.
Now, there exists an integer $N\ge 1$ such that for all $n\ge N$, all $\omega\in\Omega$, and all $r_{n+1}(\omega)\le r<r_n(\omega)$,
$$
\exp\left (\frac{F^{n+1}_\omega(0)} {8}(1-t)\right )\le\left (
\frac14r_0\(|F_\omega(0)|\dots |F^n_\omega(0)|\cdot|F^{n+1}_\omega(0)\rt)^{-s}    
=r_{n+1}^s\le r^s.
$$
If $n<N$, we still have \eqref{wokol_zera}, by increasing the constant $C$ if needed. The proof is complete.
\end{proof}

\begin{lem}\label{ll}
We have that
$$
\lim_{r\to 0}\frac{n_\om(r)}{\ln\ln\frac{1}{r}}=0
$$
uniformly with respect to $\om\in\Om$.
\end{lem}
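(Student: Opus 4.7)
The plan is to exploit the (double) exponential growth of $a_k:=|F^k_\omega(0)|$. The critical observation is that $f_\omega(0)=\eta(\omega)\in[A,B]$ is a positive real, and the maps $f_\eta(x)=\eta e^x$ preserve the positive real axis; hence for all $k\ge 1$ the iterate $f_\omega^k(0)$ is a positive real, and the cylinder modulus is just this value. Therefore
$$
a_{k+1}=\eta(\theta^k\omega)e^{a_k}\ge A e^{a_k}\quad\text{for all }k\ge 1.
$$
Since $A>1/e$, the autonomous map $x\mapsto Ae^x$ has no fixed point in $\R$ and every trajectory in $\R$ escapes to $+\infty$. This provides a deterministic minorant for $(a_k)$, and consequently there exist an integer $k_0$ and a constant $M\ge 2$, depending only on $A$ (not on $\omega$), such that $a_{k_0}(\omega)\ge M$ and $a_k(\omega)\to\infty$ uniformly in $\omega\in\Omega$.

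Next, set $L_k(\omega):=\log a_k(\omega)$. The recurrence yields
$$
L_{k+1}\ge a_k+\log A=e^{L_k}+\log A.
$$
By enlarging $k_0$ if necessary, we may assume $L_{k_0}\ge 2$ (uniformly in $\omega$), and then $e^{L_k}+\log A\ge e^{L_k/2}$ for all $k\ge k_0$. Writing $\phi(x):=e^{x/2}$, this gives $L_{k+1}\ge\phi(L_k)$ for $k\ge k_0$, hence
$$
L_{k_0+j}\ge\phi^{(j)}(L_{k_0})\ge\phi^{(j)}(2)
$$
for every $j\ge 0$. Thus $L_n$ grows iterated-exponentially fast, uniformly in $\omega$.

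Now link $L_n$ to $r$. From the definition $r_n(\omega)=\tfrac14r_0(a_1\cdots a_n)^{-1}$ we get $\log(1/r_n(\omega))\ge\log(4/r_0)+L_n$. If $r$ is small enough that $n=n_\omega(r)\ge k_0+1$, then $r<r_n(\omega)$, so
$$
\log(1/r)\ge L_n\ge\phi^{(n-k_0)}(2),
$$
and applying $\log$ once more,
$$
\log\log(1/r)\ge\log L_n\ge\tfrac12\phi^{(n-k_0-1)}(2).
$$
Since $\phi^{(j)}(2)/j\to\infty$ as $j\to\infty$ (much faster than any polynomial), the final inequality forces
$$
\frac{n_\omega(r)}{\log\log(1/r)}\longrightarrow 0
$$
as $r\to 0$, uniformly in $\omega$; and the uniform convergence $n_\omega(r)\to\infty$ follows from the same uniform growth of $a_k$ that makes $r_n(\omega)\to 0$ uniformly. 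The only thing to verify carefully is the elementary claim that the autonomous map $x\mapsto Ae^x$ has all orbits tending to $+\infty$ when $A>1/e$, which is a direct computation using that $x-\log x\ge 1$ for $x>0$ with equality only at $x=1$.
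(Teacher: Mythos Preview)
Your proof is correct and follows essentially the same approach as the paper: both exploit the uniform (in $\omega$) double-exponential growth of $a_k=|F^k_\omega(0)|$ coming from the minorant $a_{k+1}\ge Ae^{a_k}$ with $A>1/e$, then translate this into a lower bound on $\log\log(1/r)$ that dominates $n_\omega(r)$. The paper is terser---it jumps directly to $\ln\ln(1/r)>\ln F^{n-1}_\omega(0)\ge n^2$ for $n$ large---whereas you spell out the iterated-exponential minorant via $\phi(x)=e^{x/2}$; but the substance is the same.
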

\begin{proof}
As in the proof of the previous lemma put $n:=n_\om(r)$. Then we have 
$$\frac{1}{r}> \frac{1}{r_n}=\frac{4}{r_0}\cdot |F_\omega(0)|\cdot \cdots\cdot |F^n_\omega(0)|.$$
So, using  $F^k_\omega(0)=\eta(\theta^{k-1}\omega)\exp\( F^{k-1}_\omega(0)\)>\frac{1}{e}\exp\(F^{k-1}_\omega(0)\)$, true for every $k\ge 1$, we get that
$$
\aligned
\ln \frac{1}{r}&>\ln 4-\ln r_0+\ln |F_\omega(0)|+\cdots +\ln |F^{n-1}_\omega(0)|\\
&>\ln 4-\ln r_0+|F_\omega(0)|+\dots |F^{n-1}_\omega(0)|-n\\
&>|F^{n-1}_\omega(0)|
\endaligned
$$ 
for all $n$ large enough, and so, also for all $n$ large enough:
$
\ln\ln\frac{1}{r}>\ln F^{n-1}_\omega(0)\ge n^2,
$ 
and the lemma follows.
\end{proof}

\begin{lem}\label{lem:est_zero}
There exist $u\ge 2t+7$ and $\rho\in(0,r_0/4)$ ($\rho$ depends on $M_0$) such that, for every measure $\nu\in\mathcal P$, we have that
$$
\nu_\omega(B(0,r))\le r^u
$$ 
for all $r<\rho$ and $m$--a.e. $\omega\in\Omega$.
\end{lem}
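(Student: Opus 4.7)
The plan is to combine Lemma~\ref{kula_wokol_zera} with Lemma~\ref{ll}. First I would invoke Lemma~\ref{kula_wokol_zera} to get, uniformly for $\nu\in\mathcal P$, $\omega\in\Omega$, and $r\in(0,r_0/4)$, the bound
$$
\nu_\omega(B(0,r))\le C\cdot(M_0^{t-1})^{n_\omega(r)+2}\,r^{s},\qquad s=3t+7.
$$
Since $s-t=2t+7$, it suffices to absorb the prefactor $C\cdot(M_0^{t-1})^{n_\omega(r)+2}$ into a factor $r^{-t}$ for all sufficiently small $r$. This is the entire content of the lemma.

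Next I would apply Lemma~\ref{ll}: given any (small) $\varepsilon>0$, there exists $\rho_1>0$, independent of $\omega$, such that $n_\omega(r)\le \varepsilon\ln\ln(1/r)$ for every $\omega\in\Omega$ and every $r<\rho_1$. Writing $(M_0^{t-1})^{n_\omega(r)}=\exp\bigl((t-1)(\ln M_0)\,n_\omega(r)\bigr)$ and substituting, one obtains
$$
(M_0^{t-1})^{n_\omega(r)+2}\le M_0^{2(t-1)}\cdot\bigl(\ln(1/r)\bigr)^{\varepsilon(t-1)\ln M_0}
$$
for every $\omega\in\Omega$ and every $r<\rho_1$. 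Since for any $\alpha>0$ the quantity $(\ln(1/r))^\alpha$ is $o(r^{-t})$ as $r\to 0^+$, I can further shrink $\rho\in(0,\min(\rho_1,r_0/4))$ so that
$$
C\cdot M_0^{2(t-1)}\cdot\bigl(\ln(1/r)\bigr)^{\varepsilon(t-1)\ln M_0}\le r^{-t}
$$
for all $r<\rho$. Combining the displays gives $\nu_\omega(B(0,r))\le r^{-t}\cdot r^{3t+7}=r^{2t+7}$, so the assertion follows with $u:=2t+7$.

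In this argument no step is genuinely difficult; the heavy lifting was already done in Lemma~\ref{kula_wokol_zera} (yielding the exponent $s=3t+7$, with three units of slack over the target $2t+7$) and in Lemma~\ref{ll} (yielding the crucial $\ln\ln$-growth of $n_\omega(r)$, \emph{uniformly in $\omega$}). The only point requiring any care is that $\rho$ must depend only on $M_0$ and the system's fixed parameters and not on $\omega$; this is guaranteed precisely by the uniformity-in-$\omega$ clause of Lemma~\ref{ll}, which in turn was obtained from the lower bound $F^k_\omega(0)>e^{-1}\exp(F^{k-1}_\omega(0))$ valid for every $\omega$.
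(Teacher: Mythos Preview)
Your proof is correct and takes essentially the same approach as the paper's: invoke Lemma~\ref{kula_wokol_zera} to obtain the bound $C(M_0^{t-1})^{n_\omega(r)+2}r^{3t+7}$, then use Lemma~\ref{ll} to absorb the prefactor into $r^{-t}$, yielding $u=2t+7$. The paper's proof is a one-sentence sketch of exactly this argument; you have simply filled in the details, including the explicit polylogarithmic estimate and the remark about uniformity in $\omega$.
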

\begin{proof}
The estimate \eqref{wokol_zera} says that 
$\nu_\omega(B(0,r))\le C\cdot (M_0^{t-1})^{n_\om(r)+2}r^s$ where $s=3t+7>3t+6>2t+7$. Thus, invoking Lemma~\ref{ll} and the definition of $r_n(\omega)$, we see that the required estimate follows, with $u:=2t+7$.
\end{proof}

For every $\varepsilon\in(0,r_0/4)$ let $k(\varepsilon)\ge 0$ be the least non--negative integer $k$ such that
$$
A\exp\(-M_0(k+1)\)<\varepsilon.
$$
Then, define the function $\tilde \varphi(\varepsilon)$ 
\begin{equation}\label{1_2017_11_29}
\tilde\varphi(\varepsilon):=DB^{t+8}\sum_{k=k(\varepsilon)}^\infty\exp\(-M_0(t+8)k\),
\end{equation}

Conforming to our general strategy, thus aiming to apply Proposition~\ref{prop:phi_well_defined}, we shall prove the following. 

\begin{lem}\label{l1_2017_11_29}
If $\nu\in\mathcal P$, then for $m$--a.e. $\om\in\Om$ and every $\varepsilon\in(0,\rho)$, we have that 
\begin{equation}\label{bounds_on_integrals_A}
\int_{B(0,\varepsilon)}\L_{t,\omega}(\1)(z)\, d\nu_{\theta\omega}(z)\le\tilde\varphi(\varepsilon).
\end{equation}
\end{lem}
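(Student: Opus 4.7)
The idea is to combine the pointwise upper bound on the integrand $\L_{t,\omega}(\1)(z)\le D|z|^{1-t}$ supplied by Lemma~\ref{lower} with the mass estimate $\nu_{\theta\omega}(B(0,r))\le r^u$ (valid for $r<\rho$ with $u=2t+7$) supplied by Lemma~\ref{lem:est_zero}, and to organize the integral through an annular decomposition of $B(0,\varepsilon)$ at the multiplicative scale $e^{-M_0}$, in order to reproduce the geometric series appearing in the definition of $\tilde\varphi$.

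To carry this out, I first introduce the annuli
\[
A_k:=\{z\in Q : Ae^{-M_0(k+1)}\le |z|<Ae^{-M_0 k}\}, \qquad k\ge 0.
\]
The minimality in the definition of $k(\varepsilon)$ guarantees that $B(0,\varepsilon)\cap A_k=\es$ for $k<k(\varepsilon)$, that $B(0,\varepsilon)\sbt\bigcup_{k\ge k(\varepsilon)}A_k$, and that $\varepsilon\le Ae^{-M_0 k(\varepsilon)}$. Next, on each $A_k$, Lemma~\ref{lower} combined with $|z|\ge Ae^{-M_0(k+1)}$ yields
\[
\L_{t,\omega}(\1)(z)\le D|z|^{1-t}\le DA^{1-t}e^{M_0(k+1)(t-1)}.
\]
For the $\nu$-mass of $A_k$, Lemma~\ref{lem:est_zero} gives $\nu_{\theta\omega}(A_k)\le (Ae^{-M_0 k})^u=A^ue^{-M_0 ku}$ as long as $Ae^{-M_0 k}<\rho$; for the one innermost shell $A_{k(\varepsilon)}$ (where this may fail) I substitute the trivial bound $\nu_{\theta\omega}(A_{k(\varepsilon)}\cap B(0,\varepsilon))\le\nu_{\theta\omega}(B(0,\varepsilon))\le\varepsilon^u\le (Ae^{-M_0 k(\varepsilon)})^u$, the last inequality using $\varepsilon\le Ae^{-M_0 k(\varepsilon)}$.

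Multiplying the two estimates on each annulus and exploiting the identity $u+1-t=u-t+1=t+8$ which holds for $u=2t+7$, one obtains
\[
\int_{A_k\cap B(0,\varepsilon)}\L_{t,\omega}(\1)\,d\nu_{\theta\omega}\le DA^{t+8}e^{M_0(t-1)}e^{-M_0(t+8)k}.
\]
Summing over $k\ge k(\varepsilon)$ produces exactly the geometric series $\sum_{k\ge k(\varepsilon)}e^{-M_0(t+8)k}$ present in $\tilde\varphi(\varepsilon)$, leaving the multiplicative prefactor $DA^{t+8}e^{M_0(t-1)}$, which is absorbed into $DB^{t+8}$ using $A\le B$ under the mild compatibility constraint $M_0(t-1)\le(t+8)\log(B/A)$, consistent with the choice of constants fixed later in the paper.

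The step I expect to be the main obstacle is the matching of the constant prefactor to the exact form $DB^{t+8}$ of $\tilde\varphi$: the annular computation naturally produces $DA^{t+8}e^{M_0(t-1)}$, and the reduction to $DB^{t+8}$ requires the above constraint on $M_0$ (or, failing that, a slight harmless relaxation of the definition of $\tilde\varphi$). Once this bookkeeping is settled, the annular decomposition and the two input lemmas combine cleanly, and no further dynamical input is required at this step.
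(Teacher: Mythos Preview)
Your approach is essentially the paper's: an annular decomposition at multiplicative scale $e^{-M_0}$, combined with Lemma~\ref{lower} for the integrand and Lemma~\ref{lem:est_zero} for the mass of each shell, then summing the resulting geometric series. The paper uses annuli centered at scales $\eta(\omega)e^{-kM_0}$ rather than your $Ae^{-kM_0}$, but this is cosmetic---the two computations produce the same estimate, and the paper's comparison $k(\varepsilon)\le k_\omega(\varepsilon)$ plays exactly the role of your observation that $B(0,\varepsilon)$ is contained in the union of shells with index $\ge k(\varepsilon)$.

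Your concern about the prefactor is well spotted, but your proposed resolution is wrong: the constraint $M_0(t-1)\le(t+8)\log(B/A)$ is not assumed anywhere in the paper and fails outright when $A=B$. In fact the paper's own computation also produces the extra factor $e^{M_0(t-1)}$ (it comes from the $(k+1)$ in $\exp(M_0(t-1)(k+1))$), so the stated $\tilde\varphi$ is simply off by this constant factor---a typo in the definition, not something to be absorbed by a constraint on $M_0$. Since $\tilde\varphi$ is only used to build a continuous monotone $\varphi$ with $\varphi(\varepsilon)\to 0$, the precise constant is immaterial; you should drop the constraint and just note that the estimate holds with $DB^{t+8}e^{M_0(t-1)}$ in place of $DB^{t+8}$, which suffices for everything downstream.
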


\begin{proof}
For every $\om\in\Om$, let
$$
B(\omega)=:\big\{z\in Q:|z|<\eta(\omega) e^{-M_0}\big\}.
$$
Partition the ball $B(\om)$ into annuli 
$$
P_k(\omega):= \big\{z\in Q:\eta(\omega) e^{-(k+1)M_0}<|z|\le\eta(\omega) e^{-kM_0}\big\}.
$$
We define $k_\om(\varepsilon)\ge 0$ to be the only non--negative integer such that $\varepsilon\in P_{k_\om(\varepsilon)}(\om)$. Of course $k(\varepsilon)\le k_\om(\varepsilon)$. Therefore, using also Lemma~\ref{lower} and Lemma~\ref{lem:est_zero}, we can estimate as follows.
$$
\begin{aligned}
\int_{B(0,\varepsilon)}\!\!\! \L_{t,\omega}(\1)(z)d\nu_{\theta\omega}(z)
&\le\sum_{k=k_\om(\varepsilon)}^\infty\int_{P_k(\omega)}\L_{t,\omega}(\1)(z)d\nu_{\theta\omega}(z)
\le D\sum_{k=k_\om(\varepsilon)}^\infty\int_{P_k(\omega)}|z|^{1-t}d\nu_{\theta\omega}(z) \\
&\le DB^{1-t}\sum_{k=k_\om(\varepsilon)}^\infty\exp\(M_0(t-1)(k+1)\) \nu_{\theta\omega}\(P_k(\omega)\) \\
&\le DB^{1-t}\sum_{k=k_\om(\varepsilon)}^\infty\exp\(M_0(t-1)(k+1)\)
      B^{2t+7}\exp\(-M_0(2t+7)k\) \\
&=DB^{t+8}\sum_{k=k_\om(\varepsilon)}^\infty\exp\(-M_0(t+8)\) \\
&\le DB^{t+8}\sum_{k=k(\varepsilon)}^\infty\exp\(-M_0(t+8)\) \\
&= \tilde\varphi(\varepsilon).
\end{aligned}
$$
The proof is complete.
\end{proof}

\medskip\noindent Since the function $(0,\rho)\ni\varepsilon\longmapsto \tilde\varphi(\varepsilon)\in (0,+\infty)$, is monotone increasing and $\lim_{\varepsilon\to 0^+}\tilde\varphi(\varepsilon)=0$, there exists a monotone increasing continuous function $(0,\rho)\ni\varepsilon\longmapsto \varphi(\varepsilon)\in (0,+\infty)$ such that
$$
\tilde\varphi(\varepsilon)\le \varphi(\varepsilon)
$$
for all $\varepsilon\in(0,\rho)$ and 
$$
\lim_{\varepsilon\to 0^+}\varphi(\varepsilon)=0.
$$
Therefore, as an immediate consequence of Lemma~\ref{l1_2017_11_29}, we get the following. 

\begin{lem}\label{l2_2017_11_29}
If $\nu\in\mathcal P$, then for $m$--a.e. $\om\in\Om$ and every $\varepsilon\in(0,\rho)$, we have that 
\begin{equation}\label{bounds_on_integrals_A}
\int_{B(0,\varepsilon)}\L_{t,\omega}(\1)(z)d\nu_{\theta\omega}(z)
\le\varphi(\varepsilon).
\end{equation}
\end{lem}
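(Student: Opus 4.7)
The plan is to deduce this statement directly from the quantitative bound obtained in Lemma~\ref{l1_2017_11_29}, coupled with the existence of a suitable continuous majorant $\varphi$ of $\tilde\varphi$ that is already asserted in the paragraph immediately preceding the lemma. Concretely, Lemma~\ref{l1_2017_11_29} gives, for every $\nu\in\P$, for $m$--a.e. $\om\in\Om$ and every $\varepsilon\in(0,\rho)$,
$$
\int_{B(0,\varepsilon)}\L_{t,\omega}(\1)(z)\,d\nu_{\theta\omega}(z)\le \tilde\varphi(\varepsilon),
$$
and by construction $\tilde\varphi\le \varphi$ pointwise on $(0,\rho)$, so chaining the two inequalities yields the claim. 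So the real content of the proof is simply to justify the passage from the (a priori discontinuous) function $\tilde\varphi$ to a continuous monotone majorant $\varphi$.

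To see that such a $\varphi$ exists, I would observe the following about $\tilde\varphi$ as defined in \eqref{1_2017_11_29}. The quantity $k(\varepsilon)$ is a non--negative, non--increasing, integer--valued step function of $\varepsilon$, with $k(\varepsilon)\to\infty$ as $\varepsilon\to 0^+$. Hence $\tilde\varphi$ is itself a non--decreasing step function of $\varepsilon$, and since $\sum_{k\ge 0}\exp(-M_0(t+8)k)$ converges, the tail sum defining $\tilde\varphi(\varepsilon)$ vanishes as $\varepsilon\to 0^+$. Any non--decreasing function on $(0,\rho)$ with limit $0$ at $0$ is dominated by a continuous non--decreasing function with the same property: one standard way is piecewise linear interpolation between a sequence $\varepsilon_n\downarrow 0$ of sample points, at each of which the interpolant is set to (say) $\tilde\varphi(\varepsilon_n)+2^{-n}$; monotonicity and continuity are immediate, and the majorization follows from monotonicity of $\tilde\varphi$ on each interval $[\varepsilon_{n+1},\varepsilon_n]$.

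Because the whole statement is essentially bookkeeping once Lemma~\ref{l1_2017_11_29} is in hand, there is no genuine obstacle; the only ``delicate'' part is making sure the interpolated $\varphi$ remains integrable/useful for the eventual invocation of Proposition~\ref{prop:phi_well_defined}, but Proposition~\ref{prop:phi_well_defined} only requires $\varphi$ to be monotone increasing, continuous, and to tend to $0$ at $0^+$, all of which are built into the construction.
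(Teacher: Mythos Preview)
Your proposal is correct and follows exactly the paper's approach: the lemma is stated there as an immediate consequence of Lemma~\ref{l1_2017_11_29} together with the existence of the continuous monotone majorant $\varphi\ge\tilde\varphi$ asserted in the paragraph preceding it. Your added explanation of how to actually build such a $\varphi$ (piecewise linear interpolation of the step function $\tilde\varphi$) is more detailed than what the paper provides, but the logical structure is the same.
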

In this Lemma, both $\rho$, and the function $\varphi(\varepsilon)$ depend on the choice of $M_0$.
As an immediate consequence of this lemma and Lemma~\ref{lower}, we get the following.

\begin{lem}\label{l1_2017_12_05}
If $\nu\in\mathcal P$, then there exists $P\in (0,+\infty)$ such that for $m$--a.e. $\om\in\Om$, we have that 
\begin{equation}\label{bounds_on_integrals_A}
\nu_{\theta\omega}\(\L_{t,\omega}\1)
=\int_Q\L_{t,\omega}(\1)(z)d\nu_{\theta\omega}(z)\le P.
\end{equation}
\end{lem}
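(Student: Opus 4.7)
The plan is to deduce the uniform bound $P$ by splitting $Q$ into a small neighborhood of $0$, where the density $\L_{t,\omega}(\1)$ can blow up but $\nu_{\theta\omega}$ is very small by the previous lemma, and its complement, where $\L_{t,\omega}(\1)$ is uniformly bounded by Lemma~\ref{lower}.

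More precisely, I would fix any $\varepsilon_0\in(0,\rho)$ and write
$$
\int_Q \L_{t,\omega}(\1)(z)\,d\nu_{\theta\omega}(z)
=\int_{B(0,\varepsilon_0)}\L_{t,\omega}(\1)(z)\,d\nu_{\theta\omega}(z)
+\int_{Q\sms B(0,\varepsilon_0)}\L_{t,\omega}(\1)(z)\,d\nu_{\theta\omega}(z).
$$
The first summand is bounded above by $\varphi(\varepsilon_0)$ by Lemma~\ref{l2_2017_11_29}, uniformly in $\om$. For the second summand, Lemma~\ref{lower} gives $\L_{t,\omega}(\1)(z)\le D|z|^{1-t}\le D\varepsilon_0^{1-t}$ whenever $|z|\ge\varepsilon_0$, and since $\nu_{\theta\omega}$ is a probability measure on $Q$ we conclude
$$
\int_{Q\sms B(0,\varepsilon_0)}\L_{t,\omega}(\1)(z)\,d\nu_{\theta\omega}(z)\le D\varepsilon_0^{1-t}.
$$
Setting $P:=\varphi(\varepsilon_0)+D\varepsilon_0^{1-t}$ gives the desired uniform bound for $m$--a.e.\ $\om\in\Om$.

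There is essentially no obstacle here: the only mild point is that Lemma~\ref{l2_2017_11_29} is stated for $\varepsilon\in(0,\rho)$ (strict inequality), which is why I pick any fixed $\varepsilon_0$ strictly less than $\rho$ rather than trying to plug in $\varepsilon=\rho$. The resulting constant $P$ depends on the choice of $M_0$ (through $\rho$, $\varphi$, and implicitly $D$), but it is independent of both $\om\in\Om$ and $\nu\in\P$, which is all that is required for the subsequent application of Proposition~\ref{prop:phi_well_defined}.
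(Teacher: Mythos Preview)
Your proof is correct and is exactly the argument the paper has in mind: it states the lemma as ``an immediate consequence of this lemma [Lemma~\ref{l2_2017_11_29}] and Lemma~\ref{lower}'', which is precisely your splitting into $B(0,\varepsilon_0)$ and its complement. The only inconsequential slip is that $D$ from Lemma~\ref{lower} depends only on $t$, not on $M_0$; the dependence of $P$ on $M_0$ enters only through $\rho$ and $\varphi$.
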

and, again, the constant $P$ depends on the choice of $M_0$.

\noindent Using Lemma~\ref{lower} again, we will obtain a common, i.e. good for all $\om\in\Om$, lower bound on 
$\nu_{\theta\omega}\(\L_{t,\omega}\1)$.
\begin{lem}\label{measurelower}
(Recall that we assume that $C(M_0)>0$ is of the form \eqref{eq:C(M0)}. For every measure $\nu\in \mathcal P$ (thus, satisfying in particular \eqref{2.1}) the following holds.
\begin{equation}\label{1.3}
\nu_{\theta\omega}\(\L_{t,\omega}\1)
=\int_Q\L_{t,\omega}(\1)(z)d\nu_{\theta\omega}(z)
\ge \frac{c}{M_0^{t-1}}=\frac{1}{C(M_0)}.
\end{equation}
\end{lem}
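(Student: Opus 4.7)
The proof will be a short direct estimate, obtained by chaining together the pointwise lower bound of Lemma~\ref{lower} with the mass condition \eqref{2.1} that is built into the definition of $\mathcal P$. The plan is to restrict the integral defining $\nu_{\theta\omega}(\mathcal L_{t,\omega}\1)$ to the bounded strip $Q_{M_0}$, apply the bound $\mathcal L_{t,\omega}(\1)(z)\ge d/|z|^{t-1}$ on this strip, and combine this with the uniform upper bound for $|z|$ on $Q_{M_0}$ together with $\nu_{\theta\omega}(Q_{M_0})\ge 1/2$.

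Concretely, the chain of inequalities I would carry out is
\begin{equation*}
\int_Q \mathcal L_{t,\omega}(\1)(z)\,d\nu_{\theta\omega}(z)
\;\ge\; \int_{Q_{M_0}} \frac{d}{|z|^{t-1}}\,d\nu_{\theta\omega}(z)
\;\ge\; \frac{d}{M_0^{t-1}}\,\nu_{\theta\omega}(Q_{M_0})
\;\ge\; \frac{d}{M_0^{t-1}}\cdot\frac12
\;=\; \frac{c}{M_0^{t-1}}
\;=\; \frac{1}{C(M_0)},
\end{equation*}
the last two equalities being just the defining identities $c=d/2$ and $C(M_0)=M_0^{t-1}/c$ from \eqref{eq:C(M0)}. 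Thus the proof reduces to two essentially trivial inputs already available in the excerpt.

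I expect no serious obstacle. The only mild bookkeeping point is the second inequality, where strictly speaking the canonical representative of $z\in Q_{M_0}$ with $\Im z\in(-\pi,\pi]$ yields $|z|\le\sqrt{M_0^2+\pi^2}$ rather than exactly $M_0$. This minor discrepancy is harmless and can be absorbed either by assuming $M_0$ large (so that $(1+\pi^2/M_0^2)^{(t-1)/2}$ is bounded by a fixed constant), or by folding it into the constant $d$ produced by Lemma~\ref{lower}; in both cases it is a purely constant-level adjustment consistent with the surrounding framework, and does not affect the structure of the argument.
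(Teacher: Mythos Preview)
Your proof is correct and essentially identical to the paper's: both restrict the integral to $Q_{M_0}$, apply the pointwise lower bound $\mathcal L_{t,\omega}(\1)(z)\ge d/|z|^{t-1}$ from Lemma~\ref{lower}, and then use $\nu_{\theta\omega}(Q_{M_0})\ge 1/2$ together with $c=d/2$. The minor $\sqrt{M_0^2+\pi^2}$ versus $M_0$ discrepancy you flag is glossed over in the paper in exactly the same way.
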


\begin{proof}
Using Lemma~\ref{lower}, we obtain
$$
\nu_{\theta\omega}\(\L_{t,\omega}\1)
\ge\nu_{\theta\omega}(Q_{M_0})\cdot \inf_{z\in Q_{M_0}}\L_{t,\omega}(\1)
\ge d\nu_{\theta\omega}(Q_{M_0})\cdot \inf_{z\in Q_{M_0}}\frac{1}{| z|^{t-1}}
\ge \frac{c}{M_0^{t-1}}.
$$
The proof is complete.
\end{proof}

\

\section{Invariance of the Space $\P$ Under the Map $\Phi$:
$\Phi(\mathcal P)\subset \mathcal P$}\label{s9_2017_12_05}

\medskip Having Lemmas~\ref{l1_2017_11_29},
 \ref{l1_2017_12_05}, and \ref{measurelower} proved, we can apply Proposition~\ref{prop:phi_well_defined} and take all its fruits. In particular, the measures $\mathcal L_t^*\nu$ and $\Phi(\nu)$ are well defined for all measures $\nu\in\mathcal P$.  

\begin{lem}\label{right_estimate}
If  $\nu\in\mathcal P$ (thus, in particular, $\nu$ satisfies \eqref{2.1}) then the measure $\Phi(\nu)$ satisfies the estimate  \eqref{3.1}, with the constant 
$$
(D M_0^{t-1}/c+C)=(D\cdot C(M_0)+C),
$$
where $C>0$ is some absolute constant, depending on $t>1$ but independent of $M_0$.
Therefore, if $M_0$ is sufficiently large, then the condition \eqref{3.1} is satisfied.
\end{lem}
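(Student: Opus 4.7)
The plan is to reduce to estimating the transfer operator via the defining formula
$$
\Phi(\nu)_\omega(Y_M^+)=\frac{\int_Q \L_{t,\omega}(\1_{Y_M^+})\,d\nu_{\theta\omega}}{\int_Q \L_{t,\omega}(\1)\,d\nu_{\theta\omega}}.
$$
Lemma~\ref{measurelower} already controls the denominator from below by $1/C(M_0)$, so the whole task reduces to showing that the numerator is bounded above by $(D+C/C(M_0))\,e^{M(1-t)/2}$ for some constant $C$ depending only on $t$, $A$, and $B$.

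For the numerator, I would split $Q$ into a far part $A_M:=\{z\in Q:|z|>e^{M/2}\}$ and a near part $B_M:=Q\sms A_M$, treating the two with different tools. On $A_M$, the crude domination $\L_{t,\omega}(\1_{Y_M^+})\le \L_{t,\omega}(\1)$ combined with the upper estimate of Lemma~\ref{lower} gives $\L_{t,\omega}(\1_{Y_M^+})(z)\le D|z|^{1-t}\le D\,e^{M(1-t)/2}$, so that the contribution from $A_M$ is at most $D\,e^{M(1-t)/2}$. After division by $1/C(M_0)$ this furnishes the main term $D\,C(M_0)\,e^{M(1-t)/2}$ in the target bound.

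On $B_M$ the right tool is the explicit series expression \eqref{eq:def_L}. A preimage $w_k$ of $z\in B_M$ belongs to $Y_M^+$ exactly when $|z+2k\pi i|>\eta(\omega)e^M$; because $|z|\le e^{M/2}$ and $\eta(\omega)\ge A$, this forces $2\pi|k|\ge \eta(\omega)e^M-e^{M/2}$, and hence $|k|\gtrsim e^M$ once $M$ is at all large. Estimating $|z+2k\pi i|\asymp|k|$ on this tail range and using $t>1$ to bound $\sum_{|k|\gtrsim e^M}|k|^{-t}$, I obtain the pointwise estimate $\L_{t,\omega}(\1_{Y_M^+})(z)\le C'\,e^{M(1-t)}$ on $B_M$ with $C'$ depending only on $t$, $A$, and $B$. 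Integrating and using $e^{M(1-t)}\le e^{M(1-t)/2}$, the contribution from $B_M$ is at most $C'\,e^{M(1-t)/2}$, which after division by the denominator bound is the additive term $C$ in the statement.

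The main conceptual obstacle I anticipate is precisely this near-region contribution: it is tempting to exploit the behaviour of $\nu$ close to $0$ via Lemma~\ref{lem:est_zero} or Lemma~\ref{l2_2017_11_29}, but both of those bounds carry a hidden $M_0$-dependence through $\rho$ and $\varphi$, which would spoil the desired absolute character of $C$. The tail-of-series argument on $B_M$ avoids this entirely, because it uses only the lattice geometry of $2\pi i\Z$ and the hypothesis $t>1$. Combining both contributions yields $\Phi(\nu)_\omega(Y_M^+)\le (D\,C(M_0)+C)\,e^{M(1-t)/2}$, and since $c(M_0)=2D\,C(M_0)$ by \eqref{eq:c(M0)}, the inequality $D\,C(M_0)+C\le c(M_0)$ holds once $M_0^{t-1}\ge C/(cD)$; this verifies \eqref{3.1} for $\Phi(\nu)$ and completes the argument.
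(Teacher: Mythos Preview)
Your overall strategy---split $Q$ at the threshold $|z|=e^{M/2}$, use the crude bound $\L_{t,\omega}(\1_{Y_M^+})\le\L_{t,\omega}(\1)\le D|z|^{1-t}$ on the far piece, and a tail-of-series estimate on the near piece---is exactly the paper's approach, and your pointwise bounds on $\L_{t,\omega}(\1_{Y_M^+})$ are correct.

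The gap is in the final bookkeeping for the near part. You bound the \emph{numerator} contribution from $B_M$ by $C'\,e^{M(1-t)/2}$ (using $\nu_{\theta\omega}(B_M)\le 1$), and then claim that after dividing by the denominator this becomes the additive term $C$ independent of $M_0$. But the only lower bound you have for the denominator is $1/C(M_0)$, so division actually produces $C'\,C(M_0)\,e^{M(1-t)/2}$. Your quotient is therefore bounded by $(D+C')\,C(M_0)\,e^{M(1-t)/2}$, not by $(D\,C(M_0)+C)\,e^{M(1-t)/2}$. Since $c(M_0)=2D\,C(M_0)$, your closing step would require $C'\le D$, which you do not establish and which has no reason to hold.

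The fix is to avoid dividing the near-part numerator by the crude lower bound. On $B_M$ one also has the \emph{lower} estimate $\L_{t,\omega}(\1)(z)\ge d|z|^{1-t}\ge d\,e^{M(1-t)/2}$ from Lemma~\ref{lower}. Combining this with your pointwise bound $\L_{t,\omega}(\1_{Y_M^+})(z)\le C'\,e^{M(1-t)}$ gives the ratio estimate
\[
\L_{t,\omega}(\1_{Y_M^+})(z)\le \tfrac{C'}{d}\,e^{M(1-t)/2}\,\L_{t,\omega}(\1)(z)\qquad(z\in B_M).
\]
Integrating this over $B_M$ bounds the near-part numerator by $(C'/d)\,e^{M(1-t)/2}$ times the denominator itself, so after division the near part contributes the genuinely absolute constant $C=C'/d$. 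This is precisely what the paper does.
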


\begin{proof}
We have 
$$
\aligned
\L^*_{t,\omega}&\nu_{\theta\omega}(Y_M^+)
=\int\L_{t,\omega}(\1_{Y_M^+})(z)d\nu_{\theta\omega}(z)=\\
&=\int_{|\Re z|<e^{M/2}}\L_{t,\omega}(\1_{Y_M^+})(z)d\nu_{\theta\omega}(z)+\int_{|\Re z|\ge e^{M/2}}\L_{t,\omega}
(\1_{Y_M^+})(z)d\nu_{\theta\omega}(z)\\
&\le \int_{|\Re z|<e^{M/2}}\L_{t,\omega}(\1_{Y_M^+})(z)d\nu_{\theta\omega}(z)+\nu_{\theta\omega}(\{z:|\Re z|
\ge e^{M/2}\})\cdot \sup_{|\Re z|\ge e^{M/2}}\L_{t,\omega}(\1)(z)\\
&=\Sg_1+\Sg_2.
\endaligned
$$
It follows from Lemma~\ref{lower} that for $z$ with $|\Re z|\ge e^{M/2}$, we have that $\L_{t,\omega}\1(z)\le De^{\frac{M}{2}(1-t)}$
and, consequently
\begin{equation}\label{1.4}
\Sg_2\le De^{\frac{M}{2}(1-t)}.
\end{equation}
Now, we estimate $\Sg_1$. 
For  $z=[a+bi]$ with $|\Re z|<e^{M/2}$, Lemma~\ref{lower} yields:
$$
\L_{t,\omega}(\1)(z)\ge de^{\frac{M}{2}(1-t)},
$$
and, writing $z=[a+bi]$, 
\begin{equation}\label{1.5}
\L_{t,\omega}(\1_{Y_M^+})(z)=\sum_k\frac{1}{|a+bi+2 k\pi i|^t}\le Ce^{M(1-t)},
\end{equation}
with another positive constant $C$, where the sum is taken over all such integers $k$ for which $\log|a+bi+2k\pi i|-\log\eta(\omega)>M$.
Therefore, we can write, for  $|z|<e^{M/2}$, the following estimate for the summand $I$,  with possibly modified constant $C$:
\begin{equation}\label{2.5}
\frac{\L_{t,\omega}(\1_{Y_M^+})(z)}{\L_{t,\omega}(\1)(z)}
\le C e^{\frac{M}{2}(1-t)} 
\end{equation}
or, equivalently,
\begin{equation}\label{22.5}
\L_{t,\omega}(\1_{Y_M^+})(z)
\le  C e^{\frac{M}{2}(1-t)} \L_{t,\omega}(\1)(z).
\end{equation}
We are close to the end of the proof of \eqref{3.1}.
Using the lower estimate of Proposition~\ref{measurelower}, together with estimates \eqref{1.4} and  \eqref{22.5}, we can write
$$
\aligned
\L^*_{t,\omega}(\nu_{\theta\omega})(Y_M^+)
&\le \Sg_2+\Sg_1 
 \le De^{\frac{M}{2}(1-t)}+Ce^{\frac{M}{2}(1-t)} \int_Q\L_{t,\omega}(\1)(z)d\nu_{\theta\omega}\\
&=De^{\frac{M}{2}(1-t)}+Ce^{\frac{M}{2}(1-t)}
 \L^*_{t,\omega}\nu_{\theta\omega}(\1).
\endaligned
$$
Now, using the definition of the map $\Phi$, we obtain
$$
\Phi(\nu)_\omega(Y^+_M)
=\frac{\L^*_{t,\omega}(\nu_{\theta\omega})(Y_M^+)}{\L^*_{t,\omega}\nu_{\theta\omega}(\1)}
\le\frac{M_0^{t-1}}{c}De^{\frac{M}{2}(1-t)}+Ce^{\frac{M}{2}(1-t)}
=(DC(M_0)+C)e^{\frac{M}{2}(1-t)},
$$
Since $D$ and $C$ are absolute constants, and $C(M_0)\to\infty$ as $M_0\to\infty$, it is clear that for all $M_0$ sufficiently large $DC(M_0)+C<2DC(M_0)=c(M_0)$.
The proof is complete.
\end{proof}

\medskip At this stage of the paper we have all the constants of Definition~\ref{def:p} except $M_0$. Now, we will determine its value.

\
First of all, we require $M_0$ to be large enough as to satisfy Lemma~\ref{right_estimate}.
Next, let us note the  following direct consequence of Lemma~\ref{right_estimate}.
\begin{cor}\label{cor:large M0}
If $M_0>0$ is large enough, then for every random measure $\nu\in \mathcal P$ (thus, in particular, satisfying condition \eqref{2.1}),  the measures $\Phi(\nu)_\omega$, $\om\in\Om$, satisfy the following:
$$
\Phi(\nu)_\omega(Y^+_{M_0})<1/4.
$$
\end{cor}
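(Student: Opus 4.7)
The plan is to obtain this as a straight specialization of Lemma~\ref{right_estimate}. Applying that lemma with the choice $M := M_0$ (which is permitted since the lemma's conclusion holds for every $M>0$), I would write
$$
\Phi(\nu)_\omega(Y^+_{M_0}) \le \bigl(D\cdot C(M_0)+C\bigr)\, e^{\frac{M_0}{2}(1-t)}.
$$
Substituting the definition $C(M_0)=M_0^{t-1}/c$ from \eqref{eq:C(M0)} into this bound gives
$$
\Phi(\nu)_\omega(Y^+_{M_0}) \le \Bigl(\frac{D}{c}M_0^{t-1}+C\Bigr) e^{-\frac{(t-1)}{2}M_0}.
$$

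Because $t>1$, the exponential factor $e^{-(t-1)M_0/2}$ decays to zero strictly faster than the polynomial factor $\frac{D}{c}M_0^{t-1}+C$ grows. Hence the whole right-hand side tends to $0$ as $M_0\to\infty$. Consequently one can choose a threshold $M_0^{*}>0$ such that the right-hand side is strictly less than $1/4$ whenever $M_0>M_0^{*}$.

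Therefore, imposing on $M_0$ this additional lower bound $M_0>M_0^{*}$, together with the lower bound already required by Lemma~\ref{right_estimate}, yields
$$
\Phi(\nu)_\omega(Y^+_{M_0})<1/4
$$
uniformly in $\nu\in\P$ and $\omega\in\Omega$, which is the claimed conclusion. There is no serious obstacle here; the only point to observe is that the exponential decay in $M_0$ dominates the polynomial growth of $C(M_0)$, so the estimate from Lemma~\ref{right_estimate} can be made as small as we wish by enlarging $M_0$.
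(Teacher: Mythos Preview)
Your proof is correct and follows exactly the approach the paper intends: the corollary is stated as a ``direct consequence'' of Lemma~\ref{right_estimate}, and specializing that lemma's bound to $M=M_0$ and observing that the exponential decay $e^{-(t-1)M_0/2}$ beats the polynomial growth of $C(M_0)=M_0^{t-1}/c$ is precisely the intended argument.
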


From now on, we also assume that  $M_0>0$ is large enough to satisfy Corollary~\ref{cor:large M0}.

\begin{prop}\label{W_0}
If $\nu\in\mathcal P$, then, for every $j\ge 0$, we have that
$$
(\Phi(\nu))_{\theta^j\omega}(B(F^j_\omega(0),r_0))\le b_{j}
(\omega),
$$
where
$$
b_{j}
(\omega):=\frac{Kr_0}{2\pi}
c(M_0)C(M_0)\cdot |F^{j+1}_\omega(0)|^{1-t}\cdot e^{\frac{(1-t)}{4}|F^{j+1}_\omega(0)|}.
$$
In other words,
the measure $\Phi(\nu)$ satisfies condition $W_0$.

\end{prop}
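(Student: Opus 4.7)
The plan is to unfold the definition of $\Phi(\nu)$ as a ratio,
$$
(\Phi(\nu))_{\theta^j\omega}(A)=\frac{\mathcal L^*_{t,\theta^j\omega}\nu_{\theta^{j+1}\omega}(A)}{\mathcal L^*_{t,\theta^j\omega}\nu_{\theta^{j+1}\omega}(\1)},\qquad A:=B(F^j_\omega(0),r_0),
$$
and to estimate numerator and denominator separately. Lemma~\ref{measurelower} bounds the denominator below by $1/C(M_0)$, so the whole task reduces to an upper bound for the numerator. Since $r_0<1/(2K)<\pi$, the map $F_{\theta^j\omega}$ is injective on $A$ (indeed on the larger ball $B(F^j_\omega(0),2r_0)$), hence the numerator equals
$\int_{F_{\theta^j\omega}(A)}|F'_{\theta^j\omega}(w(z))|^{-t}\,d\nu_{\theta^{j+1}\omega}(z)$, where $w(z)$ denotes the unique preimage of $z$ lying in $A$.

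To control this integral, I would apply Koebe's distortion theorem with parameter $1/2$ to the univalent restriction of $F_{\theta^j\omega}$ to $B(F^j_\omega(0),2r_0)$. Using the identity $F'_\eta=F_\eta$, so that $|F'_{\theta^j\omega}(F^j_\omega(0))|=|F^{j+1}_\omega(0)|$, distortion gives the pointwise bound $|F'_{\theta^j\omega}(w)|^{-t}\le K^t|F^{j+1}_\omega(0)|^{-t}$ throughout $A$, together with the image inclusion $F_{\theta^j\omega}(A)\subset B(F^{j+1}_\omega(0),4r_0|F^{j+1}_\omega(0)|)$. Because $F^{j+1}_\omega(0)$ is real and positive and $4r_0<1/2$, this image lies inside $Y^+_{|F^{j+1}_\omega(0)|/2}$, whose $\nu_{\theta^{j+1}\omega}$-mass is at most $c(M_0)e^{(1-t)|F^{j+1}_\omega(0)|/4}$ by condition \eqref{3.1}. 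Multiplying the two estimates and dividing by $1/C(M_0)$ yields
$$
(\Phi(\nu))_{\theta^j\omega}(A)\le K^t C(M_0)c(M_0)\,|F^{j+1}_\omega(0)|^{-t}\,e^{(1-t)|F^{j+1}_\omega(0)|/4}.
$$

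The main obstacle is that this differs from the target $b_j(\omega)$ in both the polynomial prefactor (exponent $-t$ rather than $1-t$) and the front constant ($K^t$ rather than $Kr_0/(2\pi)$). I would close this gap by a dichotomy on $|F^{j+1}_\omega(0)|$. If $|F^{j+1}_\omega(0)|\ge 2\pi K^{t-1}/r_0$, then $K^t|F^{j+1}_\omega(0)|^{-t}\le (Kr_0/(2\pi))|F^{j+1}_\omega(0)|^{1-t}$, so the displayed bound already gives $(\Phi(\nu))_{\theta^j\omega}(A)\le b_j(\omega)$. The condition $A>1/e$ forces the orbit $F^n_\omega(0)=\eta(\theta^{n-1}\omega)\exp(F^{n-1}_\omega(0))$ to escape to $+\infty$, so only boundedly many initial indices $j$ (essentially just $j=0$ when $\eta(\omega)$ is close to $A$) can violate this size condition. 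For those remaining $j$, $|F^{j+1}_\omega(0)|$ is confined to a bounded range, hence the factors $|F^{j+1}_\omega(0)|^{1-t}$ and $e^{(1-t)|F^{j+1}_\omega(0)|/4}$ appearing in $b_j(\omega)$ are uniformly bounded away from $0$; since $c(M_0)C(M_0)\sim M_0^{2(t-1)}\to\infty$, enlarging $M_0$ further (as is still permitted at this stage, alongside the condition imposed by Corollary~\ref{cor:large M0}) forces $b_j(\omega)\ge 1$ throughout the exceptional regime, where the trivial bound $(\Phi(\nu))_{\theta^j\omega}(A)\le 1$ then closes the case.
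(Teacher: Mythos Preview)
Your proof contains a genuine error: the claim that $F_{\theta^j\omega}$ is injective on $A=B(F^j_\omega(0),r_0)$ is false. What is true is that $f_\eta:\C\to\C$ is injective on any ball of radius $<\pi$; but $F_\eta:Q\to Q$ equals $\pi\circ f_\eta$ (with $f_\eta$ viewed as a map $Q\to\C$), and the projection $\pi:\C\to Q$ is far from injective on the image $f_{\theta^j\omega}(A)$. By Koebe that image is (contained in) a disk of radius $Kr_0|F^{j+1}_\omega(0)|$ in $\C$, which for large $j$ has vertical extent far exceeding $2\pi$. Consequently a typical point of $F_{\theta^j\omega}(A)\subset Q$ has not one but up to $\approx Kr_0|F^{j+1}_\omega(0)|/(2\pi)$ preimages in $A$, and your expression ``the unique preimage $w(z)$'' and the resulting integral identity are invalid.

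This is exactly why your bound came out with the wrong powers. The paper's argument does the honest count: it bounds $\mathcal L^*_{t,\theta^j\omega}\nu_{\theta^{j+1}\omega}(A)$ by
\[
\big(\text{multiplicity}\big)\times\big(\sup_{w\in A}|F'_{\theta^j\omega}(w)|^{-t}\big)\times\nu_{\theta^{j+1}\omega}(Y^+_{M}),
\qquad M=\tfrac12|F^{j+1}_\omega(0)|,
\]
which gives $\dfrac{Kr_0|F^{j+1}_\omega(0)|}{2\pi}\cdot|F^{j+1}_\omega(0)|^{-t}\cdot c(M_0)e^{(1-t)|F^{j+1}_\omega(0)|/4}$. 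Dividing by the lower bound $1/C(M_0)$ from Lemma~\ref{measurelower} yields $b_j(\omega)$ on the nose---the extra factor $|F^{j+1}_\omega(0)|$ from the multiplicity is precisely what turns $|F^{j+1}_\omega(0)|^{-t}$ into $|F^{j+1}_\omega(0)|^{1-t}$ and produces the constant $Kr_0/(2\pi)$. Your dichotomy on $|F^{j+1}_\omega(0)|$ and the further enlargement of $M_0$ are thus not only unnecessary but symptoms of the missing multiplicity factor.
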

\begin{proof}
Since
$$
F_{\theta^j\omega}\(B(F^j_\omega(0),r_0)\)\sbt B\(F^{j+1}_\omega(0),Kr_0|F^{j+1}_\omega(0)|\).
$$
and since
$$
Kr_0|F^{j+1}_\omega(0)|\le 1/2|F^{j+1}_\omega(0)|,
$$
we conclude that 
$$
F_{\theta^j\omega}(B(F^j_\omega(0),r_0))\subset Y^+_M$$
with 
$
M=\frac{1}{2}|F^{j+1}_\omega(0)|=\frac{1}{2}\Re F^{j+1}_\omega(0).
$

Now, using Lemma~\ref{right_estimate}, i.e. using formula \eqref{3.1} that it yields, with $M=\frac{1}{2}|F^{j+1}_\omega(0)|$, and the fact that the image $F_{\theta^j\omega}(B(F^j_\omega(0),r_0))$ covers $Y_M^+$ at most $\frac{r_0F^{j+1}_\omega(0)\cdot K}{2\pi}$ times, i.e. every point in $F_{\theta^j\omega}(B(F^j_\omega(0),r_0))$ has at most $\frac{r_0F^{j+1}_\omega(0)\cdot K}{2\pi}$ preimages in 
$B(F^j_\omega(0),r_0))$, we can estimate the measure  $\L^*_{t,\theta^j\omega}\nu(B(F^j_\omega(0),r_0))$ as follows:

$$
\aligned
\L^*_{t,\theta^j\omega}\nu(B(F^j_\omega(0),r_0)
&\le c(M_0)\exp\lt(\frac14F^{j+1}_\omega(0)(1-t)\rt)\cdot|F^{j+1}_\omega(0)|^{-t}\cdot \frac{|F^{j+1}_\omega(0)|Kr_0}{2\pi}\\
&=\left (\frac{Kr_0}{2\pi}\right ) c(M_0)\cdot |F^{j+1}_\omega(0)|^{-t}\cdot \exp\left(\frac14(1-t)F^{j+1}_\omega(0)\right)\cdot |F^{j+1}_\omega(0)|
\endaligned
$$
and, using in addition the lower bound provided in Proposition~\ref{measurelower},
$$(\Phi(\nu))_{\theta^j\omega}(B(F^j_\omega(0),r_0)\le \left (\frac{Kr_0}{2\pi}\right ) c(M_0)\cdot C(M_0)\cdot \exp\left (\frac{1-t}{4}F^{j+1}_\omega(0)\right )\cdot |F^{j+1}_\omega(0)|^{1-t}.
$$
\end{proof}

\begin{prop}\label{inductive}
If $\nu$ is a random measure in $\mathcal P$, then  the measure
$\Phi(\nu)$ satisfies all  the conditions  $W_n$, $n\ge 0$. 
\end{prop}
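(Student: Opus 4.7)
The plan is to prove the statement by induction on $n$. The base case $n=0$ is Proposition~\ref{W_0}, so the whole game lies in the inductive step: given $\nu \in \mathcal P$, and writing $V_{j,n,\omega} := F^{-n}_{\theta^j\omega,*}(B(F^{n+j}_\omega(0), r_0))$, I want to bound $\Phi(\nu)_{\theta^j\omega}(V_{j,n,\omega})$ for $n \ge 1$. The essential structural observation is the tower decomposition
$$
V_{j,n,\omega} = F^{-1}_{\theta^j\omega,*}\!\(V_{j+1,\,n-1,\,\omega}\),
$$
where $F^{-1}_{\theta^j\omega,*}$ denotes the univalent inverse branch sending $F^{j+1}_\omega(0)$ back to $F^j_\omega(0)$. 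Consequently, $F_{\theta^j\omega}$ restricts to a conformal bijection from $V_{j,n,\omega}$ onto $V_{j+1,\,n-1,\,\omega}$.

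Using \eqref{defphi} and the conformal change of variables for the transfer operator, the numerator of $\Phi(\nu)_{\theta^j\omega}(V_{j,n,\omega})$ becomes
$$
\mathcal L^*_{t,\theta^j\omega}(\nu_{\theta^{j+1}\omega})(V_{j,n,\omega})
= \int_{V_{j+1,\,n-1,\,\omega}} \bigl|F'_{\theta^j\omega}\(F^{-1}_{\theta^j\omega,*}(z)\)\bigr|^{-t}\, d\nu_{\theta^{j+1}\omega}(z).
$$
Since $F'_\eta = F_\eta$ and thus $|F'_{\theta^j\omega}(F^j_\omega(0))| = |F^{j+1}_\omega(0)|$, Koebe's distortion theorem on the disk $B(F^{j+1}_\omega(0), r_0)$, combined with the smallness condition $r_0 < 1/(2K)$ fixed in \eqref{fixr_0}, yields the uniform bound $|F'_{\theta^j\omega}(F^{-1}_{\theta^j\omega,*}(z))|^{-t} \le K\,|F^{j+1}_\omega(0)|^{-t}$ for $z$ in the relevant region. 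Therefore
$$
\mathcal L^*_{t,\theta^j\omega}(\nu_{\theta^{j+1}\omega})(V_{j,n,\omega})
\le K\,|F^{j+1}_\omega(0)|^{-t}\cdot \nu_{\theta^{j+1}\omega}(V_{j+1,\,n-1,\,\omega}).
$$

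At this point I invoke the hypothesis $\nu\in\mathcal P$, which by definition satisfies condition $W_{n-1}$; applied at the point $\theta^{j+1}\omega$ with index $j+1$ it gives
$$
\nu_{\theta^{j+1}\omega}(V_{j+1,\,n-1,\,\omega}) \le a_{j+1,\,n-1}(\omega)\cdot b_{n+j}(\omega).
$$
Combining this with the denominator bound $\mathcal L^*_{t,\theta^j\omega}(\nu_{\theta^{j+1}\omega})(\1)\ge 1/C(M_0)$ from Lemma~\ref{measurelower}, and using the elementary identity
$$
KC(M_0)\cdot |F^{j+1}_\omega(0)|^{-t}\cdot a_{j+1,\,n-1}(\omega) = a_{j,n}(\omega),
$$
which is immediate from the definition \eqref{ajn}, I conclude
$$
\Phi(\nu)_{\theta^j\omega}(V_{j,n,\omega})
= \frac{\mathcal L^*_{t,\theta^j\omega}(\nu_{\theta^{j+1}\omega})(V_{j,n,\omega})}{\mathcal L^*_{t,\theta^j\omega}(\nu_{\theta^{j+1}\omega})(\1)}
\le a_{j,n}(\omega)\cdot b_{n+j}(\omega),
$$
which is exactly $W_n$ for $\Phi(\nu)$, closing the induction.

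The main obstacle, and the step where care is required, is the Koebe distortion estimate. One must verify that the inverse branch $F^{-1}_{\theta^j\omega,*}$ extends univalently to a ball strictly larger than $B(F^{j+1}_\omega(0), r_0)$, and that $F_{\theta^j\omega}$ is genuinely injective on $V_{j,n,\omega}$. Both follow from the fact that $|F^{j+1}_\omega(0)|\ge A e^A \gg r_0$, so that $B(F^{j+1}_\omega(0), 2r_0)$ stays well away from $0$, leaving room for a single-valued logarithmic branch; meanwhile, the iterated inverse branch contracts $V_{j,n,\omega}$ to a set of diameter $\preceq r_0/\bigl(|F^{j+1}_\omega(0)|\cdots|F^{j+n}_\omega(0)|\bigr)$, far smaller than $2\pi$, guaranteeing injectivity of $F_{\theta^j\omega}$ there.
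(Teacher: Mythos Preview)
Your argument is correct and matches the paper's proof almost exactly: both reduce $W_n$ for $\Phi(\nu)$ to $W_{n-1}$ for $\nu$ via the transfer operator identity, the derivative bound $\le K|F^{j+1}_\omega(0)|^{-t}$, Lemma~\ref{measurelower} for the denominator, and the recursion $KC(M_0)|F^{j+1}_\omega(0)|^{-t}a_{j+1,n-1}(\omega)=a_{j,n}(\omega)$. One cosmetic remark: calling this ``induction'' is slightly misleading, since you never use that $\Phi(\nu)$ satisfies $W_{n-1}$; rather, each $W_n$ for $\Phi(\nu)$ is obtained directly from $W_{n-1}$ for $\nu$, all of which hold by the hypothesis $\nu\in\mathcal P$ --- this is exactly how the paper phrases it. Also, where you invoke Koebe, the paper simply uses $|(F^{-1}_{\theta^j\omega,*})'(y)|=|y|^{-1}$ (from $F'_\eta=F_\eta$) and bounds $|y|^{-t}$ directly since $y\in B(F^{j+1}_\omega(0),r_0)$; your Koebe route reaches the same conclusion.
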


\begin{proof}
It was proved in Proposition~\ref{W_0} that then $\Phi(\nu)$ satisfies the condition $W_0$. So, below, we prove that all the conditions $W_n$, $n\ge 1$, are satisfied.
We estimate as follows:
$$
\aligned
\L^*_{t,\theta^j\omega}&\nu_{\theta^{j+1}\omega}\(F^{-n}_{\theta^j\omega,*}(B(F^{n+j}_\omega(0),r_0))\)=\\
&=\nu_{\theta^{j+1}\omega}\lt(\L_{t,\theta^j\omega}\1_{F^{-n}_{\theta^j\omega,*}\(B(F^{n+j}_\omega(0,r_0))\)}\rt)
=\int_{F^{-(n-1)}_{\theta^{j+1}\omega,*}\(B(F^{n+j}_\omega(0),r_0)\)}\big|(F^{-1}_{\theta^j\omega,*})'(y)\big|^td\nu_{\theta^{j+1}\omega}(y) \\
&=\int_{F^{-(n-j)}_{\theta^{j+1}\omega,*}\(B(F^{n+j}_\omega(0),r_0)\)}|y|^{-t}d\nu_{\theta^{j+1}\omega}(y)\\
&\le K|F^{j+1}_\omega(0)|^{-t} \nu_{\theta^{j+1}\omega}\(F^{-(n-1)}_{\theta^{j+1}\omega}(B(F^{n+j}_\omega(0),r_0)\)).
\endaligned
$$
Thus, using the fact that $\L^*_{t,\omega}(\nu_{\theta\omega})(\1)\ge 1/C(M_0)$, known from Lemma~\ref{measurelower}, together with the estimate $W_n$ applied to the measure $\nu$, we get that
$$
\aligned
\Phi(\nu)_{\theta^j\omega}(F^{-n}_{\theta^j\omega,*}\(B(F^{n+j}_\omega(0),r_0)\)
&\le K C(M_0)|F^{j+1}_\omega(0)|^{-t}\nu_{\theta^{j+1}\omega}\(F^{-(n-1)}_{\theta^{j+1}\omega,*}(B(F^{n+j}_\omega(0),r_0))\)\\
&\le K C(M_0)|F^{j+1}_\omega(0)|^{-t}a_{j+1,n-1}(\omega) b_{j+1+n-1}(\omega)\\
&= K\cdot C(M_0)|F^{j+1}_\omega(0)|^{-t} a_{j+1,n-1}(\omega)b_{j+n}(\omega)\\
&=\kappa\cdot |F^{j+1}_\omega(0)|^{-t}\cdot a_{j+1,n-1}(\omega) b_{j+n}(\omega)\\
&=a_{j,n}(\omega)b_{j+n}(\omega).
\endaligned
$$
Thus, the measure $\Phi(\nu)$ satisfies all conditions $W_n$, $n\ge 1$, and the proof is complete.
\end{proof}

Before stating the next proposition, let us recall that the definition of the space $\mathcal P$ depends on the constant $M_0$, which we assumed to be large enough to for the hypotheses of Corollary~\ref{cor:large M0} to be satisfied.  Proposition~\ref{prop:left} below will impose one more condition on $M_0$.

\begin{prop}\label{prop:left} If $M_0>0$ is large enough then for every $\nu\in \mathcal P$ and $m$--a.e. $\omega\in\Omega$, we have that
$$
\Phi(\nu)_{\omega}(Y_{M_0}^-)
\le (C/c)M^{2t-1}_0(\eta(\omega))^{2t+7} e^{M_0t}\sum_{k=1}^\infty\exp(-(2t+7)kM_0)(M_0^{t-1})^{\log k+\log M_0+2}
<1/4.
$$
\end{prop}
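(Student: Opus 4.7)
The plan is to bound the numerator $\mathcal{L}^*_{t,\omega}(\nu_{\theta\omega})(Y_{M_0}^-)$ explicitly, then divide by the lower bound on $\mathcal{L}^*_{t,\omega}(\nu_{\theta\omega})(\1)$ from Lemma~\ref{measurelower}. The key geometric observation is that for $w\in Y_{M_0}^-$ one has $|F_\omega(w)|=\eta(\omega)e^{\Re w}<\eta(\omega)e^{-M_0}$, so $F_\omega(Y_{M_0}^-)\subset B(0,\eta(\omega)e^{-M_0})$. Taking $M_0$ large enough that $\eta(\omega)e^{-M_0}<\pi$, for each $z\in B(0,\eta(\omega)e^{-M_0})$ exactly one of the preimages $w_k=\log((z+2k\pi i)/\eta(\omega))$ (namely $k=0$) lies in $Y_{M_0}^-$, and its derivative satisfies $|F'_\omega(w_0)|=|z|$. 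Hence
$$
\mathcal{L}_{t,\omega}(\1_{Y_{M_0}^-})(z)=|z|^{-t}\1_{B(0,\eta(\omega)e^{-M_0})}(z),
$$
so that
$$
\mathcal{L}^*_{t,\omega}(\nu_{\theta\omega})(Y_{M_0}^-)=\int_{B(0,\eta(\omega)e^{-M_0})}|z|^{-t}\,d\nu_{\theta\omega}(z).
$$

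I would then decompose this ball into the annuli $P_k(\omega):=\{\eta(\omega)e^{-(k+1)M_0}<|z|\le\eta(\omega)e^{-kM_0}\}$, $k\ge 1$, exactly as in the proof of Lemma~\ref{l1_2017_11_29}. On each annulus $|z|^{-t}\le\eta(\omega)^{-t}e^{(k+1)M_0t}$. For $\nu_{\theta\omega}(P_k(\omega))$ I would apply the sharp estimate of Lemma~\ref{kula_wokol_zera}, namely
$$
\nu_{\theta\omega}(B(0,\eta(\omega)e^{-kM_0}))\le C(M_0^{t-1})^{n_{\theta\omega}(\eta(\omega)e^{-kM_0})+2}(\eta(\omega)e^{-kM_0})^{3t+7},
$$
and use Lemma~\ref{ll} to replace $n_{\theta\omega}(\cdot)$ by $\log k+\log M_0$ (this is the source of the exponent $\log k+\log M_0+2$ in the statement). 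Multiplying the two estimates on $P_k(\omega)$ collapses the $\eta$-factors and the exponentials in $k$ to $\eta(\omega)^{2t+7}e^{M_0t}e^{-(2t+7)kM_0}$, and summing over $k$ produces precisely the series appearing in the proposition.

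For the final step I would divide by $\mathcal{L}^*_{t,\omega}(\nu_{\theta\omega})(\1)\ge c/M_0^{t-1}$ from Lemma~\ref{measurelower}, which accounts for the remaining power of $M_0$ in front of the sum. The second inequality, that this is $<1/4$, reduces to observing that the dominant term of the series ($k=1$) contributes roughly $e^{M_0t}e^{-(2t+7)M_0}=e^{-(t+7)M_0}$, and this exponential decay in $M_0$ crushes every polynomial (and even super-polynomial $M_0^{(t-1)\log M_0}$) factor as $M_0\to\infty$; hence the whole expression tends to zero with $M_0$, and in particular is $<1/4$ for $M_0$ large.

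The main technical obstacle I anticipate is keeping track of the two competing super-polynomial contributions: the exponent $(M_0^{t-1})^{\log k+\log M_0+2}$ grows like $\exp((t-1)(\log M_0)^2)$ from the $k=0$ term, and one must verify that the geometric factor $e^{-(t+7)M_0}$ really overcomes this, uniformly in the choice of $\nu\in\mathcal{P}$ and $m$-a.e.\ $\omega$. Once this elementary calculus check is made, the rest is bookkeeping: carefully carrying out the partition argument, applying Lemmas~\ref{lower}, \ref{kula_wokol_zera}, \ref{ll}, \ref{measurelower}, and invoking only those properties of $\nu\in\mathcal{P}$ (condition $W_n$) already propagated through Lemma~\ref{kula_wokol_zera}.
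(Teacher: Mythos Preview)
Your proposal is correct and follows essentially the same approach as the paper: the same geometric observation that $F_\omega(Y_{M_0}^-)=B(0,\eta(\omega)e^{-M_0})$ with a unique preimage there, the same annular decomposition $P_k(\omega)$, and the same appeal to Lemmas~\ref{kula_wokol_zera}, \ref{ll}, and \ref{measurelower}. The only difference is cosmetic bookkeeping in the exponents.
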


\begin{proof} 
Given $\omega\in\Omega$, we have 
$$
F_{\omega}\(Y_{M_0}^-\)
=B(\omega)=\big\{z\in Q:|z|<\eta(\omega) e^{-M_0}\big\}.
$$
Note also that for every point $z\in B(\om)$ the set
$$
F_{\omega}^{-1}(z)\cap Y_{M_0}^-
$$
is a singleton. Denote it by $w$ and note that
$$
F'_{\omega}(w)=z.
$$
Utilizing the annuli $P_k(\omega)$, introduced in the proof of Lemma~\ref{l1_2017_11_29}, and using Lemma~\ref{ll}, we may assume $M_0>0$ to be so large that, if $z\in P_k(\omega)$, then $n_{\th\om}(|z|)<\log k+\log M_0-2$. So, applying \eqref{wokol_zera} and Lemma~\ref{kula_wokol_zera}, we can thus estimate as follows: 
$$
\aligned
\L^*\nu_{\omega}(Y_{M_0}^-)
&=\nu_{\th\omega}\(\L_{\omega}(\1_{Y_{M_0}^-})\)
=\sum_{k=1}^\infty\int_{P_k(\omega)}\frac{1}{|z|^t}d\nu_{\th\omega}
\le\sum_{k=1}^\infty\sup_{z\in P_k(\omega)}\frac{1}{|z|^t}\cdot \nu_{\th\om}(P_k(\omega))\\
&\le C(\eta(\omega))^{s-t} e^{M_0t}\sum_{k=1}^\infty \exp 
\(\!\!-k(s-t)M_0\) ( M_0^{t-1})^{(n_{\th\om}(\eta(\omega))+2)}\\
&\le C(\eta(\omega))^{s-t} e^{M_0t}\sum_{k=1}^\infty\exp\(\!\!-k(s-t)M_0\)(M_0^{t-1})^{\log k+\log M_0}\\
&=C(\eta(\omega))^{2t+7} e^{M_0t}\sum_{k=1}^\infty\exp\(-(2t+7)kM_0\)(M_0^{t-1}\)^{\log k+\log M_0}.
\endaligned
$$
Therefore, invoking now Lemma~\ref{measurelower}, we get
$$
\Phi(\nu)_{\omega}(Y_{M_0}^-)
\le (C/c)B^{2t+7}M^{t-1}_0 e^{M_0t}\sum_{k=1}^\infty\exp\(-(2t+7)kM_0\)(M_0^{t-1})^{\log k+\log M_0}<1/4,
$$
the last inequality holding provided that $M_0>0$ is large enough.
\end{proof}

\medskip Now, fix  $M_0>0$ so large as to satisfy all the above estimates. 
Let us summarize the above sequence of propositions:

\medskip\begin{itemize} 
\item Let $\nu\in\mathcal P$. 

\medskip\item Then Lemma~\ref{right_estimate} shows that  $\Phi(\nu)$ satisfies the estimate \eqref{3.1}. 

\medskip\item Next, Corollary~\ref{cor:large M0} and Proposition~\ref{prop:left} guarantee that $\Phi(\nu)$ satisfies condition~\eqref{2.1}.

\medskip\item Finally, Propositions~ \ref{W_0} and ~\ref{inductive} guarantee that the conditions $W_0,W_1,\dots W_{n}\dots$ hold for $\Phi(\nu)$.
\end{itemize}

The final conclusion of this section is thus the following.

\begin{prop}\label{p15_2017_12_05}
If $\P$ is the set of all measures in $\mathcal M_m$ satisfying the conditions of Definition~\ref{def:p}, with the appropriate constants $M_0$, $c(M_0)$, and $C(M_0)$, determined in the last two sections, then 
$$
\Phi(\mathcal P)\subset \mathcal P.
$$
\end{prop}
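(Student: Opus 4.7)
The plan is to fix $\nu \in \mathcal P$ and then verify, one by one, that $\Phi(\nu)$ satisfies each of the three defining requirements of Definition~\ref{def:p}: the mass bound \eqref{2.1} on $Q_{M_0}$, the decay estimate \eqref{3.1} on the right tails $Y_M^+$, and the whole family of conditions $W_n$ for $n \ge 0$. Since nearly every individual verification has been prepared as a stand-alone proposition in the preceding two sections, the proof is essentially an orchestration: the bulk of the work lies in having chosen $M_0$ large enough so that the constants produced by $\Phi$ do not outgrow the ones used to define $\mathcal P$.

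First I would dispatch \eqref{3.1} by quoting Lemma~\ref{right_estimate}, which produces the constant $DC(M_0) + C$ in place of $c(M_0)$; because $M_0$ was fixed so that $DC(M_0) + C < 2DC(M_0) = c(M_0)$, the estimate \eqref{3.1} transfers to $\Phi(\nu)$ unchanged. Next, to verify the balance condition \eqref{2.1}, I would decompose $Q \setminus Q_{M_0} = Y^+_{M_0} \cup Y^-_{M_0}$ and invoke two independent $\le 1/4$ bounds: Corollary~\ref{cor:large M0} for the right tail and Proposition~\ref{prop:left} for the left tail. Summing these gives $\Phi(\nu)_\omega(Q_{M_0}) \ge 1/2$ for $m$-a.e.\ $\omega$, which is exactly \eqref{2.1}.

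Finally, I would handle the countable family of conditions $W_n$. The base case $n=0$ follows from Proposition~\ref{W_0}: the image $F_{\theta^j\omega}(B(F^j_\omega(0), r_0))$ is contained in the tail $Y^+_M$ with $M = \tfrac12 |F^{j+1}_\omega(0)|$, and combining the multiplicity bound $\tfrac{K r_0 |F^{j+1}_\omega(0)|}{2\pi}$ with the already-verified \eqref{3.1} and the lower bound from Lemma~\ref{measurelower} yields exactly $b_j(\omega)$. For $n \ge 1$, Proposition~\ref{inductive} performs the induction: applying $\mathcal L^*_{t,\theta^j\omega}$ to $\nu_{\theta^{j+1}\omega}$ restricted to $F^{-n}_{\theta^j\omega,*}(B(F^{n+j}_\omega(0), r_0))$, one uses Koebe distortion to extract the factor $K|F^{j+1}_\omega(0)|^{-t}$, then applies $W_{n-1}$ to $\nu$, then divides by the lower bound from Lemma~\ref{measurelower}; the recursive identity $a_{j,n}(\omega) = K C(M_0) |F^{j+1}_\omega(0)|^{-t} a_{j+1,n-1}(\omega)$ is precisely what makes the constants close up.

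The genuine obstacle, which has in fact been resolved beforehand, is the calibration of $M_0$: the same constant $M_0$ must simultaneously be large enough for Lemma~\ref{right_estimate}, Corollary~\ref{cor:large M0}, and Proposition~\ref{prop:left} to deliver their sub-threshold bounds, all while the geometric-series style factors $(M_0^{t-1})^{\log k + \log M_0 + 2}$ on the left tail are absorbed by the exponential decay $\exp(-(2t+7)kM_0)$. Once one accepts the choice of $M_0$ made in the last two sections, the conclusion $\Phi(\mathcal P) \subset \mathcal P$ reduces to collecting the six preceding results in the order above.
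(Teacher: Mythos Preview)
Your proposal is correct and follows essentially the same approach as the paper: the paper's own proof is precisely the four-item summary you give, invoking Lemma~\ref{right_estimate} for \eqref{3.1}, Corollary~\ref{cor:large M0} together with Proposition~\ref{prop:left} for \eqref{2.1}, and Propositions~\ref{W_0} and~\ref{inductive} for the conditions $W_n$. Your additional commentary on the calibration of $M_0$ and the recursive identity for $a_{j,n}$ is accurate and merely makes explicit what the paper leaves implicit.
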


\medskip

\section{Random Conformal Measures for Random Exponential Functions; the Final Step}

\medskip 
Since for every intger $l\ge 1$, we have $lM_0\ge M_0$, Proposition~\ref{prop:left} entails the following.
\begin{prop}\label{prop:left_tight}
If $\nu\in\mathcal P$, where $\mathcal P$ comes from Proposition~\ref{p15_2017_12_05}, then for every $l\in \mathbb N$, we have that
\begin{equation}\label{left_tight}
(\Phi(\nu)_{\omega})(Y_{lM_0}^-)\le S(l)
\end{equation}
where 
$$
S(l):=(C/c)B^{2t+7}(M_0l)^{t-1} e^{M_0tl}
\sum_{k=1}^\infty\exp\(-(2t+7)kM_0l\)(M_0^{t-1})^{\log k+\log M_0+l}
$$
and
\begin{equation}\label{2_2017_12_15}
\lim_{l\to\infty}S(l)=0.
\end{equation}
\end{prop}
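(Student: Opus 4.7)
The plan is to re-run the proof of Proposition~\ref{prop:left} almost verbatim, with the threshold $M_0$ replaced by $lM_0$ throughout, and then to analyze the resulting expression as $l\to\infty$. The hint in the paper, that $lM_0\ge M_0$, indicates precisely that all estimates used in the previous proof remain valid with the stronger cut-off, so no new ingredients are required beyond careful bookkeeping of the dependence on $l$.

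More concretely: given $\omega\in\Omega$, one has $F_\omega(Y_{lM_0}^-)=\{z\in Q:|z|<\eta(\omega)e^{-lM_0}\}$, and on $Y_{lM_0}^-$ the map $F_\omega$ is one-to-one. I would partition this target ball into the annuli
$$
P_k^{(l)}(\omega):=\big\{z\in Q:\eta(\omega)e^{-(k+1)lM_0}<|z|\le\eta(\omega)e^{-klM_0}\big\},\quad k\ge 1,
$$
which are exactly the annuli $P_k$ of Lemma~\ref{l1_2017_11_29} with $M_0$ replaced by $lM_0$. On each such annulus one has $|z|^{-t}\le B^{-t}\exp\bigl(t(k+1)lM_0\bigr)$. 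Lemma~\ref{ll}, combined with the definition of $r_n(\omega)$, shows (for $M_0$ large) that for $z\in P_k^{(l)}(\omega)$ the integer $n_{\theta\omega}(|z|)$ is at most $\log k+\log M_0+l$; consequently \eqref{wokol_zera} in Lemma~\ref{kula_wokol_zera} gives
$$
\nu_{\theta\omega}\bigl(P_k^{(l)}(\omega)\bigr)\le C\cdot(M_0^{t-1})^{\log k+\log M_0+l+2}\cdot\bigl(Be^{-klM_0}\bigr)^{s},\qquad s=3t+7.
$$
Summing, exactly as in Proposition~\ref{prop:left}, produces
$$
\mathcal L^*_{t,\omega}\nu_{\theta\omega}(Y_{lM_0}^-)\le CB^{2t+7}e^{lM_0 t}\sum_{k=1}^\infty e^{-(2t+7)klM_0}(M_0^{t-1})^{\log k+\log M_0+l+2},
$$
and dividing by the lower bound $\mathcal L^*_{t,\omega}\nu_{\theta\omega}(\1)\ge 1/C(M_0)=c/M_0^{t-1}$ of Lemma~\ref{measurelower} yields the claimed estimate $\Phi(\nu)_\omega(Y_{lM_0}^-)\le S(l)$.

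For the limit \eqref{2_2017_12_15} I would write $(M_0^{t-1})^l=\exp\bigl(l(t-1)\log M_0\bigr)$ and isolate the dominant $k=1$ term, whose exponent is
$$
lM_0 t-(2t+7)lM_0+l(t-1)\log M_0 \;=\; -l\bigl[M_0(t+7)-(t-1)\log M_0\bigr].
$$
Since $M_0$ has been fixed large enough (as in Propositions~\ref{prop:left} and \ref{right_estimate}), the bracket is strictly positive, so this term decays geometrically in $l$. The remaining terms in the sum are dominated by a geometric series with ratio $e^{-(2t+7)lM_0}$, which vanishes even faster, while the outer polynomial prefactor $(M_0 l)^{t-1}$ cannot compensate for the exponential decay. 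Hence $S(l)\to 0$.

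The main obstacle is of a bookkeeping nature: one must check that the additive term $+l$ appearing in the exponent $\log k+\log M_0+l$ of $M_0^{t-1}$ is indeed the correct estimate for $n_{\theta\omega}(|z|)$ uniformly over $\omega\in\Omega$ and $z\in P_k^{(l)}(\omega)$, since this is the only place where $l$ enters non-trivially into the sum beyond the obvious substitution $M_0\mapsto lM_0$. Once this uniform estimate is verified via Lemma~\ref{ll}, the rest of the proof is a routine repetition.
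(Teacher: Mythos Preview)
Your proposal is correct and follows precisely the approach the paper indicates: the paper's entire ``proof'' is the one-line remark that since $lM_0\ge M_0$, Proposition~\ref{prop:left} applies verbatim with the threshold $M_0$ replaced by $lM_0$, and you have spelled out exactly this substitution together with the limit argument for \eqref{2_2017_12_15}, which the paper omits. The only cosmetic point is that dividing by the lower bound of Lemma~\ref{measurelower} yields the factor $M_0^{t-1}/c$ rather than $(M_0l)^{t-1}/c$, so you actually obtain a bound slightly smaller than $S(l)$; this is of course still $\le S(l)$ and does not affect either conclusion.
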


If $\P$ is the set produced in Proposition~\ref{p15_2017_12_05}, then we denote
by $\hat\P$ its subset consisting of all those measures $\nu$ for which
\begin{equation}\label{1_2017_12_15}
\nu_\om(Y_{lM_0}^-)\le S(l)
\end{equation}
for $m$--a.e. $\om\in \Om$ and all integers $l\ge 1$.
Because of Proposition~\ref{p15_2017_12_05} and Proposition~\ref{prop:left_tight}, we have the following.

\begin{prop}\label{prop: R_invariant} 
If $\P$ is the set produced in Proposition~\ref{p15_2017_12_05}, then 
$$
\Phi(\P)\sbt \hat\P
$$
and 
$$
\Phi(\hat\P)\subset \hat\P.
$$
\end{prop}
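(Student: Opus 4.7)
The plan is to observe that Proposition~\ref{prop: R_invariant} is essentially a packaging statement: both inclusions follow immediately from the two preceding propositions once one recognizes that the bound \eqref{1_2017_12_15} defining $\hat\P$ is exactly the bound \eqref{left_tight} that Proposition~\ref{prop:left_tight} guarantees for the image measure.

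First I would verify $\Phi(\P)\sbt\hat\P$. Fix $\nu\in\P$. On the one hand, Proposition~\ref{p15_2017_12_05} tells us that $\Phi(\nu)\in\P$, so $\Phi(\nu)$ belongs to $\mathcal M_m$ and satisfies all of the defining requirements of $\P$ (namely \eqref{2.1}, \eqref{3.1}, and all conditions $W_n$, $n\ge 0$). On the other hand, Proposition~\ref{prop:left_tight} asserts precisely that for every integer $l\ge 1$ and $m$--a.e.\ $\om\in\Om$,
$$
\Phi(\nu)_\om(Y_{lM_0}^-)\le S(l),
$$
which is the extra bound \eqref{1_2017_12_15} defining membership in $\hat\P$. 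Combining these two statements we conclude $\Phi(\nu)\in\hat\P$, which gives the first inclusion.

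For the second inclusion $\Phi(\hat\P)\sbt\hat\P$, I would simply note that $\hat\P\sbt\P$ by construction (since $\hat\P$ is defined as a subset of $\P$ by imposing an additional condition). Therefore
$$
\Phi(\hat\P)\sbt\Phi(\P)\sbt\hat\P,
$$
where the last inclusion is what was just established.

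There is essentially no obstacle here; all the technical work has already been carried out in Sections~\ref{sec:inv} and \ref{s9_2017_12_05}, and the present proposition is a bookkeeping consequence. The only thing to be careful about is that the bound produced by Proposition~\ref{prop:left_tight} holds \emph{for every} $\nu\in\P$ (not only for $\nu\in\hat\P$), which is what allows the first inclusion to promote $\Phi(\nu)$ from $\P$ into the smaller set $\hat\P$, thereby making the iteration argument $\Phi(\hat\P)\sbt\hat\P$ automatic.
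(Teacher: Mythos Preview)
Your proof is correct and follows exactly the same approach as the paper, which simply cites Proposition~\ref{p15_2017_12_05} and Proposition~\ref{prop:left_tight} without further elaboration. You have merely spelled out explicitly what the paper leaves implicit: that $\Phi(\nu)\in\P$ together with the bound \eqref{left_tight} yields $\Phi(\nu)\in\hat\P$, and that the second inclusion then follows from $\hat\P\sbt\P$.
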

\begin{prop}\label{prop:R tight}
If $\P$ is the set produced in Proposition~\ref{p15_2017_12_05}, then the set $\hat\P$ is nonempty, convex and compact with respect to the narrow topology on $\mathcal M_m$.
\end{prop}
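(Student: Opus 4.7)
The plan is to verify the three properties separately — convexity is immediate, nonemptiness follows from Proposition~\ref{prop: R_invariant}, and compactness reduces via Crauel's Prokhorov Theorem (Theorem~\ref{crauel-prokhorow}) to tightness plus closedness in the narrow topology.

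\textbf{Convexity.} Every defining condition of $\hat{\mathcal P}$ --- \eqref{2.1}, \eqref{3.1}, all $W_n$, and \eqref{1_2017_12_15} --- is an affine (in)equality in $\nu$ on a fixed Borel set at a fixed $\om$, and $\mathcal M_m$ is itself convex. Therefore $\hat{\mathcal P}$ is convex.

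\textbf{Nonemptiness.} By Proposition~\ref{prop: R_invariant} it suffices to know $\mathcal P\ne\emptyset$. I would establish this by exhibiting a specific random measure $\nu$, independent of $\om$, supported in $Q_{M_0}$ with $\nu_\om(Q_{M_0})=1$: \eqref{2.1} and \eqref{3.1} are then immediate, and condition $W_n$ reduces to a planar area bound on the small preimage disks $F^{-n}_{\theta^j\om,*}(B(F^{n+j}_\om(0),r_0))$, controlled by Koebe's $\tfrac14$ Theorem together with the rapid decay built into $b_{n+j}(\om)$ as $|F^{n+j+1}_\om(0)|\to\infty$, provided $M_0$ is sufficiently large.

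\textbf{Tightness.} For $\ve>0$ and $l\in\N$, split $Y_{lM_0}=Y_{lM_0}^+\cup Y_{lM_0}^-$. Condition \eqref{3.1} gives $\nu_\om(Y_{lM_0}^+)\le c(M_0)e^{lM_0(1-t)/2}$, while the defining inequality \eqref{1_2017_12_15} gives $\nu_\om(Y_{lM_0}^-)\le S(l)$. Both bounds are independent of $\om$ and tend to $0$ as $l\to\infty$ (by $t>1$ and \eqref{2_2017_12_15} respectively), so for $l$ sufficiently large their sum is below $\ve$. Integrating over $\om$ against $m$ yields $\nu(\Om\times Q_{lM_0})\ge 1-\ve$ uniformly over $\nu\in\hat{\mathcal P}$. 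Theorem~\ref{crauel-prokhorow} then gives relative sequential compactness of $\hat{\mathcal P}$ in the narrow topology.

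\textbf{Closedness.} This is the step I expect to be the main obstacle, since the defining conditions are pointwise in $\om$, whereas narrow convergence is tested against random continuous functions. My plan: for each condition, test narrow convergence against $(\om,z)\mapsto \mathbf{1}_B(\om)\varphi_k(\om,z)$ for arbitrary measurable $B\sbt\Om$ and suitable $\varphi_k$ continuous in $z$. For an upper bound $\nu_\om(A(\om))\le \alpha(\om)$ with $A(\om)\sbt Q$ open, approximate $\mathbf{1}_{A(\om)}$ from below by the continuous-in-$z$ truncations $\varphi_k(\om,z):=\min\{1,k\,\dist(z,Q\sms A(\om))\}$; these are measurable in $\om$ because $A(\om)$ depends measurably on $\om$ (through the measurable maps $\om\mapsto F^{n+j}_\om(0)$ and the measurably-parametrized holomorphic branches), so $\mathbf{1}_B\varphi_k\in C_b(\Om\times Q)$. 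Passing $j\to\infty$ by narrow convergence and then $k\to\infty$ by monotone convergence on each fiber, the inequality $\nu^{(j)}_\om(A(\om))\le\alpha(\om)$ yields $\int_B\nu_\om(A(\om))\,dm\le \int_B\alpha(\om)\,dm$, and the arbitrariness of $B$ gives the condition $m$-a.e., which we restore to pointwise form by a null-set modification. The lower bound \eqref{2.1} is handled dually, approximating $\mathbf{1}_{Q_{M_0}}$ from above by continuous functions (possible since $Q_{M_0}$ is compact). Combined with the tightness above, this yields compactness of $\hat{\mathcal P}$.
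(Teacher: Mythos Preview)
Your outline for convexity, tightness, and closedness is fine and in fact more careful than the paper's own argument (the paper just asserts that constraint sets of the form $\{\nu:\nu_\om(A_\om)\le g(\om)\}$ are narrowly closed). The real problem is your nonemptiness step.

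You propose to take $\nu_\om$ to be a fixed (Lebesgue-type) measure on $Q_{M_0}$ and verify $W_n$ via an area bound on the preimage disks. This does not work. The preimage $F^{-n}_{\theta^j\om,*}(B(F^{n+j}_\om(0),r_0))$ has area of order $r_0^2\prod_{i=1}^n|F^{j+i}_\om(0)|^{-2}$, whereas the right side of $W_n$ is $a_{j,n}(\om)b_{n+j}(\om)\asymp (KC(M_0))^n\prod_{i=1}^n|F^{j+i}_\om(0)|^{-t}\cdot e^{-\frac{t-1}{4}|F^{n+j+1}_\om(0)|}$. The exponential factor $e^{-\frac{t-1}{4}|F^{n+j+1}_\om(0)|}$ decays doubly-exponentially in $n$, far faster than any polynomial-in-$|F^i|$ gain you get from the area. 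So for $j$ small (e.g.\ $j=0$) and $n$ large, the area bound is enormously larger than $a_{j,n}b_{n+j}$, and $W_n$ fails. In other words, the ``rapid decay built into $b_{n+j}$'' works \emph{against} you here, not for you: it makes the target you must hit impossibly small.

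The paper sidesteps this entirely. For each $\om$ it takes $\nu_\om$ to be normalized Lebesgue measure on
\[
Z_\om:=Q_{M_0}\setminus \bigcup_{j=0}^\infty B\bigl(F^j_{\theta^{-j}\om}(0),r_0\bigr).
\]
The point is that $F^{-n}_{\theta^j\om,*}(B(F^{n+j}_\om(0),r_0))\subset B(F^j_\om(0),r_0)$, and $B(F^j_\om(0),r_0)$ is precisely one of the excised balls in $Z_{\theta^j\om}$ (write $F^j_\om(0)=F^j_{\theta^{-j}(\theta^j\om)}(0)$). Hence the left side of every $W_n$ is identically zero, and the conditions hold trivially. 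Only finitely many of the excised balls meet $Q_{M_0}$, so $Z_\om$ has positive area and the normalization makes sense. With $\mathcal P\ne\emptyset$ thus established, $\hat{\mathcal P}\ne\emptyset$ follows from $\Phi(\mathcal P)\subset\hat{\mathcal P}$ as you said.
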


\begin{proof}
First, we shall prove that the set $\P$ produced in Proposition~\ref{p15_2017_12_05} is non-empty. Indeed, define $\nu$  in the following way: for every $\omega\in\Om$ consider the set
$$
Z_\omega:=Q_{M_0}\setminus \bigcup_{j=0}^\infty B\(F_{\th^{-j}\om}^j(0),r_0)\).
$$
Let $\nu_\omega$ be just the normalized Lebesgue measure on $Z_\omega$. Since $\supp(\nu_\omega)\subset Q_{M_0}$, the conditions \eqref{2.1} and \eqref{3.1} are trivially satisfied.
Since, for every  $j\in\mathbb{Z}$ and every $n\ge 0$,
$$
F^{-n}_{\theta^j\omega, *}\(B(F^{n+j}_\omega(0),r_0\)\subset B(F^j_\omega(0),r_0),
$$
all the conditions $W_n$, $n\ge 0$, are also trivially satisfied. So $\nu\in\P$. Then $\hat\P\ne\es$ because of the first part of Proposition~\ref{prop: R_invariant}.

Convexity of $\hat\P$ follows immediately from its definition. The uniform estimates provided by formula \eqref{3.1} and \eqref{1_2017_12_15} along with  \eqref{2_2017_12_15} show that the family $\hat\P$ is tight, thus relatively compact according to Theorem~\ref{crauel-prokhorow}. 

Finally, the set $\hat\P$ is closed with respect to the narrow topology on $\mathcal M_m$ because for every measurable set $A\sbt \Om\times Q$ and all measurable functions $g:\Om\to[0,+\infty)$, both the sets
$$
\big\{\nu\in \mathcal M_m:\nu_\om(A_\om)\le g(\om) 
\ \text{ for all} \ 
\om\in \Om\big\}
$$
and 
$$
\big\{\nu\in \mathcal M_m:\nu_\om(A_\om)\ge g(\om)
\  \text{ for all} \ \om\in \Om\big\}
$$
are closed in  $\mathcal M_m$ with respect to the narrow topology.
\end{proof}

Now, we can prove the main theorem of this section.

\begin{thm}
[Existence of $(\omega, t)$ conformal measures $\nu_{\omega}$]\label{t1_2016_10_08}
For every $t>1$ there exists a random $t$--conformal  measure $\nu^{(t)} \in\hat\P$. Recall that $t$--conformality means that
\begin{equation}\label{1_2017_03_28}
\L_{t,\omega}^*(\nu_{\theta\omega}^{(t)})=\lambda_{t,\omega}\nu_{\omega}^{(t)}
\end{equation}
for every $\om\in \Om$, where $\lambda_{t,\omega}:=\mathcal L^*_{t,\omega}\nu_{\theta\omega}^{(t)}(\1)$.
\end{thm}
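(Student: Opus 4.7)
My plan is to invoke the Schauder--Tychonoff fixed point theorem applied to the map $\Phi$ restricted to the set $\hat{\mathcal P}$, exactly as anticipated at the beginning of Section~5. All the ingredients have been built up in the preceding sections, so the argument is largely a matter of assembling them.

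First I would note that $\mathcal M(X)$, endowed with the narrow topology, is a locally convex Hausdorff topological vector space: its topology is generated by the seminorms $\nu\mapsto |\nu(g)|$, $g\in C_b(\Omega\times Q)$, and these seminorms separate points. By Proposition~\ref{prop:R tight} (using Proposition~\ref{p15_2017_12_05} to fix the set $\mathcal P$) the subset $\hat{\mathcal P}\subset \mathcal M_m\subset \mathcal M(X)$ is nonempty, convex, and compact with respect to this topology. By Proposition~\ref{prop: R_invariant} we have $\Phi(\hat{\mathcal P})\subset \hat{\mathcal P}$, and by Proposition~\ref{prop:phi_well_defined}, whose hypotheses are now in force on $\hat{\mathcal P}$ thanks to Lemmas~\ref{l2_2017_11_29}, \ref{l1_2017_12_05} and \ref{measurelower}, the map $\Phi:\hat{\mathcal P}\to\hat{\mathcal P}$ is continuous. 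The Schauder--Tychonoff theorem therefore yields a measure $\nu^{(t)}\in\hat{\mathcal P}$ with $\Phi(\nu^{(t)})=\nu^{(t)}$.

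Next I would translate fixedness into conformality. Writing out the disintegration, $\Phi(\nu^{(t)})=\nu^{(t)}$ means that for $m$--a.e.\ $\omega\in\Omega$,
$$
\nu^{(t)}_\omega
=\frac{\L_{t,\omega}^*(\nu^{(t)}_{\theta\omega})}{\L_{t,\omega}^*(\nu^{(t)}_{\theta\omega})(\1)}.
$$
Setting $\lambda_{t,\omega}:=\L_{t,\omega}^*(\nu^{(t)}_{\theta\omega})(\1)$, which is a positive, measurable, and (by Lemmas~\ref{l1_2017_12_05} and \ref{measurelower}) essentially bounded away from $0$ and $\infty$ function of $\omega$, we obtain exactly the desired identity \eqref{1_2017_03_28}. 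The equivalence with the geometric form of $t$--conformality, i.e.\ $\nu^{(t)}_{\theta\omega}(F_\omega(A))=\lambda_{t,\omega}\int_A |F_\omega'|^t\,d\nu^{(t)}_\omega$ for Borel sets $A$ on which $F_\omega$ is injective, is the standard transfer operator computation and was already pointed out at the beginning of Section~5.

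The main obstacle was not in the fixed point argument itself but in arranging the set $\hat{\mathcal P}$ so that Schauder--Tychonoff becomes applicable; that preparatory work has just been completed. In particular, the tightness estimates \eqref{3.1} and \eqref{1_2017_12_15} combined with \eqref{2_2017_12_15} (controlling mass escaping to $Y_M^+$ and $Y_M^-$) are what deliver compactness via Crauel's version of Prokhorov's theorem (Theorem~\ref{crauel-prokhorow}), while the delicate bounds on integrals of $\L_{t,\omega}\1$ near $0$ (Lemma~\ref{l1_2017_11_29}) are what make $\Phi$ continuous. Once these are in hand the fixed point assertion, and hence the theorem, follows.
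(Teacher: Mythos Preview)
Your proof is correct and follows exactly the paper's approach: apply Schauder--Tychonoff to the continuous map $\Phi:\hat{\mathcal P}\to\hat{\mathcal P}$, using Proposition~\ref{prop:R tight} for nonemptiness, convexity and compactness, Proposition~\ref{prop: R_invariant} for invariance, and Proposition~\ref{prop:phi_well_defined} for continuity. Your write-up is in fact more detailed than the paper's own two-line proof, spelling out the locally convex structure and the translation from $\Phi(\nu^{(t)})=\nu^{(t)}$ to \eqref{1_2017_03_28}.
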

\begin{proof}
Because of the second part of Proposition~\ref{prop: R_invariant} and because of \ref{prop:R tight}, the  Schauder--Tichonov Fixed Point Theorem applies to the continuous map $\Phi:\hat\P\to\hat\P$, thus yielding a fixed point of $\Phi$ in $\hat\P$. This just means that formula \eqref{1_2017_03_28} holds.
\end{proof}
Also recall that a, very useful in calculations, property equivalent to \eqref{1_2017_03_28}, which will be frequently used in the sequel, is that
\begin{equation}\label{7_2017_12_19}
\nu_{\theta\omega}^{(t)}(F_\om(A))
=\lambda_{t,\omega}\int_A\big|\(F_\om\)'\big|^t\,d\nu_{\omega}^{(t)}
\end{equation}
for every $\om\in \Om$ and for every Borel set $A\sbt Q$ such that $F_\om|_A$ is 1--to--1. By an immediate induction, we then get for every integer $n\ge 0$ the following.
\begin{equation}\label{6_2017_12_19}
\nu_{\theta\omega}^{(t)}(F_\om^n(A))
=\lambda_{t,\omega}^n\int_A\big|\(F_\om^n\)'\big|^t\,d\nu_{\omega}^{(t)}
\end{equation}
for every $\om\in \Om$ and for every Borel set $A\sbt Q$ such that $F_\om|_A$ is 1--to--1. Lemmas~\ref{l1_2017_12_05}, and \ref{measurelower} can be now reformulate as follows. There are two constants $0<p, P<+\infty$ such that 
\begin{equation}\label{bounds_on_lambda}
1/p\le \lambda_{t,\omega}\le P
\end{equation}
for $m$--a.e. $\om\in\Om$. Let us record the following property of the measure $\nu^{(t)}$.

\begin{prop}\label{prop:supp}
For $m$--a.e $\omega\in\Omega$ we have that 
$$
\supp(\nu_\omega^{(t)})=Q.
$$
Moreover, for all numbers $x>0$, $R>0$, and $\varepsilon\in(0,1)$ there exists a constant  $\xi=\xi(x,R,\varepsilon)>0$ and a measurable set $\Om(x,R,\varepsilon)$ such that
$$
m\(\Om(x,R,\varepsilon)\)>1-\varepsilon
$$
and for every $\omega\in\Omega(x,R,\varepsilon)$ and every $z\in Q_x$, we have that 
$$
\nu^{(t)}_\omega(B(z,R))\ge \xi.
$$
\end{prop}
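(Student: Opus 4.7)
I prove both assertions by first establishing pointwise almost-sure positivity---that is, $\nu_\omega^{(t)}(B(z,R))>0$ for each fixed $z\in Q$, $R>0$, and $m$--a.e.\ $\omega$---and then promoting to the uniform bound via compactness of $Q_x$ together with measurability. The full-support claim $\supp(\nu_\omega^{(t)})=Q$ then drops out by applying pointwise positivity to a countable base of the topology of $Q$. Since $B(z,R)\supset B(z,R')$ for $R'<R$, I assume throughout that $R<\pi$, so that every ball $B(z,R)$ in $Q$ lifts univalently to a disk in $\C$ and is simply connected.

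\textbf{Pointwise positivity.} Fix $z_0\in Q$, $R\in(0,\pi)$, and $\omega$ for which the conformality \eqref{1_2017_03_28} holds (every $\omega$, by Theorem~\ref{t1_2016_10_08}). By Theorem~\ref{CC} applied to $\mathbf a(\omega):=(\eta(\theta^k\omega))_{k\ge 0}\in[A,B]^{\mathbb N}$, one has $J_{\mathbf a(\omega)}=\C$. Combined with the arguments of Section~\ref{sec:julia} (Lemma~\ref{lem1} together with the ruling-out of bounded orbits from Lemma~\ref{prop1}), this forces $\sup_{z\in B(z_0,R/2)}|(F_\omega^n)'(z)|\to\infty$ as $n\to\infty$. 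Applying Bloch's theorem to the rescaled holomorphic lift of $F_\omega^n$ restricted to $B(z_0,R/2)$, for $n$ large enough one obtains a subdomain $A\subset B(z_0,R/2)$ on which $F_\omega^n$ is univalent as a map into $Q$ and whose image satisfies $F_\omega^n(A)\supset Q_{M_0}$. Setting $A':=A\cap(F_\omega^n|_A)^{-1}(Q_{M_0})$ and iterating the conformality \eqref{7_2017_12_19} $n$ times, one obtains
\[
\nu_{\theta^n\omega}^{(t)}(Q_{M_0})=\Bigl(\prod_{k=0}^{n-1}\lambda_{t,\theta^k\omega}\Bigr)\int_{A'}\bigl|(F_\omega^n)'\bigr|^t\,d\nu_\omega^{(t)}.
\]
Since $\nu_{\theta^n\omega}^{(t)}(Q_{M_0})\ge 1/2$ by \eqref{2.1}, the right-hand side is strictly positive, hence $\nu_\omega^{(t)}(A')>0$ and so $\nu_\omega^{(t)}(B(z_0,R/2))>0$.

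\textbf{Uniform bound.} Given $x,R,\varepsilon>0$, cover the compact set $Q_x$ by finitely many balls $B(z_i,R/4)$, $i=1,\ldots,N$. Each function $\omega\mapsto\nu_\omega^{(t)}(B(z_i,R/2))$ is measurable and $m$--a.s.\ positive by the previous step, so there exist $\xi_i>0$ and measurable sets $\Omega_i\subset\Omega$ with $m(\Omega_i)>1-\varepsilon/N$ and $\nu_\omega^{(t)}(B(z_i,R/2))\ge\xi_i$ for all $\omega\in\Omega_i$. Putting $\xi:=\min_i\xi_i>0$ and $\Omega(x,R,\varepsilon):=\bigcap_{i=1}^N\Omega_i$ yields $m(\Omega(x,R,\varepsilon))>1-\varepsilon$; and for any $\omega$ in this set and any $z\in Q_x$ lying in some $B(z_i,R/4)$, the inclusion $B(z_i,R/2)\subset B(z,R)$ gives $\nu_\omega^{(t)}(B(z,R))\ge\xi_i\ge\xi$.

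\textbf{Main obstacle.} The most delicate step is securing univalent coverage of $Q_{M_0}$ in the pointwise positivity argument: the Bloch chart naturally lives in the universal cover $\C$, and its projection to the cylinder $Q$ may wrap around vertically once its radius exceeds $\pi$, potentially destroying injectivity of $F_\omega^n$ as a map into the quotient. Since $B(z_0,R/2)$ is simply connected, $F_\omega^n$ does lift uniquely to a holomorphic map $f_\omega^n:B(z_0,R/2)\to\C$, and one must carve $A'$ out of the preimage of planar subdisks narrow enough (in the $\Im$-direction) to avoid wrap-around, while still projecting to cover all of $Q_{M_0}$---typically by piecing together finitely many univalent inverse branches on $Q_{M_0}$. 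A companion subtlety is to control the horizontal location of the Bloch center so that its image meets $Q_{M_0}$; this is handled by topological transitivity (a consequence of $J_{\mathbf a(\omega)}=\C$ via Montel's theorem), which forces $F_\omega^n(B(z_0,R/2))\cap Q_{M_0}\neq\emptyset$ for infinitely many $n$, and one rescales the Bloch chart around such an intersection.
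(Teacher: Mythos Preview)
Your approach to the uniform bound (finite cover of $Q_x$, intersection of high-probability sets) is correct and equivalent to the paper's (which uses a countable dense subset and takes an infimum over it to produce a single measurable function, but the content is identical).

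For pointwise positivity, however, your route is more intricate than necessary, and the paper sidesteps your ``Main obstacle'' entirely. Rather than producing a univalent subdomain $A\subset B(z_0,R/2)$ with $F_\omega^n(A)\supset Q_{M_0}$ in order to pull back positive measure, the paper argues by contraposition: one shows $F_\omega^{k+1}(B(z_0,R))\supset Q_{M_0}$ for large $k$ (by first hitting $\R$, then using that real orbits escape to $+\infty$ so that derivatives blow up along the real trajectory, so the forward images eventually contain large annuli and hence a copy of $Q_{M_0}$), and then observes that if $\nu_\omega^{(t)}(B(z_0,R))=0$, conformality forces $\nu_{\theta^{k+1}\omega}^{(t)}(F_\omega^{k+1}(B(z_0,R)))=0$ as well---contradicting $\nu_{\theta^{k+1}\omega}^{(t)}(Q_{M_0})\ge 1/2$. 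The point is that pushing a null set forward (after decomposing into countably many pieces on which the map is injective) requires no global univalence at all, so the wrap-around issue on the cylinder never arises.

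Your version can be salvaged, but as written the ``Main obstacle'' paragraph is a discussion rather than a proof: you need to actually locate a Bloch chart whose image in $\C$ contains a full vertical period strip $\{|\Re z|\le M_0,\ \Im z\in[y_0,y_0+2\pi)\}$ and argue injectivity of the projection, and you have not done so. The paper's contrapositive argument is both shorter and self-contained.
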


\begin{proof}
Let $z\in Q$, $r>0$. We need to check that 
$$
\nu^{(t)}_\omega(B(z,r))>0.
$$
Since $J(f_\omega)=\C$, there exists an integer $n=n(\omega,z,r)\ge 0$ such that $f^n_\omega(B(z,r))\cap\R\neq \emptyset$. So, there exists $z'\in B(z,r)$ such that $f^n_\omega(z')\in\R$. Since for every $\om\in\Om$ and every $w\in\R$,
$$
\lim_{k\to\infty}\(f_\om^k\)'(w)=\lim_{k\to\infty}f_\om^k(w)=+\infty,
$$
and since each map $f_\eta$ is 1--to--1 on each open ball with radius $\pi$, we first conclude that for all integers $k\ge 0$ large enough
$$
f_\om^k(B(z,r))\spt B\(f_\om^k(z'),\pi).
$$
Having this, using the above, we then immediately conclude that for given $S>0$, we have that
$$
f_\om^k(B(z,r))\spt B\(f_\om^k(z'),S)
$$
for all integers $k\ge 0$ large enough. Then the sets $f_\om^{k+1}(B(z,r))$ contain annuli centered at the origin with the ratio of the outer and inner radii as large as one wishes. These annuli in turn will contain some set of the form
$$
Q_{M_0}+2l\pi i,
$$ 
where $l\in \Z$. This yields 
\begin{equation}\label{5_2017_12_19}
F_\om^{k+1}(B(z,r))\supset Q_{M_0}
\end{equation}
for all integers $k\ge 0$ large enough. 
On the other hand, if $\nu_\omega^{(t)}(B(z,r))=0$ then, using conformality of the measures $\nu_\gamma$, $\gamma\in\Om$, i.e. using \eqref{6_2017_12_19}, we conclude that
$$
\nu^{(t)}_{\theta^{k+1}\omega}(F_\omega^{k+1}(B(z,r)))=0.
$$
This contradicts \eqref{2.1} and \eqref{5_2017_12_19}, finishing the proof of the first part of Proposition~\ref{prop:supp}.

\medskip In order to prove the second statement first note that in view of its first part, we have that for every radius $r>0$ and $m$--a.e. $\om\in\Om$, 
\begin{equation}\label{7_2017_12_19}
\xi_r(\om):=\inf\big\{\nu_\om(B(z,r)):z\in Q_x\big\}>0.
\end{equation}
Now, fix a countable dense subset $\Gamma$ of $Q_x$. Then the function
$$
\Om\ni\om\longmapsto \xi_R^*(\om):=\inf\big\{\nu_\om(B(z,R/2)):z\in \Gamma\big\}\in [0,1]
$$
is measurable and 
\begin{equation}\label{8_2017_12_19}
\xi_{R/2}(\om)\le \xi_R^*(\om)\le \xi_R(\om).
\end{equation}
In particular $\xi_R^*(\om)>0$ for $m$--a.e. $\om\in\Om$. Therefore, there exists $\xi>0$ so small that 
$$
m\((\xi_R^*)^{-1}((\xi,+\infty)\)>1-\varepsilon.
$$
Hence, taking 
$$
\Om(x,R,\varepsilon):=m\((\xi_R^*)^{-1}((\xi,+\infty)\)
$$
and taking into account the right--hand part of completes the proof. 
\end{proof}

\medskip
Now we shall prove a lemma which is of more restricted scope than Proposition~\ref{prop:supp} but which gives estimates uniform with respect to all $\om\in\Om$. We will the derive some of its consequences and will use them later in the paper.

\begin{lem}\label{l1m9}
For every radius $r>0$ there exists $\Delta(r)\in(0,+\infty)$ such that
$$
\nu_\om(B(0,r))\ge \Delta(r)
$$
for $m$--a.e. $\om\in\Om$.
\end{lem}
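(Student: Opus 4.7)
My plan is to transfer the question, via one step of conformality, to bounding $\nu_{\theta\om}$-mass of a ball whose center lies in the compact interval $[A,B]\subset Q$, and then to appeal to Proposition~\ref{prop:supp}(ii). Starting from the identity $\lambda_{t,\om}\nu_\om(B(0,r))=\int\mathcal L_{t,\om}(\mathbf 1_{B(0,r)})\,d\nu_{\theta\om}$ and keeping only the contribution of the univalent $k=0$ preimage branch of $F_\om$ through the origin -- for which, by Koebe's $\tfrac14$-theorem, $F_\om(B(0,r))\supset B(\eta(\om),Ar/4)$ while $|y|\le 2B$ on this image -- together with the upper bound $\lambda_{t,\om}\le P$ from~\eqref{bounds_on_lambda}, I obtain
\[
\nu_\om(B(0,r))\;\ge\;\frac{1}{P(2B)^t}\,\nu_{\theta\om}\bigl(B(\eta(\om),Ar/4)\bigr).
\]

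Since $\eta(\om)\in[A,B]\subset Q_B$ and $[A,B]$ is compact, I fix a finite cover $\{B(x_i,Ar/8)\}_{i=1}^N$ of $[A,B]$ in $Q_B$; for every $\om$ at least one $B(x_{i(\om)},Ar/8)$ is contained in $B(\eta(\om),Ar/4)$, and thus
\[
\nu_\om(B(0,r))\;\ge\;\frac{1}{P(2B)^t}\,\min_{1\le i\le N}\nu_{\theta\om}\bigl(B(x_i,Ar/8)\bigr).
\]
Applying Proposition~\ref{prop:supp}(ii) with $x=B$, $R=Ar/8$ and any $\varepsilon\in(0,1)$ then produces a measurable set $\Omega_0\subset\Omega$ with $m(\Omega_0)>1-\varepsilon$ and a constant $\xi>0$ such that $\nu_{\om'}(B(z,R))\ge\xi$ for every $\om'\in\Omega_0$ and every $z\in Q_B$. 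In particular $\nu_\om(B(0,r))\ge (P(2B)^t)^{-1}\xi$ on the set $\theta^{-1}\Omega_0$, which has $m$-measure $>1-\varepsilon$.

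The main obstacle is to upgrade this estimate from a set of measure $>1-\varepsilon$ to $m$-almost every $\om$ with a single positive constant $\Delta(r)$. I would address this by revisiting the construction used in the proof of Proposition~\ref{prop:supp}(ii), where $\Omega_0$ is the super-level set $\{\xi_R^*>\xi\}$ of the single measurable function $\xi_R^*(\om):=\inf_{z\in\Gamma}\nu_\om(B(z,R/2))$ (with $\Gamma\subset Q_B$ a countable dense subset), and $\xi_R^*>0$ $m$-a.e. by Proposition~\ref{prop:supp}(i). Applying the one-step conformality bound of the first paragraph simultaneously to each $z\in\Gamma$ yields a uniform comparison $\xi_R^*(\om)\ge c\,\xi_R^*(\theta\om)$ with a constant $c>0$ depending only on $t,A,B,P$, obtained from the uniform bounds $\lambda_{t,\om}\in[1/p,P]$ and uniform Koebe distortion along the univalent branch. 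Combined with ergodicity of $\theta$ and the fact that $\xi_R^*\le 1$, this cocycle inequality forces $\operatorname{ess\,inf}_\om\xi_R^*(\om)>0$, and setting $\Delta(r):=(P(2B)^t)^{-1}\operatorname{ess\,inf}\xi_R^*>0$ completes the proof. The subtle technical point, which I regard as the hardest step, is precisely this last ergodicity argument: one must carefully verify that the pointwise comparison is tight enough -- taking into account that $c$ may be less than $1$ -- for the essential infimum to be strictly positive rather than only $m$-a.e.\ positive.
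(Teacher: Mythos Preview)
Your approach has a genuine gap in the final step, and you yourself flag it correctly as the hardest point---but it is not merely subtle; as written, it does not work.

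First, the claimed cocycle inequality $\xi_R^*(\om)\ge c\,\xi_R^*(\theta\om)$ is not actually established. Your first-paragraph bound is specific to the center $0$: it uses that $F_\om(0)=\eta(\om)\in[A,B]\subset Q_B$. For a general center $z\in Q_B$, the forward image $F_\om(z)=\eta(\om)e^z$ lands in $Q_{Be^B}$, not in $Q_B$, and the Koebe radius shrinks by a factor comparable to $Ae^{-B}$. So what you really get is $\xi^*_{R,\,Q_B}(\om)\ge c\,\xi^*_{R',\,Q_{Be^B}}(\theta\om)$ with a strictly larger domain and strictly smaller radius; iterating this does not close up into a self-referential inequality for a single function.

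Second, even if you had $\xi_R^*(\om)\ge c\,\xi_R^*(\theta\om)$ with $0<c<1$, this together with ergodicity and $\xi_R^*\le 1$ does \emph{not} force $\operatorname{ess\,inf}\xi_R^*>0$. Iterating gives only $\xi_R^*(\om)\ge c^n\,\xi_R^*(\theta^n\om)$, which is useless as $n\to\infty$; in the other direction it gives growing upper bounds, not lower bounds. There is no ergodic-theoretic mechanism that turns a one-sided multiplicative comparison with ratio $c<1$ into a uniform positive lower bound.

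The paper sidesteps this entirely by a direct, uniform forward-iteration argument. The key observation (from the beginning of the proof of Proposition~\ref{prop:supp}) is that there is an integer $k\ge 0$, \emph{independent of $\om$}, such that $F_\om^k(B(0,r))\supset Q_{M_0}$ for all $\om$; this uses only that $0$ escapes to $+\infty$ along the real axis uniformly in $\om$ since $\eta(\om)\in[A,B]$ with $A>1/e$. Then, by conformality and the uniform bounds $\lambda_{t,\om}\le P$ and $|(F_\om^k)'|\le (f_B^k(0))$ on $B(0,r)$,
\[
\tfrac12\le \nu_{\theta^k\om}(Q_{M_0})\le \nu_{\theta^k\om}\bigl(F_\om^k(B(0,r))\bigr)\le P^k\bigl(f_B^k(0)\bigr)^t\,\nu_\om(B(0,r)),
\]
which gives the uniform lower bound $\Delta(r)=\tfrac12 P^{-k}(f_B^k(0))^{-t}$. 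The point is that every constant here is uniform in $\om$ from the outset, so no bootstrap via ergodicity is needed.
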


\begin{proof}
Proceeding in the same way as at the beginning of the proof of Proposition~\ref{prop:supp}, we conclude that there exists an integer $k\ge 0$ such that
$$
F_\om^k(B(0,r))\sbt Q_{M_0}
$$
for $m$--a.e. $\om\in\Om$. Because of the right hand side of \eqref {bounds_on_lambda} and because of \eqref{2.1}, we get that

$$
\frac12
\le \nu_{\th^k\om}\(F_\om^k(B(0,r))\)
\le \lambda_{t,\omega}^k\int_{B(0,r)}\big|\(F_\om^k\)'\big|^t\,d\nu_\om\le P^k\(f_B^k(0)\)^t\nu_\om(B(0,r)).
$$
Hence,
$$
\nu_\om(B(0,r))\ge \frac12P^{-k}\(f_B^k(0)\)^{-t}>0,
$$
and the proof is complete.
\end{proof}

\begin{cor}\label{c2m9}
For every $r>0$ there exist $r_*>0$ and $\Delta^*(r)>0$ such that
$$
\nu_\om(B(0,r)\sms B(0,r_*))\ge \Delta^*(r)
$$
for $m$--a.e. $\om\in\Om$. 
\end{cor}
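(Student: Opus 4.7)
The plan is to combine the lower bound from Lemma \ref{l1m9} with the uniform upper bound on $\nu_\om(B(0,r_*))$ for small $r_*$ that is already available in the paper. Recall that $\nu^{(t)}\in\hat{\mathcal P}\subset \mathcal P$ (by Theorem \ref{t1_2016_10_08} together with Proposition \ref{prop: R_invariant}), so every estimate that was proved for measures in $\mathcal P$ applies to the conformal measure $\nu^{(t)}$. In particular, Lemma \ref{lem:est_zero} gives exponents $u\ge 2t+7$ and a radius $\rho>0$ (depending only on $M_0$) such that
$$
\nu_\om(B(0,s))\le s^u
$$
for every $s<\rho$ and $m$--a.e.\ $\om\in\Om$, with an estimate that is uniform in $\om$.

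Given $r>0$, first apply Lemma \ref{l1m9} to produce a constant $\Delta(r)>0$ for which $\nu_\om(B(0,r))\ge \Delta(r)$ for $m$--a.e. $\om\in\Om$. Now choose $r_*\in(0,\rho)$ small enough that
$$
r_*^{u}\le \tfrac12\Delta(r).
$$
Then, for $m$--a.e. $\om\in\Om$,
$$
\nu_\om\bigl(B(0,r)\sms B(0,r_*)\bigr)
=\nu_\om(B(0,r))-\nu_\om(B(0,r_*))
\ge \Delta(r)-r_*^{u}
\ge \tfrac12\Delta(r),
$$
so setting $\Delta^*(r):=\tfrac12\Delta(r)$ finishes the argument.

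There is essentially no obstacle; the only point to be careful about is to note that the upper bound from Lemma \ref{lem:est_zero} is uniform in $\om$ (it was derived pointwise for every $\om$ from the annular decomposition used in Lemmas \ref{kula_wokol_zera} and \ref{ll}), so the same $r_*$ works simultaneously for $m$--a.e. $\om$, which is exactly what the statement requires.
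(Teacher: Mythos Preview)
Your proof is correct and follows essentially the same route as the paper: combine the uniform lower bound $\nu_\om(B(0,r))\ge\Delta(r)$ from Lemma~\ref{l1m9} with the uniform upper bound $\nu_\om(B(0,r_*))\le r_*^u$ from Lemma~\ref{lem:est_zero}, choose $r_*$ so that $r_*^u\le\tfrac12\Delta(r)$, and set $\Delta^*(r)=\tfrac12\Delta(r)$. The only cosmetic difference is that the paper also explicitly requires $r_*\in(0,r)$ so that $B(0,r_*)\subset B(0,r)$ and the subtraction of measures is literally the measure of the annulus; you should add this harmless constraint (i.e.\ take $r_*\in(0,\min\{r,\rho\})$) to make the identity $\nu_\om(B(0,r)\setminus B(0,r_*))=\nu_\om(B(0,r))-\nu_\om(B(0,r_*))$ valid.
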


\begin{proof}
Fix $u>0$ produced in Lemma~\ref{lem:est_zero}. Take then $r_*\in(0,r)$ so small that $r_*^u<\frac12\Delta(r)$. It then follows from Lemma~\ref{l1m9}
and Lemma~\ref{lem:est_zero} that
$$
\nu_\om(B(0,r)\sms B(0,r_*))
=\nu_\om(B(0,r))- \nu_\om(B(0,r_*))
\ge \Delta(r)-r_*^u
\ge \Delta(r)-\frac12\Delta(r)
=\frac12\Delta(r)>0
$$
So, taking $\Delta^*(r):=\frac12\Delta(r)$ completes the proof.
\end{proof}

\begin{cor}\label{c1m10}
For every $M>0$ there exist $M_+\in(M,+\infty)$ and $\Delta_-(M)>0$ such that
$$
\nu_\om\(Y_M^-\sms Y_{M_+}^-\)\ge \Delta_-(M)
$$
for $m$--a.e. $\om\in\Om$. 
\end{cor}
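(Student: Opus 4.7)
The plan is to reduce the claim, via the conformality relation \eqref{7_2017_12_19} applied to $F_\omega$ restricted to $A:=Y_M^-\setminus Y_{M_+}^-$, to the already established Corollary~\ref{c2m9} for an annulus around the origin in $Q$ to which $F_\omega$ maps $A$ bijectively.

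First, observe that $Y_{M_1}^-\subset Y_M^-$ whenever $M_1\ge M$, so if we establish the statement for some $M_1\ge M$ (with the corresponding $M_+>M_1>M$ and $\Delta>0$), then the same $M_+$ and $\Delta$ work for $M$. Hence we may and do assume $M>\log(B/\pi)$, so that $Be^{-M}<\pi$. This ensures that $F_\omega$ is injective on $Y_M^-$: any two points of $Y_M^-$ taken in a fundamental strip $\{0\le\Im z<2\pi\}$ have complex exponentials of modulus less than $Be^{-M}<\pi$, so their images under $f_\omega$ differ by an element of $(2\pi i/\eta(\omega))\Z$ only if they coincide. Apply Corollary~\ref{c2m9} with $r:=Ae^{-M}$ to produce $r_*=r_*(Ae^{-M})\in(0,Ae^{-M})$ and $\Delta^*=\Delta^*(Ae^{-M})>0$, and choose $M_+>M$ large enough that $Be^{-M_+}\le r_*$.

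Set $A:=\{z\in Q:-M_+\le\Re z\le -M\}=Y_M^-\setminus Y_{M_+}^-$. Since $F_\omega$ sends the strip $\{-M_+\le\Re z\le -M\}$ injectively onto the closed annulus $\{w\in\C:\eta(\omega)e^{-M_+}\le|w|\le\eta(\omega)e^{-M}\}$, and since this annulus has outer radius $<\pi$, it projects injectively into $Q$ via $\pi$; thus
$$
F_\omega(A)\supset B(0,Ae^{-M})\setminus B(0,Be^{-M_+})\supset B(0,Ae^{-M})\setminus B(0,r_*).
$$
By the choice of $r_*,\Delta^*$, applied to $\nu_{\theta\omega}$ (a full-measure condition on $\omega$, as $\theta$ preserves $m$), we obtain $\nu_{\theta\omega}(F_\omega(A))\ge\Delta^*$ for $m$--a.e.\ $\omega$. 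On the other hand, $|F_\omega'(z)|=\eta(\omega)e^{\Re z}\le Be^{-M}$ on $A$, so the conformality identity \eqref{7_2017_12_19} together with the uniform bound $\lambda_{t,\omega}\le P$ from \eqref{bounds_on_lambda} yields
$$
\Delta^*\le\nu_{\theta\omega}(F_\omega(A))=\lambda_{t,\omega}\int_A|F_\omega'|^t\,d\nu_\omega\le P(Be^{-M})^t\,\nu_\omega(A).
$$
Setting $\Delta_-(M):=\Delta^*/\bigl(P(Be^{-M})^t\bigr)>0$ gives the desired bound for $m$--a.e.\ $\omega\in\Omega$.

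The only delicate point is arranging injectivity of $F_\omega$ on $A$ so that the conformality formula applies with no covering multiplicity; this is dealt with by the initial reduction to large $M$, which costs nothing because $Y_M^-$ increases as $M$ decreases. The rest is a straightforward combination of the conformality relation with the previously proved annulus estimate.
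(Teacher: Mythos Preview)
Your proof is correct and follows essentially the same route as the paper: apply Corollary~\ref{c2m9} with $r=Ae^{-M}$, choose $M_+$ so that $Be^{-M_+}\le r_*$, and pull the resulting lower bound on $\nu_{\theta\omega}$ back through the conformality relation and the bound $\lambda_{t,\omega}\le P$. The only difference is that you take the extra precaution of reducing to large $M$ to secure injectivity of $F_\omega$ on $A$; the paper simply uses the inequality $\nu_{\theta\omega}(F_\omega(A))\le \lambda_{t,\omega}\int_A|F_\omega'|^t\,d\nu_\omega$, which holds without injectivity, and bounds $|F_\omega'|$ by $B$ rather than your sharper $Be^{-M}$.
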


\begin{proof}
Take $M_+\in(M,+\infty)$ so large that
$$
Be^{-M_+}<(Ae^{-M})_*,
$$
where $(Ae^{-M})_*$ comes from Corollary~\ref{c2m9}. Using this corollary and the right-hand side of \eqref{bounds_on_lambda} again, we obtain
$$
F_\om\(Y_M^-\sms Y_{M_+}^-\)
\spt B\(0,Ae^{-M}\)\sms B\(0,Be^{-M_+}\)
\spt B\(0,Ae^{-M}\)\sms B\(0,(Ae^{-M})_*\)
$$
and
$$
\Delta^*(Ae^{-M})
\le \nu_{\th\om}\(F_\om\(Y_M^-\sms Y_{M_+}^-\)\)
\le \lambda_{t,\omega}\int_{Y_M^-\sms Y_{M_+}^-}\big|\(F_\om\)'\big|^t\, d\nu_\om\le PB^t\nu_\om\(Y_M^-\sms Y_{M_+}^-\).
$$
Hence,
$$
\nu_\om\(Y_M^-\sms Y_{M_+}^-\)
\ge P^{-1}B^{-t}\Delta^*(Ae^{-M}),
$$
and the proof is complete.
\end{proof}

\

\section{Random Invariant Measures Equivalent to Random Conformal Measures}\label{inv_meas}
From now on until explicitly stated otherwise, we fix $t>1$ and the random $t$--conformal measure $\nu$, with disintegrations $\(\nu_\omega\)_{\om\in\Om}$ constructed in the previous section.
Recall that we denote 
\begin{equation}\label{1_2017_03_29}
\lambda_{t,\omega}=\mathcal L^*_{t,\omega}\nu_{\theta\omega}(\1)
\end{equation}
for all $\om\in\Om$. We will also use the notation
$$
\lambda_{t,\omega}^n:=\prod_{j=0}^{n-1}\lambda_{t,\theta^j\omega}.
$$
We introduce normalized operators 
$$
\hat{\mathcal L}_{t,\omega}:=\lambda_{t,\omega}^{-1}\mathcal L_{t,\omega}
\ \  {\rm and} \ \ 
\hat{\mathcal L}^n_{t,\omega}:=(\lambda^n_{t,\omega})^{-1}\mathcal L^n_{t,\omega},
$$
so that
$$
\hat{\mathcal L}_{t,\omega}(\nu_{\theta\omega})=\nu_\omega.
$$


Our purpose in this section is to prove the following.

\begin{thm}\label{invariant2}
There exists a random measure $\mu$, i.e. one belonging to $\mathcal M_m$, such that for all $\om\in\Om$ the fiber measures $\mu_\omega$ and $\nu_\omega$ are 
equivalent, and the random measure $\mu$ is  $F$--invariant. The latter meaning that
$$
\mu\circ F^{-1}=\mu,
$$
or equivalently:
$$
\mu_\omega\circ F_\omega^{-1}=\mu_{\theta\omega}
$$
for $m$--a.e. $\omega\in\Omega$.
\end{thm}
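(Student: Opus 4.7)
The plan is to construct an invariant density $\rho_\omega$ with respect to $\nu^{(t)}$ via Cesàro averages of the normalized transfer operator, then to upgrade absolute continuity to equivalence. Writing $\nu=\nu^{(t)}$, $\hat{\mathcal L}_\omega=\hat{\mathcal L}_{t,\omega}$, and abbreviating $\hat{\mathcal L}^n_{\theta^{-n}\omega}:=\hat{\mathcal L}_{\theta^{-1}\omega}\circ\cdots\circ\hat{\mathcal L}_{\theta^{-n}\omega}$ for the $n$-fold cocycle ending at $\omega$, the $t$-conformality identity from Theorem~\ref{t1_2016_10_08} yields, by a standard change of variables, the fiberwise duality
$$
\int (h\circ F_\omega)\,g\,d\nu_\omega=\int h\cdot\hat{\mathcal L}_\omega(g)\,d\nu_{\theta\omega}.
$$
This translates the desired $F$-invariance of $\mu_\omega=\rho_\omega\nu_\omega$ into the fiberwise cocycle equation $\hat{\mathcal L}_\omega\rho_\omega=\rho_{\theta\omega}$ for $m$-a.e.\ $\omega$.

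For each integer $N\ge 1$ I would set
$$
\rho_\omega^{(N)}:=\frac{1}{N}\sum_{n=0}^{N-1}\hat{\mathcal L}^{n}_{\theta^{-n}\omega}(\1),\qquad \mu^{(N)}_\omega:=\rho_\omega^{(N)}\nu_\omega.
$$
Because $\hat{\mathcal L}^*$ preserves $\nu$ by \eqref{1_2017_03_28}, each $\mu^{(N)}$ is a random probability measure in $\mathcal M_m$, and a direct telescoping gives
$$
\hat{\mathcal L}_\omega(\rho_\omega^{(N)})-\rho^{(N)}_{\theta\omega}=\frac{1}{N}\Bigl(\hat{\mathcal L}^{N}_{\theta^{-(N-1)}\omega}(\1)-\1\Bigr),
$$
whose $L^1(\nu_{\theta\omega})$-norm is bounded by $2/N$. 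Uniform tightness of $\{\mu^{(N)}\}_{N\ge 1}$ as required by Theorem~\ref{crauel-prokhorow} then follows by propagating through the Cesàro sum the tail estimates already established for $\nu$: the two-sided bound $p^{-1}\le\lambda_{t,\omega}\le P$ of \eqref{bounds_on_lambda}, the right-tail estimate \eqref{3.1}, the left-tail estimate of Proposition~\ref{prop:left_tight}, and the near-zero control of Lemma~\ref{lem:est_zero}. Crauel's Prokhorov theorem then provides a subsequential narrow limit $\mu\in\mathcal M_m$; testing the displayed error identity against random continuous functions of the form $(h\circ F_\omega)g_\omega$ with $g$ in the class $\mathcal C_\varepsilon$ (so that the transfer operator acts continuously on the test class by Lemma~\ref{prop_przyblizanie}) passes the cocycle identity to the limit, yielding $\mu_\omega\circ F_\omega^{-1}=\mu_{\theta\omega}$.

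What remains is to upgrade $\mu\ll\nu$, automatic from the construction, to the equivalence $\mu\sim\nu$. Writing $\mu_\omega=\rho_\omega\nu_\omega$ for the limit density, the cocycle equation $\hat{\mathcal L}_\omega\rho_\omega=\rho_{\theta\omega}$ shows that $E_\omega:=\{\rho_\omega>0\}$ satisfies $F_\omega^{-1}(E_{\theta\omega})=E_\omega$ modulo $\nu_\omega$-null sets. Combining this measurable $F$-saturation with the full support of $\nu_\omega$ from Proposition~\ref{prop:supp}, the $\omega$-uniform positive lower bounds of Lemma~\ref{l1m9}, Corollary~\ref{c2m9}, and Corollary~\ref{c1m10}, and the topological spreading built into the proof of Theorem~\ref{CC} (each fiber Julia set equals all of $\C$), one forces $\nu_\omega(E_\omega)=1$ for $m$-a.e.\ $\omega$. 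I expect this final equivalence step to be the main obstacle: it is the only place where topological mixing of the random system must be combined with the measure-theoretic cocycle argument, and carrying it out quantitatively enough to recover arbitrary $\nu$-positive sets will hinge on the uniform, $\omega$-independent estimates of Section~8 rather than on the merely $\omega$-dependent support property.
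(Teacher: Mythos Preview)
Your construction has a genuine gap at the step ``$\mu\ll\nu$, automatic from the construction.'' Narrow (weak) limits of absolutely continuous measures need \emph{not} be absolutely continuous; what you would need is uniform integrability of the densities $\rho_\omega^{(N)}$, or equivalently uniform absolute continuity of the sequence $\nu\circ F^{-n}$ with respect to $\nu$. Crauel--Prokhorov tightness alone does not supply this. Moreover, the tightness itself is not obtained by ``propagating'' the tail estimates \eqref{3.1}, Proposition~\ref{prop:left_tight} and Lemma~\ref{lem:est_zero} for $\nu$: by your own duality, $\mu^{(N)}_\omega(Y_M)=\frac{1}{N}\sum_{n=0}^{N-1}\nu_{\theta^{-n}\omega}\bigl(F^{-n}_{\theta^{-n}\omega}(Y_M)\bigr)$, so what is actually required are uniform-in-$n$ bounds on $\nu_{\omega'}(F^{-n}_{\omega'}(A))$, which is precisely the hard content you have not addressed.

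The paper's route is different and is organized exactly around this missing ingredient. It first proves the H\"older-type uniform absolute continuity estimate of Proposition~\ref{prop:invariant},
\[
\nu_\omega\bigl(F^{-n}_\omega(A)\bigr)\le C\,\nu_{\theta^n\omega}(A)^\beta,
\]
via a delicate decomposition of $A$ relative to $Q_{M_1}$ and the post-singular balls $B(\beta_{n,j}(\omega),r_0)$, with separate treatment of ``good'' and ``bad'' inverse branches (Lemmas~\ref{AinQ}--\ref{prop:estimate_in YM}). With this in hand the invariant measure is produced in Theorem~\ref{t2im2} by a Banach limit $\mu(A)=\ell_B\bigl((\nu\circ F^{-n}(A))_n\bigr)$, and absolute continuity follows immediately from the uniform estimate rather than from any compactness argument. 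Equivalence is then established separately in Theorem~\ref{t1im3} by a quantitative argument that locates a set $\Gamma$ of uniformly positive fiber mass, proves $\nu(A)>0\Rightarrow\mu(A)>0$ on $\Gamma$ via distortion and the Banach-limit formula, and spreads this to all of $Q$ using the dynamics. Your sketch of the equivalence step gestures in this direction but does not identify the mechanism; the crucial point is that the Banach-limit representation of $\mu$ lets one compare $\mu(A)$ directly with $\nu(A)$ through the uniform bounds, which your narrow-limit construction does not.
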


\noindent The proof of Theorem~\ref{invariant2} will follow from Proposition ~\ref{prop:invariant}. We start with some estimates.
Fix some numbers $u>2t+7$ and $\rho>0$ satisfying Lemma ~\ref{lem:est_zero}.
Also, because of Lemma~\ref{lower} there exists $M_1\ge M_0$ large enough so that
\begin{equation}\label{fixing_M1}
\frac{1}{p}\sup\big\{\L_{t,\omega}\1(z):z\in Y_{M_1}\big\}<\frac12.
\end{equation}
The need for such choice of $M_1$ will become clear in the course of the proof of Proposition ~\ref{prop:estimate_in YM}.
Note that there exists an integer $N\ge 1$ large enough  that for all $\omega$ 
$$
Q_{M_1}\cap \bu_{j=N+1}^\infty B\(F_{\th^{-j}\om}^j(0),r_0\)=\es.
$$
Since also $\nu_\omega(Q_{M_1})>1/2$ for $m$--a.e. $\omega\in\Omega$, decreasing $r_0>0$ if necessary, we can assume without loss of generality that $0<r_0<\rho$ and 
$$
\nu_\omega\lt(Q_{M_1}\setminus \bu_{j=0}^\infty B\(F_{\th^{-j}\om}^j(0),r_0\)\rt)>1/2
$$
for $m$--a.e. $\omega\in\Omega$.

\begin{lem}\label{AinQ}
If $n\ge 0$ is an integer and
\begin{equation}
A\subset Q_{M_1}\setminus \bigcup_{j=0}^NB\(F_{\th^{n-j}\om}^j(0),r_0\)
\end{equation}
is a Borel set, then
\begin{equation}\label{2}
\nu_\omega(F^{-n}_\omega(A))\le c(M_1,r_0)\nu_{\theta^n\omega}(A),
\end{equation}
where $c(M_1,r_0)\in(0,+\infty)$ is some constant depending on $M_1$ and $r_0$, but independent of $\omega$.
\end{lem}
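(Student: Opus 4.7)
The plan is to reduce the claim to a uniform pointwise upper bound on the normalized transfer operator $\hL^n_{t,\omega}(\1)$ on $A$, and then prove that bound via Koebe's distortion theorem applied to all inverse branches of $F^n_\omega$ together with a uniform positive lower estimate on the $\nu_{\theta^n\omega}$-measure of small balls inside $Q_{M_1}$.

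First, iterating the conformality identity \eqref{7_2017_12_19} along the (countably many) pieces on which $F^n_\omega$ is univalent, or equivalently writing the dual identity $\nu_\omega(g)=\int\hL^n_{t,\omega}(g)\,d\nu_{\theta^n\omega}$ with $g=\1_{F^{-n}_\omega(A)}$, gives the change-of-variables formula
$$
\nu_\omega(F^{-n}_\omega(A)) \;=\; \int_A \hL^n_{t,\omega}(\1)(z)\,d\nu_{\theta^n\omega}(z),
$$
so that the lemma reduces to establishing $\hL^n_{t,\omega}(\1)(z)\le c(M_1,r_0)$ for every $z\in A$. For this I would first verify that at every $z\in A$ each inverse branch $F^{-n}_{\omega,*,i}$ of $F^n_\omega$ extends univalently across the larger ball $B(z,r_0/2)$. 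By induction on the pullback step this reduces to checking that $B(z,r_0/2)$ avoids $F^j_{\theta^{n-j}\omega}(0)$ for each $j=0,1,\dots,n-1$: for $j\le N$ this is the hypothesis on $A$ applied to the center $z$ together with the passage from radius $r_0$ to $r_0/2$, and for $j>N$ it is built into the choice of $N$ together with $z\in Q_{M_1}$.

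Once the branches are in place, Koebe's distortion theorem applied with ratio $1/2$ to each univalent map $F^{-n}_{\omega,*,i}:B(z,r_0/2)\to Q$ gives $|(F^{-n}_{\omega,*,i})'(z')|\ge K^{-1}|(F^{-n}_{\omega,*,i})'(z)|$ for every $z'\in B(z,r_0/4)$. Raising to the $t$-th power, summing over $i$ and dividing by $\lambda^n_{t,\omega}$ yields the Harnack-type comparison $\hL^n_{t,\omega}(\1)(z)\le K^t\,\hL^n_{t,\omega}(\1)(z')$ for all $z'\in B(z,r_0/4)$. Integrating this inequality against $\nu_{\theta^n\omega}$ over $B(z,r_0/4)$ and using the transfer-operator identity $\int\hL^n_{t,\omega}(\1)\,d\nu_{\theta^n\omega}=\nu_\omega(Q)=1$ gives
$$
\hL^n_{t,\omega}(\1)(z)\cdot\nu_{\theta^n\omega}(B(z,r_0/4))\;\le\; K^t.
$$

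To conclude I would invoke the second part of Proposition~\ref{prop:supp} with $x=M_1$ and $R=r_0/4$, producing a constant $\xi=\xi(M_1,r_0/4)>0$ and a subset of $\Omega$ of $m$-measure arbitrarily close to one on which $\nu_{\theta^n\omega}(B(z,r_0/4))\ge\xi$ uniformly for $z\in Q_{M_1}$. On that subset $\hL^n_{t,\omega}(\1)(z)\le K^t/\xi=:c(M_1,r_0)$, and combining with the change-of-variables formula proves the claim. The main obstacle I expect is the inductive step-by-step verification that every branch extends across $B(z,r_0/2)$: one must track simple-connectivity of the pullbacks and the avoidance of $0$ at every stage, and the numerology of the radii $r_0/2$ versus $r_0$ together with the two regimes $j\le N$ and $j>N$ is set up precisely to let this induction close. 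A secondary subtlety is that the uniform lower bound on ball masses is produced by Proposition~\ref{prop:supp} only on a subset of $\Omega$ of arbitrarily large but not necessarily full measure, so the uniformity of $c(M_1,r_0)$ in $\omega$ should be understood in that qualified sense.
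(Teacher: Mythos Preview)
Your overall strategy---reducing to a pointwise bound on $\hL^n_{t,\omega}(\1)$ via the change-of-variables formula, then proving a Harnack-type comparison using Koebe on the inverse branches---matches the paper's approach closely. The verification that all branches extend to $B(z,r_0/2)$ is correct and essentially the same argument the paper makes.

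The genuine gap is in your final step. You invoke Proposition~\ref{prop:supp} to lower-bound $\nu_{\theta^n\omega}(B(z,r_0/4))$, but that proposition only produces the bound on a set $\Om(x,R,\varepsilon)$ of $m$-measure $>1-\varepsilon$, not for all $\omega$. You flag this as a ``secondary subtlety'' and propose to reinterpret the conclusion accordingly, but this is not acceptable: the lemma is stated with $c(M_1,r_0)$ independent of $\omega$, and the downstream applications (Lemmas~\ref{prop:est_good}, \ref{AinQQ}, \ref{prop:estimate_in YM}, Proposition~\ref{prop:invariant}, and ultimately the construction of the invariant measure in Theorem~\ref{t2im2}) genuinely require uniformity in $\omega$. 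A bound valid only on a large-measure set of $\omega$ would not suffice.

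The paper resolves this by a different integration step. Rather than integrating your Harnack inequality over the small ball $B(z,r_0/4)$, it uses a global Harnack comparison over the entire set $E_{\theta^n\omega}:=Q_{M_1}\setminus\bigcup_{j=0}^N B(F^j_{\theta^{n-j}\omega}(0),r_0)$---obtained by chaining the local Koebe estimates across a covering of $E_{\theta^n\omega}$ by boundedly many balls of radius $r_0/4$, with the number and overlap constants depending only on $M_1$ and $r_0$. It then integrates over $E_{\theta^n\omega}$ and uses the uniform lower bound $\nu_{\theta^n\omega}(E_{\theta^n\omega})>1/2$, which was arranged immediately before the lemma by shrinking $r_0$. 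This is the input you are missing: a set whose $\nu_{\theta^n\omega}$-measure is bounded below \emph{uniformly} in $\omega$, on which the Harnack comparison holds. Replacing your appeal to Proposition~\ref{prop:supp} with this argument would close the gap.
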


\begin{proof}
Notice that by partitioning the set 
$$
Q_{M_1}\setminus \bigcup_{j=0}^NB\(F_{\th^{n-j}\om}^j(0),r_0\)
$$
into a finite disjoint union of Borel sets with diameters smaller than $r_0/4$, we may assume without loss of generality that
$\diam(A)<r_0/4$.
Then we further notice that  holomorphic branches of $F_\omega^{-n}$, labeled as $F^{-n}_{\omega,*}$ are well--defined on $A$, in fact on a ball with radius $r_0/2$ centered at a point of $A$, with  distortion bounded by $K$, meaning that 
$$
\frac{|(F^{-n}_{\omega,*})'(x)|}{|(F^{-n}_{\omega,*})'(y)|}\le K
$$
for all $x,y\in A$. We have

\begin{equation}\label{4}
\nu_\omega(F^{-n}_\omega(A))
=\int_A\hat{\mathcal L}^n_{t,\omega}(\1)(z)d\nu_{\theta^n\omega}(z)
\le\sup_A\(\hat{\mathcal L}^n_{t,\omega}(\1)\)\nu_{\theta^n\omega}(A).
\end{equation}
In order to establish the upper  bound for $\sup_A\(\hat{\mathcal L}^n_{t,\omega}(\1)\)$ notice that 
$$
\aligned
\nu_\omega\Big(F^{-n}_{\omega}\Big(Q_{M_1}\setminus &\bigcup _{j=0}^N
B\(F^j_{\th^{n-j}\omega}(0),r_0\)\Big)\Big)=\\
&=\int_{Q_{M_1}\setminus \bigcup _{j=0}^N
B\(F^j_{\th^{n-j}\omega}(0),r_0)\)}\hat{\mathcal L}^n_{t,\omega}(\1)(z)d\nu_{\theta^n\omega}(z)\\
&\ge\inf_{Q_{M_1}}\(\hat{\mathcal L}^n_{t,\omega}(\1)\) \nu_{\theta^n\omega}\Big(Q_{M_1}\setminus \bigcup _{j=0}^N
B\(F^j_{\th^{n-j}\omega}(0),r_0)\)\Big).
\endaligned
$$
Now, again by distortion estimates, there exists a constant $c(M_1,r_0)>0$ such that
\begin{equation}\label{2017_05_12}
\aligned
\inf\Big(\hat{\mathcal L}^n_{t,\omega}(\1)(z):z\in & Q_{M_1}\setminus \bigcup_{j=0}^N B\(F^j_{\th^{n-j}\omega}(0),r_0\)\Big) \ge \\
&\ge \frac{2}{c(M_1,r_0)}\sup\Big(\hat{\mathcal L}^n_{t,\omega}(\1)(z):z\in Q_{M_1}\setminus \bigcup_{j=0}^N B\(F^j_{\th^{n-j}\omega}(0),r_0\)\Big).
\endaligned
\end{equation}
Thus,
$$
\aligned 
1&\ge
\nu_\omega\Big(F^{-n}_{\omega}\Big(Q_{M_1}\setminus \bigcup _{j=0}^N
B\(F^j_{\th^{n-j}\omega}(0),r_0\)\Big)\Big) \\
&\ge \frac12 \frac{2}{c(M_1,r_0)}\sup\Big(\hat{\mathcal L}^n_{t,\omega}(\1)(z):z\in Q_{M_1}\setminus \bigcup_{j=0}^N B\(F^j_{\th^{n-j}\omega}(0),r_0\)\Big) \\
&=\frac{1}{c(M_1,r_0)}\sup\Big(\hat{\mathcal L}^n_{t,\omega}(\1)(z):z\in Q_{M_1}\setminus \bigcup_{j=0}^N B\(F^j_{\th^{n-j}\omega}(0),r_0\)\Big),
\endaligned
$$
i.e.
\begin{equation}\label{bound_for_L}
\sup\Big(\hat{\mathcal L}^n_{t,\omega}(\1)(z):z\in Q_{M_1}\setminus \bigcup_{j=0}^N B\(F^j_{\th^{n-j}\omega}(0),r_0\)\Big)
\le c(M_1,r_0).
\end{equation}
So, inserting this estimate to \eqref{4}, we obtain 
$\nu_\omega(F^{-n}_\omega(A))\le c(M_1, r_0)\nu_{\theta^n\omega}(A) $, as required. The proof is complete.
\end{proof}

Given $\omega\in\Omega$, $n\in\mathbb N$, and $0\le j\le N$, set 
$$
\beta_{n,j}(\om):=F^{j}_{\theta^{n-j}\omega}(0).
$$
Now let 
$$
A\sbt B\(\beta_{n,j}(\om),r_0\)
$$ 
be an arbitrary Borel set. Consider all connected components $C$ of $F^{-n}_\omega\(B\(\beta_{n,j}(\om),r_0\)\)$. We say that such a  $C$ is good if there exists a holomorphic branch of $F^{-n}_\omega$ defined on $B\(\beta_{n,j}(\om),r_0))$ and mapping $B\(\beta_{n,j}(\om),r_0))$ onto $C$. Otherwise, we say that $C$ is bad. Note that $C$ is bad if and only if $0\in f_{\theta^{k+1}\omega} (F^k_\omega(C))$ for some $0\le k\le n-1$. Equivalently, $C$ is bad if and only if $C$ is unbounded.
Now, the set $F^{-n}_\omega(A)$ splits into the disjoint union
$$
F^{-n}_\omega(A)=F^{-n}_{\omega,B}(A)\cup F^{-n}_{\omega,G}(A),
$$
where $F^{-n}_{\omega,B}(A)$ is the intersection of $F^{-n}_\omega(A)$ with the union of all bad components of $F^{-n}_\omega\(B\(\beta_{n,j}(\om),r_0\)\)$ and 
$F^{-n}_{\omega,G}(A)$ is the intersection of $F^{-n}_\omega(A)$ with the union of all good components of $F^{-n}_\omega\(B\(\beta_{n,j}(\om),r_0\)\)$

The next lemma is proved in an analogous way as Lemma~\ref{AinQ}, with possibly modified constant $c(M_1,r_0)$, still independent of $\omega\in\Omega$.
\begin{lem}\label{prop:est_good}
If $\omega\in\Omega$, $n\in\mathbb N$, $0\le j\le N$, and $A\sbt B\(\beta_{n,j}(\om),r_0\)$ is an arbitrary Borel set, then
$$
\nu_\omega(F^{-n}_{\omega,G}(A))\le c(M_1,r_0)\cdot \nu_{\theta^n\omega}(A).
$$
\end{lem}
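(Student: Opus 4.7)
The plan is to mirror the proof of Lemma~\ref{AinQ}, exploiting that restriction to good components already rules out the bad behavior near the orbit of $0$. For $z\in B(\beta_{n,j}(\omega),r_0)$ introduce the restricted transfer operator
$$
\hat{\mathcal L}^{n,G}_{t,\omega}(\1)(z):=\sum_{C\text{ good}}\lambda_{t,\omega}^{-n}\big|(F^{-n}_{\omega,C})'(z)\big|^t,
$$
where $F^{-n}_{\omega,C}:B(\beta_{n,j}(\omega),r_0)\to C$ is the univalent inverse branch attached to the good component $C$. Conformality yields
$$
\nu_\omega(F^{-n}_{\omega,G}(A))=\int_A\hat{\mathcal L}^{n,G}_{t,\omega}(\1)(z)\,d\nu_{\theta^n\omega}(z),
$$
so the task reduces to a uniform sup bound $\sup \hat{\mathcal L}^{n,G}_{t,\omega}(\1)\le c(M_1,r_0)$; after a finite covering of $B(\beta_{n,j}(\omega),r_0)$ by balls of radius $r_0/4$, it suffices to establish this bound on $B(\beta_{n,j}(\omega),r_0/2)$.

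Since each good branch $F^{-n}_{\omega,C}$ is univalent on the full ball $B(\beta_{n,j}(\omega),r_0)$, the Koebe Distortion Theorem yields a uniform comparison of the derivatives $|(F^{-n}_{\omega,C})'|$ at any two points of $B(\beta_{n,j}(\omega),r_0/2)$ with constant depending only on $K$; summing over good components this converts into a bounded oscillation estimate of the form
$\sup\hat{\mathcal L}^{n,G}_{t,\omega}(\1)\le K_1\inf\hat{\mathcal L}^{n,G}_{t,\omega}(\1)$ on $B(\beta_{n,j}(\omega),r_0/2)$, for some constant $K_1=K_1(K,t)$. Combined with the trivial estimate
$$
\int_{B(\beta_{n,j}(\omega),r_0/2)}\hat{\mathcal L}^{n,G}_{t,\omega}(\1)\,d\nu_{\theta^n\omega}=\nu_\omega\(F^{-n}_{\omega,G}(B(\beta_{n,j}(\omega),r_0/2))\)\le 1,
$$
this reduces the whole problem to producing a uniform (in $\omega$ and $n$) lower bound on $\nu_{\theta^n\omega}\(B(\beta_{n,j}(\omega),r_0/2)\)$.

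This final step is the main obstacle, and it is the place where the bound $j\le N$ plays an essential role. Since $\eta$ takes values in the fixed bounded interval $[A,B]$ and $\beta_{n,j}(\omega)=F^j_{\theta^{n-j}\omega}(0)$ is obtained by at most $N$ iterations along the positive real axis, both $\beta_{n,j}(\omega)$ and the derivative $|(F^j_{\theta^{n-j}\omega})'(0)|=\prod_{i=1}^{j}|F^i_{\theta^{n-j}\omega}(0)|$ are uniformly bounded in terms of $A$, $B$, and $N$. Koebe's $\tfrac14$-Theorem then supplies a radius $r^{*}>0$ depending only on $A$, $B$, $N$, and $r_0$ on which $F^j_{\theta^{n-j}\omega}$ is univalent and for which $F^j_{\theta^{n-j}\omega}(B(0,r^{*}))\supset B(\beta_{n,j}(\omega),r_0/2)$. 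Applying conformality with the lower bound $\lambda_{t,\omega}\ge 1/p$ from \eqref{bounds_on_lambda}, together with the uniform lower bound $\nu_{\theta^{n-j}\omega}(B(0,r^{*}))\ge\Delta(r^{*})>0$ furnished by Lemma~\ref{l1m9}, then converts into the desired uniform positive lower bound on $\nu_{\theta^n\omega}\(B(\beta_{n,j}(\omega),r_0/2)\)$, completing the argument with a constant $c(M_1,r_0)$ independent of $\omega$.
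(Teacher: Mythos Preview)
Your approach is essentially the one the paper has in mind (it merely says ``analogous to Lemma~\ref{AinQ}''): reduce to a $\sup$--bound on the restricted operator $\hat{\mathcal L}^{n,G}_{t,\omega}(\1)$, use Koebe distortion on the good branches to compare $\sup$ and $\inf$, and close with a uniform positive lower bound on $\nu_{\theta^n\omega}$ of the relevant ball. You correctly identify that this last lower bound is the only new ingredient compared with Lemma~\ref{AinQ}, and your use of $j\le N$ together with Lemma~\ref{l1m9} is the right idea.

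Two points need fixing. First, the inclusion in your last paragraph is reversed: to lower--bound $\nu_{\theta^n\omega}\bigl(B(\beta_{n,j}(\omega),r_0/2)\bigr)$ via conformality and Lemma~\ref{l1m9} you want $F^j_{\theta^{n-j}\omega}(B(0,r^{*}))\subset B(\beta_{n,j}(\omega),r_0/2)$ (so that $\nu_{\theta^n\omega}(B(\beta_{n,j},r_0/2))\ge \nu_{\theta^n\omega}(F^j(B(0,r^{*})))\ge (1/p)^{N}\cdot c^t\cdot\Delta(r^{*})$), not the opposite inclusion. This is a trivial slip; the uniform bounds on $|(F^j)'(0)|$ for $j\le N$ give such an $r^*$ via Koebe.

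Second, the sentence ``after a finite covering of $B(\beta_{n,j}(\omega),r_0)$ by balls of radius $r_0/4$, it suffices to establish this bound on $B(\beta_{n,j}(\omega),r_0/2)$'' is not justified as written: Koebe distortion for branches univalent only on $B(\beta_{n,j},r_0)$ gives bounded oscillation on $B(\beta_{n,j},r_0/2)$, but $A$ may lie in the outer annulus $B(\beta_{n,j},r_0)\setminus B(\beta_{n,j},r_0/2)$. This gap is easy to close: since (for $r_0$ small) the balls $B(\beta_{n,j'},2r_0)$, $0\le j'\le N$, are pairwise disjoint and contain all singular values of $F^n_\omega$ in $Q_{M_1}$, each good branch on $B(\beta_{n,j},r_0)$ in fact continues univalently to $B(\beta_{n,j},2r_0)$ (the only singular value there is $\beta_{n,j}$ itself, at which the good branch is already holomorphic), so Koebe distortion is available on the full ball $B(\beta_{n,j},r_0)$. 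Alternatively, for $A$ in the annulus one may simply invoke the bound \eqref{bound_for_L} directly, since points there are at distance $\ge r_0/2$ from every singular value and $\hat{\mathcal L}^{n,G}\le \hat{\mathcal L}^n$.
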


\noindent In Lemma~\ref{prop:est_bad}, we will provide  estimates for bad components of $F^{-n}_\omega(A)$. In order to do this, we start with the following.

\begin{lem}\label{prop:around_0}
There exits a constant $\gamma>0$ such that for all radii $0<r\le r_0$, all integers $n\ge 0$, and all $m$--a.e. $\omega\in\Omega$, we have that
$$
\nu_\omega(F_{\omega,B}^{-n}(B(0,r))\preceq r^\gamma.
$$
\end{lem}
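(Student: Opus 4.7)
The plan has three parts. First I reduce to a cleaner set. Because the orbit $(F^j_\omega(0))_{j\ge1}$ of the singular value $0$ escapes super-exponentially (so $|F^j_\omega(0)|$ grows faster than any finite tower of exponentials), for $r$ below a fixed constant---which may be absorbed by shrinking $r_0$ if necessary---no intermediate component $C_j=F^j_\omega(C)$ of the orbit of a bad component $C$ contains $0$; only the final target $C_n=B(0,r)$ does. Consequently badness is introduced only at the final pullback step, and
\[
F^{-n}_{\omega,B}(B(0,r))=F^{-(n-1)}_\omega\bigl(V_{\theta^{n-1}\omega}(r)\bigr),
\]
where $V_\sigma(r):=\{z\in Q:\Re z<\log(r/\eta(\sigma))\}$ is the unbounded component of $F^{-1}_\sigma(B(0,r))$; crucially, $F_\sigma$ restricts to a conformal bijection $V_\sigma(r)\to B(0,r)\sms\{0\}$.

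Second, I slice $V_{\theta^{n-1}\omega}(r)$ into annular slabs and estimate each. Using the annuli $P_k(\sigma):=\{z\in Q:Ae^{-(k+1)M_0}<|z|\le Ae^{-kM_0}\}$ from Lemma~\ref{l1_2017_11_29} and pulling them back through the biholomorphism $F_{\theta^{n-1}\omega}|_{V_{\theta^{n-1}\omega}(r)}$, set
\[
\Sigma_k:=V_{\theta^{n-1}\omega}(r)\cap F^{-1}_{\theta^{n-1}\omega}\bigl(P_k(\theta^n\omega)\bigr),\qquad k\ge k_0(r)\asymp\log(A/r)/M_0.
\]
On each slab $|F'_{\theta^{n-1}\omega}|\asymp Ae^{-kM_0}$; since $F_{\theta^{n-1}\omega}$ is univalent on $V_{\theta^{n-1}\omega}(r)$, conformality \eqref{7_2017_12_19} together with Lemma~\ref{lem:est_zero} and the two-sided bound \eqref{bounds_on_lambda} yield
\[
\nu_{\theta^{n-1}\omega}(\Sigma_k)\asymp\lambda_{t,\theta^{n-1}\omega}^{-1}(Ae^{-kM_0})^{-t}\nu_{\theta^n\omega}(P_k)\preceq (Ae^{-kM_0})^{u-t}.
\]
For $n=1$ this gives immediately $\nu_\omega(V_\omega(r))=\sum_{k\ge k_0(r)}\nu_\omega(\Sigma_k)\preceq r^{u-t}$, fixing the exponent $\gamma:=u-t>0$.

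Third, for $n\ge 2$ the estimate becomes a bound on transfer-operator iterates:
\[
\nu_\omega\bigl(F^{-(n-1)}_\omega(\Sigma_k)\bigr)=\int_{\Sigma_k}\hat{\mathcal L}^{n-1}_{t,\omega}(\1)\,d\nu_{\theta^{n-1}\omega}.
\]
Each slab $\Sigma_k\subset Y^-_{(k-1)M_0}$ lies far from $0$ and from every point of the orbit of $0$ (whose iterates all belong to $Y^+$), so every branch of $F^{-(n-1)}_\omega$ reaching $\Sigma_k$ is univalent on an $r_0$-neighbourhood of $\Sigma_k$ and has Koebe-controlled distortion. The plan is to use these univalent branches, together with the uniform-in-$n$ bound \eqref{bound_for_L} on $Q_{M_1}\sms\bigcup_j B(F^j_{\theta^{n-j}\omega}(0),r_0)$, to transfer that bound and obtain
\[
\sup_{\Sigma_k}\hat{\mathcal L}^{n-1}_{t,\omega}(\1)\le c(M_1,r_0),
\]
independent of $n$. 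Summation then gives
\[
\nu_\omega\bigl(F^{-(n-1)}_\omega(V_{\theta^{n-1}\omega}(r))\bigr)=\sum_{k\ge k_0(r)}\nu_\omega\bigl(F^{-(n-1)}_\omega(\Sigma_k)\bigr)\preceq\sum_{k\ge k_0(r)}(Ae^{-kM_0})^{u-t}\preceq r^{u-t},
\]
proving the lemma with $\gamma=u-t$.

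The main obstacle is the step-three uniform bound on $\hat{\mathcal L}^{n-1}_{t,\omega}(\1)$ over $\Sigma_k$: a direct imitation of the sup-vs-inf argument from the proof of Lemma~\ref{AinQ} fails because $\nu_{\theta^{n-1}\omega}(\Sigma_k)$ is small (of order $(Ae^{-kM_0})^{u-t}$) rather than bounded below by $1/2$, so the bound must instead be exported from a reference region in $Q_{M_1}\sms\bigcup_j B(F^j_{\theta^{n-j}\omega}(0),r_0)$---where \eqref{bound_for_L} already gives the desired uniform-in-$n$ control---and carried to $\Sigma_k$ along the univalent pullback branches via Koebe distortion.
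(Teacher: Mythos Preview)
Your reduction in Step~1 (bad preimages of $B(0,r)$ are exactly $F^{-(n-1)}_\omega(Y_M^-)$ with $M\asymp\log(1/r)$) and the $n=1$ estimate in Step~2 are correct and match the paper. The gap is precisely the obstacle you flag in Step~3: you never actually prove the uniform-in-$n$ bound on $\hat{\mathcal L}^{n-1}_{t,\omega}(\1)$ over $\Sigma_k$, and your proposed ``export via Koebe from $Q_{M_1}$'' is not a proof. The slabs $\Sigma_k$ sit at real part $\asymp -kM_0\asymp -\log(1/r)$, so any Koebe comparison with a fixed reference region produces a distortion factor that grows with $M$; without controlling that growth you cannot conclude.

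The paper closes this gap by a ratio argument rather than an absolute bound on $\hat{\mathcal L}^{n-1}_{t,\omega}(\1)$. One pulls back \emph{one more step}: the branches $g\circ G_k$ of $F^{-(n-1)}_\omega$ on $Y_M^-$ factor through maps $G_k$ which extend univalently to the entire half-plane $\{\Re Z<-1\}$. Koebe on this half-plane (with explicit distortion $\preceq |M|^3$) compares, branch by branch, the pullback of $Y_M^-$ with the pullback of a \emph{fixed} reference annulus $Y_1^-\sms Y_{1_+}^-$. Summing over all branches gives
\[
\nu_\omega\bigl(F^{-(n-1)}_\omega(Y_M^-)\bigr)\ \preceq\ |M|^3\,\frac{\nu_{\theta^{n-1}\omega}(Y_M^-)}{\nu_{\theta^{n-1}\omega}(Y_1^-\sms Y_{1_+}^-)}\cdot \nu_\omega\bigl(F^{-(n-1)}_\omega(Y_1^-\sms Y_{1_+}^-)\bigr).
\]
The last factor is $\le 1$ trivially; the numerator $\nu_{\theta^{n-1}\omega}(Y_M^-)\preceq r^{u-t}$ is your Step~2 estimate; and the denominator is bounded below \emph{uniformly in $\omega$} by Corollary~\ref{c1m10}. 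The polynomial factor $|M|^3\asymp(\log(1/r))^3$ is then absorbed into $r^{u-t}$, yielding $r^\gamma$. The two ingredients you are missing are thus (i) the comparison with a fixed reference region rather than a direct sup bound on the transfer operator, and (ii) the uniform lower bound $\nu_{\theta^{n-1}\omega}(Y_1^-\sms Y_{1_+}^-)\ge\Delta_-(1)>0$, which is what makes the ratio finite independently of $n$ and $\omega$.
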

\begin{proof}
First note that the only bad component of $F^{-1}_{\theta^{n-1}\omega,B}(B(0,r))$ is of the form $\pi\circ f_{\theta^{n-1}\omega}^{-1}(B(0,r))$ where the latter $B(0,r)$ is considered as a subset of $\C$, and $\pi:\C\to Q$ is the canonical projection. Thus, 
\begin{equation}\label{1_2017_04_11}
F^{-1}_{\theta^{n-1}\omega,B}(B(0,r))=Y_M^-
\end{equation}
for some $M\in [ \ln(1/r)+\ln A, \ln(1/r)+\ln B]$.
Next, using the estimate from Lemma~\ref{lem:est_zero} and \eqref{bounds_on_lambda}, we easily conclude that for $0<r\le r_0$, we have that
\begin{equation}\label{2_2017_04_11}
\nu_{\theta^{n-1}\omega}\(F^{-1}_{\theta^{n-1}\omega,B}(B(0,r))\)
\le C r^{-t}r^u=C r^{u-t}.
\end{equation}
with some constant $C\in (0,+\infty)$.
Now, every component of $F_{\omega,B}^{-n}(B(0,r))$ is of the form $F^{-(n-1)}_*(Y_M^-)$ where $F^{-(n-1)}_*(Y_M^-)$ is some connected component of  $F^{-(n-1)}(Y_M^-)$.
Let us note that the set $f^{-1}_{\theta^{(n-2)}\omega}(\{Z\in\C:\Re Z<-M\})$ is a union of (repeated periodically, with period $2\pi i$) unbounded components, each being bounded by some curve of the form 
$$
f^{-1}_{\theta^{(n-2)}\omega}(\{Z\in\C:\Re Z=-M\}).
$$ 
Since the projection onto $Q$ identifies these components, the set $$
\mathcal C_M:=F^{-1}_{\theta^{(n-2)}\omega}(Y_M^-)\subset Q
$$
is connected, and the map $F$ restricted to $\mathcal C_M$ is infinite--to--one. Similarly, the set 
$$
F^{-1}_{\theta^{(n-2)}\omega}(Y_1^-)\spt\mathcal C_M
$$ 
is connected, and the map $F_{\theta^{(n-2)}\omega}$ restricted to $\mathcal C_1$ is infinite--to--one. 

Now, the holomorphic  branches of $F^{-(n-2)}_\omega$ are all well defined on $\mathcal C_1$ and the restriction of these branches to $\mathcal C_M$ produces all bad connected components of  $F_{\omega}^{-n}(B(0,r))$, i.e., the set $F_{\omega,B}^{-n}(B(0,r))$. Denote 
$$ 
Y(*):=Y_1^-\sms Y_{1_+}^-
=\{z\in Q:\Re z\in [-{1_+},-1]\},
$$
and partition the set $\mathcal C_1$ into subsets $\mathcal C_1^k$ by defining
$$
\mathcal C_1^k:=\big\{z\in \mathcal C_1:\Im f_{\theta^{(n-2)}\omega}(z)\in [2k\pi, 2(k+1)\pi)\big\}. 
$$
Similarly, let 
$$
\mathcal C_M^k
:=C_M\cap \mathcal C_1^k
=\big\{z\in \mathcal C_M:\Im f_{\theta^{(n-2)}\omega}(z)\in [2k\pi, 2(k+1)\pi)\big\}.
$$
Then for each $k\in\Z$  the function $f_{\theta^{n-2}\omega}$ maps $\mathcal C_1^k$ bijectively onto the region 
$$
\big\{Z\in\C: \Re Z<-1\ \text{and}\ \Im Z\in [2k\pi, 2(k+1)\pi)\big\},
$$
which we identify with $Y_1^-$. Denote by $G_k^*$ the corresponding inverse map. Then the holomorphic map 
$$
Z\longmapsto G_k(z):=G_k^*(Z+2 k\pi i)
$$ 
is in fact defined and univalent on $\{Z\in\C: \Re(Z)<-1\}$, and maps the region
$$
\big\{Z\in\C:\Re(Z)<-1\  \text{and}\  \Im Z\in [0,2\pi)\big\},
$$
which we identify with $Y_1^-$, onto $\mathcal C_1^k$, while it maps the region
$$
\big\{Z\in\C:\Re(Z)<-M\  \text{and}\  \Im Z\in [0,2\pi)\big\},
$$
which we identify with $Y_M^-$, onto $\mathcal C_M^k$.

Still keeping the identification $Q\simeq \{Z\in\C:0\le\Im Z< 2\pi\}$, we thus see that the inverse--image \
$$
F_{\omega,B}^{-n}(B(0,r))=F^{-(n-1)}_\omega(Y_M^-)
$$ 
can be expressed as
$$
\bigcup_{k\in\Z}\bigcup_g g\circ  G_k(Y_M^-),
$$
where, the second union is taken over all holomorphic branches $g$ of $F^{-(n-2)}_{\omega}$ defined on $\mathcal C_1$.

Since, as we see, each such branch $g\circ  G_k$ has a univalent holomorphic extension to the whole left half-plane
$\{Z\in\C:\Re(Z)<-1\}$, we can use Koebe's Distortion Theorem to compare  the measure $\nu_\omega (g\circ G_k)\(Y_1^-\sms Y_{1_+}^-)\)$ and  $\nu_\omega (g\circ G_k)(Y_M^-))$. 
Applying this theorem separately for each composition $g\circ  G_k$ and then summing up, with using also \eqref{bounds_on_lambda}, we obtain  that
$$
\frac{\nu_\omega(F^{-(n-1)}_\omega(Y_M^-))}{\nu_\omega\(F^{-(n-1)}_\omega\(Y_1^-\sms Y_{1_+}^-)\)\)}\preceq |M|^3\frac{\nu_{\theta^{n-1}\omega}(Y_M^-)}{{\nu_{\theta^{n-1}\omega}\(Y_1^-\sms Y_{1_+}^-\)}}.
$$
By virtue of \eqref{1_2017_04_11} and \eqref{2_2017_04_11}, this gives
$$
\begin{aligned}
\nu_\omega((F^{-n}_{\omega,B}(B(0,r)))
&=\nu_\omega(F^{-(n-1)}_\omega(Y_M^-))
\preceq\nu_\omega\(F^{-(n-1)}_\omega\(Y_1^-\sms Y_{1_+}^-\)\)\frac{|M|^3r^{u-t}}{{\nu_{\theta^{n-1}\omega}\(Y_1^-\sms Y_{1_+}^-\)}} \\
&\le \frac{|M|^3r^{u-t}}{{\nu_{\theta^{n-1}\omega}\(Y_1^-\sms Y_{1_+}^-)}}.
\end{aligned}
$$ 
The proof is now completed by invoking the bounds $\ln(1/r)+\ln A\le M\le \ln(1/r)+\ln B$ along with Corollary~\ref{c1m10} which gives
$$
\nu_{\omega}\(Y_1^-\sms Y_{1_+}^-\)\ge\Delta_-(1)>0.
$$
\end{proof}

\noindent As an immediate consequence of Lemma~\ref{lem:est_zero}, Lemma~\ref{prop:est_good}, and  Lemma~\ref{prop:around_0}, we get the following.

\begin{lem}\label{l1_2017_04_11}
We have that
$$
\nu_\omega(F^{-n}_\omega\(B(0,r))\)\preceq r^\gamma
$$
for every integer $n\ge 0$, all $\om\in \Om$ and every $r\in (0,r_0]$.
\end{lem}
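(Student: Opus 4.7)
The plan is to exploit the decomposition $F^{-n}_\omega(B(0,r))=F^{-n}_{\omega,B}(B(0,r))\cup F^{-n}_{\omega,G}(B(0,r))$ already introduced just before Lemma~\ref{prop:est_good}, applied to the target point $0=\beta_{n,0}(\omega)=F^{0}_{\theta^{n}\omega}(0)$, which fits precisely into the framework of that lemma (with $j=0$). The bad part is handled outright by Lemma~\ref{prop:around_0}, which gives
$$
\nu_\omega\bigl(F^{-n}_{\omega,B}(B(0,r))\bigr)\preceq r^\gamma
$$
for some $\gamma>0$ independent of $n$, $\omega$, and $r\in(0,r_0]$.

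For the good part, I would apply Lemma~\ref{prop:est_good} with $j=0$ and $A=B(0,r)\subset B(\beta_{n,0}(\omega),r_0)$, which immediately yields
$$
\nu_\omega\bigl(F^{-n}_{\omega,G}(B(0,r))\bigr)\le c(M_1,r_0)\,\nu_{\theta^n\omega}(B(0,r)).
$$
Lemma~\ref{lem:est_zero} then bounds the right-hand side by $c(M_1,r_0)\,r^{u}$ for every $r<\rho$, with $u>2t+7>0$; for the finitely many radii $r\in[\rho,r_0]$ one absorbs them into the implicit constant.

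Adding the two estimates gives
$$
\nu_\omega\bigl(F^{-n}_\omega(B(0,r))\bigr)\preceq r^{\gamma}+r^{u}\preceq r^{\min(\gamma,u)},
$$
so the lemma follows after replacing $\gamma$ by $\min(\gamma,u)>0$ (still denoted $\gamma$ in the statement). The case $n=0$ reduces directly to Lemma~\ref{lem:est_zero}. Since all the work is already done in the three cited lemmas, there is no genuine obstacle here; the only point that needs a moment of attention is verifying that $0$ plays the role of $\beta_{n,0}(\omega)$ so that Lemma~\ref{prop:est_good} applies cleanly, and that the two resulting exponents $\gamma$ and $u$ can be combined into a single positive exponent uniform in $\omega$ and $n$.
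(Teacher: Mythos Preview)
Your proof is correct and follows exactly the approach indicated in the paper, which simply states that the lemma is an immediate consequence of Lemma~\ref{lem:est_zero}, Lemma~\ref{prop:est_good}, and Lemma~\ref{prop:around_0}. You have spelled out precisely how these three lemmas combine: the good/bad decomposition with $j=0$ (noting $\beta_{n,0}(\omega)=0$), Lemma~\ref{prop:around_0} for the bad part, Lemma~\ref{prop:est_good} followed by Lemma~\ref{lem:est_zero} for the good part, and the trivial absorption of the range $r\in[\rho,r_0]$ into the implicit constant.
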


\noindent We shall prove the following. 
\begin{lem}\label{prop:preimage_0}
There exists $\beta>0$ such that for every Borel set $A\subset B(0,r_0)$ and all integers $n\ge 0$ we have that
$$
\nu_\omega(F^{-n}_\omega(A))\preceq \nu_{\theta^n\omega}^\beta(A).
$$
\end{lem}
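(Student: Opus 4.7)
The plan is to Hölder-interpolate two complementary estimates by splitting $A$ at a small radius. For a parameter $s\in(0,r_0)$ chosen at the end, write $A=A_1\cup A_2$ with $A_1:=A\cap B(0,s)$ and $A_2:=A\sms B(0,s)$. Lemma~\ref{l1_2017_04_11} immediately yields $\nu_\om(F^{-n}_\om(A_1))\preceq s^\gamma$. For the annular piece $A_2\sbt B(0,r_0)=B(\beta_{n,0}(\om),r_0)$, the good-pullback contribution is linear by Lemma~\ref{prop:est_good} applied with $j=0$:
$$
\nu_\om(F^{-n}_{\om,G}(A_2))\le c(M_1,r_0)\,\nu_{\th^n\om}(A).
$$

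The heart of the argument is $\nu_\om(F^{-n}_{\om,B}(A_2))$, which I handle by adapting the tract analysis in the proof of Lemma~\ref{prop:around_0}. That proof shows that every bad component of $F^{-n}_\om(B(0,r_0))$ is an $(n-1)$-step pullback of the single step-$1$ tract $T:=F^{-1}_{\th^{n-1}\om,B}(B(0,r_0))\sbt Y^-$, so setting $T_2:=F^{-1}_{\th^{n-1}\om,B}(A_2)\sbt T$ one has $F^{-n}_{\om,B}(A_2)=F^{-(n-1)}_\om(T_2)$. Since $F_{\th^{n-1}\om}$ is a biholomorphism from $T$ onto $B(0,r_0)\sms\{[0]\}$ with $|F'_{\th^{n-1}\om}(z)|=|F_{\th^{n-1}\om}(z)|\ge s$ for $z\in T_2$, the conformality relation \eqref{6_2017_12_19} combined with the uniform bound \eqref{bounds_on_lambda} gives
$$
\nu_{\th^{n-1}\om}(T_2)\preceq s^{-t}\,\nu_{\th^n\om}(A).
$$
Next the Koebe-distortion step of the proof of Lemma~\ref{prop:around_0} applies with $T_2$ in place of $Y^-_M$: each composite branch $g\circ G_k$ extends univalently to the whole left half-plane $\{\Re Z<-1\}$ and $T_2$ lies at Euclidean depth at most $\log(B/s)$ there, so summing the half-plane Koebe estimate over branches — and using Corollary~\ref{c1m10} to bound $\nu_{\th^{n-1}\om}(Y^-_1\sms Y^-_{1_+})\ge\Delta_-(1)$ — yields
$$
\nu_\om(F^{-n}_{\om,B}(A_2))\preceq \bigl(\log(1/s)\bigr)^3\,\nu_{\th^{n-1}\om}(T_2)\preceq s^{-t-1}\,\nu_{\th^n\om}(A),
$$
after absorbing the logarithm via $(\log(1/s))^3\le Cs^{-1}$ for $s$ small.

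Adding the three contributions produces
$$
\nu_\om(F^{-n}_\om(A))\preceq s^\gamma+s^{-(t+1)}\,\nu_{\th^n\om}(A).
$$
Balancing via $s:=\nu_{\th^n\om}(A)^{1/(\gamma+t+1)}$ — a choice which lies in $(0,\rho)$ since Lemma~\ref{lem:est_zero} forces $\nu_{\th^n\om}(A)\le r_0^u$ to be small — equalizes both terms and delivers the bound with $\beta:=\gamma/(\gamma+t+1)\in(0,1)$. The principal difficulty is the bad-part estimate, because bad components are genuinely unbounded logarithmic tracts; the escape is that conformality along the biholomorphic bad branch trades the smallness $|F'|\ge s$ on $T_2$ for smallness of $\nu_{\th^{n-1}\om}(T_2)$, after which the fact that every remaining inverse branch extends univalently to the entire left half-plane lets Euclidean Koebe propagate the bound through the remaining $n-1$ iterates with only a polylogarithmic, and hence $n$-independent, loss.
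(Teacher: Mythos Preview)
Your proof is correct and follows essentially the same architecture as the paper's: split $A$ at a radius determined by $\nu_{\theta^n\omega}(A)$, control the inner ball via Lemma~\ref{l1_2017_04_11}, control the outer annulus with a loss polynomial in the radius, and balance.

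The only genuine difference is how the annular piece $A\setminus B(0,s)$ is treated. The paper does not split it into good and bad parts; instead it observes directly that
\[
\sup_{A\setminus B(0,r)}\hat{\mathcal L}^n_{t,\omega}(\1)\preceq r^{-3}\sup_{Q_{M_1}\setminus\bigcup_j B(\beta_{n,j},r_0)}\hat{\mathcal L}^n_{t,\omega}(\1)\le r^{-3}c(M_1,r_0),
\]
the $r^{-3}$ being the same half-plane Koebe factor you unpack explicitly. This gives $\nu_\omega(F_\omega^{-n}(A\setminus B(0,r)))\preceq r^{-3}\nu_{\theta^n\omega}(A)$ in one stroke. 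The paper then chooses $r$ via $\nu_{\theta^n\omega}(A)=Cr^6$, yielding $\beta=\min(\gamma/6,1/2)$, whereas your good/bad split plus tract argument gives the exponent $\gamma/(\gamma+t+1)$. Your route is longer but makes the source of the distortion loss more visible; the paper's route is shorter because the tract Koebe argument is already encapsulated in the transfer-operator bound \eqref{bound_for_L}. One minor point: your justification that $s<\rho$ is not quite airtight (one would need $u\ge\gamma+t+1$, which is not guaranteed), but this is harmless since when $\nu_{\theta^n\omega}(A)$ is bounded below by a fixed constant the conclusion is trivial.
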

\begin{proof}
By Lemma~\ref{lem:est_zero} and Lemma~\ref{l1_2017_04_11} there exist constants $C\in(0,+\infty)$ and $D\in(0,+\infty)$ such that  
$$
\nu_{\omega}(B(0,r))\le Cr^u
$$
and 
$$
\nu_\omega(F^{-n}_\omega(B(0,r))\le D r^\gamma.
$$  
for all $r\in (0,r_0]$, almost  all $\om\in\Om$ and all integers 
$n\ge 0$. So, Since $u>6$, given such $r$, $\om$, and $n$, there exists $r\in(0,r_0]$ such that 
$$
\nu_{\theta^n\omega}(A)=C r^6.
$$
Then, 
$$\nu_\omega(F^{-n}(A\cap B(0,r)))
\le \nu_\omega(F_\omega^{-n}(B(0,r))
\le D r^\gamma
=D\left (\frac{\nu_{\theta^n\omega}(A)}{C}\right )^{\gamma/6}
= DC^{-\gamma/6}\nu_{\theta^n\omega}^{\gamma/6}(A),
$$
while using \eqref{bound_for_L}, we get
$$
\aligned
\nu_\omega(F^{-n}_\omega(A\setminus B(0,r))
&\le\sup\big\{\hL^n_{t,\omega}(\1)(z):z\in A\setminus B(0,r)\big\} \, \nu_{\theta^n\omega}(A\setminus B(0,r))\\
&\preceq  r^{-3}\sup\Big\{\hL^n_{t,\omega}(\1)(z):z\in 
Q_{M_1}\setminus \bigcup_{j=0}^NB(\beta_{n,j}(\om),r_0)\Big\}\, \nu_{\theta^n\omega}(A)\\
&\preceq c(M_1,r_0)\nu_{\theta^n\omega}^{-1/2}(A)\nu_{\theta^n\omega}(A) \\
&= c(M_1,r_0)\nu_{\theta^n\omega}^{1/2}(A).
\endaligned
$$
Thus, the statement holds with $\beta:=\min(\gamma/6, 1/2)$.
\end{proof}

\begin{lem}\label{prop:est_bad} There exists $\beta>0$ such that, if $\om\in\Omega$, $j\le N$, $n\in\mathbb N$, and 
$A\subset B(\beta_{n,j}(\om),r_0)$ is an arbitrary Borel set, then
$$
\nu_\omega(F^{-n}_{\omega,B}(A))\preceq \nu_{\theta^n\omega}^\beta(A).
$$
\end{lem}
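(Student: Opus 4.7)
My strategy is to classify every bad component $C$ of $F^{-n}_\omega(A)$ by its \emph{first bad time} $l=l(C)$, defined as the smallest integer in $\{1,\ldots,n\}$ with $0\in F^l_\omega(C)$. Because the asymptotic value $0$ lies outside the image of every $F_\eta$ (so $0\notin F^k_\omega(Q)$ for any $k\ge 1$), I expect to show that necessarily $l\le n-j$: otherwise $F^l_\omega(C)=F^{l-(n-j)}_{\theta^{n-j}\omega}(F^{n-j}_\omega(C))$ would be a value of an at least one-fold iterate of some $F_\eta$, which never contains $0$. Combined with $F^n_\omega(C)\sbt A$ this also forces $F^{n-l}_{\theta^l\omega}(0)\in A$. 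For each $l$ in the resulting (possibly empty) set $L\sbt\{1,\ldots,n-j\}$, I will define $D_l$ to be the connected component of $F^{-(n-l)}_{\theta^l\omega}(A)$ containing $0$; absence of $F$-preimages of $0$ will guarantee that $F^{-(n-l)}_{\theta^l\omega,*}\colon A\to D_l$ is univalent, so $D_l$ is a Koebe-controlled deformed copy of $A$ sitting near $0$.

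Each bad component with first bad time $l$ satisfies $F^l_\omega(C)\sbt D_l$, and so $C\sbt F^{-l}_\omega(D_l)$; therefore $\nu_\omega(F^{-n}_{\omega,B}(A))\le \sum_{l\in L}\nu_\omega(F^{-l}_\omega(D_l))$. The Koebe Distortion Theorem applied to the univalent branch $F^{-(n-l)}_{\theta^l\omega,*}$ gives
$$
D_l\sbt B\!\bigl(0,\,Kr_0/|(F^{n-l}_{\theta^l\omega})'(0)|\bigr)
\quad\text{and}\quad
|(F^{n-l}_{\theta^l\omega})'|\asymp|(F^{n-l}_{\theta^l\omega})'(0)|\ \text{on}\ D_l,
$$
and combining with the conformality identity \eqref{6_2017_12_19} yields
$$
\nu_{\theta^l\omega}(D_l)\asymp \nu_{\theta^n\omega}(A)\big/\bigl(\lambda_{t,\theta^l\omega}^{n-l}\,|(F^{n-l}_{\theta^l\omega})'(0)|^t\bigr).
$$
For those $l\in L$ with $D_l\sbt B(0,r_0)$, applying Lemma~\ref{prop:preimage_0} to $D_l$ (at level $l$) then gives
$$
\nu_\omega\bigl(F^{-l}_\omega(D_l)\bigr)\preceq \nu_{\theta^l\omega}^\beta(D_l)\preceq \nu_{\theta^n\omega}^\beta(A)\cdot\bigl(\lambda_{t,\theta^l\omega}^{n-l}\,|(F^{n-l}_{\theta^l\omega})'(0)|^t\bigr)^{-\beta}.
$$

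Summing over $l\in L$ and re-indexing with $s=n-l$ yields
$$
\nu_\omega(F^{-n}_{\omega,B}(A))\preceq \nu_{\theta^n\omega}^\beta(A)\,\sum_{s\ge j}\Bigl(\lambda^s\prod_{k=1}^s F^k_{\theta^{n-s}\omega}(0)\Bigr)^{-t\beta}.
$$
The series is bounded uniformly in $\omega$, $n$ and $j\le N$ by the super-exponential escape of the orbit of $0$: since $A>1/e$, the recursion $F^{k+1}_\omega(0)=\eta(\theta^k\omega)e^{F^k_\omega(0)}\ge Ae^{F^k_\omega(0)}$ eventually carries $F^k_\omega(0)$ past the parabolic threshold into doubly-exponential growth, so $\prod_{k=1}^sF^k(0)$ grows super-exponentially in $s$ uniformly in $\omega$. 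This will yield the lemma with the same exponent $\beta$ as in Lemma~\ref{prop:preimage_0}.

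The principal obstacle will be the technical accommodation of the finitely many indices $l\in L$ with $s=n-l$ so small that $K/|(F^s_{\theta^l\omega})'(0)|>1$, for which $D_l$ may stretch slightly outside $B(0,r_0)$; I plan to handle these by covering $D_l$ with a uniformly bounded number of radius-$r_0$ balls and applying Lemma~\ref{prop:preimage_0} to each covering ball, absorbing the finite covering count into the multiplicative constants. A secondary subtlety is the verification that $D_l$ really is a univalent preimage (so that Koebe and conformality both apply), which hinges on $0$ being an asymptotic and not a critical value of the system.
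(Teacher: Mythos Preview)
Your definition of the ``first bad time'' $l$ as the smallest integer with $0\in F^l_\omega(C)$ is self-defeating: you yourself note that $0\notin F^k_\omega(Q)$ for every $k\ge 1$, which makes such an $l$ impossible, not merely $\le n-j$. The right notion is dual to yours: the failure of the inverse branch happens at the first pullback step at which $0$ lies in the set being inverted. Concretely, if $C$ is a bad component, trace the inverse branches from $B(\beta_{n,j}(\omega),r_0)$ downward; the branch fails at step $m$ from the top exactly when $F^m_{\theta^{n-m}\omega}(0)=\beta_{n,m}(\omega)$ lies in $B(\beta_{n,j}(\omega),r_0)$. Since the real orbit points $\beta_{n,m}(\omega)$ are strictly increasing in $m$ and $r_0<1/(2K)$ keeps the balls $B(\beta_{n,m}(\omega),r_0)$ disjoint, this forces $m=j$. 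Your set $L$ is therefore always the singleton $\{n-j\}$, and the series you assemble collapses to a single term.

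A second issue: you take $D_l$ to be the component of $F^{-(n-l)}_{\theta^l\omega}(A)$ \emph{containing $0$}, but $A$ need not contain the centre $\beta_{n,j}(\omega)$, so such a component may not exist even though bad components of $F^{-n}_\omega(B(\beta_{n,j}(\omega),r_0))$ certainly do. The correct object is simply $F^{-j}_{\theta^{n-j}\omega,*}(A)$, the star branch applied to $A$; it sits inside $B(0,r_0)$ but need not contain $0$.

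The paper's proof is accordingly much shorter. Every bad component of $F^{-n}_\omega(B(\beta_{n,j}(\omega),r_0))$ factors through the single star branch, so
\[
F^{-n}_{\omega,B}(A)\subset F^{-(n-j)}_\omega\bigl(F^{-j}_{\theta^{n-j}\omega,*}(A)\bigr),\qquad F^{-j}_{\theta^{n-j}\omega,*}(A)\subset B(0,r_0).
\]
Since $j\le N$, one conformality estimate with bounded distortion gives
\[
\nu_{\theta^{n-j}\omega}\bigl(F^{-j}_{\theta^{n-j}\omega,*}(A)\bigr)\le \Bigl(\max_{0\le k\le N}K^t|(F^k_{\theta^{n-j}\omega})'(0)|^{-t}p^k\Bigr)\,\nu_{\theta^n\omega}(A),
\]
and a single application of Lemma~\ref{prop:preimage_0} to the set $F^{-j}_{\theta^{n-j}\omega,*}(A)\subset B(0,r_0)$ finishes the job. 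No sum over $l$, no super-exponential tail estimate, and no ``technical accommodation'' of small $s$ is needed.
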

\begin{proof}
Recall that the bad components of $F^{-n}_\omega(B(\beta_{n,j}(\om),r_0))$ are all the connected components of the set
$$
F^{-(n-j)}_{\omega}(F^{-j}_{\theta^{n-j}\omega,*}(B(\beta_{n,j}(\om),r_0))),
$$
where $F^{-j}_{\theta^{n-j}\omega,*}$ is the branch of $F^{-j}_{\theta^{n-j}\omega}$ mapping $B(\beta_{n,j}(\om),r_0)$ into $B(0,r_0)$, and $F^{-n}_{\omega,B}(A))$ is the union of all these components intersected with $F^{-n}_{\omega}(A))$.
Since, using  \eqref{bounds_on_lambda}, we obtain
$$
\nu_{\theta^{n-j}\omega}(F^{-j}_{\theta^{n-j}\omega,*}(A))
\le \max_{0\le k\le N}
\big\{K^t|(F_{\theta^{n-j}\omega}^k)'(0)|^{-t}p^k\big\} \nu_{\theta^n\omega}(A),
$$
we thus conclude the proof by applying Lemma~\ref{prop:preimage_0}.
\end{proof}

We summarize the above Lemmas~\ref{AinQ}, \ref{prop:est_good}, \ref{prop:preimage_0}, \ref{prop:est_bad} in the following.

\begin{lem}\label{AinQQ}
There exists $\beta>0$ such that
for every Borel set $A\subset Q_{M_1}$ and for every $n\ge 0$
$$
\nu_\omega(F^{-n}_\omega(A))\preceq \nu_{\theta^n\omega}^\beta(A).
$$
\end{lem}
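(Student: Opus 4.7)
The plan is to assemble Lemma \ref{AinQQ} by decomposing $A$ relative to the finitely many balls centered at the postcritical--type points $\beta_{n,j}(\omega) = F^j_{\theta^{n-j}\omega}(0)$ for $0 \le j \le N$, and then applying the preceding three lemmas to each piece. Recall that the integer $N$ was fixed so that $Q_{M_1}$ meets at most the balls $B(\beta_{n,j}(\omega),r_0)$ with $0 \le j \le N$, so this finite collection covers the entire ``obstruction region'' inside $Q_{M_1}$.

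Concretely, I would write
$$A = A_* \cup \bigcup_{j=0}^N A_j,\qquad A_j := A \cap B\(\beta_{n,j}(\omega),r_0\),\qquad A_* := A \setminus \bigcup_{j=0}^N B\(\beta_{n,j}(\omega),r_0\),$$
and then use subadditivity:
$$\nu_\omega\(F_\omega^{-n}(A)\) \le \nu_\omega\(F_\omega^{-n}(A_*)\) + \sum_{j=0}^N \Big(\nu_\omega\(F_{\omega,G}^{-n}(A_j)\) + \nu_\omega\(F_{\omega,B}^{-n}(A_j)\)\Big).$$
For the first term, Lemma \ref{AinQ} applies directly since $A_* \subset Q_{M_1} \setminus \bigcup_{j=0}^N B(\beta_{n,j}(\omega),r_0)$, yielding $\nu_\omega(F_\omega^{-n}(A_*)) \le c(M_1,r_0)\nu_{\theta^n\omega}(A_*)$. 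For each $A_j$, the good--component estimate comes from Lemma \ref{prop:est_good}, giving $\nu_\omega(F_{\omega,G}^{-n}(A_j)) \preceq \nu_{\theta^n\omega}(A_j)$, and the bad--component estimate from Lemma \ref{prop:est_bad}, giving $\nu_\omega(F_{\omega,B}^{-n}(A_j)) \preceq \nu_{\theta^n\omega}^\beta(A_j)$, where $\beta > 0$ is the exponent furnished by that lemma.

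To reconcile the mixed linear and H\"older bounds, I would use that $\nu_{\theta^n\omega}(A) \le 1$, whence $\nu_{\theta^n\omega}(A) \le \nu_{\theta^n\omega}^\beta(A)$ (after choosing $\beta \in (0,1]$, which we may since the exponent of Lemma \ref{prop:est_bad} is itself at most $1$). Summing the $N+1$ bad and good contributions and the term coming from $A_*$, and absorbing $N+1$ and $c(M_1,r_0)$ into the $\preceq$ constant (both are independent of $\omega$ and $n$), one obtains
$$\nu_\omega\(F_\omega^{-n}(A)\) \preceq \nu_{\theta^n\omega}^\beta(A),$$
as required.

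The only mildly subtle point is that no real work is needed here: the entire preparatory machinery has been built so that these four lemmas combine with pure set--theoretic decomposition. The decomposition into good and bad components inside each $B(\beta_{n,j}(\omega),r_0)$ is the one already fixed above Lemma \ref{prop:est_good}, and the uniformity of all constants in $\omega$ (inherited from the corresponding uniformity in the cited lemmas) propagates automatically. Thus the proof is essentially a one--line consequence of the list of lemmas; there is no genuine obstacle to overcome.
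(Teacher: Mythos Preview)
Your proposal is correct and follows exactly the approach the paper intends: the paper's own ``proof'' consists solely of the sentence ``We summarize the above Lemmas~\ref{AinQ}, \ref{prop:est_good}, \ref{prop:preimage_0}, \ref{prop:est_bad} in the following,'' and your decomposition into $A_*$ and the $A_j$'s, followed by the good/bad splitting, is precisely the way these lemmas assemble. Your observation that $\beta\le 1$ (so that linear bounds dominate the H\"older ones) is the only small bookkeeping step, and it is justified since Lemma~\ref{prop:preimage_0} gives $\beta=\min(\gamma/6,1/2)\le 1$.
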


\noindent The next proposition deals with sets contained in the complement of $Q_{M_1}$.

\begin{lem}\label{prop:estimate_in YM}
There exists $\beta>0$ such that
for every Borel set $A\subset Y_{M_1}$ and for every $n\ge 0$
$$
\nu_\omega(F^{-n}_\omega(A))\preceq \nu_{\theta^n\omega}^\beta(A).
$$
\end{lem}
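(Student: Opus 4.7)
The plan is to prove the bound by induction on $n$ with the same exponent $\beta$ as in Lemma~\ref{AinQQ}. The base case $n=0$ is trivial: $\nu_\omega(A)\le\nu_\omega(A)^\beta$ since $\nu_\omega(A)\in[0,1]$ and we may take $\beta\in(0,1]$. For the inductive step I will use $F^{-n}_\omega = F^{-(n-1)}_\omega\circ F^{-1}_{\theta^{n-1}\omega}$ to write $\nu_\omega(F^{-n}_\omega(A))=\nu_\omega(F^{-(n-1)}_\omega(A'))$ where $A':=F^{-1}_{\theta^{n-1}\omega}(A)\subset Q$ sits at fiber $\theta^{n-1}\omega$, and then split $A'=A'_Q\sqcup A'_Y$ along the partition $Q=Q_{M_1}\sqcup Y_{M_1}$.

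Applying Lemma~\ref{AinQQ} to $A'_Q\sbt Q_{M_1}$ and the inductive hypothesis to $A'_Y\sbt Y_{M_1}$ yields
$$\nu_\omega(F^{-(n-1)}_\omega(A'_Q))\le c\,\nu_{\theta^{n-1}\omega}(A'_Q)^\beta,\qquad \nu_\omega(F^{-(n-1)}_\omega(A'_Y))\le C_{n-1}\,\nu_{\theta^{n-1}\omega}(A'_Y)^\beta,$$
where $c$ is the (uniform) constant from Lemma~\ref{AinQQ} and $C_{n-1}$ is the inductive constant. Combining these with the concavity inequality $x^\beta+y^\beta\le 2^{1-\beta}(x+y)^\beta$ valid for $\beta\in(0,1)$ gives
$$\nu_\omega(F^{-n}_\omega(A))\le 2^{1-\beta}\max(c,C_{n-1})\,\nu_{\theta^{n-1}\omega}(A')^\beta.$$

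The decisive contraction is delivered by \eqref{fixing_M1}. Indeed, by conformality,
$$\nu_{\theta^{n-1}\omega}(A')=\int_A \hL_{t,\theta^{n-1}\omega}\1\,d\nu_{\theta^n\omega},$$
and since $A\sbt Y_{M_1}$, \eqref{fixing_M1} furnishes $\hL_{t,\theta^{n-1}\omega}\1(z)\le\varepsilon$ on $A$ for some $\varepsilon\in(0,1)$. Hence $\nu_{\theta^{n-1}\omega}(A')\le\varepsilon\,\nu_{\theta^n\omega}(A)$, and the inductive estimate takes the form of the recursion
$$C_n\le 2^{1-\beta}\varepsilon^\beta\,\max(c,C_{n-1}).$$

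The main obstacle will be ensuring this recursion is contractive, i.e.\ $2^{1-\beta}\varepsilon^\beta<1$, equivalently $\varepsilon<2^{1-1/\beta}$. By Lemma~\ref{lower}, $\hL_{t,\omega}\1(z)\preceq|z|^{1-t}$ on $Y_{M_1}$, so enlarging $M_1$ (which is only constrained to be at least $M_0$ and to satisfy \eqref{fixing_M1}) makes $\varepsilon$ arbitrarily small, compatibly with all previously fixed constants. Once the contraction holds, the recursion stabilizes at $C_n\le\max(1,c)$ uniformly in $n$ and $\omega$, which gives the desired estimate $\nu_\omega(F^{-n}_\omega(A))\preceq\nu_{\theta^n\omega}(A)^\beta$ for every $A\sbt Y_{M_1}$ and every $n\ge 0$.
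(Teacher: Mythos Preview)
Your proof is correct and follows the same core strategy as the paper: split $F^{-1}_{\theta^{n-1}\omega}(A)$ along $Q_{M_1}\sqcup Y_{M_1}$, apply Lemma~\ref{AinQQ} to the $Q_{M_1}$ part, and use the contraction from \eqref{fixing_M1} on the $Y_{M_1}$ part. The paper unwinds this into an explicit telescoping sum---iterating the split until everything has landed in $Q_{M_1}$, then summing the geometric series $\sum_k 2^{-\beta k}$---whereas you package the same mechanism as an induction on $n$ with the recursion $C_n\le 2^{1-\beta}\varepsilon^\beta\max(c,C_{n-1})$.

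The one substantive cost of your organization is the concavity step $x^\beta+y^\beta\le 2^{1-\beta}(x+y)^\beta$, which loses a factor of $2^{1-\beta}$ and forces the stronger requirement $\varepsilon<2^{1-1/\beta}$. Since $\beta=\min(\gamma/6,1/2)\le 1/2$ (see the proof of Lemma~\ref{prop:preimage_0}), this is \emph{not} satisfied by the bound $1/2$ actually fixed in \eqref{fixing_M1}, so you genuinely need to go back and enlarge $M_1$. As you correctly note, this is harmless: the exponent $\beta$ is independent of $M_1$ (only the constant $c(M_1,r_0)$ changes), so one can first fix $\beta$ and then choose $M_1$ accordingly. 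The paper's telescoping avoids this concavity loss entirely and works directly with the bound $1/2$ as stated in \eqref{fixing_M1}, which is why that particular threshold was chosen there.
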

\begin{proof}
First, let us  notice that using  \eqref{fixing_M1} and the bounds on $\lambda_{t,\omega}$, see \eqref{bounds_on_lambda}, we can estimate as follows.
\begin{equation}\label{onestep}
\begin{aligned}
\nu_{\theta^{n-1}\omega}(F^{-1}_{\theta^{n-1}\omega}(A))
&=\int_A\hL_{t,\theta^{n-1}\omega}(\1)(z)d\nu_{\theta^n\omega}(z)\\
&\le \frac{1}{p}\sup\big\{\L_{t,\theta^{n-1}\omega}(\1)(z)z\in Y_{M_1}\big\} \nu_{\theta^n\omega}(A)\\
&<\frac{1}{2}\nu_{\theta^n\omega}(A).
\end{aligned}
\end{equation}
Write 
$$
F^{-1}_{\theta^{n-1}\omega}(A)=A_1\cup A_2=(F^{-1}_{\theta^{n-1}\omega}(A)\cap Q_{M_1})\cup (F^{-1}_{\theta^{n-1}\omega}(A)\setminus Q_{M_1})
$$ 
and 
$$
F_\omega^{-n}(A)=F^{-(n-1)}_{\omega}(A_1)\cup F^{-(n-1)}_{\omega}(A_2).
$$
Using  \eqref{onestep}  and Lemma~\ref{AinQQ}, we have, with some positive constant $C_1$ guaranteed by Lemma~\ref{AinQQ}:
$$
\nu_{\omega}(F^{-(n-1)}_\omega(A_1))
\le C_1 \nu_{\theta^{n-1}\omega}^\beta(A_1)
\le C_1\nu_{\theta^{n-1}\omega}^\beta (F^{-1}_{\theta^{n-1}\omega}(A))
\le 2^{-\beta}C_1\nu_{\theta^n\omega}^\beta(A),
$$
while, again, 
$$
F^{-1}_{\theta^{n-2}\omega}(A_2)=A_{21}\cup 
A_{22}=(F^{-1}_{\theta^{n-2}\omega}(A_2)\cap Q_{M_1})\cup 
(F^{-1}_{\theta^{n-2}\omega}(A_2)\setminus Q_{M_1}),
$$
and 
$$
F^{-(n-1)}_{\omega}(A_2)=F^{-(n-2)}_{\omega}(A_{21})\cup F^{-(n-2)}_{\omega}(A_{22}), 
$$
where
$$
\nu_{\theta^{n-2}\omega}(A_{21})\le\frac12\nu_{\theta^{n-1}\omega}(A_2)
\le \lt(\frac12\rt)^2\nu_{\theta^n\omega}(A)
$$ 
and, similarly,
$$
\nu_{\theta^{n-2}\omega}(A_{22})\le \lt(\frac12\rt)^2\nu_{\theta^n\omega}(A).
$$
Proceeding inductively we thus obtain the following splitting.
$$
F^{-n}_\omega(A)=F^{-(n-1)}_\omega(A_1)\cup F^{-(n-2)}_\omega(A_{21})\cup F^{-(n-3)}_\omega (A_{221})\dots \cup F^{-1}_\omega(A_{22\dots 1})\cup A_{22\dots 2},
$$
where 
$$
\nu_{\theta^{n-k}\omega}(A_{22\dots 1})\le \lt(\frac12\rt)^k\nu_{\theta^n\omega}(A).
$$
Since for all sets $A_{22\dots 1}$ Lemma~\ref{AinQQ} applies, we conclude that
$$
\nu_\omega(F^{-n}_\omega(A))
\le C_1\nu_{\theta^n \omega}^\beta(A) (1+2^{-\beta}+2^{-2\beta}+\dots+2^{-n\beta}) +\lt(\frac12\rt)^n\nu_{\theta^n\omega}(A).
$$
This ends the proof, with possibly modified constant $c_1$, and the same $\beta$ as in Lemma~\ref{AinQQ}.
\end{proof}

We summarize the above lemmas as follows.

\begin{prop}\label{prop:invariant}
There exist constants $\beta>0$ and $C>0$ such that if $A\subset Q$ is a Borel set then
for every $n\in\mathbb N$ and for $m$--a.e. $\omega\in\Omega$, 
$$
\nu_\omega(F^{-n}_\omega(A))\le C\nu_{\theta^n\omega}^\beta(A).
$$
\end{prop}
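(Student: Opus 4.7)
The proposition is essentially a packaging of Lemmas~\ref{AinQQ} and~\ref{prop:estimate_in YM}, so the plan is a straightforward splitting argument together with bookkeeping of the constants.

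My approach is the following. Given an arbitrary Borel set $A\subset Q$, decompose it into the two pieces
$$
A_1:=A\cap Q_{M_1} \  \  \text{ and } \  \  A_2:=A\cap Y_{M_1},
$$
which are disjoint and have union $A$. Then $F_\omega^{-n}(A)$ is the disjoint union of $F_\omega^{-n}(A_1)$ and $F_\omega^{-n}(A_2)$, so by additivity
$$
\nu_\omega(F_\omega^{-n}(A))
=\nu_\omega(F_\omega^{-n}(A_1))+\nu_\omega(F_\omega^{-n}(A_2)).
$$
Let $\beta_1>0$ and $C_1>0$ be the constants produced by Lemma~\ref{AinQQ}, and let $\beta_2>0$ and $C_2>0$ be those produced by Lemma~\ref{prop:estimate_in YM}. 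Set $\beta:=\min\{\beta_1,\beta_2\}>0$ and $C:=C_1+C_2$.

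Applying each of the two lemmas to the corresponding piece, and using that $\nu_{\theta^n\omega}(A_i)\le \nu_{\theta^n\omega}(A)\le 1$ together with monotonicity of $x\mapsto x^\beta$ on $[0,1]$ (so that $x^{\beta_i}\le x^\beta$ since $\beta\le\beta_i$), I obtain for $i=1,2$,
$$
\nu_\omega(F_\omega^{-n}(A_i))
\le C_i\,\nu_{\theta^n\omega}^{\beta_i}(A_i)
\le C_i\,\nu_{\theta^n\omega}^{\beta}(A).
$$
Summing the two estimates gives the required bound
$$
\nu_\omega(F_\omega^{-n}(A))\le C\,\nu_{\theta^n\omega}^{\beta}(A),
$$
valid for $m$--a.e. $\omega\in\Omega$ and for every $n\in\mathbb N$.

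The content of the proof lies entirely in the preceding lemmas; here there is no genuine obstacle, only the minor trick that one cannot use subadditivity of $x\mapsto x^\beta$ to combine $\nu_{\theta^n\omega}^{\beta}(A_1)$ and $\nu_{\theta^n\omega}^{\beta}(A_2)$ into $\nu_{\theta^n\omega}^{\beta}(A_1\cup A_2)$ directly. This is circumvented by the trivial but crucial observation that each $A_i$ is a subset of $A$, so that $\nu_{\theta^n\omega}^{\beta}(A_i)\le \nu_{\theta^n\omega}^{\beta}(A)$ by monotonicity of the measure, and monotonicity of the power function.
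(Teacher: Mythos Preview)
Your proof is correct and is exactly the argument the paper has in mind: the proposition is stated in the paper merely as a summary of Lemmas~\ref{AinQQ} and~\ref{prop:estimate_in YM}, without an explicit proof, and your splitting of $A$ into $A\cap Q_{M_1}$ and $A\cap Y_{M_1}$ is the intended combination. In fact the paper arranges for the two lemmas to share the same exponent $\beta$, so your taking $\min\{\beta_1,\beta_2\}$ is a harmless extra precaution.
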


Now we are position to prove the following.

\begin{thm}\label{t2im2}
For every $t>1$ there exists a random Borel probability $F$--invariant measure $\mu=\mu^{(t)}$ absolutely continuous with respect to $\nu^{(t)}$, the $t$-conformal random measure for $F:\Omega\times \C\to \Omega\times \C$, produced in Theorem~\ref{t1_2016_10_08}. Furthermore,
$$
\mu^{(t)}(A)=\ell_B\big((\nu^{(t)}\circ F^{-n}(A))_{n=0}^\infty\big),
$$
where $\ell_B:\ell_\infty\to \R$ is a (fixed) Banach limit on $\ell_{\infty}$.
\end{thm}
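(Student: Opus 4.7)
The plan is to realize $\mu^{(t)}$ as the Banach--limit of the pushforwards $\nu^{(t)}\circ F^{-n}$ and then to upgrade this finitely additive set function to a countably additive random measure by means of the subadditivity estimate supplied by Proposition~\ref{prop:invariant}. First I would set, for every Borel set $A\sbt X$,
$$
\mu^{(t)}(A):=\ell_B\bigl((\nu^{(t)}\circ F^{-n}(A))_{n=0}^\infty\bigr),
$$
which is well defined because the argument lies in $[0,1]^\N\sbt \ell_\infty$. Positivity, linearity, normalization and shift--invariance of $\ell_B$ make $\mu^{(t)}$ a finitely additive probability content on $\mathcal F\otimes \mathcal B$; the identity $\nu^{(t)}\circ F^{-(n+1)}(A)=\nu^{(t)}\circ F^{-n}(F^{-1}(A))$ combined with shift--invariance of $\ell_B$ gives $F$--invariance $\mu^{(t)}(F^{-1}(A))=\mu^{(t)}(A)$; and taking $A=\pi_1^{-1}(B)$ with $B\in\mathcal F$, one uses $\pi_1\circ F=\theta\circ\pi_1$, $\nu^{(t)}\in\mathcal M_m$ and $m\circ\theta^{-1}=m$ to compute $\nu^{(t)}\circ F^{-n}(A)=m(B)$ for every $n$, so that $\mu^{(t)}\circ\pi_1^{-1}=m$.

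The key step, and the one I expect to be the main obstacle, is to turn the functional $\mu^{(t)}$ into a genuine countably additive measure and to establish its absolute continuity with respect to $\nu^{(t)}$. Here I would apply Proposition~\ref{prop:invariant} fiberwise: since the $\om$--fiber of $F^{-n}(A)$ equals $F_\om^{-n}(A_{\th^n\om})$, that proposition applied for each $\om$ to the Borel subset $A_{\th^n\om}\sbt Q$, followed by integration over $\om$, Jensen's inequality (concavity of $x\mapsto x^\beta$ for $\beta\in(0,1)$) and the $\th$--invariance of $m$, gives the uniform bound
$$
\nu^{(t)}\circ F^{-n}(A)\le C\,\nu^{(t)}(A)^\beta
$$
for all $n\ge 0$. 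Passing to the Banach limit yields $\mu^{(t)}(A)\le C\,\nu^{(t)}(A)^\beta$. This inequality establishes absolute continuity $\mu^{(t)}\ll \nu^{(t)}$ and, combined with countable additivity of $\nu^{(t)}$, implies continuity of $\mu^{(t)}$ at $\es$: for $A_k\downarrow\es$ one has $\nu^{(t)}(A_k)\downarrow 0$, hence $\mu^{(t)}(A_k)\downarrow 0$. Consequently the finitely additive $\mu^{(t)}$ is in fact countably additive, and thus a bona fide Borel probability measure on $X$.

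Finally, since $\mu^{(t)}\circ\pi_1^{-1}=m$, we have $\mu^{(t)}\in\mathcal M_m$, so the correspondence recalled in Section~4 yields a fiberwise disintegration $(\mu^{(t)}_\om)_{\om\in\Om}$; uniqueness of the disintegration then transfers the global $F$--invariance into the fiberwise identity $\mu^{(t)}_\om\circ F_\om^{-1}=\mu^{(t)}_{\th\om}$ for $m$--a.e.\ $\om\in\Om$. The point I expect to require the most care is the fiberwise use of Proposition~\ref{prop:invariant}: that proposition is stated for a single fixed Borel subset of $Q$, whereas we need to apply it to the varying fibers $A_{\th^n\om}$. One must therefore verify either that the exceptional $m$--null set in Proposition~\ref{prop:invariant} is uniform in $A$, or else reinterpret its proof so that the estimate holds as a jointly measurable pointwise inequality in $(\om,A)$, which permits the subsequent Fubini and Jensen manipulations to go through.
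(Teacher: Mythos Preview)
Your argument is correct and follows the same overall strategy as the paper: define $\mu^{(t)}$ via a Banach limit of the pushforwards $\nu^{(t)}\circ F^{-n}$, establish uniform absolute continuity of this sequence with respect to $\nu^{(t)}$, and deduce countable additivity, absolute continuity, $F$--invariance, and membership in $\mathcal M_m$.

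There is one genuine difference in execution. You integrate the fiberwise bound from Proposition~\ref{prop:invariant} and apply Jensen's inequality (concavity of $x\mapsto x^\beta$, $\beta\in(0,1)$) to obtain the clean global estimate
\[
\nu^{(t)}\circ F^{-n}(A)\le C\,\nu^{(t)}(A)^\beta.
\]
The paper instead proceeds by a Chebyshev-type splitting: given $\nu(A)<\varepsilon^2$, it separates $\Omega$ into $\Omega_0=\{\omega:\nu_{\theta^n\omega}(A_{\theta^n\omega})\ge\varepsilon\}$ and its complement, bounds $m(\Omega_0)\le\nu(A)/\varepsilon\le\varepsilon$, and applies Proposition~\ref{prop:invariant} on $\Omega_0^c$, arriving at $\nu(F^{-n}(A))\le\varepsilon+C\varepsilon^\beta$. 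Both routes yield uniform absolute continuity; your Jensen argument is shorter and produces a sharper inequality, while the paper's splitting avoids any appeal to concavity.

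Regarding your final concern: the exceptional $m$--null set in Proposition~\ref{prop:invariant} is indeed uniform in $A$. Tracing back through Lemmas~\ref{AinQ}--\ref{prop:estimate_in YM}, the only $\omega$--dependence of the constants enters through fixed properties of the conformal measure (e.g.\ $\nu_\omega(Q_{M_1})>1/2$, the bounds in Lemma~\ref{lem:est_zero}, Corollary~\ref{c1m10}) that hold for $m$--a.e.\ $\omega$ independently of the set $A$. So your fiberwise application, followed by integration and Jensen, is legitimate without further work.
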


\begin{proof}
It is well--known in abstract ergodic theory that all assertions of Theorem~\ref{t2im2}, perhaps except that $\mu\in \mathcal M_m$, would follow from uniform absolute continuity of measures $\big(\nu\circ F^{-n}\big)_{n=0}^\infty$ with respect to measure $\nu$. In order to prove this uniform continuity, fix $\varepsilon>0$ and suppose that $A\subset \Omega\times Q$ is a measurable set such that $\nu(A)<\varepsilon^2$. We then get for every integer $n\ge 0$ that
$$
\begin{aligned}
\nu(F^{-n}(A))
&=\int_\Omega\nu_\om(F_\om^{-n}(A_{\th^n(\om)}))\,dm(\om) \\
&=\int_{\Omega_0}\nu_\om(F_\om^{-n}(A_{\th^n(\om)}))\,dm(\om)+
  \int_{\Omega_0^c}\nu_\om(F_\om^{-n}(A_{\th^n(\om)}))\,dm(\om),
\end{aligned}
$$
where
$$
\Om_0:=\big\{\om\in \Om:\nu_{\th^n(\om)}(A_{\th^n(\om)})\big\}\ge \ve.
$$
But then
$$
m(\Om_0)=m\(\{\om\in \Om:\nu_\om(A_\om)\ge \ve\}\)\le \nu(A)/\ve.
$$
So, applying Proposition~\ref{prop:invariant}, we get that
$$
\nu(F^{-n}(A))
\le \frac{\nu(A)}{\ve}+C\ve^\b
\le \ve+\ve^\b,
$$
and the required uniform absolute continuity has been proved. In order to see that $\mu\in\mathcal M_m$, let $\Gamma$ be an arbitrary Borel subset of $\Omega$. Then
$$
\begin{aligned}
\mu(\Gamma\times\C)
&=\ell_B\((\nu\circ F^{-n}(\Gamma\times\C))_{n=0}^\infty\)
=\ell_B\((\nu(\th^{-n}(\Gamma)\times\C))_{n=0}^\infty\) \\
&=\ell_B\((m(\th^{-n}(\Gamma)))_{n=0}^\infty\)
=\ell_B\((m(\Gamma))_{n=0}^\infty\) \\
&=m(\Gamma).
\end{aligned}
$$
This means that $\mu\in \mathcal M_m$, and the proof is complete.
\end{proof}

\noindent We can prove more about the invariant measure $\mu^{(t)}$. Namely:

\begin{thm}\label{t1im3}
Let $t>1$. If $\nu^{(t)}$ is the $t$-conformal random measure for $F:\Omega\times \C\to \Omega\times \C$, produced in Theorem~\ref{t1_2016_10_08}, then the Borel probability $F$--invariant measure $\mu=\mu^{(t)}\in \mathcal M_m$ absolutely continuous with respect to $\nu^{(t)}$, produced in Theorem~\ref{t2im2}, is in fact equivalent with $\nu^{(t)}$.
\end{thm}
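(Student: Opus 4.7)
\emph{Proof plan.}

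The plan is to show that the Radon--Nikodym density $\bar h:=d\mu^{(t)}/d\nu^{(t)}$, which exists by Theorem~\ref{t2im2}, satisfies $\bar h_\omega>0$ for $\nu^{(t)}_\omega$-a.e.\ $z$, for $m$-a.e.\ $\omega$. The first step is to derive a cohomological equation for $\bar h$. The adjoint identity $\int \hat{\mathcal L}_{t,\omega}(g)\,d\nu^{(t)}_{\theta\omega}=\int g\,d\nu^{(t)}_\omega$, which follows from the $t$-conformality \eqref{1_2017_03_28} via $\hat{\mathcal L}^*_{t,\omega}\nu^{(t)}_{\theta\omega}=\nu^{(t)}_\omega$, combined with the invariance $\mu^{(t)}_{\theta\omega}(B)=\mu^{(t)}_\omega(F_\omega^{-1}(B))$, translates (applied to $g=\mathbf 1_B\cdot \bar h_\omega$) into
$$
\hat{\mathcal L}_{t,\omega}(\bar h_\omega)=\bar h_{\theta\omega}\quad \nu^{(t)}_{\theta\omega}\text{-a.e.},\ m\text{-a.e.}\ \omega.
$$
Because $\hat{\mathcal L}_{t,\omega}(g)(z)=\lambda_{t,\omega}^{-1}\sum_{w\in F_\omega^{-1}(z)}g(w)|F_\omega'(w)|^{-t}$ is a positive sum over \emph{all} preimages, the zero set $E_\omega:=\{\bar h_\omega=0\}$ is \emph{backward invariant}: $F_\omega^{-1}(E_{\theta\omega})\subset E_\omega$ modulo $\nu^{(t)}_\omega$-null sets.

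The second step uses the Banach-limit formula of Theorem~\ref{t2im2} to realize $\bar h$ through the iterated densities $h_n(\omega,z):=\hat{\mathcal L}^n_{t,\theta^{-n}\omega}(\mathbf 1)(z)$. Iterating the adjointness gives $\nu^{(t)}_\omega(F_\omega^{-n}(B))=\int_B \hat{\mathcal L}^n_{t,\omega}(\mathbf 1)\,d\nu^{(t)}_{\theta^n\omega}$, so after the change of variable $\omega\mapsto\theta^{-n}\omega$,
$$
\mu^{(t)}(A)=\ell_B\!\Bigl(\Bigl(\int_A h_n\,d\nu^{(t)}\Bigr)_{n\ge 0}\Bigr).
$$
Introduce the ``good set''
$$
G(\omega):=Q_{M_1}\setminus\bigcup_{j=0}^N B\bigl(F^j_{\theta^{-j}\omega}(0),r_0\bigr).
$$
By \eqref{2.1}, condition $W_0$ (which, after reparametrization $\omega\mapsto \theta^{-j}\omega$, yields $\nu^{(t)}_\omega(B(F^j_{\theta^{-j}\omega}(0),r_0))\le b_j(\theta^{-j}\omega)$), and the super-exponential decay of the $b_j$'s in $|F^{j+1}|$, one has $\nu^{(t)}_\omega(G(\omega))\ge 1/4$ uniformly in $\omega$, after possibly shrinking $r_0$ and enlarging $M_1, N$. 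The key estimate is the uniform-in-$n$ lower bound
$$
h_n(\omega,z)\ge c_0>0\quad\text{for all }n\ge 0,\ z\in G(\omega),\ m\text{-a.e.}\ \omega,
$$
which I will obtain by combining Lemma~\ref{lower}, the two-sided bounds \eqref{bounds_on_lambda} on $\lambda_{t,\omega}$, the bounded-distortion estimate \eqref{2017_05_12} derived in the proof of Lemma~\ref{AinQ} (which, reparametrized, asserts that $h_n(\omega,\cdot)$ has $\sup/\inf$-ratio on $G(\omega)$ bounded by $c(M_1,r_0)/2$, uniformly in $n$), the normalization $\int h_n\,d\nu^{(t)}_\omega=1$, and an upper bound on $\int_{G(\omega)^c}h_n\,d\nu^{(t)}_\omega$ coming from \eqref{3.1} (for the $Y_{M_1}$-part) and from a reparametrized version of condition $W_n$ (for the ball-neighborhoods). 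Passing to the Banach limit then yields $\bar h_\omega\ge c_0$ on $G(\omega)$ for $\nu^{(t)}_\omega$-a.e.\ $z$, i.e.\ $E_\omega\subset Q\setminus G(\omega)$ mod null.

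For the third step, suppose for contradiction that $\nu^{(t)}_\omega(E_\omega)>0$ on a positive-$m$-measure set. Iterating the backward invariance from the first paragraph gives
$$
F^{-n}_{\theta^{-n}\omega}(E_\omega)\subset E_{\theta^{-n}\omega}\subset Q\setminus G(\theta^{-n}\omega)\qquad\text{for every } n\ge 0.
$$
On the other hand, since by Theorem~\ref{CC} (in its fibrewise random form proved in Section~\ref{sec:julia}) every fibre Julia set equals $Q$, a standard Bloch-type argument shows that for $\nu^{(t)}$-a.e.\ $(\omega,z_0)$ the backward orbit $\bigcup_n F^{-n}_{\theta^{-n}\omega}(z_0)$ is dense in $Q$; in particular, for some $n\ge 0$ it meets the interior of $G(\theta^{-n}\omega)$, since $G(\theta^{-n}\omega)^c$ is the union of the closed half-cylinder $Y_{M_1}$ and finitely many open balls and therefore cannot cover any open subset of $\mathrm{int}(Q_{M_1})$. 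This contradicts the displayed inclusion, forcing $\nu^{(t)}(E)=0$ and hence $\mu^{(t)}\sim\nu^{(t)}$.

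The main technical obstacle is the second step — producing the uniform-in-$n$ lower bound $h_n\ge c_0$ on $G(\omega)$. The single-iterate bound is immediate from Lemma~\ref{lower} and \eqref{bounds_on_lambda}, but for $n\ge 2$ the iterated mass can in principle leak into $Y_{M_1}$ or into neighborhoods of the fibrewise orbit of $0$; controlling this leakage uniformly in $n$ is precisely what the intricate fibrewise estimates of Sections~5--8 and Section~\ref{inv_meas} (in particular the conditions $W_n$ and the bounded-distortion Lemma~\ref{AinQ}) were set up to enable.
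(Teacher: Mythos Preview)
Your density-based approach (steps~1--4) is a reasonable alternative to the paper's argument and is close to working. Your step~3 is indeed the heart of the matter, and the sketch you give---combining the bounded-distortion estimate \eqref{2017_05_12} on $G(\omega)$ with the normalisation $\int h_n\,d\nu^{(t)}_\omega=1$ and a uniform upper bound on the leakage $\int_{G(\omega)^c}h_n\,d\nu^{(t)}_\omega=\nu^{(t)}_{\theta^{-n}\omega}\bigl(F^{-n}_{\theta^{-n}\omega}(G(\omega)^c)\bigr)$---can be made rigorous by invoking Proposition~\ref{prop:invariant} (not the raw conditions $W_n$, which only control a single inverse branch): this gives $\int_{G(\omega)^c}h_n\,d\nu^{(t)}_\omega\le C\,\nu^{(t)}_\omega(G(\omega)^c)^\beta$, and after shrinking $r_0$ and enlarging $M_1$ this is uniformly $<1$. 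Passing to the Banach limit (using $\ell_B\ge\liminf$) then legitimately yields $\mu^{(t)}(A)\ge c_0\,\nu^{(t)}(A)$ for every measurable $A\subset\bigcup_\omega\{\omega\}\times G(\omega)$, i.e.\ $\bar h\ge c_0$ on the good set.

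The genuine gap is in step~5. Your backward-orbit contradiction does not go through: the inclusion $F^{-n}_{\theta^{-n}\omega}(E_\omega)\subset G(\theta^{-n}\omega)^c$ holds only modulo $\nu^{(t)}_{\theta^{-n}\omega}$-null sets, and the countable backward orbit of a single point $z_0\in E_\omega$ is itself a null set, so there is no contradiction with a preimage landing in $\mathrm{int}\,G(\theta^{-n}\omega)$. (Also, the claimed density of backward orbits is not proved anywhere in the paper; Theorem~\ref{CC} is a forward statement.) What you actually need after step~4 is a \emph{forward} propagation: given any $D$ with $\nu^{(t)}(D)>0$, use the eventual covering property established in the proof of Proposition~\ref{prop:supp} (balls expand to cover $Q_{M_0}$, hence $G(\omega)$) together with ergodicity of $\theta$ to find a measurable $H\subset\bigcup_\omega\{\omega\}\times G(\omega)$ with $\nu^{(t)}(H)>0$ and $F^n(H)\subset D$; then $\mu^{(t)}(D)\ge\mu^{(t)}(H)\ge c_0\,\nu^{(t)}(H)>0$. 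This is exactly how the paper finishes.

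For comparison, the paper bypasses the cohomological equation and the global leakage estimate entirely. It first locates, using only that $\mu^{(t)}$ gives zero mass to the fibrewise orbit of $0$, a random ball $B(\xi_\omega,R)$ of positive $\mu^{(t)}$-mass on which all inverse branches of arbitrary depth are defined with bounded distortion; then a direct distortion comparison inside the Banach-limit formula gives $\mu^{(t)}(A)\gtrsim\nu^{(t)}(A)$ for $A$ inside that ball (Claim~$1^0$). Your approach trades this localisation for the heavier machinery of Proposition~\ref{prop:invariant}, but has the advantage of producing a uniform ``good set'' $G(\omega)$ rather than a random ball.
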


\begin{proof}
Since $\lim_{n\to\infty}F_\om^n(0)=+\infty$ uniformly with respect to $\om\in \Om$ and since each measure $\mu_\om$ is a probability one satisfying, by virtue of $F$-invariance, $\mu_{\th(\om)}(F_\om(A))\ge \mu_\om(A)$ for every $\om\in\Om$ and every Borel set $A\subset \C$, we have that
$$
\mu_\om\(\big\{F_{\th^{-n}(\om)}^n(0):n\ge 0\big\}\)=0
$$
for $m$--almost all $\om\in \Om$. Therefore,
$$
\mu\lt(\bu_{\om\in\Om}\{\om\}\times
\big\{F_{\th^{-n}(\om)}^n(0):n\ge 0\big\}\rt)=0
$$
Hence, there exists $R\in(0,r_0/2)$ so small that
$$
\mu\lt(\bu_{\om\in\Om}\{\om\}\times\bu_{n=0}^\infty
B\(F_{\th^{-n}(\om)}^n(0),2R\)\rt)<1/8.
$$
Hence, there exists a measurable set $\Om_0\sbt \Om$ such that
\begin{equation}\label{1im4}
m(\Om_0)\ge 1/2
\end{equation}
and
\begin{equation}\label{2im4}
\mu_\om\lt(\bu_{n=0}^\infty B\(F_{\th^{-n}(\om)}^n(0),2R\)\rt)<1/4
\  \  \  {\rm for~ all }  \  \  \
\om\in\Om_0.
\end{equation}
Now, there exists a constant $M>0$ so large that $\mu(\Om\times Y_M)<1/8$, and therefore there exists a measurable set $\Om_1\sbt \Om_0$ such that 
$$
m(\Om_1)\ge 1/4
$$
and 
$$
\mu_\om(Q_M)\ge 1/2
\  \  \  {\rm for \ all }  \  \  \  \om\in\Om_1.
$$
Combining this along with \eqref{2im4}, we conclude that there exists $\alpha>0$ and for every $\om\in\Om_1$ there exists
$$
\xi_\om\in Q_M\cap\lt(\C\sms \bu_{n=0}^\infty B\(F_{\th^{-n}(\om)}^n(0),2R\)\rt)
$$
such that 
\begin{equation}\label{3im4}
\mu_\om(B(\xi_\om,R))\ge \alpha
\end{equation}
and the choice $\Om_1\ni\om\mapsto\xi_\om$ is measurable. Let
$$
\Ga:=\bu_{\om\in\Om_1}\{\om\}\times B(\xi_\om,R).
$$
Of course
$$
\mu(\Ga)\ge \alpha/8>0.
$$
We shall prove the following

\medskip{\bf Claim~$1^0$:} If $A\sbt\Gamma$ is a measurable set and $\nu(A)>0$, then $\mu(A)>0$.

\begin{proof}
Because of our definition of the set $\Ga$, for every $\om\in\Om_1$, every integer $n\ge 0$, and every $\xi\in F_{\th^{-n}(\om)}^{-n}(\xi_\om)$, we have that
$$
\begin{aligned}
\nu_{\th^{-n}(\om)}\(F_{\th^{-n}(\om),\xi}^{-n}(A_\om)\)
&=\lam_{\th^{-n}(\om)}^{-n}
\int_{A_\om}\big|\(F_{\th^{-n}(\om),\xi}^{-n}\)'\big|^t\,d\nu_\om\\
&\ge K^{-t}\lam_{\th^{-n}(\om)}^{-n}
\big|\(F_{\th^{-n}(\om)}^{-n}\)'(\xi)\big|^{-t}\nu_\om(A_\om),
\end{aligned}
$$
while
$$
\nu_{\th^{-n}(\om)}\(F_{\th^{-n}(\om),\xi}^{-n}(B(\xi_\om,R))\)
\le  K^t\lam_{\th^{-n}(\om)}^{-n}
\big|\(F_{\th^{-n}(\om)}^{-n}\)'(\xi)\big|^{-t}\nu_\om(B(\xi_\om,R)).
$$
Therefore,
$$
\begin{aligned}
\nu_{\th^{-n}(\om)}\(F_{\th^{-n}(\om),\xi}^{-n}(A_\om)\)
&\ge K^{-2t}\nu_{\th^{-n}(\om)}\(F_{\th^{-n}(\om),\xi}^{-n}
 (B(\xi_\om,R))\)\frac{\nu_\om(A_\om)}{\nu_\om(B(\xi_\om,R))}\\
&\ge K^{-2t}\nu_{\th^{-n}(\om)}\(F_{\th^{-n}(\om),\xi}^{-n}
 (B(\xi_\om,R))\)\nu_\om(A_\om)
\end{aligned}
$$
Now notice that if 
$$
\Om_*:=\lt\{\om\in\Om_1:\nu_\om(A_\om)\ge \frac12\nu(A)\rt\},
$$
then
$$
m(\Om_*)>0.
$$
Therefore, for every $\om\in\Om_*$, we get
$$
\begin{aligned}
\nu_{\th^{-n}(\om)}\(F_{\th^{-n}(\om)}^{-n}(A_\om)\)
&=\sum_{\xi\in F_{\th^{-n}(\om)}^{-n}(\xi_\om)}
   \nu_{\th^{-n}(\om)}\(F_{\th^{-n}(\om),\xi}^{-n}(A_\om)\) \\
&\ge \frac12K^{-2t}\nu(A)\sum_{\xi\in F_{\th^{-n}(\om)}^{-n}(\xi_\om)}
   \nu_{\th^{-n}(\om)}\(F_{\th^{-n}(\om),\xi}^{-n}(B(\xi_\om,R))\) \\
&=\frac12K^{-2t}\nu(A)\nu_{\th^{-n}(\om)}
   \(F_{\th^{-n}(\om)}^{-n}(B(\xi_\om,R))\)
\end{aligned}
$$
Hence, we obtain
$$
\begin{aligned}
\nu(F^{-n}(A))
&=\int_\Omega\nu_{\th^{-n}(\om)}(F_{\th^{-n}(\om)}^{-n}
       (A_\om))\,dm(\om) \\
&\ge \int_{\Omega_*}\nu_{\th^{-n}(\om)}(F_{\th^{-n}(\om)}^{-n}
       (A_\om))\,dm(\om) \\
&\ge \frac12K^{-2t}\nu(A)\int_{\Omega_*}\nu(A)\nu_{\th^{-n}(\om)}
   \(F_{\th^{-n}(\om)}^{-n}(B(\xi_\om,R)) \\
&=\frac12K^{-2t}\nu(A)\lt(F^{-n}\lt(\bu_{\om\in\Om_*}\{\om\}\times B(\xi_\om,R)\rt)\rt).
\end{aligned}
$$
Finally, using \eqref{3im4}, we get
$$
\begin{aligned}
\mu(A)
&=\ell_B\big((\nu(F^{-n}(A)))_{n=0}^\infty\big)\\
&\ge\frac12K^{-2t}\nu(A)
   \ell_B\lt(\lt(\nu\lt(F^{-n}\lt(\bu_{\om\in\Om_*}\{\om\}\times 
    B(\xi_\om,R)\rt)\rt)\rt)_{n=0}^\infty\rt)\\
&=\frac12K^{-2t}\mu\lt(\bu_{\om\in\Om_*}\{\om\}\times 
    B(\xi_\om,R)\rt)\nu(A)\\
&\ge \frac12K^{-2t}\alp\nu(A)>0,
\end{aligned}
$$
and the Claim is proved.
\end{proof}

\medskip Now we conclude the proof of Theorem~\ref{t1im3}. So, let $D\subset \Om\times\C$ be an arbitrary Borel set with $\nu(D)>0$.
Then there exist a measurable set $\Om_2\sbt \Om$ and $\eta\in(0,1/2)$ such that $m(\Om_2)>0$ and for every $\om\in\Om_2$ there exists $x_\om\in A(0;2\eta,1/\eta)$, depending measurably on $\om$, such that
\begin{equation}\label{1im6}
\nu_\om(D_\om\cap B(x_\om,\eta))>0.
\end{equation}
Denote the ball $B(x_\om,\eta)$ just by $B_\om$. From our hypotheses  on the functions $f_\om$, $\om\in\Om$, there exists an integer $N\ge 0$ such that
$$
F_\om^n(B(z,R))\spt \bu_{x\in A(0;2\eta,1/\eta)}B(x,\eta)
$$
for all $\om\in\Om$, all $n\ge N$, and all $z\in Q_M$. Since, $m(\Om_1), m(\Om_2)>0$ and since the map $\th:\Om\to\Om$ is ergodic, there exists $n\ge N$ such that
$$
m(\Om_1\cap\th^{-n}(\Om_2))>0.
$$
Then 
\begin{equation}\label{1im7}
m(\th^n(\Om_1)\cap\Om_2)>0,
\end{equation}
and
$$
\begin{aligned}
F^n(\Ga)
&\spt F^n\lt(\bu_{\om\in \Om_1\cap\th^{-n}(\Om_2)}\{\om\}\times B(\xi_\om,R)\rt) \\
&\spt \bu_{\om\in \th^n(\Om_1)\cap\Om_2}\{\om\}\times B_\om \\
&\spt \bu_{\om\in \th^n(\Om_1)\cap\Om_2}\{\om\}\times (D_\om\cap B_\om).
\end{aligned}
$$
Therefore there exists a measurable set $H\sbt \Ga$ such that
\begin{equation}\label{2im7}
F^n(H)
=\bu_{\om\in \th^n(\Om_1)\cap\Om_2}\{\om\}\times (D_\om\cap B_\om)
\sbt D.
\end{equation}
But then, because of \eqref{1im6} and \eqref{1im7}, we have that $\nu(F^n(H))>0$. This in turn, by conformality of $\nu$, yields $\nu(H)>0$. Since $H\sbt\Ga$, it then follows from Claim~$1^0$ that $\mu(H)>0$. Hence, by virtue of \eqref{2im7}, we get that $\mu(D)\ge 
F^n(H)\ge \mu(H)>0$. The proof of Theorem~\ref{t1im3} is thus complete. 
\end{proof}

\

We shall prove more about measures $\mu_t$: their ergodicity and uniqueness. This however requires some preparation. 

\medskip Fix $(\om,z)\in\Om\times Q$. Let $N\in\N$. Define $N_\om(z,N)$ to be the set of all integers $n\ge 0$ such that there exists a (unique) holomorphic inverse branch 
$$
F_{\om,z}^{-n}:B(F_\om^n(z),2/N)\to Q
$$
of $F_\om^n:Q\to Q$ sending $F_\om^n(z)$ to $z$ and such that $|F_\om^n(z)|\le N$. Following a number theory tradition, given a set $A\sbt \N$, we denote by $\un\rho(A)$ and $\ov\rho(A)$ respective lower and upper densities of the set $A$. Precisely,
$$
\un\rho(A):=\varliminf_{n\to\infty}\frac1N\#(A\cap\{1,2,\ld, N\})
$$
and 
$$
\ov\rho(A):=\varlimsup_{n\to\infty}\frac1N\#(A\cap\{1,2,\ld, N\})
$$
We define
\begin{equation}\label{def:J_r}
J_r(\om)
:=\big\{z\in Q: \lim_{N\to\infty}\un\rho(N_\om(z,N))=1\big\}.
\end{equation}
$J_r(\om)$ is said to be the set of radial (or conical) points of $F$ at $\om$. We further denote:
$$
J_r(F):=\bu_{\om\in\Om}\{\om\}\times J_r(\om),
$$
and call it the set of all radial points of $F$. We will need some sufficient conditions for a point $(\om,z)$ to be radial. In order to formulate it, we need an auxiliary subset $\tilde N_\om(z,N)$ of $N_\om(z,N)$. It consists of all integers $n\ge 0$ such that for every integer $0\le k\le n$,
$$
F_{\th^{n-k}(\om)}^k(0)\notin B\(F_\om^n(z),2/N\) 
\ \ {\rm and }  \  \
|F_\om^n(z)|\le N
$$
Of course 
\begin{equation}\label{1im8.1}
\tilde N_\om(z,N)\sbt N_\om(z,N).
\end{equation}
Also, $n\in \tilde N_\om(z,N)$ if and only if 
$$
F_\om^n(z)\in Q_N  
$$
and
$$
F_\om^n(z)
\notin \bu_{k=0}^nB\(F_{\th^{n-k}(\om)}^k(0),2/N\)
=\bu_{k=0}^nB\(F_{\th^{-k}(\th^n(\om))}^k(0),2/N\).
$$
Therefore, if we denote
$$
J_N^*(F)
:=\bu_{\om\in\Om}\{\om\}\times \lt(Q_N\sms \bu_{k=0}^\infty B\lt(F_{\th^{-k}(\om)}^k(0),2/N\rt)\rt)
$$
and 
$$
\tilde N_\om^*(z,N):=\big\{n\ge 0:F_\om^n(z)=F^n(\om,z)\in J_N^*(F)\big\},
$$
then
\begin{equation}\label{1im8.2}
\tilde N_\om^*(z,N)\sbt \tilde N_\om(z,N).
\end{equation}

The first significance of the set of radial points comes from the following.

\begin{prop}\label{p1im8}
If $\mu\in\mathcal M_m$ is $F$--invariant, then $\mu(J_r(F))=1$.
\end{prop}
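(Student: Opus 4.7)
The plan is to apply the Birkhoff pointwise ergodic theorem to the $F$--invariant probability $\mu$, using the sequence of auxiliary sets $J_N^*(F)$ introduced just before the statement as an exhaustion. Because of the inclusions \eqref{1im8.1} and \eqref{1im8.2} one has $\tilde N_\om^*(z,N)\sbt N_\om(z,N)$, and because the family $N\mapsto N_\om(z,N)$ is non-decreasing in $N$ (enlarging $N$ both relaxes the altitude constraint $|F^n_\om(z)|\le N$ and admits the smaller inverse-branch balls $B(F^n_\om(z),2/N)$), it suffices to prove
$$
\lim_{N\to\infty}\un\rho\(\tilde N_\om^*(z,N)\)=1
\qquad\text{for $\mu$--a.e.\ }(\om,z).
$$

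Since $\tilde N_\om^*(z,N)$ is by its definition exactly the set of visit times of $(\om,z)$ to $J_N^*(F)$ under the measurable map $F:\Om\times Q\to\Om\times Q$, Birkhoff's theorem applied to the (possibly non-ergodic) measure $\mu$ gives, for each fixed $N$ and $\mu$--a.e.\ $(\om,z)$,
$$
\un\rho\(\tilde N_\om^*(z,N)\)
=\lim_{n\to\infty}\frac{1}{n}\#\big\{0\le k<n:F^k(\om,z)\in J_N^*(F)\big\}
=\mathbb{E}_\mu\(\1_{J_N^*(F)}\mid\mathcal I\)(\om,z),
$$
where $\mathcal I$ denotes the $\sigma$--algebra of $F$--invariant Borel sets in $\Om\times Q$; in particular the lower and upper densities coincide. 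Intersecting these full--measure sets over all $N\in\N$ reduces the problem to showing $\mathbb{E}_\mu(\1_{J_N^*(F)}\mid\mathcal I)\to 1$ $\mu$--a.e.\ as $N\to\infty$.

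The family $\{J_N^*(F)\}_{N\ge 1}$ is monotone increasing, because as $N$ grows the cylinder $Q_N$ exhausts $Q$ while each removed ball $B(F_{\th^{-k}\om}^k(0),2/N)$ shrinks to its center. The uniform convergence $F_\om^n(0)\to+\infty$ (already invoked at the opening of the proof of Theorem~\ref{t1im3}) forces $\{F_{\th^{-k}\om}^k(0):k\ge 0\}$ to be a closed discrete subset of $Q$ for every $\om$, so for every fixed $\om$
$$
\bigcap_{N=1}^\infty\bigcup_{k=0}^\infty B\(F_{\th^{-k}\om}^k(0),2/N\)
=\big\{F_{\th^{-k}\om}^k(0):k\ge 0\big\}.
$$
The same opening argument of the proof of Theorem~\ref{t1im3} showed that this countable set is $\mu_\om$--null for $m$--a.e.\ $\om$, hence $\bigcup_N J_N^*(F)$ has full $\mu$--measure; the conditional monotone convergence theorem then delivers $\mathbb{E}_\mu(\1_{J_N^*(F)}\mid\mathcal I)\nearrow 1$ $\mu$--a.e., which is what we need.

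The only mildly delicate point is the absence of ergodicity of $\mu$: were $\mu$ ergodic, the conditional expectations would collapse to the deterministic scalars $\mu(J_N^*(F))$ and a two-line Birkhoff/monotone-convergence argument would finish the job. In the general case one must choose a single $\mu$--null exceptional set by a countable intersection over $N\in\N$ and invoke the conditional version of monotone convergence, after first verifying the $\mathcal F\otimes\mathcal B$--measurability of the sets $J_N^*(F)$, which is immediate from the measurability of the maps $\om\mapsto F_{\th^{-k}\om}^k(0)$.
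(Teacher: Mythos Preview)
Your argument is correct and follows the same overall scheme as the paper: one shows that the visit frequencies of $(\om,z)$ to the sets $J_N^*(F)$ tend to $1$ by combining Birkhoff's theorem with the fact that the forward orbits $\{F_{\th^{-k}\om}^k(0):k\ge 0\}$ carry no $\mu$--mass. The packaging, however, is genuinely different. The paper first passes to the ergodic case via ergodic decomposition, then uses ergodicity twice: once to obtain a $0$--$1$ dichotomy for $\mu\big(\bigcap_N J_N^*(F)^c\big)$ (the intersection being the forward--orbit set, which is $F$--forward invariant), and once to convert $\mu(J_N^*(F))\to 1$ into almost--sure visit frequencies. You bypass both uses of ergodicity by invoking the conditional form of Birkhoff's theorem and conditional monotone convergence, which is more economical.

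The one point worth flagging is your appeal to the opening of the proof of Theorem~\ref{t1im3}. That theorem concerns the particular measure $\mu^{(t)}$, not an arbitrary $F$--invariant $\mu\in\mathcal M_m$; what you are really using is that the argument given there---based only on $\mu_{\th\om}(F_\om(\{p\}))\ge\mu_\om(\{p\})$ for singletons and on $F_{\th^{-k}\om}^k(0)\to+\infty$ uniformly---applies verbatim to any $F$--invariant random probability measure. This is true, so there is no logical gap, but it would be cleaner to say so explicitly rather than to cite a statement proved only for $\mu^{(t)}$. Also, the remark that $N\mapsto N_\om(z,N)$ is non-decreasing is harmless but unnecessary: the inclusion $\tilde N_\om^*(z,N)\subset N_\om(z,N)$ alone already forces $\un\rho(N_\om(z,N))\to 1$ once $\un\rho(\tilde N_\om^*(z,N))\to 1$.
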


\begin{proof}

By considering ergodic decomposition, we may assume without loss of generality that measure $\mu$ is ergodic. By virtue of \eqref{1im8.1} and \eqref{1im8.2} it suffices to show that
$$
\lim_{N\to\infty}\mu\(J_N^*(F)\)=1.
$$
And indeed, let
$$
J_N^*(F)^c:=(\Om\times Q)\sms J_N^*(F)
$$
be the complement of $J_N^*(F)$ in $\Om\times Q$. Then
$$
J_N^*(F)^c
=\bu_{\om\in\Om}\{\om\}\times \lt(Y_N\cup \bu_{k=0}^\infty B\lt(F_{\th^{-k}(\om)}^k(0),2/N\rt)\rt)
$$
and $\(J_N^*(F)^c\)_{N=1}^\infty$ is a descending sequence of measurable sets with
\begin{equation}\label{2im8.1}
\bi_{N=1}^\infty J_N^*(F)^c
=\bu_{\om\in\Om}\{\om\}\times\big\{F_{\th^{-k}(\om)}^k(0):k\ge 0\big\}.
\end{equation}
But 
$$
\begin{aligned}
F\lt(\bi_{N=1}^\infty J_N^*(F)^c\rt)
&=\bu_{\om\in\Om}\{\th(\om)\}\times\big\{F_{\th^{-k}(\om)}^{k+1}(0):k\ge 0\big\} \\
&=\bu_{\om\in\Om}\{\th(\om)\}\times\big\{F_{\th^{-(k+1)}(\th(\om))}^{k+1}(0):k\ge 0\big\} \\
&\sbt\bi_{N=1}^\infty J_N^*(F)^c,
\end{aligned}
$$
hence by ergodicity of $\mu$, 
$$
\mu\lt(\bi_{N=1}^\infty J_N^*(F)^c\rt)\in\{0,1\}.
$$
If 
$$
\mu\lt(\bi_{N=1}^\infty J_N^*(F)^c\rt)=0,
$$
we are done. So, suppose that
\begin{equation}\label{3im8.1}
\mu\lt(\bi_{N=1}^\infty J_N^*(F)^c\rt)=1.
\end{equation}
Then for $m$--a.e. $\om\in\Om$, say $\om\in\Om^*$, with $\Om^*$ being $\th$-invariant, 
$$
\mu_\om\lt(\big\{F_{\th^{-k}(\om)}^k(0):k\ge 0\big\}\rt)=1.
$$
But as $\mu_\om\circ F_\om^{-1}=\mu_{\th(\om)}$, we then get that
$$
\begin{aligned}
\mu_{\th(\om)}\lt(\big\{F_{\th^{-(k+1)}(\th(\om))}^{k+1}(0):k\ge 0\big\}\rt)
&=\mu_{\th(\om)}\lt(\big\{F_\om\lt(F_{\th^{-k}(\om)}^k(0):k\ge 0\big\}\rt)\rt) \\
&\ge\mu_\om\lt(\big\{F_{\th^{-k}(\om)}^k(0):k\ge 0\big\}\rt)\\
&=1.
\end{aligned}
$$
Hence,
$$
\mu_\om(F_{\th^{-1}(\om)}(0))=0
$$
for all $\om\in\Om^*$. Proceeding in the same way by induction, we deduce that
$$
\mu_\om\(F_{\th^{-k}(\om)}^k(0)\)=0
$$
for every integer $k\ge 0$ and all $\om\in\Om^*$. Thus
$$
\mu_\om\lt(\big\{F_{\th^{-k}(\om)}^k(0):k\ge 0\big\}\rt)=0
$$
for all $\om\in\Om^*$. By \eqref{2im8.1} this entails
$$
\mu\lt(\bi_{N=1}^\infty J_N^*(F)^c\rt)=0,
$$
contrary to \eqref{3im8.1}. The proof of Proposition~\ref{p1im8} is complete.
\end{proof}

We now pass to consider random conformal measures and we do this with their relations to the set of radial points. Let $t>1$ and suppose we are given two $t$-conformal measures $\nu^{(1)}$ and $\nu^{(2)}$. Denote by $\lambda_\omega^{(1)}$ and  $\lambda_\omega^{(2)}$ the corresponding normalizing factors coming from the definition  of a conformal measure.  For every $l>0$ and $\om\in\Om$ let
\begin{equation}\label{1im10}
L_\om(l)
:=\lt\{n\ge 1:\frac{\lam_\om^{(1)n}}{\lam_\om^{(2)n}}\le l\rt\}\sbt\N.
\end{equation}
Let $\hat\Om_l$ be the set of all points $\om\in \Om$ such that the set $L_\om(l)\sbt\N$ has positive upper density. Finally let 
$$
\hat\Om:=\bu_{l=1}^\infty\hat\Om_l.
$$
We shall prove the following.

\begin{lem}\label{l3im8}
If $t>1$ and two $t$-conformal measures $\nu^{(1)}$ and $\nu^{(2)}$ are given, then for every $m$-a.e. $\om\in \hat\Om$, the fiber measure $\nu_\om^{(2)}|_{J_r(\om)}$ is absolutely continuous with respect to the fiber measure $\nu_\om^{(1)}|_{J_r(\om)}$.
\end{lem}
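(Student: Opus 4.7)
The plan is to fix $\om\in\hat\Om_l$ and a Borel set $A\sbt J_r(\om)$ with $\nu^{(1)}_\om(A)=0$, and show $\nu^{(2)}_\om(A)=0$. The strategy is to compare $\nu^{(1)}$ and $\nu^{(2)}$ locally at each $z\in A$ by pulling back round discs under iterates $n$ for which a good inverse branch is available. For $n\in N_\om(z,N)$ the branch $F^{-n}_{\om,z}$ is univalent on $B(F^n_\om(z),2/N)$, so Koebe's distortion theorem applied on the concentric half--size disc, combined with the conformality identity \eqref{6_2017_12_19}, yields for $B_n(z):=F^{-n}_{\om,z}(B(F^n_\om(z),1/N))$ and $i=1,2$ the estimate
\[
\nu^{(i)}_\om(B_n(z))\asymp_{K^t}(\lam^{(i)n}_\om)^{-1}\big|(F^n_\om)'(z)\big|^{-t}\,\nu^{(i)}_{\th^n\om}\big(B(F^n_\om(z),1/N)\big).
\]
Taking the ratio and restricting to $n\in L_\om(l)$ (where $\lam^{(1)n}_\om/\lam^{(2)n}_\om\le l$) gives
\[
\frac{\nu^{(2)}_\om(B_n(z))}{\nu^{(1)}_\om(B_n(z))}\preceq l\cdot\frac{\nu^{(2)}_{\th^n\om}(B(F^n_\om(z),1/N))}{\nu^{(1)}_{\th^n\om}(B(F^n_\om(z),1/N))}.
\]

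To bound the right--hand side uniformly, I would apply Proposition~\ref{prop:supp} to the measure $\nu^{(1)}$: given $\varepsilon>0$ it produces $\xi>0$ and $\Om^*\sbt\Om$ with $m(\Om^*)>1-\varepsilon$ such that $\nu^{(1)}_\gamma(B(y,1/N))\ge\xi$ whenever $\gamma\in\Om^*$ and $y\in Q_N$. Coupled with the trivial bound $\nu^{(2)}_\gamma(\cdot)\le 1$, this makes the second factor at most $1/\xi$ whenever $\th^n\om\in\Om^*$. Birkhoff's theorem, using ergodicity of $\th$ on $(\Om,m)$, gives that $\{n:\th^n\om\in\Om^*\}$ has density $m(\Om^*)>1-\varepsilon$. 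Intersecting this with $N_\om(z,N)$ (of lower density $>1-\varepsilon$ for $N$ large, since $z\in J_r(\om)$) and with $L_\om(l)$ (of positive upper density since $\om\in\hat\Om_l$), one obtains for every $z\in A$ a set $G_\om(z,N)\sbt\N$ of positive upper density, along which
\[
\nu^{(2)}_\om(B_n(z))\le M\nu^{(1)}_\om(B_n(z)),\qquad M:=C K^{2t} l/\xi.
\]

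To convert this local comparison into $\nu^{(2)}_\om(A)=0$, I would use a Vitali/Besicovitch covering argument. By Koebe's $1/4$ and distortion theorems each $B_n(z)$ is sandwiched between round balls centered at $z$ with radii comparable to $(N|(F^n_\om)'(z)|)^{-1}$; these radii tend to $0$ along $G_\om(z,N)$, for otherwise $(F^n_\om)_n$ would be a normal family on a fixed disc around $z$, contradicting $J(f_\om)=\C$ from Theorem~\ref{CC}. Thus the enclosing round balls form a fine cover of $A$ at every point. Given $\delta>0$, outer regularity provides an open $U\spt A$ with $\nu^{(1)}_\om(U)<\delta$; restricting the cover to balls contained in $U$ and invoking the Besicovitch covering theorem on $Q$ extracts a countable subcover $\{B_{n_k}(z_k)\}$ of $A$ with bounded overlap, from which
\[
\nu^{(2)}_\om(A)\le\sum_k\nu^{(2)}_\om(B_{n_k}(z_k))\le M\sum_k\nu^{(1)}_\om(B_{n_k}(z_k))\preceq M\nu^{(1)}_\om(U)\le M\delta,
\]
forcing $\nu^{(2)}_\om(A)=0$. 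The most delicate step, in my estimation, will be the covering selection itself: because $B_n(z)$ is not a round ball one must pass to the concentric enclosing discs, carefully track Koebe constants through the measure comparison, and verify that Besicovitch's theorem applies to the resulting family in the cylinder $Q$ while keeping the bounded--overlap multiplicity compatible with the constant $M$ above.
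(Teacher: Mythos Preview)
Your local comparison via Koebe distortion and conformality is exactly the engine the paper uses. The paper, however, packages the conclusion differently: rather than running a Besicovitch covering, it shows directly that for every $z\in J_r(\om)$ one has
\[
\varliminf_{r\to 0}\frac{\nu^{(2)}_\om(B(z,r))}{\nu^{(1)}_\om(B(z,r))}<\infty,
\]
and then invokes the standard differentiation theorem for measures. This saves the bookkeeping you flagged as ``the most delicate step.''

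The substantive difference matters because of a gap in your argument as written: your constant $M=CK^{2t}l/\xi$ is \emph{not} uniform in $z$. The number $\xi$ produced by Proposition~\ref{prop:supp} depends on the radius $1/N$, and $N$ is chosen depending on $z$ (large enough that $\un\rho(N_\om(z,N))>1-\varepsilon$). So $M=M(z)$, and your Besicovitch estimate $\sum_k\nu^{(1)}_\om(B_{n_k}(z_k))\preceq\nu^{(1)}_\om(U)$ cannot be combined with a single $M$ to bound $\nu^{(2)}_\om(A)$. You can repair this by stratifying $A=\bigcup_N A_N$ where $A_N=\{z\in A:\un\rho(N_\om(z,N))>1-\varepsilon\}$ and running the covering argument separately on each $A_N$; but note the paper's route avoids this entirely, since the density theorem only requires the liminf ratio to be finite at each point, not uniformly bounded.

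One more small point: Proposition~\ref{prop:supp} is stated and proved for the specific conformal measure constructed in $\hat\P$ (its proof uses \eqref{2.1}). For a general $t$--conformal measure $\nu^{(1)}$ you first need the full--support statement, which the paper asserts separately in its proof of the lemma (``By $t$--conformality and quasi topological exactness\ldots'') and from which the positivity of $\inf_{z\in Q_N}\nu^{(1)}_\om(B(z,1/N))$ follows by compactness. Your appeal to Proposition~\ref{prop:supp} should be replaced by this direct argument.
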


\begin{proof}
Fix an integer $l\ge 1$ and then an integer $q\ge 1$. By $t$--conformality and quasi topological exactness of the map $F:\Om\times Q\to\Om\times Q$, each measure $\nu_\om^{(i)}$, $i=1,2$, $\om\in\Om$, has full topological support, i.e. it is positive on all non-empty open subsets of $Q$. Therefore, for every $\N\in\N$, we have that
$$
M_N^{(i)}(\om):=\inf\big\{\nu_\om^{(i)}\(B(z,(4KN)^{-1})\):z\in Q_N\big\}>0,
$$
and the function
$$
\Om\ni\om\longmapsto M_N^{(i)}(\om)\in(0,+\infty)
$$
is measurable. Hence, for every integer $k\ge 1$ there exists $\ve_{N,k}^{(i)}>0$ so small that
$$
m\lt(M_N^{(i)-1}\((\ve_{N,k}^{(i)},+\infty)\)\rt)
>1-\frac1{2k}.
$$
By Birkhoff's Ergodic Theorem, for $m$--a.e. $\om\in\Om$, say $\om$ in some $\th$--invariant set $\Om_{N,k}^{(i)}$ with measure $m$ equal to $1$, we have that
\begin{equation}\label{2im11}
\rho\lt(\La_{N,k}^{(i)}(\om)\rt)
=m\lt(M_N^{(i)-1}\((\ve_{N,k}^{(i)},+\infty)\)\rt)
>1-\frac1{2k},
\end{equation}
where
$$
\La_{N,k}^{(i)}(\om)
:=\lt\{n\ge 0:\th^n(\om)\in M_N^{(i)-1}\((\ve_{N,k}^{(i)},+\infty)\)\rt\} 
\sbt \N.
$$
Let $\hat\Om_{l,q}$ be the set of all points $\om\in \Om$ such that the set $\ov\rho(L_\om(l))\ge 1/q$. Of course 
$$
\hat\Om_l=\bu_{q=1}^\infty \hat\Om_{l,q}.
$$
It therefore suffices to prove our lemma with the set $\hat\Om$ replaced by $\hat\Om_{l,q}$. In order to do this we shall estimate from above the limit
$$
\varliminf_{r\to 0}\frac{\nu_\om^{(2)}(B(z,r))}{\nu_\om^{(1)}(B(z,r))}
$$
for all $\om\in \hat\Om_{l,q}$ and all $z\in J_r(\om)$. So, fix $N_q\ge 1$ so large that
\begin{equation}\label{3im10}
\un\rho\(N_\om(z,N_q)\)>1-\frac1{2q}.
\end{equation}
It then follows from $\frac14$--Koebe's Distortion Theorem, Koebe's Distortion Theorem, and $t$-conformality of measure $\nu^{(1)}$ that for every $n\in N_\om(z,N_q)\cap L_\om(l)$, we have that
\begin{equation}\label{4im10}
\begin{aligned}
\nu_\om^{(2)}\bigg(B\bigg(z,\frac14\frac1{N_q}
    \big|\(F_\om^n\)'(z)\big|^{-1}\bigg)\bigg)
&\le \nu_\om^{(2)}\lt(F_{\om,z}^{-n}\(B\(F_\om^n(z),1/N_q\)\)\rt) \\
&\le K^t\lam_\om^{(2)-n}\big|\(F_\om^n\)'(z)\big|^{-t}
\nu_{\th^n(\om)}^{(2)}\(B\(F_\om^n(z),1/N_q\)\) \\
&\le K^t\lam_\om^{(2)-n}\big|\(F_\om^n\)'(z)\big|^{-t}
\end{aligned}
\end{equation}
By the same token,
\begin{equation}\label{5im10}
\begin{aligned}
\nu_\om^{(1)}\bigg(B\bigg(z,\frac14\frac1{N_q}
    \big|\(F_\om^n\)'(z)\big|^{-1}\bigg)\bigg)
&\ge\nu_\om^{(1)}\lt(F_{\om,z}^{-n}\(B\(F_\om^n(z),(4KN_q)^{-1}\)\)\rt) \\
&\ge K^{-t}\lam_\om^{(1)-n}\big|\(F_\om^n\)'(z)\big|^{-t}
\nu_{\th^n(\om)}^{(1)}\(B\(F_\om^n(z),(4KN_q)^{-1}\)\).
\end{aligned}
\end{equation}
Now assume in addition that 
$$
\om\in \Om_{N_q,q}^{(1)}. 
$$
Then, we deduce from \eqref{3im10} and \eqref{2im11} that
$$
\ov\rho\lt(N_\om(z,N_q)\cap L_\om(l)\cap \La_{N_q,q}^{(1)}\rt)>0.
$$
Therefore, for ever $n\in N_\om(z,N_q)\cap L_\om(l)\cap \La_{N_q,q}^{(1)}$, we get that
\begin{equation}\label{1im11}
\begin{aligned}
\frac{\nu_\om^{(2)}\bigg(B\bigg(z,\frac14\frac1{N_q}
    \big|\(F_\om^n\)'(z)\big|^{-1}\bigg)\bigg)}{\nu_\om^{(1)}\bigg(B\bigg(z,\frac14\frac1{N_q}
    \big|\(F_\om^n\)'(z)\big|^{-1}\bigg)\bigg)} 
&\le K^t\lt(\nu_{\th^n(\om)}^{(1)}\(B\(F_\om^n(z),(4KN_q)^{-1}\)\)\rt)^{-1}\frac{\lam_\om^{(1)n}}{\lam_\om^{(2)n}} \\
& \le K^t \(\ve_{N_q,q}^{(1)}\)^{-1}l.
\end{aligned}
\end{equation}
Consequently,
$$
\varliminf_{r\to 0}
\frac{\nu_\om^{(2)}(B(z,r))}{\nu_\om^{(1)}(B(z,r))}
\le \varliminf_{n\to\infty}
\frac{\nu_\om^{(2)}\bigg(B\bigg(z,\frac14\frac1{N_q}
    \big|\(F_\om^n\)'(z)\big|^{-1}\bigg)\bigg)}{\nu_\om^{(1)}\bigg(B\bigg(z,\frac14\frac1{N_q}
    \big|\(F_\om^n\)'(z)\big|^{-1}\bigg)\bigg)} 
\le K^t \(\ve_{N_q,q}^{(1)}\)^{-1}l.
$$
This implies that for each $\om\in\hat\Om_{l,q}\cap\Om_{N_q,q}^{(1)} $, the measure $\nu_\om^{(2)}|_{J_r(\om)}$ is absolutely continuous with respect to $\nu_\om^{(1)}|_{J_r(\om)}$, and the proof of Lemma~\ref{l3im8} is complete.
\end{proof}

Our ultimate theorem about conformal and invariant measures is this.

\begin{thm}\label{t1im3B}
Let $t>1$. If $\nu^{(t)}$ is the $t$-conformal random measure for $F:\Omega\times \C\to \Omega\times \C$, produced in Theorem~\ref{t1_2016_10_08}, then the Borel probability $F$--invariant measure $\mu=\mu^{(t)}\in \mathcal M_m$ absolutely continuous with respect to $\nu^{(t)}$, produced in Theorem~\ref{t2im2}, is in fact 

\medskip\begin{itemize}
\item[(a)] Equivalent with $\nu^{(t)}$,

\medskip\item[(b)] Ergodic,

\medskip\item[(c)] It is the only Borel probability $F$--invariant measure in $\mathcal M_m$ absolutely continuous with respect to $\nu^{(t)}$. 
\end{itemize}
\end{thm}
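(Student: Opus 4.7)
Statement (a) is precisely the content of Theorem~\ref{t1im3}, already established. For (c), once (b) is known, let $\mu_1\in\mathcal M_m$ be any $F$--invariant probability random measure with $\mu_1\ll\nu^{(t)}$; by (a), $\mu_1\ll\mu^{(t)}$. Birkhoff's ergodic theorem applied to the ergodic $F$--invariant probability $\mu^{(t)}$ yields, for every bounded measurable $g:\Om\times Q\to\R$, the $\mu^{(t)}$--almost sure (hence, by absolute continuity, also $\mu_1$--almost sure) convergence
$$
\frac{1}{n}\sum_{k=0}^{n-1}g\circ F^{k}\lra \int g\,d\mu^{(t)};
$$
integrating against $\mu_1$ and invoking its $F$--invariance yields $\int g\,d\mu_1=\int g\,d\mu^{(t)}$ for all such $g$, hence $\mu_1=\mu^{(t)}$. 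Thus the entire substance of the theorem is (b).

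For (b), suppose $A\sbt\Om\times Q$ is $F$--invariant with $c:=\mu^{(t)}(A)\in(0,1)$; I derive a contradiction. By $F$--invariance of $\mu^{(t)}$ the function $\om\mapsto \mu^{(t)}_\om(A_\om)$ is $\th$--invariant, hence equals $c$ for $m$--a.e.\ $\om$, and similarly $\mu^{(t)}_\om(A^c_\om)=1-c$ a.e. Equivalence $\mu^{(t)}\sim\nu^{(t)}$ from (a) gives $\nu^{(t)}_\om(A_\om)>0$ and $\nu^{(t)}_\om(A^c_\om)>0$ for $m$--a.e.\ $\om$. Combining Proposition~\ref{p1im8} with the fiberwise Lebesgue density theorem for the finite Borel measures $\nu^{(t)}_\om$ on $Q$, one selects measurably, on a common $m$--positive set of base points $\om$, both a $\nu^{(t)}_\om$--density point $z_\om\in A_\om\cap J_r(\om)$ and a $\nu^{(t)}_\om$--density point $w_\om\in A^c_\om\cap J_r(\om)$. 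I then transport density forward through inverse branches: for $z=z_\om$ and each $n\in\tilde N^*_\om(z,N)$---a set whose lower density tends to $1$ as $N\to\infty$---the branch $F^{-n}_{\om,z}$ on $B(F^n_\om(z),2/N)$ has distortion at most $K$, and $t$--conformality of $\nu^{(t)}$ together with $F$--invariance of $A$ gives
$$
\frac{\nu^{(t)}_{\th^n\om}\!\lt(A_{\th^n\om}\cap B(F^n_\om(z),1/N)\rt)}{\nu^{(t)}_{\th^n\om}\!\lt(B(F^n_\om(z),1/N)\rt)}\;\ge\; K^{-2t}\,\frac{\nu^{(t)}_\om\!\lt(A_\om\cap F^{-n}_{\om,z}(B(F^n_\om(z),1/N))\rt)}{\nu^{(t)}_\om\!\lt(F^{-n}_{\om,z}(B(F^n_\om(z),1/N))\rt)}.
$$
Since at a radial point the pulled--back balls $F^{-n}_{\om,z}(B(F^n_\om(z),1/N))$ shrink to $z$, a standard consequence of Koebe's distortion theorem applied to the branches $F^{-n}_{\om,z}$, the right--hand side tends to $1$ along $\tilde N^*_\om(z,N)$. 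The analogous computation at $w_\om$ produces $\nu^{(t)}$--density of $A^c$ arbitrarily close to $1$ in balls of radius $1/N$ centered at points of $Q_N$ on various fibers.

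The main obstacle is converting these two local density conclusions into a contradiction at the fiber level. I would proceed by (i) a pigeonhole in the indices $n,m$ from the two density arguments, together with ergodicity of $\th$, so as to land both conclusions simultaneously on an $m$--positive set of common fibers $\tau$ at points $y^+, y^-\in Q_N$; (ii) invoking the uniform lower bound $\nu^{(t)}_\tau(B(y,R))\ge \xi(x,R,\ve)$ from Proposition~\ref{prop:supp} for balls of a fixed radius centered in $Q_x$, valid on a set of $\tau$'s of $m$--measure $\ge 1-\ve$; and (iii) pushing each of the two balls forward by a bounded number of additional iterates $F^k_\tau$, using injectivity on small disks followed by the topological--exactness argument from the first half of the proof of Proposition~\ref{prop:supp}, so that $F^k_\tau\(B(y^\pm,1/N)\)\spt Q_{M_0}$. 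Using $t$--conformality and bounded distortion on each injective restriction together with $F$--invariance of $A$ then forces $\nu^{(t)}_{\th^k\tau}(A_{\th^k\tau})\ge 1-C\ve$ from the $A$--side and $\nu^{(t)}_{\th^k\tau}(A^c_{\th^k\tau})\ge 1-C\ve$ from the $A^c$--side on an intersecting $m$--positive set of $\tau$, for arbitrary $\ve>0$. Letting $\ve\to 0$ contradicts $c\in(0,1)$, so $\mu^{(t)}$ is ergodic, establishing (b) and completing the proof.
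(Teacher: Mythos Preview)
Your handling of (a) and (c) is correct and matches the paper. The content is (b), and here your route---the classical density--point argument---is genuinely different from the paper's. As written, step (iii) contains a gap.

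The problematic inference is: ``$F^k_\tau\(B(y^+,1/N)\)\supset Q_{M_0}$ together with $A$--density $\ge 1-\ve$ in $B(y^+,1/N)$ forces $\nu^{(t)}_{\th^k\tau}(A_{\th^k\tau})\ge 1-C\ve$.'' This does not follow. Total invariance gives $A^c_{\th^k\tau}=F^k_\tau(A^c_\tau)$, and the right side contains $F^k_\tau\(A^c_\tau\sms B(y^+,1/N)\)$, over which you have no control whatsoever. Phrased via your ``injective restrictions'': if you decompose $B(y^+,1/N)$ into pieces $D_j$ on which $F^k_\tau$ is univalent with bounded distortion, conformality tells you $\nu_{\th^k\tau}\(A^c\cap F^k_\tau(D_j)\)/\nu_{\th^k\tau}\(F^k_\tau(D_j)\)\asymp \nu_\tau(A^c\cap D_j)/\nu_\tau(D_j)$, but the individual ratios on the right can be large (all of the $\ve$--mass of $A^c$ in $B$ could sit in one $D_j$), and the images $F^k_\tau(D_j)$ overlap, so no upper bound on $\nu_{\th^k\tau}(A^c_{\th^k\tau})$ is available. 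The same obstruction blocks the $A^c$--side; you never get the two incompatible conclusions on a common fiber. (A minor additional point: the claim that the pulled--back balls shrink to $z$ is not a consequence of Koebe alone---it needs $|(F^n_\om)'(z)|\to\infty$, which here comes from Proposition~\ref{prop:lyapunov} or Lemma~\ref{lem:wykl}.)

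The paper sidesteps the push--forward problem entirely. It forms the conditional measures $\hat\nu_A,\hat\nu_B$ (with $B=A^c$), observes that each is again $t$--conformal with eigenvalue $\lam_{A,\om}=\lam_\om\,\nu_\om(A_\om)/\nu_{\th\om}(A_{\th\om})$ (and similarly for $B$), so that $\lam_{A,\om}^n/\lam_{B,\om}^n=\(\nu_\om(A_\om)/\nu_\om(B_\om)\)\cdot\(\nu_{\th^n\om}(B_{\th^n\om})/\nu_{\th^n\om}(A_{\th^n\om})\)$; a Birkhoff argument then shows this ratio is bounded along a set of $n$'s of positive density for $m$--a.e.\ $\om$. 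Lemma~\ref{l3im8}---whose proof is exactly the Koebe/distortion comparison you wrote down in your density--transport inequality, but between two conformal measures rather than between $A$ and $A^c$---then gives $\hat\nu_{B,\om}|_{J_r(\om)}\ll\hat\nu_{A,\om}|_{J_r(\om)}$, which together with $\nu_\om(J_r(\om))=1$ contradicts $A\cap B=\es$. In effect, the paper converts your local density comparison into a global absolute--continuity statement between two conformal measures of the same exponent, and this is what circumvents the fiber--matching difficulty that your step (iii) does not resolve.
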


\begin{proof}
Item (a) is just Theorem~\ref{t1im3}. In order to prove ergodicity of $\mu$, i.e. item (b) of Theorem~\ref{t1im3B}, assume for a contradiction that there are two disjoint totally $F$-invariant measurable sets $A, B\sbt \Om\times\C$ such that 
$$
0<\mu(A),\, \mu(B)<1.
$$
Since $\th:\Om\to\Om$ is ergodic with respect to measure $m$, we have that $0<\mu_\om(A_\om), \mu_\om(B_\om)<1$ for $m$--ae. $\om\in\Om$. Therefore, also 
$$
0<\nu_\om(A_\om),\, \nu_\om(B_\om)<1
$$ 
for $m$--ae. $\om\in\Om$. Define two random measures measures $\hat\nu_A$ and $\hat\nu_B$ by demanding that their fiber measures
$\hat\nu_{A,\om}$ and $\hat\nu_{B,\om}$ are respective conditional measures of the measure $\nu_\om$ on the sets $A_\om$ and $B_\om$. By this very definition both $\hat\nu_A$ and $\hat\nu_B$ belong to $\mathcal M_m$. It is easy to verify that these two measures are also $t$--conformal with respective generalized eigenvalues equal to
$$
\lam_{A,\om}=\lam_\om\frac{\nu_\om(A_\om)}{\nu_\om(A_{\th(\om)})}
$$
and 
$$
\lam_{B,\om}=\lam_\om\frac{\nu_\om(B_\om)}{\nu_\om(B_{\th(\om)})}.
$$
But then
$$
\lam_{A,\om}^n
=\lam_\om^n\frac{\nu_\om(A_\om)}{\nu_\om\(A_{\th^n(\om)}\)}
$$
and 
$$
\lam_{B,\om}^n
=\lam_\om^n\frac{\nu_\om(B_\om)}{\nu_\om\(B_{\th^n(\om)}\)}
$$
for every integer $n\ge 0$. Therefore
$$
\frac{\lam_{A,\om}^n}{\lam_{B,\om}^n}
=\frac{\nu_\om(A_\om)}{\nu_\om(B_\om)}\cdot  \frac{\nu_\om\(B_{\th^n(\om)}\)}{\nu_\om\(A_{\th^n(\om)}\)}
\le \frac{1}{\nu_\om(B_\om)}\cdot \frac{1}{\nu_\om\(A_{\th^n(\om)}\)}.
$$
Now, since $\nu(A)>0$, there exists $\ve>0$ such that
$$
m\(\{\om\in\Om:\nu_\om(A_\om)\ge \ve\}\)>0.
$$
Denote this, just defined, subset of $\Om$ by $\Om^*$. By Birkhoff's Ergodic Theorem and ergodicity of the measure $m$ with respect to the map $\th:\Om\to\Om$, we have for $m$--a.e. $\om\in\Om$, say $\om\in\Om^+$, that 
$$
\rho\(\{n\ge 0:\th^n(\om)\in \Om^*\}\)=m(\Om^*)>0.
$$
For every $k\ge 1$ let 
$$
\Om_k:=\{\om\in\Om:\nu_\om(B_\om)\ge 1/k\}.
$$
Then $\Om_k\cap\Om^+\sbt \hat\Om_{k/\ve}\sbt \hat\Om$. Hence 
$$
\bu_{k=1}^\infty \Om_k\cap\Om^+\sbt \hat\Om.
$$
Since also $m\(\bu_{k=1}^\infty \Om_k\cap\Om^+\)=1$, it thus follows from Lemma~\ref{l3im8} that the fiber measure $\hat\nu_{B,\om}|_{J_r(\om)}$ is absolutely continuous with respect to the fiber measure $\hat\nu_{A,\om}|_{J_r(\om)}$ for $m$--a.e. $\om\in\Om$. But because of Proposition~\ref{p1im8} and Theorem~\ref{t1im3}, $\nu_\om(J_r(\om))=1$ for $m$--a.e. $\om\in\Om$; consequently
$\hat\nu_{B,\om}(J_r(\om))=\hat\nu_{A,\om}(J_r(\om)))=1$ for $m$--a.e. $\om\in\Om$.
We thus obtained that the fiber measure $\hat\nu_{B,\om}$ is absolutely continuous with respect to the fiber measure $\hat\nu_{A,\om}$ for $m$--a.e. $\om\in\Om$. This contradicts the fact that $A_\om\cap B_\om=\es$ for $m$--a.e. $\om\in\Om$, and finishes the proof of item (b), i.e. ergodicity of the measure $\mu$.

\medskip The proof of item (c) is now straightforward. Assume for a contradiction that there exists an $F$-invariant Borel probability measure on $\Om\times Q$ absolutely continuous with respect to $\nu$ and different from $\mu$. Then there also exists an ergodic measure $\eta$ with all such properties. But then by (a), $\eta$ is  absolutely continuous with respect to $\mu$. As both measures $\eta$ and $\mu$ are ergodic, we thus conclude that $\mu=\eta$. This contradiction finishes the proof of item (c) and simultaneously the whole proof of Theorem~\ref{t1im3B}.
\end{proof}

\noindent As an immediate consequence of Proposition~\ref{p1im8} and Theorem~\ref{t1im3B}, we get the following.

\begin{cor}\label{c15_2017_06_19} 
For every $t>1$ we have that $\nu^{(t)}(J_r(F))=1$.
\end{cor}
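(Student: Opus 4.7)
The plan is to deduce this as a direct consequence of the two previously established facts, with essentially no additional work. The key ingredients are Proposition~\ref{p1im8}, which says that every $F$--invariant random probability measure $\mu\in\mathcal M_m$ satisfies $\mu(J_r(F))=1$, and Theorem~\ref{t1im3B}(a), which says that the invariant measure $\mu^{(t)}$ produced in Theorem~\ref{t2im2} is equivalent to $\nu^{(t)}$.

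First, I would apply Proposition~\ref{p1im8} to the measure $\mu^{(t)}$. This measure belongs to $\mathcal M_m$ and is $F$--invariant by Theorem~\ref{t2im2}, so the proposition applies and yields $\mu^{(t)}(J_r(F))=1$. Equivalently, the complement set
$$
J_r(F)^c=(\Omega\times Q)\setminus J_r(F)
$$
satisfies $\mu^{(t)}(J_r(F)^c)=0$.

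Next, I would invoke equivalence of the two measures. By Theorem~\ref{t1im3B}(a), $\mu^{(t)}$ and $\nu^{(t)}$ are mutually absolutely continuous, so $\mu^{(t)}(J_r(F)^c)=0$ forces $\nu^{(t)}(J_r(F)^c)=0$, i.e.\ $\nu^{(t)}(J_r(F))=1$. This completes the proof.

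There is no substantive obstacle; the work was already done in establishing Proposition~\ref{p1im8} (invariance plus ergodicity argument ruling out concentration on the forward orbit of $0$) and in Theorem~\ref{t1im3B} (equivalence of $\mu^{(t)}$ and $\nu^{(t)}$). The corollary is simply a measure--theoretic combination of these two results.
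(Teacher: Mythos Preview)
Your proof is correct and follows exactly the paper's own approach: the corollary is stated as an immediate consequence of Proposition~\ref{p1im8} and Theorem~\ref{t1im3B}, and your argument spells out precisely this combination. Nothing further is needed.
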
 

As the last important fact in this section, we shall prove the following.
\begin{prop}\label{prop:lyapunov}
For every $t>1$ the global Lyapunov exponent 
$$
\chi_{\mu^{(t)}}
:=\int_{\Omega\times Q} \log|F'_\omega(z)\,d\mu^{(t)}(\om,z) 
=\int_{\Omega\times Q} \log|f'_\omega(z)\,d\mu^{(t)}(\om,z) 
$$
is finite and positive.
\end{prop}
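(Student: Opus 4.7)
Because $f_\eta'(z)=\eta e^z$, we have $\log|F_\omega'(z)|=\log\eta(\omega)+\Re z$, with $\log\eta(\omega)\in[\log A,\log B]$ bounded; the claims therefore hinge on controlling the $\mu^{(t)}$-integral of $\Re z$. For \emph{finiteness}, it suffices to show $\int|\Re z|\,d\mu^{(t)}<\infty$. The plan is to use the Banach-limit representation $\mu^{(t)}(\Om\times E)=\ell_B\bigl((\nu^{(t)}(F^{-n}(\Om\times E)))_{n\ge 0}\bigr)$ from Theorem~\ref{t2im2}, combined with the transfer bound $\nu^{(t)}_\omega(F_\omega^{-n}(E))\le C\,\nu^{(t)}_{\theta^n\omega}(E)^\beta$ of Proposition~\ref{prop:invariant}, to convert decay estimates on $\nu^{(t)}$ into decay estimates on $\mu^{(t)}$. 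Taking $E=Y_M^+$, condition \eqref{3.1} gives $\nu^{(t)}_\omega(Y_M^+)\preceq e^{(1-t)M/2}$; taking $E=Y_M^-$, the $\hat{\mathcal P}$-condition \eqref{1_2017_12_15} together with \eqref{2_2017_12_15} gives super-exponential decay of $\nu^{(t)}_\omega(Y_M^-)$. Passing to the Banach limit yields $\mu^{(t)}(\Om\times Y_M^\pm)\preceq e^{-cM}$ for some $c>0$, hence $\int|\Re z|\,d\mu^{(t)}<\infty$ and $|\chi_{\mu^{(t)}}|<\infty$.

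For \emph{positivity}, I work on the radial Julia set: by Corollary~\ref{c15_2017_06_19} and $\mu^{(t)}\sim\nu^{(t)}$ (Theorem~\ref{t1im3B}) one has $\mu^{(t)}(J_r(F))=1$. Fix $(\omega,z)\in J_r(F)$ and $N$ large enough that $N_\omega(z,N)$ has lower density close to $1$. Along each $n\in N_\omega(z,N)$ there is a univalent inverse branch $F_{\omega,z}^{-n}\colon B(F_\omega^n(z),2/N)\to V_n\ni z$, and iterating the conformality relation \eqref{6_2017_12_19} together with Koebe's distortion theorem gives
$$
\nu^{(t)}_\omega(V_n)\asymp\lambda_{t,\omega}^{-n}\bigl|(F_\omega^n)'(z)\bigr|^{-t}\nu^{(t)}_{\theta^n\omega}\bigl(B(F_\omega^n(z),2/N)\bigr).
$$
By Proposition~\ref{prop:supp}, the last factor is bounded below by some $\xi(N)>0$ on a set of $\omega$'s of measure arbitrarily close to one; combined with the trivial bound $\nu^{(t)}_\omega(V_n)\le 1$, this forces $|(F_\omega^n)'(z)|^t\ge\xi(N)\lambda_{t,\omega}^{-n}$. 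Taking logs, dividing by $n$, letting $n\to\infty$ along the subsequence, and applying Birkhoff's ergodic theorem $\frac{1}{n}\sum_{j=0}^{n-1}\log\lambda_{t,\theta^j\omega}\to\mathcal E\Pr(t)$, I obtain $\chi_{\mu^{(t)}}\ge -\mathcal E\Pr(t)/t$, which is strictly positive for $t>h$ by Theorem~\ref{thm:bowen_Intro}.

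\emph{The main obstacle} is strict positivity throughout the range $t\in(1,h]$, where $\mathcal E\Pr(t)\ge 0$ renders the lower bound $-\mathcal E\Pr(t)/t$ non-positive. To close this gap, I derive the Rokhlin-type identity $h_{\mu^{(t)}}(F|\theta)=\mathcal E\Pr(t)+t\chi_{\mu^{(t)}}$ by computing the Jacobian $\lambda_{t,\omega}|F_\omega'|^t(h_{\theta\omega}\circ F_\omega)/h_\omega$, where $h=d\mu^{(t)}/d\nu^{(t)}$, and observing that the coboundary $\log(h_{\theta\omega}\circ F_\omega)-\log h_\omega$ integrates to zero under $F$-invariance. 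Combining this identity with the strict inequality $h_{\mu^{(t)}}(F|\theta)>\mathcal E\Pr(t)$--which follows from the full topological support of $\mu^{(t)}$ (Proposition~\ref{prop:supp}) together with the infinite degree of $F_\omega$ on $Q$, a finite generating partition adapted to distinct sheets of $F_\omega^{-1}$ over a compact region giving a non-trivial refinement under $F^{-1}$ with conditional entropy strictly exceeding $\mathcal E\Pr(t)$--yields $\chi_{\mu^{(t)}}>0$ across the full range $t>1$.
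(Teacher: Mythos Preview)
Your finiteness argument is essentially the paper's: reduce to $\int|\Re z|\,d\mu^{(t)}<\infty$, push the tail estimates \eqref{3.1} and \eqref{1_2017_12_15} through Proposition~\ref{prop:invariant}, and pass to the Banach limit via Theorem~\ref{t2im2}. That part is fine.

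The positivity argument, however, has genuine gaps. First, there is a circularity: you invoke Theorem~\ref{thm:bowen_Intro} to assert $\mathcal E\Pr(t)<0$ for $t>h$, but in the paper the strict monotonicity of $\mathcal E\Pr$ (Lemma~\ref{prop:decreasing}) is proved \emph{using} Proposition~\ref{prop:lyapunov}. Second, and more seriously, your Rokhlin-identity route for $t\in(1,h]$ is not justified. The identity $h_{\mu^{(t)}}(F\mid\theta)=\mathcal E\Pr(t)+t\chi_{\mu^{(t)}}$ requires that the coboundary $\log(h_{\theta\omega}\circ F_\omega)-\log h_\omega$ integrate to zero, which in turn needs $\log(d\mu^{(t)}/d\nu^{(t)})\in L^1(\mu^{(t)})$; nothing in the paper establishes this, and for an infinite-to-one map on a non-compact space it is far from automatic. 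Third, the strict inequality $h_{\mu^{(t)}}(F\mid\theta)>\mathcal E\Pr(t)$ is asserted with only a sketch (``a finite generating partition\ldots with conditional entropy strictly exceeding $\mathcal E\Pr(t)$''); since $\mathcal E\Pr(t)\to+\infty$ as $t\to 1^+$, this would force the fiber entropy to be arbitrarily large, and your argument gives no mechanism for that.

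The paper's proof of positivity is completely different and much more elementary. It exploits the specific structure of the exponential family through Lemma~\ref{lem1}, which says $|(F_\omega^n)'(z)|\ge|\Im f_\omega^n(z)|$. One then introduces the open set $D_\omega:=\{z\in Q:|\Im f_\omega(z)|>2\}$, observes $\mu^{(t)}(D)>0$ by full support, and applies Birkhoff's ergodic theorem: along a typical orbit the frequency of visits to $D$ is $\mu^{(t)}(D)$, and each visit contributes a factor $\ge 2$ to the derivative by Lemma~\ref{lem1} and the Chain Rule. This yields $\chi_{\mu^{(t)}}\ge\mu^{(t)}(D)\log 2>0$ directly, with no entropy theory and no dependence on the pressure function.
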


\begin{proof}
We first note that 
$$
\chi_{\mu^{(t)}}
=\int_{\Omega\times Q} \log|f_\omega(z)\,d\mu^{(t)}(\om,z)
=\int_{\Omega\times Q}\(\log\eta(\om)+\Re(z)\)\,d\mu^{(t)}(\om,z).
$$
Since $\log A\le \log\eta(\om)\le \log B$ for all $\om\in\Om$ and since $\mu^{(t)}$ is a probability measure, we are thus to show that
$$
\int_{\Omega\times Q} |\Re(z)|\,d\mu^{(t)}(\om,z)<+\infty.
$$
in order to do this, we will provide sufficiently good upper estimates for $\mu_\om^{(t)}(Y_M^{\pm})$ for all $M\ge 0$ and all $\om\in\Om$. First, using \eqref{3.1} and Proposition~\ref{prop:invariant}, we have 
$$
\nu_\om^{(t)}\(F_\om^{-n}(Y_M^+)\)\le c^\b(M_0)e^{\frac{\b M}{2}(1-t)}
$$
for every integer $n\ge 0$ and every real number $M>0$. Second, by Proposition~\ref{prop:invariant} again and by Proposition~\ref{prop:left_tight} there are two constants $D>0$ and $\gamma>0$ such that
$$
\nu_\om^{(t)}\(F_\om^{-n}(Y_M^-)\)\le De^{-\gamma M}
$$
for every integer $n\ge 0$ and every real number $M>0$. Therefore, 
$$
\nu^{(t)}\(F^{-n}(\Om\times Y_M^+)\)
=\int_\Om\nu_\om^{(t)}\(F_\om^{-n}(Y_M^+)\)\,dm(\om)
\le c^\b(M_0)e^{\frac{\b M}{2}(1-t)}
$$
and likewise,
$$
\nu^{(t)}\(F^{-n}(\Om\times Y_M^-)\)\le De^{-\gamma M}. 
$$
It therefore follows from Theorem~\ref{t2im2} and basic properties of Banach limits that
\begin{equation}\label{1m4}
\mu^{(t)} (\Om\times Y_M^+)
\le c^\b(M_0)e^{\frac{\b M}{2}(1-t)} 
\  \  \  {\rm and} \  \  \
\mu^{(t)} (\Om\times Y_M^-)
\le De^{-\gamma M}.
\end{equation}
Hence, by straightforward calculation:
$$
\int_{\Omega\times Y_1^+} |\Re(z)|\,d\mu^{(t)}(\om,z)
<+\infty.
$$
In the same way, based on the right--hand side of \eqref{1m4}, we get
$$
\int_{\Omega\times Y_1^-} |\Re(z)|\,d\mu^{(t)}(\om,z)<+\infty.
$$
Since obviously,
$$
\int_{\Omega\times Q_1} |\Re(z)|\,d\mu^{(t)}(\om,z)\le 1,
$$
we thus conclude that
$$
\int_{\Omega\times Q} |\Re(z)|\,d\mu^{(t)}(\om,z)<+\infty,
$$
and the proof of finiteness of the global Lyapunov exponent 
$\chi_{\mu^{(t)}}$ is complete.

So, we now pass to the proof that $\chi_{\mu^{(t)}}>0$. The first observation is that for each $\om\in\Om$ the set 
$$
D_\om:=\{z\in Q:|\Im(f_\om(z))|>2\}
$$
is non-empty 
and open. Therefore 
$
\mu^{(t)}(D)>0,
$
where 
$$
D:=\bu_{\om\in\Om}\{\om\}\times D_\om.
$$
It thus follows from ergodicity of the global map $F:\Om\times Q\to \Om\times Q$ with respect to $\mu^{(t)}$ (Theorem~\ref{t1im3B}) and from Birkhoff's Ergodic Theorem that there exists a measurable set $\Ga\sbt\Om\times Q$ such that $\mu^{(t)}(\Ga)=1$ and 
\begin{equation}\label{1m5}
\lim_{n\to\infty}\frac1n\#\big\{0\le j\le n-1:F_\om^j(z)\in D\big\}
=\mu^{(t)}(D)>0
\end{equation}
for every $(\om,z)\in\Ga$. Since $|(F_\om^k)'(z)|=|f_\om^k(z)|\ge |\Im(f_\om^k(z))|$ for each $k\ge 1$, it follows from Lemma~\ref{lem1}, formula \eqref{1m5}, the Chain Rule, and the definition of the set $D$, that
$$
\liminf_{n\to\infty}\frac{1}{n}\log|(F^n_\omega)'(z)|\ge \mu^{(t)}(D)\log 2.
$$
Since, by Birkhoff's Ergodic Theorem again, 
$$
\chi_{\mu^{(t)}}=\lim_{n\to\infty}\frac{1}{n}\log|(F^n_\omega)'(z)|
$$
(in particular the limit exists) for $\mu^{(t)}$--a.e. $(\om,z)\in \Om\times Q$, we thus obtain that $\chi_{\mu^{(t)}}\ge \mu^{(t)}(D)\log 2>0$ and the proof of Proposition~\ref{prop:lyapunov} is complete. 
\end{proof}

\section{Bowen's formula}\label{bowen}

In this section we prove a formula holds that determines the value of the Hausdorff dimension of radial Julia sets. We refer to it as Bowen's formula. Precisely, we prove the following.

\begin{thm}\label{thm:bowen}
For $t>1$  put 
$$
\mathcal E \Pr(t):=\int_\Om\log \lambda_{t,\omega}d m(\omega).
$$
Then 

\begin{enumerate}

\item $\mathcal E \Pr(t)<+\infty$ for all $t>1$,

\smallskip\item The function $(1,+\infty)\ni t\mapsto \mathcal E \Pr(t)$ is strictly decreasing, convex, and thus continuous, 

\smallskip\item $\lim_{t\to 1}\mathcal E \Pr(t)=+\infty$ and $ \mathcal E \Pr(2))\le 0$. 

\smallskip\item Let $h>1$ be the unique value $t>1$ for which
$ \mathcal E \Pr(t)=0$. Then 
$$
\HD(J_{r,\omega})=h
$$ 
for $m$--a.e.$\omega\in\Omega$.
\end{enumerate}
\end{thm}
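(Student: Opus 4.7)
The plan is to dispose of items (1)--(3), which are structural properties of the expected pressure function, and then establish the substantive part (4), Bowen's formula, via sharp two--sided density estimates for the $h$--conformal measure $\nu^{(h)}$ on balls around points in $J_r(\omega)$.

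For (1), finiteness is immediate from the uniform two--sided bounds $1/p \le \lambda_{t,\omega}\le P$ in \eqref{bounds_on_lambda}, which make $\log\lambda_{t,\omega}$ bounded and hence $m$--integrable. For (2), convexity will follow from the pointwise log--sum--exp convexity of
$$
t\longmapsto \log\mathcal L_{t,\omega}^n(\1)(z) = \log\sum_{w\in (F_\omega^n)^{-1}(z)}\exp\bigl(-t\log|(F_\omega^n)'(w)|\bigr),
$$
combined with the identity $\lambda^n_{t,\omega} = \int \mathcal L_{t,\omega}^n(\1)\,d\nu^{(t)}_{\theta^n\omega}$ and Birkhoff's theorem, which yield $\mathcal E\Pr(t)=\lim_n n^{-1}\log\lambda_{t,\omega}^n$ for $m$--a.e.\ $\omega$; strict monotonicity then comes from positivity of $\chi_{\mu^{(t)}}$ (Proposition~\ref{prop:lyapunov}) via a random Ruelle--type inequality. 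For (3), the divergence as $t\to 1^+$ is forced by the explicit formula \eqref{eq:def_L}: the series $\sum_{k\in\mathbb Z}|\eta(\omega)/(z+2k\pi i)|^t$ diverges uniformly on compact subsets of $Q\setminus\{0\}$ as $t\to 1^+$, so integrating against $\nu^{(t)}_{\theta\omega}$ (using the lower support bounds from Proposition~\ref{prop:supp}) drives $\lambda_{t,\omega}\to\infty$, and Fatou yields $\mathcal E\Pr(t)\to+\infty$. For $\mathcal E\Pr(2)\le 0$ I would compare with the restriction of two--dimensional Lebesgue measure to $Q_M$, which transforms conformally under $F_\omega$ with Jacobian $|F'_\omega|^2$ and hence, once normalized, realizes an approximate $2$--conformal weight with $\lambda\equiv 1$; in the limit $M\to\infty$ this forces $\mathcal E\Pr(2)\le 0$.

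The substantive content is (4). The strategy is to show that for $m$--a.e.\ $\omega$ and $\nu^{(h)}_\omega$--a.e.\ $z\in J_r(\omega)$ the conformal measure satisfies
$$
\nu^{(h)}_\omega\bigl(B(z,r_n)\bigr) = r_n^{h}\, e^{o(n)}\quad\text{along the scales}\quad r_n\asymp N^{-1}|(F_\omega^n)'(z)|^{-1},
$$
for $n$ ranging over $N_\omega(z,N)$. Since $n\in N_\omega(z,N)$, the inverse branch $F^{-n}_{\omega,z}$ is defined and univalent on $B(F_\omega^n(z),2/N)$; applying conformality \eqref{6_2017_12_19} to this branch together with the Koebe distortion theorem gives
$$
\nu^{(h)}_\omega\bigl(B(z,r_n)\bigr)\asymp \lambda_{h,\omega}^{-n}\,|(F_\omega^n)'(z)|^{-h}\,\nu^{(h)}_{\theta^n\omega}\bigl(B(F_\omega^n(z),1/N)\bigr).
$$
Two applications of Birkhoff's ergodic theorem finish the job: the one to $\log\lambda_{h,\cdot}$ gives $\lambda_{h,\omega}^n = e^{o(n)}$ because $\mathcal E\Pr(h)=0$; the one to $\log|F'|$ gives $|(F_\omega^n)'(z)| = e^{n\chi_{\mu^{(h)}}+o(n)}\to\infty$; and $\nu^{(h)}_{\theta^n\omega}(B(F_\omega^n(z),1/N))$ is bounded below by a positive constant on a large--measure set of fibres by Proposition~\ref{prop:supp}. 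Combined with the radial condition $\un\rho(N_\omega(z,N))\to 1$, this produces the displayed density estimate along a sequence of scales $r_n\to 0$ of full asymptotic density. The Rogers--Taylor density principle then identifies $\HD(J_r(\omega))$ with $h$, and ergodicity of $\theta$ gives the $m$--a.e.\ constancy.

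The main obstacle will be the delicate interplay between the three sources of error in the density estimate: the sub--exponential drift of $\lambda_{h,\omega}^n$, the non--uniformity in $\omega$ of the lower bounds on $\nu^{(h)}_{\theta^n\omega}$ on balls of radius $1/N$, and the exceptional set of $n$ where either $n\notin N_\omega(z,N)$ or $\theta^n\omega$ lies outside the good fibre set from Proposition~\ref{prop:supp}. Handling all three simultaneously, so that the density estimate holds along a sequence of scales of density one in $n$ uniformly over a full $\nu^{(h)}_\omega$--measure set of $z$ for $m$--a.e.\ $\omega$, will require a diagonal argument over $N\to\infty$ and the intersection of a countable family of measure--one sets.
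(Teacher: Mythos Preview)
Your density--estimate strategy for item~(4) is sound for one direction but has a genuine gap in the other. The Rogers--Taylor (Billingsley) principle is asymmetric: from
$$
\liminf_{r\to 0}\frac{\log\nu^{(h)}_\omega(B(z,r))}{\log r}\ge h\qquad\text{for }\nu^{(h)}_\omega\text{--a.e.\ }z
$$
you correctly deduce $\HD(J_r(\omega))\ge h$, since $\nu^{(h)}_\omega(J_r(\omega))=1$. But the reverse inequality $\HD(J_r(\omega))\le h$ requires
$$
\liminf_{r\to 0}\frac{\log\nu^{(h)}_\omega(B(z,r))}{\log r}\le h\qquad\text{for \emph{every} }z\in J_r(\omega),
$$
not merely almost every. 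Your argument produces the scales $r_n\asymp N^{-1}|(F_\omega^n)'(z)|^{-1}$ via Birkhoff's theorem applied to $\log|F'|$, which is only available for $\mu^{(h)}$--typical (equivalently $\nu^{(h)}_\omega$--typical) points $z$. For an arbitrary $z\in J_r(\omega)$ you have no a~priori reason why $|(F_\omega^n)'(z)|\to\infty$ along $n\in N_\omega(z,N)$, so your sequence of radii need not tend to zero. The paper closes this gap with a separate, purely dynamical lemma (Lemma~\ref{lem:wykl}): for every $\omega$, every $N\ge 2$, and every $z\in Q_N$, one has $|(F_\omega^n)'(z)|\ge 2$ for all sufficiently large $n\in N_\omega(z,N)$. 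The proof of this lemma does not use any invariant measure---it relies on the fact that the Julia set is all of $\C$ (Section~\ref{sec:julia}) together with Bloch's theorem, exactly the kind of expansivity argument used there. This is the missing idea in your proposal.

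Two smaller points. Your argument for $\mathcal E\Pr(2)\le 0$ via ``Lebesgue as an approximate $2$--conformal measure'' is not easy to make rigorous here: Lebesgue measure on $Q$ is infinite, and the normalization you sketch does not obviously produce a member of $\mathcal M_m$ or interact cleanly with the fixed--point construction of $\nu^{(t)}$. The paper instead argues by contradiction: if $\mathcal E\Pr(2)>0$ then $\lambda_{2,\omega}^n\to\infty$, and the same Koebe--conformality computation you use gives $\liminf_{r\to 0}\nu^{(2)}_\omega(B(z,r))/r^2=0$ for $z\in J_r(\omega)$, forcing $\nu^{(2)}_\omega(J_r(\omega))=0$ against Corollary~\ref{c15_2017_06_19}. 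Similarly, your ``random Ruelle inequality'' for strict monotonicity would need a variational characterization of $\mathcal E\Pr(t)$ that is not established; the paper instead compares $\nu^{(s)}$ and $\nu^{(t)}$ directly (Lemma~\ref{prop:decreasing}), again via density ratios, to derive a contradiction from $\mathcal E\Pr(t)\ge\mathcal E\Pr(s)$.
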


\noindent The proof of this theorem will be deduced from a series of lemmas.

\begin{lem}
$\mathcal E \Pr(2)\le 0$.
\end{lem}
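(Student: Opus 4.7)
The plan is to argue by contradiction using that $2$-dimensional Lebesgue measure $\ell$ on $Q$ is itself a $\sigma$-finite $2$-conformal measure for each $F_\omega$ with generalized eigenvalue identically equal to $1$: indeed the conformal change of variables $\ell(F_\omega(A)) = \int_A |F_\omega'|^2\, d\ell$ holds for every Borel set $A$ on which $F_\omega$ is injective. Our random probability $2$-conformal measure $\nu^{(2)}$ satisfies the same functional equation with eigenvalues $\lambda_{2,\omega}$, and at radial points these two conformal measures must be pointwise comparable in an asymptotic density sense, forcing the geometric mean $\exp\mathcal{E}\Pr(2)$ of the $\lambda_{2,\omega}$ to be at most $1$.

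Suppose for contradiction that $\mathcal{E}\Pr(2) > 0$. Since $\log \lambda_{2,\cdot} \in L^\infty(m)$ by \eqref{bounds_on_lambda}, Birkhoff's ergodic theorem gives $\lambda_{2,\omega}^n \to \infty$ exponentially for $m$-a.e. $\omega$. By Corollary~\ref{c15_2017_06_19} the measure $\nu^{(2)}$ is concentrated on $J_r(F)$, and by Proposition~\ref{prop:lyapunov} the Lyapunov exponent $\chi := \chi_{\mu^{(2)}}$ is positive. Fix a $\nu^{(2)}$-typical point $(\omega,z)$ with $z \in J_r(\omega)$. Along any radial time $n_k$ of $z$ there is, by the definition of $J_r(\omega)$, a univalent inverse branch $F_\omega^{-n_k,z}\colon B(F_\omega^{n_k}(z), r_0) \to Q$ to which Koebe's Distortion Theorem applies. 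Applying conformality of $\nu^{(2)}$ and of $\ell$ simultaneously on these branches and setting $\rho_k := r_0/|(F_\omega^{n_k})'(z)|$ yields the two-sided comparison
\[
\frac{\nu^{(2)}_\omega(B(z,\rho_k))}{\ell(B(z,\rho_k))} \asymp \lambda_{2,\omega}^{-n_k} \cdot \frac{\nu^{(2)}_{\theta^{n_k}\omega}(B(F_\omega^{n_k}(z),r_0))}{\pi r_0^2},
\]
with proportionality constants depending only on Koebe. Since $F_\omega^{n_k}(z) \in Q_N$, Proposition~\ref{prop:supp} bounds the numerator on the right both above (trivially by $1$) and below (on a set of $m$-measure arbitrarily close to $1$), so the left-hand ratio is of order $\lambda_{2,\omega}^{-n_k}$, which tends to $0$ exponentially.

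The next step is to upgrade this decay from the radial subsequence $\{\rho_k\}$ to arbitrary $\rho \to 0$. For $\rho \in (\rho_{k+1},\rho_k]$, monotonicity of $r \mapsto \nu^{(2)}_\omega(B(z,r))$ together with $\rho_k/\rho \le \rho_k/\rho_{k+1} = |(F_{\theta^{n_k}\omega}^{m_k})'(F_\omega^{n_k}(z))|$ where $m_k := n_{k+1} - n_k$ gives
\[
\frac{\nu^{(2)}_\omega(B(z,\rho))}{\ell(B(z,\rho))} \le C \lambda_{2,\omega}^{-n_k} \exp\bigl(2(\chi + o(1))m_k\bigr),
\]
via Birkhoff applied to $\log|F_{\theta^{n_k}\omega}'|$ under the $F$-invariant measure $\mu^{(2)}$. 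Because radial times have asymptotic density $1$ in $\mathbb{N}$, the gaps satisfy $m_k = o(n_k)$ along a full-density subsequence of indices $k$, and combined with $\lambda_{2,\omega}^{-n_k}\sim e^{-n_k\mathcal{E}\Pr(2)}$ the above right-hand side tends to zero as $\rho\to 0$ within such a range. Hence
\[
\limsup_{\rho\to 0^+}\frac{\nu^{(2)}_\omega(B(z,\rho))}{\ell(B(z,\rho))} = 0
\quad\text{for } \nu^{(2)}\text{-a.e. } (\omega,z).
\]

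Finally, this contradicts the Rogers--Taylor density theorem, which asserts that for any Borel probability measure $\nu$ on a $2$-dimensional Riemannian space (in particular on $Q$), $\limsup_{\rho\to 0^+}\nu(B(z,\rho))/\ell(B(z,\rho)) > 0$ at $\nu$-a.e. $z$. Therefore $\mathcal{E}\Pr(2) \le 0$. The principal obstacle lies in the interpolation step, namely ensuring that the exponential blow-up $e^{2\chi m_k}$ coming from the occasional large radial gap is still dominated by the exponential decay $e^{-n_k\mathcal{E}\Pr(2)}$ of the contribution from the last radial visit; this is where the density-$1$ property of radial times, giving $m_k = o(n_k)$ along a full-density subsequence, is essential.
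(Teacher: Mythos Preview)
Your overall strategy---comparing $\nu^{(2)}$ with Lebesgue measure via conformality and using positivity of $\mathcal E\Pr(2)$ to force the density ratio to vanish---is exactly the paper's idea, and the first displayed comparison along radial times is essentially what the paper writes down. However, you then work much harder than necessary, and the extra work contains an imprecision.

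The paper stops after the subsequence estimate: from the radial times alone one gets
\[
\liminf_{r\to 0}\frac{\nu^{(2)}_\omega(B(z,r))}{r^2}=0
\]
for $\nu^{(2)}_\omega$--a.e.\ $z\in J_r(\omega)$, and this \emph{liminf} statement already forces $\nu^{(2)}_\omega(J_r(\omega))=0$ by a standard Besicovitch/Vitali covering argument (Mattila, Lemma~2.13). No interpolation between radial times is required, and no control on the gaps $m_k$ is needed. The density theorem you invoke for the $\limsup$ is in fact no easier than the $\liminf$ version; both reduce to the same covering argument.

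Your interpolation step, besides being unnecessary, is not quite correct as written. You assert that ``radial times have asymptotic density $1$'' and hence $m_k=o(n_k)$. But the definition of $J_r(\omega)$ says only that $\underline\rho(N_\omega(z,N))\to 1$ as $N\to\infty$; for any \emph{fixed} $N$ the lower density may be $1-\varepsilon(N)<1$, which yields only $m_k\le \tfrac{\varepsilon(N)}{1-\varepsilon(N)}\,n_k+O(1)$, not $m_k=o(n_k)$. The argument can be repaired by first fixing $N$ large enough that $2\chi\,\varepsilon(N)/(1-\varepsilon(N))<\mathcal E\Pr(2)$, so that the exponential decay $e^{-n_k\mathcal E\Pr(2)}$ still dominates; but you do not do this, and in any case the whole manoeuvre is avoidable. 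The cleaner route is simply to drop the interpolation paragraph and conclude directly from $\liminf=0$.
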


\begin{proof}
Assume for a contrary that $\mathcal E \Pr(2)> 0$. It then follows from Birkhoff's Ergodic Theorem that
$$
\lim_{n\to\infty}\lambda_{2,\omega}^n=+\infty
$$ 
for $m$--a.e. $\omega\in\Omega$, and in fact, the divergence is exponentially fast. Then 
using Definition~\ref{def:J_r} (of the set  $J_r(\omega)$), conformality of the measure $\nu^{(2)}$  produced in Theorem~\ref{t2im2}, and Koebe's Distortion Theorem, we can write for $m$--almost every $\omega\in\Omega$  and  for  $\nu^{(2)}_\omega$--
almost every $z\in J_r(\omega)$, every integer $N\ge 1$ and all $n\in N_\omega(z,N)$, that 
$$
\nu^{(2)}_\omega(F_{\om,z}^{-n}(B(F_\om^n(z),1/N)))\le C(N)\frac{1}{\lambda^n_{\omega,2}}\diam^2( F_{\om,z}^{-n}(B(F_\om^n(z),1/N))),
$$
with some constant $C(N)\in(0,+\infty)$ depending only on $N$.
Using Koebe's Distortion Theorem again, we thus conclude that 
\begin{equation}\label{eq:dim_small}
\liminf _{r\to 0} \frac{\nu^{(2)}_\omega(B(z,r))}{r^2}=0.
\end{equation} 
But since ${\rm Leb}(B(z,r))=\pi r^2$ for all $r>0$ small enough independently of $z$, where ${\rm Leb}$ denotes the $2$--dimensional Lebesgue measure on $Q$), formula \eqref{eq:dim_small} implies (standard in geometric measure theory, see  e.g., Lemma 2.13 in \cite{mattila} or \cite{PUbook})  that $\nu^{(2)}_\omega(J_r(\omega))=0$. This contradicts Corollary~\ref{c15_2017_06_19}  and finishes the proof.
\end{proof}

\begin{lem}
For every $t>1$ the expected pressure $\mathcal E\Pr(t)$ is finite and the function 
$$
(1,+\infty)\ni t\mapsto \mathcal E\Pr(t)\in\R
$$ 
is convex, thus continuous.
\end{lem}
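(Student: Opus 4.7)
The \textbf{finiteness} of $\mathcal E\Pr(t)$ is immediate from the uniform bounds $1/p\le\lambda_{t,\om}\le P$ obtained in \eqref{bounds_on_lambda}: the function $\log\lambda_{t,\cdot}$ is $m$--essentially bounded in $[-\log p,\log P]$, so $\mathcal E\Pr(t)\in[-\log p,\log P]$ for every $t>1$.

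For \textbf{convexity}, my plan is to apply H\"older at the level of the transfer operator and then to pass to $\mathcal E\Pr$ via a comparison at a well-chosen off-axis reference point. Fix $t_1,t_2>1$, $\alp\in(0,1)$, and set $t:=\alp t_1+(1-\alp)t_2$. H\"older's inequality applied to the counting measure on $F_\om^{-n}(z)$ gives
\begin{equation}\label{pplan:hoelder}
\mathcal L_{t,\om}^n\1(z)=\sum_{w\in F_\om^{-n}(z)}|(F_\om^n)'(w)|^{-t}\le \bigl(\mathcal L_{t_1,\om}^n\1(z)\bigr)^{\alp}\bigl(\mathcal L_{t_2,\om}^n\1(z)\bigr)^{1-\alp}
\end{equation}
for every $(\om,n,z)$, so $t\mapsto\log\mathcal L_{t,\om}^n\1(z)$ is convex for each fixed triple $(\om,n,z)$.

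The key simplification in our setting is that the postsingular set $\{F_{\th^{n-k}\om}^k(0):1\le k\le n\}$ is contained in the single horizontal line $\{\Im z=0\}\sbt Q$, since iterating the real positive maps $z\mapsto \eta(\th^j\om)e^z$ starting from $0$ keeps values real and positive. Fix therefore $z_0\in Q$ with $\Im z_0=\pi$ and any $r<\pi/2$; then $B(z_0,r)$ is disjoint from the entire postsingular orbit for every $\om$ and every $n\ge 0$, so every inverse branch of $F_\om^n$ over $B(z_0,r)$ is univalent. Koebe's Distortion Theorem yields $\mathcal L_{t,\om}^n\1(z)\asymp \mathcal L_{t,\om}^n\1(z_0)$ on $B(z_0,r/2)$ with constants depending only on $t$ and $r$, and combined with the uniform lower bound $\nu_{\th^n\om}^{(t)}(B(z_0,r/2))\ge\xi>0$ from Proposition~\ref{prop:supp} this gives
$$\xi\cdot \mathcal L_{t,\om}^n\1(z_0)\preceq \int_{B(z_0,r/2)}\mathcal L_{t,\om}^n\1\,d\nu_{\th^n\om}^{(t)}\le \lambda_{t,\om}^n.$$
Since Birkhoff's Ergodic Theorem applied to the bounded function $\log\lambda_{t,\cdot}$ under the ergodic map $\th$ gives $\tfrac{1}{n}\log\lambda_{t,\om}^n\to\mathcal E\Pr(t)$ for $m$--a.e. $\om$, this yields
\begin{equation}\label{pplan:limsup}
\limsup_{n\to\infty}\tfrac{1}{n}\log\mathcal L_{t,\om}^n\1(z_0)\le \mathcal E\Pr(t).
\end{equation}

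It remains to show the matching inequality $\limsup_{n\to\infty}\tfrac{1}{n}\log\mathcal L_{t,\om}^n\1(z_0)\ge \mathcal E\Pr(t)$. Granted this, taking $\limsup_{n\to\infty}\tfrac{1}{n}\log$ in \eqref{pplan:hoelder} at $z=z_0$ immediately gives $\mathcal E\Pr(t)\le\alp\mathcal E\Pr(t_1)+(1-\alp)\mathcal E\Pr(t_2)$, and continuity on $(1,+\infty)$ follows automatically from convexity of a real--valued function on an open interval. This matching lower bound is the \textbf{hard part} of the argument. The plan is to combine a Chebyshev-type observation---$\int\mathcal L_{t,\om}^n\1\,d\nu_{\th^n\om}^{(t)}=\lambda_{t,\om}^n$ forces $\mathcal L_{t,\om}^n\1\ge\tfrac{1}{2}\lambda_{t,\om}^n$ on a set of positive $\nu_{\th^n\om}^{(t)}$--measure---with the mass estimates of Lemmas~\ref{AinQQ} and~\ref{prop:estimate_in YM} (and the summary Proposition~\ref{prop:invariant}) to conclude that a definite fraction of the contribution to $\lambda_{t,\om}^n$ comes from the compact ``good'' region $Q_{M_1}\sms\bu_{j=0}^N B(\beta_{n,j}(\om),r_0)$, to which Koebe then transfers the lower bound $\mathcal L_{t,\om}^n\1\asymp \mathcal L_{t,\om}^n\1(z_0)$ via univalent inverse branches; the uniformity in $\om$ and $n$ of the resulting estimates mirrors the delicate accounting already performed in the proof of Proposition~\ref{prop:invariant}, and is what makes this step the technical heart of the lemma.
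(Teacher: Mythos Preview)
Your overall strategy coincides with the paper's: establish that $\mathcal E\Pr(t)=\lim_{n\to\infty}\tfrac1n\log\mathcal L_{t,\om}^n\1(z)$ at a well--chosen reference point, then apply H\"older pointwise. However, the proof as written is \emph{incomplete}: you explicitly label the lower bound $\limsup_n\tfrac1n\log\mathcal L_{t,\om}^n\1(z_0)\ge\mathcal E\Pr(t)$ as ``the hard part'' and give only a plan, not an argument. That plan (Chebyshev plus mass estimates to localise a definite fraction of $\lambda_{t,\om}^n$ in the good compact region, then Koebe) is sound, but it is exactly the kind of accounting that cannot be left to the reader here.

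More importantly, you are reproving from scratch what the paper already has in hand. Your clever choice of $z_0$ with $\Im z_0=\pi$ (take also $|\Re z_0|\le M_1$) places $z_0$ inside the set
\[
E_\om=Q_{M_1}\setminus\bigcup_{j=0}^NB\bigl(F^j_{\theta^{n-j}\omega}(0),r_0\bigr)
\]
for every $\om$ and every $n$, since all the removed balls are centered on the real axis and $r_0<\pi$. The paper simply invokes the estimates \eqref{2017_05_12} and \eqref{bound_for_L}, established in the proof of Lemma~\ref{AinQ}, to obtain directly the two--sided comparison $\lambda_{t,\om}^{-n}\mathcal L_{t,\om}^n\1(z)\asymp 1$ for $z\in E_\om$, with constants that can be taken uniform for $t$ in any compact subinterval of $(1,\infty)$. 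Birkhoff then gives the limit, and H\"older finishes. So your $z_0$ is fine, but there is no need to route the upper bound through Proposition~\ref{prop:supp} or to redo the lower bound: both are already packaged in \eqref{2017_05_12}--\eqref{bound_for_L}.

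A final correctness point: your appeal to Proposition~\ref{prop:supp} for a ``uniform lower bound $\nu_{\theta^n\om}^{(t)}(B(z_0,r/2))\ge\xi$'' is inaccurate as stated---that proposition only yields the bound on a set $\Om(x,R,\varepsilon)$ of measure $>1-\varepsilon$, not for all $\om$. This can be repaired (Birkhoff gives positive density of returns to $\Om(x,R,\varepsilon)$, which suffices for the $\limsup$), but as written it is a gap.
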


\begin{proof}
First note that finiteness of the expected pressure follows immediately from  the bounds on $\lambda_{t,\omega}$ provided in \eqref{bounds_on_lambda}. The constants $p,P$ in this estimate depend on $t$ but they are independent of $\omega$.

Obviously, it is enough to prove convexity for every bounded interval $(T_1,T_2)\subset (1,\infty)$. So, from now on let us fix some $1<T_1<T_2$. For every $\om\in\Om$ denote
$$
E_\om:=Q_{M_1}\setminus \bigcup_{j=0}^N B\(F^j_{\th^{n-j}\omega}(0),r_0\).
$$ 
It follows from \eqref{2017_05_12} and  \eqref{bound_for_L} that
\begin{equation}\label{1_2017_05_12}
\frac{1}{\lambda_{t,\omega}^n}\mathcal L^n_{t,\omega}(z)\asymp 1,
\end{equation}
independently of $\om\in\Om$ and $z\in E_\om$, and furthermore, it is easy to see that the comparability constant can be taken the same for all $t\in [T_1,T_2]$. Since, by Birkhoff's Ergodic Theorem the limit $\lim_{n\to\infty}\frac{1}{n}\log \lambda_{t,\omega}^n$ exists for all $t\in[T_1,.T_2]$ and $m$--a.e. $\om\in\Om$, say $\om\in\Om_t$ with $m(\Om_t)=1$, and is equal to $\mathcal E\Pr(t)$, we conclude from \eqref{1_2017_05_12} that also the limit $\lim_{n\to\infty}\frac{1}{n}\log \mathcal L_{t,\omega}^n(z)$ exists for all $t\in[T_1,.T_2]$, all $\om\in\Om_t$, and every $z\in E_\om$, and
$$
\mathcal E\Pr(t)
=\lim_{n\to\infty}\frac{1}{n}\log \lambda_{t,\omega}^n
=\lim_{n\to\infty}\frac{1}{n}\log \mathcal L_{t,\omega}^n(z).
$$
Now, fix $\alpha\in[0,1]$ and $s,t\in [T_1,T_2]$. Fix then $\om\in\Om_s\cap\Om_t\cap \Om_{\alpha s+(1-\alpha)t}$ and $z\in E_\om$. A direct use of H\"older inequality shows that
$$
\begin{aligned}
\mathcal E\Pr(\alpha s+(1-\alpha)t)
&=\lim_{n\to\infty}\frac{1}{n}\log \mathcal L_{\alpha s+(1-\alpha)t,\omega}^n(z)\\
&\le \lim_{n\to\infty}\frac{1}{n}\Big(\alpha\log \mathcal L_{s,\omega}^n(z)
+(1-\alpha)\log\mathcal L_{t,\omega}^n(z)\Big)\\
&= \alpha\lim_{n\to\infty}\frac{1}{n}\log \mathcal L_{s,\omega}^n(z)
+(1-\alpha)\lim_{n\to\infty}\frac{1}{n}\log \mathcal L_{t,\omega}^n(z)\\
&=\alpha\mathcal E\Pr(s)+(1-\alpha)\mathcal E\Pr(t).
\end{aligned}
$$
The proof is complete.
\end{proof}

\begin{lem}\label{prop:decreasing}
The function $(1,\infty)\ni t\longmapsto \mathcal E \Pr(t)\in\R$ is strictly decreasing.
\end{lem}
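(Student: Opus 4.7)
Plan. The plan is to derive strict decrease of $\mathcal E\Pr$ from the convexity already established and the positivity $\chi_t := \chi_{\mu^{(t)}} > 0$ proved in Proposition~\ref{prop:lyapunov}. Fix $1 < t < s$. The starting point is the identity
\[
\log \mathcal L^n_{s,\omega}(\1)(z) - \log \mathcal L^n_{t,\omega}(\1)(z) = -\int_t^s \langle \log |(F^n_\omega)'| \rangle_{u,n,\omega,z}\, du,
\]
valid for every $\omega, n, z$, obtained by differentiating the log-sum-exp
$\log\mathcal L^n_{u,\omega}(\1)(z) = \log\sum_{w\in F^{-n}_\omega(z)} e^{-u\log|(F^n_\omega)'(w)|}$ in $u$; here $\langle \cdot \rangle_{u,n,\omega,z}$ denotes the probability average over the preimage set $F^{-n}_\omega(z)$ weighted by $|(F^n_\omega)'(w)|^{-u}/\mathcal L^n_{u,\omega}(\1)(z)$.

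Evaluating at a point $z$ in the good set $E_{\theta^n\omega}$ of the previous proof and applying \eqref{1_2017_05_12} uniformly in $u \in [t,s]$, both $\log\mathcal L^n_{u,\omega}(\1)(z)$ and $\log\lambda^n_{u,\omega}$ differ by $O(1)$, so Birkhoff's theorem for $(\theta,m)$ applied to the cocycle $\log\lambda_{u,\omega}$ shows that, dividing the identity by $n$, the left-hand side converges to $\mathcal E\Pr(s) - \mathcal E\Pr(t)$. The task reduces to proving
\[
\lim_{n\to\infty} \frac{1}{n}\langle \log|(F^n_\omega)'|\rangle_{u,n,\omega,z} = \chi_{\mu^{(u)}}
\]
and interchanging this limit with the integral $\int_t^s$. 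Once this is accomplished, Proposition~\ref{prop:lyapunov} applied at each $u \in [t,s]$ yields
\[
\mathcal E\Pr(s) - \mathcal E\Pr(t) = -\int_t^s \chi_{\mu^{(u)}}\, du < 0.
\]

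The main obstacle is the Ruelle--Perron--Frobenius identification above: the weighted preimage distribution equals $\lambda_{u,\omega}^{-n}(\mathcal L^n_{u,\omega})^*\delta_z$, and one must show that its integration against $\log|F'|$ converges to $\int\log|F'|\,d\mu^{(u)}=\chi_{\mu^{(u)}}$, which in this random, non-uniformly expanding setting has no off-the-shelf spectral-gap shortcut. A more elementary workaround, bypassing the full identification, is the following direct estimate: apply Birkhoff plus Egorov to $(F,\mu^{(t)})$ to obtain a set $\Gamma\subset\Omega\times Q$ of positive $\mu^{(t)}$-measure (hence of positive $\nu^{(t)}$-measure, by Theorem~\ref{t1im3B}) together with $N\in\mathbb N$ such that $|(F^n_\omega)'(z)|\ge e^{n\chi_t/2}$ for $(\omega,z)\in\Gamma$, $n\ge N$. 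Partitioning the preimages of a typical point into those whose trajectory eventually lies in $\Gamma$ (whose contribution dominates $\lambda^n_{t,\omega}$ by conformality of $\nu^{(t)}$) and those avoiding $\Gamma$ (whose $\nu^{(t)}$-measure decays exponentially in view of Lemmas~\ref{lem:est_zero}, \ref{l1_2017_04_11}, and Proposition~\ref{prop:invariant}), one directly picks up the exponential factor $e^{-(s-t)n\chi_t/2}$ in the pointwise comparison $\mathcal L^n_{s,\omega}(\1)(z)\le e^{-(s-t)n\chi_t/2}\mathcal L^n_{t,\omega}(\1)(z)+o(\lambda^n_{t,\omega})$, which after taking logarithms, dividing by $n$, and invoking Birkhoff produces $\mathcal E\Pr(s)\le\mathcal E\Pr(t)-(s-t)\chi_t/2$ and completes the proof.
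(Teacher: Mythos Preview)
Your proposal sketches two routes. The first (differentiate $u\mapsto\log\mathcal L^n_{u,\omega}\1(z)$ and identify the limit as $\chi_{\mu^{(u)}}$) is the right heuristic, and you correctly note that carrying it through would require a convergence-of-eigenmeasures statement that the paper does not prove. The second route, your ``workaround,'' contains a genuine gap.

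The problem is the claim that the contribution of preimages lying in $\Gamma^c_\omega$ is $o(\lambda^n_{t,\omega})$. By conformality, $\int\mathcal L^n_{t,\omega}(\1_{\Gamma^c_\omega})\,d\nu_{\theta^n\omega}=\lambda^n_{t,\omega}\,\nu_\omega(\Gamma^c_\omega)$, so for typical $z$ the normalized contribution $\hat{\mathcal L}^n_{t,\omega}(\1_{\Gamma^c_\omega})(z)$ is of order $\nu_\omega(\Gamma^c_\omega)$, a \emph{fixed} positive number independent of $n$; it is not $o(1)$. The lemmas you cite (Lemma~\ref{lem:est_zero}, Lemma~\ref{l1_2017_04_11}, Proposition~\ref{prop:invariant}) control preimages of small balls around $0$ or give a H\"older-type bound $\nu_\omega(F^{-n}_\omega(A))\le C\nu_{\theta^n\omega}^\beta(A)$, neither of which yields exponential decay in $n$ for the fixed set $\Gamma^c$. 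Worse, for the $s$-sum you need to bound $\mathcal L^n_{s,\omega}(\1_{\Gamma^c_\omega})(z)$, and on $\Gamma^c_\omega$ you have no lower bound on $|(F^n_\omega)'(w)|$; since $s>t$, preimages with small derivative are weighted \emph{more} heavily, so you cannot even dominate this by the $t$-sum without further argument.

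The paper takes a different route that sidesteps these difficulties: it argues by contradiction, assuming $\mathcal E\Pr(t)\ge\mathcal E\Pr(s)$ for some $s<t$, and compares the two conformal measures $\nu^{(s)}$ and $\nu^{(t)}$ directly on shrinking balls. Koebe distortion reduces the ratio $\nu^{(t)}_\omega(B)/\nu^{(s)}_\omega(B)$ to $|(F^{n}_\omega)'(z)|^{s-t}\cdot(\lambda^{-n}_{t,\omega}/\lambda^{-n}_{s,\omega})$ times a harmless factor; the contradiction hypothesis controls the $\lambda$-ratio via Birkhoff, while the positive Lyapunov exponent from Proposition~\ref{prop:lyapunov} forces $|(F^{n}_\omega)'(z)|^{s-t}\to 0$ exponentially. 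One obtains $\varliminf_{r\to 0}\nu^{(t)}_\omega(B(z,r))/\nu^{(s)}_\omega(B(z,r))=0$ on a set of positive $\nu^{(t)}$-measure, which is impossible. This measure-comparison argument never needs to control the ``bad'' preimages globally, only to work along a single well-chosen sequence of radii at each point.
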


\begin{proof}
Seeking contradiction suppose that 
\begin{equation}\label{1_2017_05_27}
\mathcal E \Pr(t)\ge \mathcal E \Pr(s)
\end{equation}
for some $1<s<t$. It follows from Corollary~\ref{c15_2017_06_19}, Theorem~\ref{t1im3B}, Proposition~\ref{prop:lyapunov}, and Birkhoff's Ergodic Theorem, that there exist $\chi>0$, an integer $q_0\ge 1$, a measurable set $\Omega_0\sbt \Omega$ with $m(\Omega_0)>1/2$, and for each $\om\in \Omega_0$, a measurable set $J_r^0(\om)\sbt J_r(\om)$ such that 
$$
\nu^{(t)}_\omega(J_r^0(\omega))\ge 1/2,
$$
and
$$
|(F^n_\omega)'(z)|\ge e^{\chi n}
$$
for every $\omega\in \Omega_0$, every $z\in J_r^0(\omega)$, and every integer $n\ge q_0$.
It furthermore follows from Birkhoff's Ergodic Theorem and \eqref{1_2017_05_27} there are an integer $q_1\ge q_0$, a measurable set $\Omega_1\sbt \Omega_0$ with $m(\Omega_1)>1/4$, and 
$$
\frac{\lambda^{-n}_{t,\omega}}{\lambda^{-n}_{s,\omega}}\le  e^{\frac12\chi(t-s) n}.
$$
for every $\omega\in \Omega_1$ and every integer $n\ge q_1$. Fix such $\omega\in\Om_1$ and $z\in J_r^0(\omega)$. By the definition of $J_r(\om)$ there exists an integer $N\ge 1$ such that $\un\rho(N_\om(z,N))> 3/4$.
So, there exist an integer $N\ge 1$ depending on $\om$ and $z$ and an unbounded increasing sequence $(n_k)_{k=1}^\infty$ of integers $\ge q_1$ with lower density $\ge 3/4$ such that for every $k\ge 1$ there exists a holomorphic branch $F^{-n_k}_{\omega,z}:B(F^{n_k}_\omega(z),2/N)\to Q$ of $F^{-n_k}_{\omega,z}$ that maps $F^{n_k}_\omega(z)$ back to $z$ and 
$$
|F^{n_k}_\omega(z)|\le N. 
$$
By Birkhoff's Ergodic Theorem and Proposition ~\ref{prop:supp} there exist a measurable set $\Om_2\sbt \Om_1$ such that $m(\Om_2)>1/8$ and 
$$
\th^n\om\in \Om\(N,(4KN)^{-1},1/4\)
$$
for all $\om\in\Om_2$ and a set of integers $n\ge 0$ of lower density $\ge 3/4$, where the set $\Om\(N,(4KN)^{-1},1/4\)$ comes from Proposition ~\ref{prop:supp}. Passing to a subsequence we may therefore assume that
$$
\th^{n_k}\om\in \Om\(N,(4KN)^{-1},1/4\)
$$
for all $\om\in\Om_2$ and every integer $k\ge 1$. 

Using all the above, Koebe's Distortion Theorems, conformality of the measures $\nu_\omega^{(t)}$ and $\nu_\omega^{(s)}$, and at the end Proposition ~\ref{prop:supp} (the constant $\xi=\xi\(N,(4KN)^{-1},1/4\)>0$ below comes from it), we obtain
$$
\begin{aligned}
&\frac{\nu_\omega^{(t)}\(B(z,(4N)^{-1}|(F^{n_k}_\omega(z)'|^{-1})\)}
{\nu_\omega^{(s)}\(B(z,(4N)^{-1}|(F^{n_k}_\omega(z)'|^{-1})\)} 
\le \frac{\nu_\omega^{(t)}\(F^{-n_k}_{\omega,z}\(B(F^{n_k}_\omega(z),N^{-1}))\)\)}{\nu_\omega^{(s)}\(F^{-n_k}_{\omega,z}\(B(F^{n_k}_\omega(z,(4KN)^{-1})\)\)}\le \\
& \  \  \  \  \  \  \  \  \  \  \le K^{t-s}|(F^{n_k}_\omega)'(z)|^{s-t} 
\frac{\nu_{\theta^{n_k}\omega}^{(t)}\(B(F^n_\omega(z),N^{-1}))\)}{\nu_{\theta^{n_k}\omega}^{(s)}\(B(F^{n_k}_\omega(z,(4KN)^{-1})\)}
  \frac{\lambda^{-n_k}_{t,\omega}}{\lambda^{-n_k}_{s,\omega}} \\
&  \  \  \  \  \  \  \  \  \le K^{t-s}\exp(\chi(s-t)n_k)\(\nu_{\theta^{n_k}\omega}^{(s)}\(B(F^{n_k}_\omega(z),(4KN)^{-1})\)\)^{-1}
     \exp\left(\frac12\chi(t-s)n_k\right) \\
& \  \  \  \  \  \  \  \ =   K^{t-s}\exp\left(\frac12\chi(s-t)n_k\right)\(\nu_{\theta^{n_k}\omega}^{(s)}\(B(F^{n_k}_\omega(z),(4KN)^{-1})\)\)^{-1}\\
&  \  \  \  \  \  \  \  \  \le \xi^{-1}K^{t-s}\exp\left(\frac12\chi(s-t)n_k\right).
\end{aligned}
$$
Therefore
$$
\begin{aligned}
\varliminf_{r\to 0}\frac{\nu_{\omega}^{(t)}(B(z,r))}{\nu_{\omega}^{(s)}(B(z,r))}
&\le\varliminf_{k\to\infty}
\frac{\nu_\omega^{(t)}\(B(z),(4N)^{-1}|(F^{n_k}_\omega(z)'|^{-1})\)}
{\nu_\omega^{(s)}\(B(z),(4N)^{-1}|(F^{n_k}_\omega(z)'|^{-1})\)}\\
&\le \xi^{-1}K^{t-s}\varliminf_{k\to\infty}\exp\left(\frac12\chi(s-t)n_k\right)\\
&=0.
\end{aligned}
$$
This, in a standard way, implies that 
$$
\nu_{\omega}^{(t)}\lt(\bu_{\om\in\Om_1}\{\om\}\times J_r^0(\om)\rt)=0.
$$
But, on the other hand, from the very definition of the sets $\Om_2$ and $J_r^0(\om)$, we have that
$$
\nu_{\omega}^{(t)}\lt(\bu_{\om\in\Om_1}\{\om\}\times J_r^0(\om)\rt)\ge 1/8>0.
$$
This contradiction ends the proof of Lemma~\ref{prop:decreasing}.
\end{proof}

\noindent Before we prove a next part of Theorem~\ref{thm:bowen}, 
note that the following elementary estimate holds:
\begin{equation}\label{eq:calculus}
\sum_{k=0}^\infty\frac{1}{(a+k)^t}\ge \frac{(a)^{1-t}}{t-1}.
\end{equation}



\begin{lem}\label{prop:pressure_at_1}
$\lim_{t\to 1^+}\mathcal E \Pr(t)=+\infty.$
\end{lem}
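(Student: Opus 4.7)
The plan is to obtain a pointwise lower bound on $\mathcal L_{t,\omega}(\1)$ on the compact core $Q_{M_0}$ which blows up like $1/(t-1)$ as $t\to 1^+$, and then to transport this into a lower bound on $\lambda_{t,\omega}$ by using that $\nu_{\theta\omega}^{(t)}(Q_{M_0})\ge 1/2$. The hint \eqref{eq:calculus} is exactly the ingredient that will produce the divergent factor.

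First I would fix $z\in Q_{M_0}$ (so $|z|\le M_0+2\pi$) and use the explicit series from \eqref{eq:def_L},
$$
\mathcal L_{t,\omega}(\1)(z)
=\sum_{k\in\Z}\left|\frac{\eta(\omega)}{z+2k\pi i}\right|^t.
$$
The triangle inequality gives $|z+2k\pi i|\le M_0+2\pi+2\pi|k|$, so, using $\eta(\omega)\ge A$ together with \eqref{eq:calculus} (applied with $a=(M_0+2\pi)/(2\pi)$ after factoring out $(2\pi)^t$),
$$
\mathcal L_{t,\omega}(\1)(z)
\ge A^t\sum_{k=0}^\infty\frac{1}{(M_0+2\pi+2\pi k)^t}
\ge \frac{A^t}{(2\pi)^t}\cdot\frac{\bigl((M_0+2\pi)/(2\pi)\bigr)^{1-t}}{t-1}.
$$
For $t\in(1,2]$, the prefactor $A^t(M_0+2\pi)^{1-t}/(2\pi)^t$ is bounded below by a positive constant $C=C(A,M_0)$ independent of $\omega$ and of $z\in Q_{M_0}$, so
$$
\mathcal L_{t,\omega}(\1)(z)\ge \frac{C}{t-1}\qquad\text{for all } \omega\in\Omega,\ z\in Q_{M_0}.
$$

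Next I would integrate this against $\nu_{\theta\omega}^{(t)}$. Since $\nu^{(t)}\in\hat{\mathcal P}$, the bound \eqref{2.1} gives $\nu_{\theta\omega}^{(t)}(Q_{M_0})\ge 1/2$ for every $\omega$, and hence
$$
\lambda_{t,\omega}
=\int_Q\mathcal L_{t,\omega}(\1)(z)\,d\nu_{\theta\omega}^{(t)}(z)
\ge\int_{Q_{M_0}}\frac{C}{t-1}\,d\nu_{\theta\omega}^{(t)}(z)
\ge\frac{C}{2(t-1)}.
$$
Taking logarithms and integrating over $\Omega$ against the probability measure $m$ yields
$$
\mathcal E\Pr(t)=\int_\Omega\log\lambda_{t,\omega}\,dm(\omega)
\ge\log(C/2)-\log(t-1),
$$
which tends to $+\infty$ as $t\to 1^+$, as required.

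The only potentially delicate point — modest in this case — is ensuring that the constant $C(A,M_0)$ in the pointwise lower bound on $\mathcal L_{t,\omega}(\1)$ does not degenerate in either of the parameters over which we integrate and can be chosen independent of $t$ on a right neighborhood of $1$; this is immediate from the explicit shape of the prefactor above. Everything else is just the conformality relation plus the already-established bulk estimate \eqref{2.1} on $\nu_{\theta\omega}^{(t)}$, both of which are in hand from Sections 6--8.
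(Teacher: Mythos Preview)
Your proof is correct and follows essentially the same route as the paper: bound $\mathcal L_{t,\omega}(\1)$ from below on $Q_{M_0}$ via the elementary estimate \eqref{eq:calculus}, producing a $1/(t-1)$ divergence, and then use $\nu_{\theta\omega}^{(t)}(Q_{M_0})\ge 1/2$ from \eqref{2.1} to pass to $\lambda_{t,\omega}$. The paper phrases this as showing that the constant $d=d(t)$ in Lemma~\ref{lower} can be chosen with $d(t)\to\infty$ as $t\to1^+$, which is the same computation you carry out directly; both arguments rely (as the paper states explicitly) on $M_0$ being chosen independently of $t$ near $1$.
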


\begin{proof}
In order to prove this lemma, we have to examine in more detail the lower bound for $ \L^*_{t,
\omega}\nu_{\theta\omega}(\1)$ obtained in Proposition ~\ref{measurelower}, which, in turn, follows from 
Lemma~\ref{lower}. We need to use this estimate for the random conformal measure $\nu_{\omega}^{(t)}$, as  
$$
\lambda_{t,\omega}=\L^*_{t,\omega}\nu_{\theta\omega}^{(t)}(\1).
$$
So, recall the formula \eqref{1.3} from Lemma~\ref{measurelower}:
$$
\L^*_{t,\omega}\nu_{\theta\omega}(\1)\ge \frac12dM_0^{1-t}
$$
where $M_0>0$ is a constant independent of $t$, and  $d>0$, which does depend on $t$, comes from the first estimate in Lemma~\ref{lower}:
\begin{equation}\label{1_2017_05_30}
\L_{t,\omega}(\1)(z)\ge\frac{d}{|z|^{t-1}}.
\end{equation}
Thus, in order to conclude the proof of lemma~\ref{prop:pressure_at_1} it is enough to establish the following.

\medskip 
{\bf Claim~$1^0$}: The constant $d=d(t)>0$ of \eqref{1_2017_05_30} can be chosen so that 
$$
\lim_{t\to 1^+}d(t)=+\infty.
$$
\begin{proof}
Take an arbitrary point $z\in Q$, $z\neq 0$, and its representative in $x+iy\in\C$ with $y\in (0,\pi]$. Then
$$
\mathcal L_{t,\omega}(\1)(z)=\sum_{k\in\Z}\frac{1}{(x^2+(y+2k\pi)^2)^ {t/2}}\ge\sum_{k=0}^\infty\frac{1}{(|x|+y+2k\pi)^t}.
$$
Putting $a:=|x|+y$ and using  \eqref{eq:calculus}, we easily complete the proof of Claim~$1^0$.
\end{proof}

\noindent Lemma~\ref{prop:pressure_at_1} is thus also proved.
\end{proof}

\noindent As an immediate consequence of the above lemmas, we get the following.
\begin{cor}
There exists a unique value $h\in [1,2)$ such that $\mathcal E\Pr(h)=0$.
\end{cor}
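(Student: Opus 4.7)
The plan is a short Intermediate Value Theorem plus strict monotonicity deduction from the four properties of $\mathcal E\Pr$ already established earlier in the section: continuity on $(1,+\infty)$ (from the convexity lemma), strict monotonicity (Lemma~\ref{prop:decreasing}), divergence to $+\infty$ as $t\to 1^+$ (Lemma~\ref{prop:pressure_at_1}), and the inequality $\mathcal E\Pr(2)\le 0$.

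Concretely, by the divergence at $1^+$ I can pick any $t_0\in(1,2)$ with $\mathcal E\Pr(t_0)>0$. Then $\mathcal E\Pr$ is continuous on $[t_0,2]$ with $\mathcal E\Pr(t_0)>0$ and $\mathcal E\Pr(2)\le 0$, so the Intermediate Value Theorem produces a zero $h\in[t_0,2]\subset(1,2]$. Uniqueness is immediate from strict monotonicity: if $h\neq h'$ both satisfied $\mathcal E\Pr(h)=\mathcal E\Pr(h')=0$, Lemma~\ref{prop:decreasing} would force $\mathcal E\Pr(h)\neq\mathcal E\Pr(h')$, a contradiction. The lower bound $h>1$ comes for free, because $\mathcal E\Pr$ is finite on $(1,+\infty)$ but blows up at $t=1^+$.

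The only step that is not purely formal is the strict upper bound $h<2$ required by the stated range $h\in[1,2)$, which amounts to upgrading $\mathcal E\Pr(2)\le 0$ to $\mathcal E\Pr(2)<0$. The earlier lemma only rules out $\mathcal E\Pr(2)>0$, via exponential growth of $\lambda_{2,\omega}^n$ combined with the Koebe-plus-Lebesgue comparison producing $\nu^{(2)}_\omega(J_r(\omega))=0$; in the borderline case $\mathcal E\Pr(2)=0$ the multipliers grow only subexponentially and that argument no longer applies verbatim. I expect this borderline refinement to be the main obstacle. My plan to handle it is to run the same Koebe-distortion comparison along radial orbits supplied by Proposition~\ref{prop:lyapunov}, but choose a subsequence $n_k\to\infty$ at $\nu^{(2)}_\omega$-a.e.\ radial point $z$ along which the Lyapunov expansion $|(F_\omega^{n_k})'(z)|\geq e^{\chi n_k}$ dominates the subexponential growth of $\lambda_{2,\omega}^{n_k}$, forcing $\nu^{(2)}_\omega(B(z,r_k))/r_k^2\to 0$ along a subsequence $r_k\to 0$, and thereby contradicting Corollary~\ref{c15_2017_06_19} as before. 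Once $\mathcal E\Pr(2)<0$ is in hand, the IVT step already places $h$ strictly inside $(1,2)$.
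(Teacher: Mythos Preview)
Your existence–uniqueness argument via the Intermediate Value Theorem and strict monotonicity is correct and is exactly what the paper means by calling the corollary an ``immediate consequence''. Note, however, that from the four preceding lemmas alone one really only obtains a unique zero $h\in(1,2]$; the paper does not upgrade this to $h<2$ at this point, and the strict inequality is established only later (Theorem~\ref{t1_2017-09_04}, via Proposition~\ref{prop:low_lim}). So your instinct that the range $[1,2)$ needs extra input is correct, but you need not supply that input here.

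Your proposed shortcut to $\mathcal E\Pr(2)<0$, however, does not work. At $t=2$ the Koebe--conformality estimate along a radial time $n$ gives
\[
\nu_\omega^{(2)}\big(B(z,r_n)\big)\;\le\;C\,\lambda_{2,\omega}^{-n}\,\big|(F_\omega^n)'(z)\big|^{-2},
\qquad r_n\asymp \big|(F_\omega^n)'(z)\big|^{-1},
\]
so that
\[
\frac{\nu_\omega^{(2)}(B(z,r_n))}{r_n^2}\;\preceq\;\lambda_{2,\omega}^{-n}.
\]
The derivative factors cancel completely; the Lyapunov exponent $\chi>0$ plays no role in this ratio. Hence what you need is $\limsup_n\lambda_{2,\omega}^n=+\infty$, and this does \emph{not} follow from $\mathcal E\Pr(2)=0$: Birkhoff sums with zero mean can remain bounded (e.g.\ if $\log\lambda_{2,\cdot}$ is a measurable coboundary). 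The paper's actual route to $h<2$ (Proposition~\ref{prop:low_lim}) confronts precisely this dichotomy: Case~2 assumes $\limsup\lambda_\omega^n=+\infty$ and proceeds as above, while Case~1 (bounded partial sums on a positive-measure set) uses Lemma~\ref{sums_bounded} together with an excursion of the orbit into $Y_N^+$, where the conformal measure is exponentially small in $N$, to force $\nu_\omega(B(z,r))/r^h\to0$ along a subsequence. That extra geometric input, not the Lyapunov exponent, is the missing idea.
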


Now, in order to conclude the proof of Theorem~\ref{thm:bowen}, we are only left to establish its item (4). Towards this end, we shall prove the following auxiliary result.

\begin{lem}\label{lem:wykl}
For every $\om\in\Om$ and every integer $N\ge 2$ there exists $q(\omega,N)$ such that if $z\in Q_N$ and if $q(\omega,N)\le n\in  N_\omega(z,N)$, then 
$$
|(F_\om^n)'(z)|\ge 2.
$$
\end{lem}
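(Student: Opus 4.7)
I would argue by contradiction: suppose that for some $\omega\in\Omega$ and some $N\ge 2$ the conclusion fails, so one can find sequences $(z_k)_k\subset Q_N$ and $n_k\to\infty$ with $n_k\in N_\omega(z_k,N)$ and $|(F^{n_k}_\omega)'(z_k)|<2$. The contradiction will come from combining Koebe's theorems with the $t$-conformality of $\nu^{(t)}$ and the positivity of the expected pressure $\mathcal E\Pr(t)$ for $t\in(1,h)$.

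First I would apply Koebe's theorems to produce a fixed-size univalence ball. Each $F^{-n_k}_{\omega,z_k}\colon B(F^{n_k}_\omega(z_k),2/N)\to Q$ is univalent and its derivative at the centre has modulus $|(F^{n_k}_\omega)'(z_k)|^{-1}>1/2$; Koebe's $1/4$ Theorem applied to the restriction to $B(F^{n_k}_\omega(z_k),1/N)$ yields $V^{1/2}_k:=F^{-n_k}_{\omega,z_k}(B(F^{n_k}_\omega(z_k),1/N))\supset B(z_k,1/(8N))$, while Koebe's Distortion Theorem forces $|(F^{n_k}_\omega)'|\le 2K$ on $V^{1/2}_k$. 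By compactness of $\overline{Q_N}$ I extract a subsequence with $z_k\to z^*$, so that the fixed ball $V^*:=B(z^*,1/(16N))$ is contained in $V^{1/2}_k$ for all large $k$.

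Next I would use the $t$-conformality relation together with Koebe Distortion to estimate
\[
\nu^{(t)}_\omega(V^{1/2}_k)\asymp \lambda_{t,\omega}^{-n_k}\,|(F^{n_k}_\omega)'(z_k)|^{-t}\,\nu^{(t)}_{\theta^{n_k}\omega}\!\big(B(F^{n_k}_\omega(z_k),1/N)\big).
\]
Since $\nu^{(t)}\le 1$ and $|(F^{n_k}_\omega)'(z_k)|\ge 1/(2N|z_k|)\ge c_N>0$ (a Koebe-type lower bound obtained by pulling back the obstruction that the preimages along the orbit must avoid $0$), one obtains $\nu^{(t)}_\omega(V^{1/2}_k)\le C(N,t)\,\lambda_{t,\omega}^{-n_k}$. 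Taking $t\in(1,h)$, Lemma~\ref{prop:decreasing} gives $\mathcal E\Pr(t)>0$, and Birkhoff's Ergodic Theorem applied to $\log\lambda_{t,\cdot}\in L^1(m)$ (integrability from \eqref{bounds_on_lambda}) yields $\lambda_{t,\omega}^{-n_k}\to 0$ exponentially fast for $m$-a.e.\ $\omega$. Thus $\nu^{(t)}_\omega(V^{1/2}_k)\to 0$, and since $V^{1/2}_k\supset B(z^*,1/(16N))$ for $k$ large, this forces $\nu^{(t)}_\omega(B(z^*,1/(16N)))=0$, contradicting the full-support property $\supp\nu^{(t)}_\omega=Q$ from Proposition~\ref{prop:supp}.

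The hard part will be that both supporting facts (Birkhoff convergence of $\log\lambda_{t,\cdot}$ and full support of $\nu^{(t)}_\omega$) hold only for $m$-a.e.\ $\omega$, so the argument above delivers the lemma for $m$-a.e.\ $\omega$. Upgrading to every $\omega\in\Omega$ requires either uniform-in-$\omega$ refinements of these two facts, or a more delicate route via Theorem~\ref{CC} (which is valid for every $\omega$): extract $V^*$ as in Step~1, lift to $\C$, use Lemma~\ref{lem1} together with the Koebe derivative bound to force each image $f^{n_k}_{\ba_\omega}(\tilde V^*)$ into a fixed rectangle in $\C$, and then propagate boundedness through the iterates between successive radial returns by a Bloch-based argument, obtaining normality of the full family $\{f^n_{\ba_\omega}|_{\tilde V^*}\}$ and contradicting $J_{\ba_\omega}=\C$.
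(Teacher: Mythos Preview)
Your measure-theoretic route has two genuine problems, one of which you already flag.

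\textbf{The a.e.\ issue is fatal for the stated lemma.} Birkhoff's theorem for $\log\lambda_{t,\cdot}$ and the full-support statement of Proposition~\ref{prop:supp} hold only for $m$-a.e.\ $\omega$, so your first argument proves at best an a.e.\ version. The lemma is stated for \emph{every} $\omega$, and this is needed: the paper subsequently defines the measurable function $q_0(\omega,N)$ (Observation~\ref{o1m2}) and must know it is finite everywhere before invoking measurability and the inequality \eqref{1m2}.

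\textbf{The derivative lower bound is not justified.} Your line ``$|(F^{n_k}_\omega)'(z_k)|\ge 1/(2N|z_k|)\ge c_N$'' has no clear source; intermediate iterates can visit the far left of the cylinder and make the product of moduli arbitrarily small. You can dodge this by reversing the conformality estimate (push forward rather than pull back: $\nu^{(t)}_{\theta^{n_k}\omega}(B(F^{n_k}_\omega(z_k),1/N))\le (2K)^t\lambda_{t,\omega}^{n_k}$, then take $t>h$ so $\lambda_{t,\omega}^{n_k}\to 0$, and contradict the uniform lower bound from Proposition~\ref{prop:supp}). But this still only yields a.e.\ $\omega$.

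\textbf{What the paper does instead.} The paper's proof is entirely dynamical and measure-free, so it works for every $\omega$. After the same Koebe/compactness extraction of a fixed ball $B(\xi,1/(16N))$, the paper shows directly that for every $k\ge 0$ the lifted image $f_\omega^k(B(\xi,1/(16N)))$ avoids $\bigcup_{j\in\mathbb{Z}}(\mathbb{R}+j\pi i)$: if it met this set, then (since real points escape past $4N$ after $q_N$ steps, uniformly in $\omega$) the later return $F_\omega^{n_l}(z_l)$ would lie outside $Q_N$, contradicting $n_l\in N_\omega(z_l,N)$. From this avoidance, Lemma~\ref{lem1} plus Bloch force all but finitely many iterates into the strip $S$, and then Lemma~\ref{prop1} (empty interior of $\bigcap f^{-n}_{\bf a}(S)$) gives the contradiction.

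Your last-paragraph sketch is heading in exactly this direction; what is missing is the key mechanism linking ``inverse branch on $B(\cdot,2/N)$ with image bounded'' to ``forward orbit avoids the horizontal lines $\mathbb{R}+\pi i\mathbb{Z}$'', which is the step that lets you feed into the Section~\ref{sec:julia} machinery and thereby get the conclusion for every $\omega$.
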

\begin{proof} 
Fix an integer $N\ge 2$. Notice that then there exists an integer $q_N\ge 0$ such that
\begin{equation}\label{7_2017_07_10}
f_\om^n(\R)\sbt [4N,+\infty)
\end{equation}
for all $\om\in\Om$ and all $n\ge q_N$. Now fix also $\om\in\Om$. 
Assume for a contrary that there exist a strictly increasing sequence $(n_l)_{l=1}^\infty$ of integers, all greater than or equal to $q_N$, and a sequence $(z_l)_{l=1}^\infty$ of points in $Q_N$ such that 
$$
n_l\in N_\omega(z_l,N) \  \  {\rm and} \  \  |(F_\om^{n_l})'(z_l)|\le 2
$$
for every $l\ge 1$. Using compactness of $Q_N$ we can replace the sequence $(n_l)_{l=1}^\infty$ by its increasing subsequence for which there exist a point $\xi\in Q_N$ such that 
$$
z_l\in B(\xi,1/(16Nl)).
$$
It then follows from Koebe's $\frac14$--Distortion Theorem that
\begin{equation}\label{8_2017_07_10}
B(\xi,1/(16N))\sbt F_{\om,z_l}^{-n_l}\(B(F_\om^{n_l}(z_l), 1/N)\)
\end{equation}
now seeking contradiction assume that
$$
f_\om^k\(B(\xi,2^{-4}N^{-1})\)\cap \lt(\bu_{j\in\Z}\R+j\pi i\rt)\ne\es
$$
for some integer $k\ge 0$. Then for for every integer $l\ge 1$,
$$
f_\om^k\(F_{\om,z_l}^{-n_l}\(B(F_\om^{n_l}(z_l), 1/N)\)\)
\cap \lt(\bu_{j\in\Z}\R+j\pi i\rt)\ne\es.
$$
Fix $l\ge 1$ so large that $n_l-k\ge q_N+1$. Then invoking \eqref{7_2017_07_10}, we conclude that
$$
B(F_\om^{n_l}(z_l), 1/N)\cap [4N,+\infty)\ne\es.
$$
Hence, $F_\om^{n_l}(z_l)\notin Q_N$ contrary to the fact that $n_l\in N_\om(z_l,N)$. So,
\begin{equation}\label{1m1}
f_\om^k\(F_{\om,z_l}^{-n_l}\(B(F_\om^{n_l}(z_l), 1/N)\)\)
\cap \lt(\bu_{j\in\Z}\R+j\pi i\rt)=\es
\end{equation}
for every integer $k\ge 0$. Now, as at the beginning of the paper, keep $S$ to denote the set
$$
\{z\in\C:|\Im(z)|<\pi\}.
$$
If the set 
$$
A_\om(N):=\big\{k\ge 0:f_\om^k\(B(\xi,2^{-4}N^{-1})\)\cap S=\es\big\}
$$
is infinite, then $\varlimsup_{j\to \infty}|(f_\om^j)'(\xi)|=+\infty$ by lemma~\ref{lem1} and the Chain Rule. This and \eqref{1m1} would however 
contradict Bloch's Theorem, proving that the set $A_\om(N)$ is finite. But if 
$f_\om^k\(B(\xi,2^{-4}N^{-1})\)\cap S\ne\es$, then 
\begin{equation}\label{1m2}
f_\om^k\(B(\xi,2^{-4}N^{-1})\)\sbt S
\end{equation}
by \eqref{1m1} again. Therefore, \eqref{1m2} holds for all but finitely many $k$s. This however contradicts Lemma~\ref{prop1}, finishing the proof of Lemma~\ref{lem:wykl}.
\end{proof}

Now, within the framework of Lemma~\ref{lem:wykl}, let $q_0(\om,N)$ denote the least number $q(\om,N)$ produced by this lemma. We immediately observe the following.

\begin{obs}\label{o1m2}
For every integer $N\ge 2$ the function
$$
\Om\ni\om\longmapsto q_0(\om,N)\in\N
$$
is measurable.
\end{obs}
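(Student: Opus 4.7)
The plan is to reduce measurability of $q_0(\cdot,N)$ to measurability, for each fixed $n\geq 0$, of the function
$$h_n(\omega):=\inf\bigl\{|(F_\omega^n)'(z)|:z\in Q_N,\ n\in N_\omega(z,N)\bigr\}$$
(with the convention $\inf\emptyset:=+\infty$). By Lemma~\ref{lem:wykl}, for every $\omega$ the set $\{n:h_n(\omega)<2\}$ is finite, and $q_0(\omega,N)$ equals $1$ plus its maximum (with value $0$ when this set is empty); consequently
$$\{\omega:q_0(\omega,N)\leq k\}=\bigcap_{n\geq k}\{\omega:h_n(\omega)\geq 2\},$$
so it is enough to prove measurability of each $h_n$.

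For a fixed $n$, the map $(\omega,z)\mapsto F_\omega^n(z)$ is measurable in $\omega$ (as each coefficient $\eta(\theta^j\omega)$ is) and holomorphic in $z$; being a Carath\'eodory function, it is jointly $\mathcal{F}\otimes\mathcal{B}$-measurable, and so is its derivative $(F_\omega^n)'(z)$. The condition $n\in N_\omega(z,N)$ is the conjunction of $|F_\omega^n(z)|\leq N$ with the existence of a univalent holomorphic inverse branch of $F_\omega^n$ on $B(F_\omega^n(z),2/N)$ sending $F_\omega^n(z)$ back to $z$. Since each $F_\eta$ is a covering of $Q\setminus\{0\}$ omitting only the singular value $0$, this existence is equivalent to the following: setting $U_n:=B(F_\omega^n(z),2/N)$ and defining $U_j$ inductively for $j=n-1,n-2,\ldots,0$ as the connected component of $F_{\theta^j\omega}^{-1}(U_{j+1})$ containing $F_\omega^j(z)$, none of the sets $U_j$ contains $0$. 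Because this construction involves only finitely many measurable operations on the Carath\'eodory data $(\omega,z,j)\mapsto F_{\theta^j\omega}(z)$, the set
$$S_n:=\{(\omega,z)\in\Omega\times Q:z\in Q_N,\ n\in N_\omega(z,N)\}$$
belongs to $\mathcal{F}\otimes\mathcal{B}$.

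To extract $h_n$, observe that the $\omega$-fibres of $S_n$ are open in $\{z:|\Re(z)|<N,\,|F_\omega^n(z)|<N\}$, while the boundary locus where $|\Re(z)|=N$ or $|F_\omega^n(z)|=N$ is nowhere dense in the fibre. Choosing a countable dense subset $D\subset Q_N$ and using continuity of $(F_\omega^n)'$, one expresses $h_n(\omega)$ as a countable infimum of the measurable functions $\omega\mapsto|(F_\omega^n)'(z)|\cdot\mathbf{1}_{S_n}(\omega,z)$, $z\in D$, with a routine approximation at the boundary to recover points where the infimum is attained in the closure; hence $h_n$ is measurable.

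The main obstacle is verifying joint measurability of $S_n$: although each defining condition is geometrically natural, assembling them into a single measurable statement requires measurably tracking the connected components $U_j$ under iterated pullbacks, which rests crucially on the covering-map structure of $F_\eta$ with unique omitted value $0\in Q$. Once $S_n$ is measurable, the remainder reduces to standard Carath\'eodory-function and separability arguments.
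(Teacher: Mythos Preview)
The paper gives no argument for this observation; it is asserted as immediate right after Lemma~\ref{lem:wykl}. Your write-up is therefore supplying what the authors omitted, and its overall structure --- the level-set identity $\{q_0(\cdot,N)\le k\}=\bigcap_{n\ge k}\{h_n\ge 2\}$ followed by measurability of each $h_n$ --- is correct.

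What remains genuinely incomplete is precisely the step you flag as ``the main obstacle'': joint measurability of $S_n$. Saying the connected-component tracking ``involves only finitely many measurable operations'' is a restatement of the claim rather than a proof. A cleaner way to close this gap is to note that for fixed $n$ every quantity in sight depends on $\omega$ only through the finite tuple $\vec\eta(\omega):=(\eta(\omega),\ldots,\eta(\theta^{n-1}\omega))\in[A,B]^n$, so it suffices that the induced function $\Psi_n:[A,B]^n\to[0,+\infty]$ with $h_n=\Psi_n\circ\vec\eta$ be Borel. On this finite-dimensional parameter space the inverse branch of $F_a^n$ is built by iterated explicit logarithms, and the existence condition on $B(F_a^n(z),2/N)$ is open in $(a,z)$ (lifts through a covering map are stable under perturbation as long as the omitted value $0$ stays outside the successive lifted domains). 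Hence the analogue of $S_n$ in $[A,B]^n\times Q_N$ is Borel, and the infimum of the continuous function $(a,z)\mapsto|(F_a^n)'(z)|$ over its $z$-sections is at worst universally measurable; since $(\Omega,\mathcal F,m)$ is complete this is enough. Your separability argument is a legitimate alternative once $S_n$ is known to be Borel with open interior in each fibre, but the boundary approximation you dismiss as ``routine'' does need the one line you omit: continuity of $|(F_\omega^n)'|$ together with nowhere-density of the locus where $|\Re z|=N$ or $|F_\omega^n(z)|=N$.
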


The last step, the one finishing the proof of Theorem~\ref{thm:bowen} is this.

\begin{lem}\label{l1_2017_05_30}
$$
\HD(J_r(\omega))=h
$$
for $m$--a.e. $\omega\in\Omega$.
\end{lem}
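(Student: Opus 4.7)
The plan is to prove the two inequalities $\HD(J_r(\omega))\ge h$ and $\HD(J_r(\omega))\le h$ separately for $m$-a.e.\ $\omega$, using the $h$-conformal random measure $\nu^{(h)}$ produced in Theorem~\ref{t1_2016_10_08}, the fact that $\nu^{(h)}(J_r(F))=1$ (Corollary~\ref{c15_2017_06_19}), and the crucial consequence of $\mathcal E\Pr(h)=0$ that, by Birkhoff's Ergodic Theorem applied to $\log\lambda_{h,\omega}$, one has $\frac1n\log\lambda_{h,\omega}^n\to 0$ for $m$-a.e.\ $\omega$.

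For the lower bound $\HD(J_r(\omega))\ge h$, I would show that at every radial point the lower pointwise dimension of $\nu_\omega^{(h)}$ is at least $h$. Fix $z\in J_r(\omega)$ and $N$ with $\un\rho(N_\omega(z,N))>0$. For $n\in N_\omega(z,N)$, the $1/4$-Koebe theorem gives $F^{-n}_{\omega,z}(B(F_\omega^n(z),1/N))\supset B(z,(4N)^{-1}|(F_\omega^n)'(z)|^{-1})$, while conformality (formula \eqref{6_2017_12_19}) and Koebe's distortion theorem yield
$$\nu_\omega^{(h)}\!\bigl(F^{-n}_{\omega,z}(B(F_\omega^n(z),1/N))\bigr)\le K^h\lambda_{h,\omega}^{-n}|(F_\omega^n)'(z)|^{-h}\nu_{\theta^n\omega}^{(h)}(B(F_\omega^n(z),1/N))\le K^h\lambda_{h,\omega}^{-n}|(F_\omega^n)'(z)|^{-h}.$$
Combining with $\frac1n\log\lambda_{h,\omega}^n\to 0$ and $|(F_\omega^n)'(z)|\to\infty$ (via Lemma~\ref{lem:wykl} and iteration along $N_\omega(z,N)$) shows $\liminf_{r\to 0}\log\nu_\omega^{(h)}(B(z,r))/\log r\ge h$. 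Then Frostman's converse / the standard mass distribution argument gives $\HD(J_r(\omega))\ge h$.

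For the upper bound $\HD(J_r(\omega))\le h$, I would decompose $J_r(\omega)=\bigcup_{N,k}J_r(\omega;N,k)$ where $J_r(\omega;N,k):=\{z\in Q_N:\un\rho(N_\omega(z,N))\ge 1-1/k\}$, and show $\HD(J_r(\omega;N,k))\le h$. Fix $\ve>0$. Using Proposition~\ref{prop:supp} with parameters $(N,(4KN)^{-1},\delta)$ and Observation~\ref{o1m2} together with Birkhoff's theorem applied to the indicators of $\Omega(N,(4KN)^{-1},\delta)$ and to $\log\lambda_{h,\omega}$, restrict to a measurable set $\Omega^*\subset\Omega$ of measure $>1-\delta$ where, for every $\omega\in\Omega^*$, a positive density of integers $n$ satisfies simultaneously $\th^n\omega\in\Omega(N,(4KN)^{-1},\delta)$, $n\ge q_0(\omega,N)$, and $\lambda_{h,\omega}^n\le e^{\ve n/2}$. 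For such $n\in N_\omega(z,N)$ the ball $B_n(z):=F^{-n}_{\omega,z}(B(F_\omega^n(z),1/N))$ has $\diam B_n(z)\preceq N^{-1}|(F_\omega^n)'(z)|^{-1}$, and by conformality and Proposition~\ref{prop:supp},
$$\nu_\omega^{(h)}(B_n(z))\succeq \lambda_{h,\omega}^{-n}|(F_\omega^n)'(z)|^{-h}\xi,$$
so $(\diam B_n(z))^{h+\ve}\preceq\lambda_{h,\omega}^n|(F_\omega^n)'(z)|^{-\ve}\nu_\omega^{(h)}(B_n(z))\le e^{\ve n/2}2^{-\ve n}\nu_\omega^{(h)}(B_n(z))$ by Lemma~\ref{lem:wykl} iterated along the positive-density subsequence. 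Select by a $5r$-covering (Vitali-type) a disjoint subfamily whose $5$-enlargements cover $J_r(\omega;N,k)$; summing gives $\mathcal H^{h+\ve}(J_r(\omega;N,k))\preceq \nu_\omega^{(h)}(Q)<\infty$ on $\Omega^*$, hence $\HD(J_r(\omega;N,k))\le h+\ve$. Letting $\ve\to 0$ and $\delta\to 0$ and using ergodicity of $\th$ to promote the conclusion from a set of positive measure to a set of full $m$-measure finishes the proof.

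The main obstacle is the upper bound: the random factor $\lambda_{h,\omega}^n$ is only \emph{averaged} to zero and can be locally as large as $e^{o(n)}$, so one must genuinely pay the $\ve$-surplus in dimension to absorb it, and this requires having a \emph{uniform} lower bound $\nu_{\theta^n\omega}^{(h)}(B(F_\omega^n(z),(4KN)^{-1}))\ge\xi>0$ along a positive-density subsequence of $n$'s. Securing this simultaneously for all $z\in J_r(\omega;N,k)$ on a single measurable set $\Omega^*$ of nearly full measure—by stacking Birkhoff averages of the measurable function $\om\mapsto\inf_{z\in Q_N}\nu_\om^{(h)}(B(z,(4KN)^{-1}))$ (whose positivity comes from Proposition~\ref{prop:supp}) with those of $\log\lambda_{h,\omega}$—is the key technical step.
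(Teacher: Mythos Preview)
Your outline for the upper bound is sound and parallels the paper closely in the ingredients it assembles (Lemma~\ref{lem:wykl}, Observation~\ref{o1m2}, Proposition~\ref{prop:supp}, Birkhoff for $\log\lambda_{h,\omega}$). The paper, however, takes the cleaner local--dimension route rather than a Vitali covering: it shows directly that for every $z\in J_r(\omega)$ there is a sequence $r_l\searrow 0$ with $\nu_\omega^{(h)}(B(z,r_l))\succeq \lambda_{h,\omega}^{-u_l}r_l^h$, whence $\liminf_{r\to 0}\log\nu_\omega^{(h)}(B(z,r))/\log r\le h$ and $\HD(J_r(\omega))\le h$ by the standard Frostman-type argument. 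Your $5r$-covering argument would also work, but note that your displayed inequality $e^{\ve n/2}2^{-\ve n}$ tacitly assumes $|(F_\omega^n)'(z)|\ge 2^n$; what you actually get from iterating Lemma~\ref{lem:wykl} along a set of density $\ge c$ is $|(F_\omega^n)'(z)|\ge 2^{c'n}$ for some $c'>0$, so you must first choose the Birkhoff tolerance $\ve'$ on $\lambda_{h,\omega}^n$ with $\ve'<c'\ve\log 2$, not $\ve/2$.

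Your lower bound sketch, on the other hand, has a real gap. Showing
\[
\nu_\omega^{(h)}\bigl(B(z,(4N)^{-1}|(F_\omega^n)'(z)|^{-1})\bigr)\le K^h\lambda_{h,\omega}^{-n}|(F_\omega^n)'(z)|^{-h}
\]
only controls radii of the special form $r_n=(4N)^{-1}|(F_\omega^n)'(z)|^{-1}$ with $n\in N_\omega(z,N)$. To get $\liminf_{r\to 0}\log\nu_\omega^{(h)}(B(z,r))/\log r\ge h$ you must interpolate: for $r$ between two consecutive good radii $r_s<r\le r_k$ one has $\nu_\omega^{(h)}(B(z,r))\le\nu_\omega^{(h)}(B(z,r_k))$ but $\log r\ge\log r_s$, so the extra factor $(r_k/r_s)^h=|(F_{\theta^k\omega}^{s-k})'(F_\omega^k(z))|^h$ appears and must be shown to be $e^{o(k)}$. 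Merely having $\un\rho(N_\omega(z,N))>0$ does not control this: during the gap $(k,s)$ the orbit may leave $Q_N$ and pick up an enormous derivative. The paper handles this by (i) invoking Proposition~\ref{prop:lyapunov} and Birkhoff to restrict to a full--$\nu_\omega^{(h)}$--measure set $J_r^0(\omega)\subset J_r(\omega)$ on which $\frac1n\log|(F_\omega^n)'(z)|\to\chi>0$, and (ii) using that $\un\rho(N_\omega(z,N_\eta))>1-\eta$ for every $\eta>0$ (not just $>0$), so that $s/k\to 1$ and hence $\frac1k\log|(F_{\theta^k\omega}^{s-k})'(F_\omega^k(z))|\le\frac{\eta}{1-\eta}\chi$. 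Letting $\eta\to 0$ kills the error term. Without the Lyapunov exponent you have no upper bound on $\log|(F_\omega^s)'(z)|/\log|(F_\omega^k)'(z)|$, and without density close to $1$ you cannot force $s-k=o(k)$; both are essential, and neither appears in your sketch.
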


\begin{proof} 
The beginning of this proof is similar to the proof of Lemma~\ref{prop:decreasing}.
It follows from Corollary~\ref{c15_2017_06_19}, Theorem~\ref{t1im3B}, Proposition~\ref{prop:lyapunov}, and  Birkhoff's Ergodic Theorem, that there exist $\chi>0$, a measurable set $\Omega_0\sbt \Omega$ with $m(\Omega_0)=1$, and for each $\om\in \Omega_0$, a measurable set $J_r^0(\om)\sbt J_r(\om)$ such that 
$$
\nu^{(h)}_\omega(J_r^0(\omega))=1
$$
and 
\begin{equation}\label{MUZ_9.13}
\lim_{n\to\infty}\frac1n\log|(F^n_\omega)'(z)|=\chi
\end{equation}
for every $\omega\in \Omega_0$ and every $z\in J_r^0(\omega)$. It furthermore follows from Birkhoff's Ergodic Theorem that there exists a measurable set $\Omega_1\sbt \Omega_0$ with $m(\Omega_1)=1$, and such that
\begin{equation}\label{2_2017_06_19}
\lim_{n\to\infty}\frac1n\log\lambda^n_{h,\omega}=0
\end{equation}
for every $\omega\in \Omega_1$. Fix such $\omega\in\Om_1$ and $z\in J_r^0(\omega)$. Fix $\eta\in(0,1/2)$ arbitrary. By the definition of $J_r(\om)$ there exists an integer $N_\eta\ge 1$ such that 
\begin{equation}\label{5_2017_06_14}
\un\rho(N_\om(z,N_\eta))> 1-\eta.
\end{equation}
For every $r\in(0,1/N_\eta)$ let $k:=k(z,r)$ be the largest integer $n\in N_\om(z,N_\eta)$ such that
\begin{equation}\label{MUZ_9.8}
F_{\omega,z}^{-n}\(B(F_\omega^n(z),1/N_\eta)\)\supset B(z,r).
\end{equation}
Let $s=s_k$ be the largest integer $\ge k+1$ belonging to $N_\om(z,N_\eta)$. It follows from \eqref{5_2017_06_14} that
\begin{equation}\label{MUZ_9.9}
\lim_{r\to 0}\frac{k(z,r)}{s_{k(z,r)}}\ge 1-\eta.
\end{equation}
Applying conformality of the measure $\nu^{(h)}$ and Koebe's Distortion Theorem, we now conclude from \eqref{MUZ_9.8} and the definition of $k$ that
\begin{equation}\label{11_2017_06_14}
\aligned
\nu_{\omega}^{(h)}\(B(z,r)\)
&\le \nu_\omega^{(h)}\(F_{\omega,z}^{-k}\(B(F_\omega^k(z),1/N_\eta)\)\)\\
&\le K^h\lambda^{-k}_{\omega,h}\big|\(F_\omega^k\)'(z)\big|^{-h}   \nu_{\theta^k\omega}\(B(F_\om^k(z),1/N_\eta)\)\\
&\le K^h\lambda^{-k}_{\omega,h}\big|\(F_\omega^k\)'(z)\big|^{-h}.
\endaligned
\end{equation}
On the other hand $B(z,r)\not\subset F_{\omega,z}^{-s}\(B(F_\omega^s(z),1/N_\eta)\)$. But since, by $\frac14$-Koebe's Distortion Theorem,
$$
F_{\omega,z}^{-s}\(B(F_\omega^s(z),1/N_\eta)\)
\supset B\lt(z,\frac14\big|\(F_\omega^s\)'(z)\big|^{-1}N_\eta^{-1}\rt),
$$
we thus get that $r\ge \frac14\big|\(F_\omega^s\)'(z)\big|^{-1}N_\eta^{-1}$. Equivalently,
$$
\big|\(F_\omega^k\)'(z)\big|^{-1}\le 4N_\eta r.
$$
By inserting this into \eqref{11_2017_06_14} and using also the Chain Rule, we obtain 
$$
\nu_{\omega}^{(h)}\(B(z,r)\)
\le (4KN_\eta)^hr^h\lambda^{-k}_{\omega,h}\big|\(F_{\th^k\om}^{s-k}\)'(F_\omega^k(z))\big|^h.
$$
Equivalently,
\begin{equation}\label{1abf2}
\begin{aligned}
\frac{\log\nu_{\omega}^{(h)}(B(z,r))}{\log r}
&\ge h+\frac{\log(4KN_\eta)}{\log r} - \frac{\log\lambda^k_{\omega,h}}{\log r}  +h\frac{\log|\(f_{\th^k\om}^{s-k}\)'(F_\omega^k(z))\big|}{\log r} \\
&=h-\frac{k}{\log(1/r)}\frac{\log(4KN_\eta)}{k} 
+\frac{k}{\log(1/r)}\frac{1}{k}\log\lambda^k_{\omega,h}- \\
&\  \  \  \  \  \  \  \  \  \  \  \ -h\frac{k}{\log(1/r)}\frac{1}{k}\log|\(f_{\th^k\om}^{s-k}\)'(F_\omega^k(z))\big|.
\end{aligned}
\end{equation}
Now, Koebe's Distortion Theorem yields
$$
F_{\omega,z}^{-k}\(B(F_\omega^k(z),1/N_\eta)\)
\sbt B\(z,K\big|\(F_\omega^k\)'(z)\big|^{-1}N_\eta^{-1}\).
$$
Along with \eqref{MUZ_9.8} and the definition of $k$ this gives $r\le K\big|\(F_\omega^k\)'(z)\big|^{-1}N_\eta^{-1}$. Equivalently:
\begin{equation}\label{MUZ_9.12}
-\log r\ge \log(N_\eta/K)+\log\big|\(F_\omega^k\)'(z)\big|.
\end{equation}
Therefore, invoking \eqref{MUZ_9.13}, we get that
\begin{equation}\label{1_2017_06_19}
\limsup_{r\to 0}\frac{k(z,r)}{\log(1/r)}\le 1/\chi.
\end{equation}
Also, formula \eqref{MUZ_9.13} along with \eqref{MUZ_9.9} gives
\begin{equation}\label{MUZ_9.13}
\lim_{r\to 0}\frac1k\log|\(F_{\th^k\om}^{s-k}\)'(F_\omega^k(z))\big|
\le \frac{\chi}{1-\eta}-\chi
=\frac{\eta}{1-\eta}\chi.
\end{equation}
Inserting now \eqref{1_2017_06_19}, \eqref{2_2017_06_19}, and \eqref{MUZ_9.13} to \eqref{1abf2}, we obtain
$$
\liminf_{r\to 0}\frac{\log\nu_{\omega}^{(h)}(B(z,r))}{\log r}
\ge h\lt(1-\frac{\eta}{1-\eta}\rt)
=\frac{1-2\eta}{1-\eta}h.
$$
Since $\eta\in(0,1/2)$ was arbitrary, this yields
\begin{equation}\label{3_2017_06_19}
\liminf_{r\to 0}\frac{\log\nu_{\omega}^{(h)}(B(z,r))}{\log r}
\ge h,
\end{equation}
Therefore 
\begin{equation}\label{11_2017_06_30}
\HD(J_r(\om))\ge \HD(J_r^0(\om))\ge h,
\end{equation}
and one side of the equation from Lemma~\ref{l1_2017_05_30} is thus established.

For the opposite inequality set $\eta:=1/4$ and 
$$
N:=N_{1/4}.
$$
By Lemma~\ref{lem:wykl}, Observation~\ref{o1m2}, and Proposition~\ref{prop:supp}, there exists an integer $q\ge 1$ such that
\begin{equation}\label{1m2}
m\(\big\{\om\in\Om:q_0(\om,N)\le q\big\}\cap \Om(N,1/N,1/8\)>5/8.
\end{equation} 
It therefore follows from Birkhoff's Ergodic Theorem that there exists a measurable set $\hat\Om\sbt\Om$ such that $m(\hat\Om)=1$ and 
$$
\rho\(\big\{n\ge 0:q_0(\th^n\om,N)\le q \ \  {\rm and }  \   \  \th^n\om\in  \Om(N,1/N,1/8)\big\}\)>5/8
$$
for all $\om\in\hat\Om$. Then
\begin{equation}\label{1m3}
\rho\(N_\om(z,N)\cap\big\{n\ge 0:q_0(\th^n\om,N)\le q\ \  {\rm and }  \   \  \th^n\om\in  \Om(N,1/N,1/8)\big\}\)>3/8.
\end{equation}
Fix now an arbitrary element $\om\in\hat\Om$ and $z\in J_r(\om)$. S
There thus exists an integer $l_0\ge 1$ so large that if $l$ is an integer $\ge l_0$ and if $u_l$ is the $l$th element of the set 
$$
N_\om(z,N)\cap\big\{n\ge 0:q_0(\th^n\om,N)\le q \ \  {\rm and }  \   \  \th^n\om\in  \Om(N,1/N,1/8\big\}, 
$$
then
$$
\frac{l}{u_l}\ge 3/8.
$$
So, applying lemma~\ref{lem:wykl}, we thus get that
\begin{equation}\label{5_2017_07_19}
|(F_\om^{u_l})'(z)|\ge 2^{l/q}\ge 2^{3u_l/8q}
\end{equation}
Let $r_l>0$ be the least radius such that
\begin{equation}\label{2abf4}
F_\om^{-u_l}\(B(F_\om^{u_l}(z),1/N)\) \sbt B(z,r_l).
\end{equation}
But, by Koebe's Distortion Theorem, $F_\om^{-u_l}\(B(F_\om^{u_l}(z),1/N)\) \sbt B\(z,KN^{-1}|(F_\om^{u_l})'(z)|^{-1}\)$; hence
\begin{equation}\label{5abf4}
r_l\le KN^{-1}|(F_\om^{u_l})'(z)|^{-1}.
\end{equation}
Formula \eqref{2abf4} along with Koebe's Distortion Theorem and \eqref{5abf4}, and Proposition~\ref{prop:supp} (the constant $\xi=\xi(N,1/N,1/8)>0$ below comes from it), yield
\begin{equation}\label{4abf4}
\begin{aligned}
\nu_{\omega}^{(h)}\(B(z,r_l)\)
&\ge \nu_{\omega}^{(h)}\(F_\om^{-u_l}\(B(F_\om^{u_l}(z),1/N)\)\)\\
&\ge K^{-h}\lambda^{-u_l}_{\omega,h}\big|\(F_\omega^{u_l}\)'(z)\big|^{-h}    
 \nu_{\theta^{u_l}\omega}\(B(F_\om^{u_l}(z),1/N)\)\\
&\ge K^{-h}\xi\lambda^{-u_l}_{\omega,h}\big|\(F_\omega^{u_l}\)'(z)\big|^{-h}\\
&\ge (K^{-2}N)^h\xi\lambda^{-u_l}_{\omega,h}r_l^h.
\end{aligned}
\end{equation}
Therefore, 
\begin{equation}\label{20_2017_06_19}
\begin{aligned}
\frac{\log\nu_{\omega}^{(h)}(B(z,r_l))}{\log r_l}
&\le h+\frac{h\log(N/K^2)}{\log r_l}-\frac{\log\lambda^{u_l}_{\omega,h}}{\log r_l} +\frac{\xi}{\log r_l}.
\end{aligned}
\end{equation}
Formula \eqref{5abf4} equivalently means that
\begin{equation}\label{10_2017_06_30}
-\log r_l\ge \log|(F_\om^{u_l})'(z)|+\log(N/K).
\end{equation}
Hence, invoking  \eqref{5_2017_07_19}, we get that
\begin{equation}\label{10_2017_06_30}
-\log r_l\ge \frac{3\log 2}{8q}u_l+\log(N/K).
\end{equation}
Inserting this to \eqref{20_2017_06_19} and using \eqref{2_2017_06_19}, we get 
$$
\liminf_{r\to 0}\frac{\log\nu_{\omega}^{(h)}(B(z,r))}{\log r}
\le \liminf_{l\to\infty} \frac{\log\nu_{\omega}^{(h)}(B(z,r_l))}{\log r_l}
\le h.
$$
Therefore,
$$
\HD(J_r(\om))\le h,
$$
and long with \eqref{11_2017_06_30} this finishes the proof of Lemma~\ref{l1_2017_05_30}.
\end{proof}

\

\section{Hausdorff Dimension of the radial Julia set is smaller than $2$}
\begin{lem}\label{sums_bounded}
Let $(Y,\frak F, \mu)$ be a probability space and let $T:Y\to Y$ be a measure preserving ergodic transformation. Assume that $\varphi:Y\to \mathbb R$ is an integrable function with $\int\varphi d\mu=0$. Assume further that there exist a set $A\in\frak F$ with $\mu(A)>0$ and a constant $C>0$ such that for all $y\in A$ 
$$
\sup_{k\ge 1}\{S_k\varphi(y)\}<C.
$$
Then for $\mu$-a.e. $y\in A$  the following implication  holds:
\begin{equation}\label{low_bound}
T^n(y)\in A\,\Longrightarrow\, S_n\varphi(y)>-2C.
\end{equation}
\end{lem}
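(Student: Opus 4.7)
The plan is to combine Atkinson's recurrence theorem for zero-mean integrable cocycles with the sup hypothesis applied at the point $T^n(y)$. Since $T$ is ergodic with respect to $\mu$ and $\varphi\in L^1(\mu)$ has zero mean, Atkinson's theorem provides a full-measure set $Y_0\subseteq Y$ on which $\liminf_{k\to\infty}|S_k\varphi(y)|=0$; in particular, for every $y\in Y_0$ and every $\varepsilon>0$ the Birkhoff sums $S_k\varphi(y)$ exceed $-\varepsilon$ for arbitrarily large $k$. This is the single non-elementary input; I shall establish the implication \eqref{low_bound} on the set $A\cap Y_0$, which has the same measure as $A$.

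Fix $y\in A\cap Y_0$ and an integer $n\geq 1$ with $T^n(y)\in A$. Given $\varepsilon>0$, use Atkinson's conclusion to pick $k>n$ with $S_k\varphi(y)>-\varepsilon$, and write $k=n+m$ with $m\geq 1$. The cocycle identity $S_{n+m}\varphi(y)=S_n\varphi(y)+S_m\varphi(T^n y)$ then gives
$$
S_n\varphi(y)=S_k\varphi(y)-S_m\varphi(T^n y)>-\varepsilon-S_m\varphi(T^n y).
$$
Since $T^n(y)\in A$, the standing hypothesis forces $S_m\varphi(T^n y)<C$, so $S_n\varphi(y)>-\varepsilon-C$. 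Letting $\varepsilon\to 0^+$ yields $S_n\varphi(y)\geq -C$, which is strictly stronger than the stated bound $S_n\varphi(y)>-2C$.

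The main (really only) obstacle is recognizing that Atkinson's theorem is the right recurrence tool. Birkhoff's ergodic theorem alone gives only $S_k\varphi(y)/k\to 0$, which is compatible with $S_k\varphi(y)$ drifting to $-\infty$ sublinearly and therefore cannot by itself control a single index $n$; moreover, coupling this sublinearity with the one-sided sup hypothesis gives no lower bound whatsoever on $S_n\varphi(y)$. What one needs is the precise zero-mean strengthening of Birkhoff, namely that $S_k\varphi(y)$ recurs arbitrarily close to $0$, and this is exactly Atkinson's theorem. Once such near-zero values of $S_k\varphi(y)$ are available, the conclusion follows by a one-line use of the cocycle identity, and the slack between the $-C$ that actually falls out and the stated $-2C$ is harmless headroom that absorbs the strict versus non-strict inequality in the sup hypothesis.
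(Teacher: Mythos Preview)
Your proof is correct and takes a genuinely different route from the paper's. The paper argues by contradiction via Kac's lemma: assuming the implication fails on a positive-measure set $B\subset A$, it passes (after a pigeonhole reduction) to the $k$th-return map $\hat T_B^{(k)}$ on $B$, decomposes the return-time Birkhoff sum as $S_{n_B^{(k)}(x)}\varphi(x)=S_{n_A^{(k)}(x)}\varphi(x)+S_{n_B^{(k)}(x)-n_A^{(k)}(x)}\varphi\bigl(T^{n_A^{(k)}(x)}(x)\bigr)$, and uses the failure hypothesis together with the sup bound (applied at the intermediate point in $A$) to force $S_{n_B^{(k)}(x)}\varphi(x)<-C$ on $B$, contradicting Kac's formula $\int_B S_{n_B^{(k)}}\varphi\,d\mu=0$. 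You instead invoke Atkinson's recurrence theorem to obtain, on a full-measure set, times $k>n$ with $S_k\varphi(y)$ arbitrarily close to $0$, and then the cocycle identity plus the sup hypothesis at $T^n(y)\in A$ immediately gives $S_n\varphi(y)\ge -C$.

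The trade-off: the paper's argument is more self-contained, relying only on Kac's lemma, but needs the measurable-selection step (fixing which return to $A$ witnesses the failure) and the auxiliary set $B$. Your argument is shorter, avoids the contradiction and the reduction, and yields the sharper bound $S_n\varphi(y)\ge -C$, confirming that the stated $-2C$ is indeed slack. The cost is importing Atkinson's theorem, which is a somewhat deeper recurrence result than Kac.
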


\begin{proof}
Assume that \eqref{low_bound} does not hold.
Then there exists a measurable subset $B\subset A$ with $\mu(B)>0$ and such that for every $y\in B$ there exists an integer $n\ge 1$ for which
\begin{equation}\label{set_B}
T^n(y)\in A \quad \text{and}\quad  S_n\varphi(y)\le -2C.
\end{equation}
Replacing $B$ by its subset, still of positive measure, we can assume that there exists an integer $k\ge 1$ such that \eqref{set_B} holds for integers $n$ being the $k$th returns of $y$ to $A$.
Now, let us consider the map $\hat T_B^{(k)}:B\to B$ being the $k$th return from $B$ to $B$. For $\mu$--almost every $x\in B$ denote by $n_B(x)$ the first return time of $x$ to $B$ and by $n_B^{(k)}(x)$ the $k$th  return time of $x$ to $B$.

Kac's lemma applied for the $k$th return map $\hat T^{(k)}$ and for the function $\varphi$ thus gives 
\begin{equation}\label{sum_zero}
\int_BS_{n_B^{(k)}(x)}\varphi(x)d\mu(x)
=k\int_BS_{n_B(x)}\varphi(x)d\mu(x)
=k\int_X\varphi(x)d\mu(x)=0.
\end{equation}
Still for $\mu$--almost every $x\in B$ denote by $n_A^{(k)}(x)$ the $k$-th entrance time of $x$ to $A$ and notice an obvious inequality $n_B^{(k)}(x)\ge n_A^{(k)}(x)$.
Writing 
$$
S_{n_B^{(k)}(x)}\varphi(x)
=S_{n_A^{(k)}(x)}\varphi(x)+S_{n_B^{(k)}(x)-n_A^{(k)}(x)}\varphi\(T^{n_A^{(k)}(x)}(x)\),
$$
we see that 
$$
S_{n_B^{(k)}(x)}\varphi(x)< -2C+C=-C
$$
for $\mu$--almost all $x\in B$. But this  contradicts \eqref{sum_zero} and finishes the proof of our lemma.
\end{proof}

\

In Section~\ref{bowen} we proved that the dimension of the radial random Julia set $J_r(\omega)$  is almost surely equal to the only value $h$ such that the expected pressure at $h$, i.e.
$$
\mathcal E \Pr(h)=\int\log \lambda_{h,\omega}d m(\omega)=0.
$$
As in Section ~\ref{inv_meas}, we denote
$$
\lambda_{h,\omega}^n:=\lambda_{h,\omega}\cdot\lambda_{h,\theta\omega}\cdot \lambda_{h,\theta^{n-1}\omega}.$$

Our goal now is to prove that $h<2$. The crucial technical ingredient is the following.

\begin{prop}\label{prop:low_lim}
For $m$--a.e. $\omega\in \Omega$ and for $\nu_{\omega,h}$--almost every point $z\in Q$ we have that
\begin{equation}
\liminf_{r\to 0}\frac{\nu_{\omega,h}(B(z,r))}{r^h}=0.
\end{equation}
\end{prop}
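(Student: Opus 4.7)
I argue by contradiction. Suppose that the measurable set
\[
E:=\big\{(\om,z)\in\Om\times Q:\liminf_{r\to 0}\nu_\om^{(h)}(B(z,r))/r^h>0\big\}
\]
has positive $\nu^{(h)}$-measure. By Corollary~\ref{c15_2017_06_19} and a standard measurable exhaustion, I pass to a subset $E'\sbt E\cap J_r(F)$ with $\nu^{(h)}(E')>0$ on which there are uniform constants $c_0>0$, $r_1>0$, an integer $N\ge 2$, and a parameter $\eta>0$ (to be taken small later) such that
\[
\nu_\om^{(h)}(B(z,r))\ge c_0 r^h\quad\text{for every }(\om,z)\in E'\text{ and every }r\in(0,r_1],
\]
while simultaneously $\un\rho(N_\om(z,N))>1-\eta$ for every $(\om,z)\in E'$.

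The first step converts this geometric lower bound into an analytic upper bound on the cocycle $\log\lambda_{h,\om}^n$. For $(\om,z)\in E'$ and any large radial time $n\in N_\om(z,N)$, the $\tfrac14$-Koebe and Koebe distortion theorems together with $h$-conformality \eqref{6_2017_12_19}, exactly as in \eqref{11_2017_06_14} of the proof of Lemma~\ref{l1_2017_05_30}, yield
\[
\nu_\om^{(h)}\!\left(B\!\left(z,(4N)^{-1}|(F_\om^n)'(z)|^{-1}\right)\right)\le K^h\lambda_{h,\om}^{-n}|(F_\om^n)'(z)|^{-h}.
\]
Pairing this with the lower bound $c_0(4N)^{-h}|(F_\om^n)'(z)|^{-h}$ from $E'$ produces a constant $C_0=C_0(c_0,N)$ with $\log\lambda_{h,\om}^n\le C_0$ for every large radial time $n$ and every $(\om,z)\in E'$.

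The second and most technical step promotes this partial bound, valid on a density-$(1-\eta)$ subset of $\N$, to a uniform supremum bound $\sup_{k\ge 1}\log\lambda_{h,\om}^k\le C$ on a measurable subset $A\sbt\Om$ with $m(A)>0$. Since $|\log\lambda_{h,\cdot}|$ is uniformly bounded by \eqref{bounds_on_lambda}, the Birkhoff sum $\log\lambda_{h,\om}^k$ changes by at most a fixed constant $L$ per step, so this uniform bound follows once I can restrict to a positive-measure subset on which the gaps between consecutive elements of $N_\om(z,N)$ are uniformly bounded by some integer. Choosing $N$ sufficiently large and $\eta$ sufficiently small, and applying Birkhoff's theorem to the indicator of the set $J_N^*(F)$ introduced before Proposition~\ref{p1im8}, I arrange this by replacing $E'$ with a further positive-measure subset and taking $A$ to be its $\Om$-projection.

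With such an $A$ in hand, Lemma~\ref{sums_bounded} applies to $(\Om,m,\th,\varphi:=\log\lambda_{h,\cdot})$, whose integral $\int\varphi\,dm=\mathcal E\Pr(h)=0$ vanishes by Bowen's formula (Theorem~\ref{thm:bowen}(4)). It produces the complementary bound $\log\lambda_{h,\om}^n>-2C$ for $m$-a.e. $\om\in A$ whenever $\th^n\om\in A$, and by ergodicity of $\th$ such return times have positive density. Substituting this two-sided control of $\lambda_{h,\om}^n$ back into the conformality identity at the radial scales $r_n=(4N)^{-1}|(F_\om^n)'(z)|^{-1}$ yields $\nu_\om^{(h)}(B(z,r_n))\asymp r_n^h$ along a positive-density sequence of scales, for $\nu_\om^{(h)}$-typical $z$ and $m$-typical $\om\in A$. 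Pushing these balls forward by suitable iterates $F_\om^m$ into a neighborhood of the singular point $0$ via the conformality relation and comparing with the sharper decay $\nu_\om^{(h)}(B(0,r))\le r^u$, $u>h$, from Lemma~\ref{lem:est_zero}, produces the required contradiction.

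The principal obstacle is the upgrade step: extracting a uniform-in-$k$ supremum bound on $\log\lambda_{h,\om}^k$ from a density-$(1-\eta)$ bound requires control over the \emph{length} of the gaps in $N_\om(z,N)$ rather than merely their density, and it is this ergodic-theoretic refinement, tied to the visits to the sets $J_N^*(F)$, that carries the weight of the argument.
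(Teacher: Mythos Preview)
Your argument contains two substantive gaps.

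The first and more serious is the ``upgrade'' step. You obtain $\log\lambda_{h,\om}^n\le C_0$ only for radial times $n\in N_\om(z,N)$, and you then claim to pass to a uniform bound $\sup_{k\ge 1}\log\lambda_{h,\om}^k\le C$ by arranging, via Birkhoff's theorem applied to $\1_{J_N^*(F)}$, that consecutive radial times have uniformly bounded gaps on a positive-measure subset. But Birkhoff's theorem delivers only the \emph{density} of visits to $J_N^*(F)$; it gives no control whatsoever on gap lengths. A set of integers can have lower density arbitrarily close to $1$ while still having unbounded gaps (remove from $\N$ blocks of length $k$ centered near $k^3$). No combination of ``$N$ large'' and ``$\eta$ small'' repairs this, because the definitions of $J_r(\om)$ and of $N_\om(z,N)$ are asymptotic and density-based, not gap-based. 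The step you yourself flag as the principal obstacle is not established.

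Second, the final contradiction is not explained. Having a two-sided bound on $\lambda_{h,\om}^n$ along a positive-density set of times merely reproduces, via conformality, the estimate $\nu_\om^{(h)}(B(z,r_n))\asymp r_n^h$ that you assumed at the outset. How ``pushing these balls into a neighborhood of $0$'' is supposed to collide with Lemma~\ref{lem:est_zero} is not indicated, and I do not see a mechanism: the pushed-forward ball has a different center and a rescaled radius, and any factor $r^{u-h}$ you might hope to gain is offset by the derivative in the conformality relation.

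The paper avoids both difficulties by arguing \emph{directly}, without contradiction, through a dichotomy on the base variable alone: either (Case~1) there is a positive-measure set $A\sbt\Om$ with $\sup_n\log\lambda_{h,\om}^n<C$ for all $\om\in A$, or (Case~2) for $m$-a.e.\ $\om$ one has $\limsup_n\lambda_{h,\om}^n=+\infty$. In Case~2 the conclusion is immediate from conformality along a subsequence with $\lambda_\om^{n_j}\to\infty$ (with a short extra argument, using Lemma~\ref{lem:est_zero}, when the image ball meets the singular orbit). In Case~1 one invokes Lemma~\ref{sums_bounded} just as you do, but then, rather than seeking a contradiction near $0$, one uses ergodicity to find for each $N$ a time $\ell_N$ with $F_\om^{\ell_N}(z)\in Y_{N+2}^+\sms(\R\times(-2,2))$ and $\th^{\ell_N}\om\in A$; then $\nu_{\th^{\ell_N}\om,h}\big(B(F_\om^{\ell_N}(z),1)\big)\le\nu_{\th^{\ell_N}\om,h}(Y_N^+)\le c(M_0)e^{(1-h)N/2}$ by \eqref{3.1}, and the two-sided $\lambda$-bound transports this decay back to $z$, forcing $\liminf=0$. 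The crucial point is that in Case~1 the boundedness of $\sup_n\log\lambda_{h,\om}^n$ is \emph{assumed} as one horn of a clean dichotomy in $\Om$, rather than deduced from a pointwise density statement about $N_\om(z,N)$.
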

\begin{proof}
We consider two separate cases.

{\bf Case$1^0$.} Partial sums 
$$
\log\lambda_{h,\om}^n=\sum_{j=0}^{n-1}\log \lambda_{h,\th^j\omega}
$$ 
are bounded above for a measurable set of points $\omega\in\Omega$ with positive measure $m$. 
This means that there exist a measurable set $A\subset \Omega$ with $m(A)>0$ and a constant $C<+\infty$ such that 
\begin{equation}\label{1_2017_08_11}
\log\lambda_{h,\omega}^n<C
\end{equation}
for all $\om\in A$. By ergodicity of the map $F:\Om\times Q\to \Om\times Q$ with respect to the measure $\mu_h$ (see Theorem~\ref{t1im3B}) and by Birkhoff's Ergodic Theorem there exists a measurable set $\Ga_1\sbt\Om\times Q$ with $\mu_h(\Ga_1)=\nu_h(\Ga_1)=1$ and such that for every point $(\om,z)\in\Ga_1$ there exists an integer $k_1(\om,z)\ge 0$ such that
\begin{equation}\label{1m6}
F^{k_1(\om,z)}(\om,z)\in A\times Q.
\end{equation}
Fix an integer $N\ge 1$ and consider the set 
$$
A\times \(Y_{N+2}^+\sms (\R\times (-2,2))\).
$$
Since $\mu_h\(A\times \(Y_{N+2}^+\sms (\R\times (-2,2))\)\)>0$, again by ergodicity of the map $F:\Om\times Q\to \Om\times Q$ with respect to the measure $\mu_h$ (see Theorem~\ref{t1im3B}) and by Birkhoff's Ergodic Theorem there exists a measurable set $\Ga_2\sbt A\times Q$ with $\mu_h(\Ga_2)=\mu_h(A\times Q)$ and such that for every point $(\tau,\xi)\in\Ga_2$ there exists an integer $k_2(\tau,\xi;N)\ge 0$ such that
\begin{equation}\label{3m6}
F^{k_2(\tau,\xi;N)}(\tau,\xi)\in A\times \(Y_{N+2}^+\sms (\R\times (-2,2))\).
\end{equation}
In conclusion, there exists a measurable set $\Ga_3(N)\sbt \Ga_1$ such that $\mu_h(\Ga_3(N))=1$ and
\begin{equation}\label{4m6}
F^{k_1(\om,z)}(\om,z)\in \Ga_2
\end{equation}
for all points $(\om,z)\in\Ga_3(N)$. In particular $k_2\(F^{k_1(\om,z)}(\om,z);N\)$
is well defined and finite. For every point $(\om,z)\in\Ga_3(N)$ set
\begin{equation}\label{5m6}
\ell_N(\om,z):=k_1(\om,z)+k_2\(F^{k_1(\om,z)}(\om,z);N\).
\end{equation}
Denote
$$
\Ga_3(\infty):=\bi_{N=1}^\infty\Ga_3(N).
$$
Then 
$$
\mu_h(\Ga_3(\infty))=1
$$
and the number $\ell_N(\om,z)$ is well defined for all points $(\om,z)\in \Ga_3(\infty)$ and all integers $N\ge 1$. Fix such $(\om,z)$ and $N$. Then 
\begin{equation}\label{2m7}
\log\lambda_{h,\om}^{\ell_N(\om,z)}
=\log\lambda_{h,\om}^{k_1(\om,z)}+\log\lambda_{h,\th^{k_1(\om,z)}\om}^{k_2\(F^{k_1(\om,z)}(\om,z);N\)}
>\log\lambda_{h,\om}^{k_1(\om,z)}-2C
\end{equation}
by \eqref{1m6}--\eqref{5m6} and Lemma~\ref{sums_bounded}. 

Now, since $F_\om^{\ell_N(\om,z)}(z)\)\in Y_{N+2}^+\sms (\R\times (-2,2))$, the holomorphic inverse branch $F_\om^{-\ell_N(\om,z)}:B\(F_\om^{\ell_N(\om,z)}(z),2\)\to Q$, sending $F_\om^{\ell_N(\om,z)}(z)$ back to $z$, is well defined,
\begin{equation}\label{1m7}
B\(F_\om^{\ell_N(\om,z)}(z),1\)\sbt Y_N^-
\end{equation}
and, with a use of Koebe's Distortion Theorem,
$$
B\lt(z,\frac14\big|\(F_\om^{\ell_N(\om,z)}\)'(z)\big|^{-1}\rt)
\sbt F_\om^{-\ell_N(\om,z)}\(B\(F_\om^{\ell_N(\om,z)}(z),1\)\).
$$
Set
$$
r_N(\om,z):=\frac14\big|\(F_\om^{\ell_N(\om,z)}\)'(z)\big|^{-1}.
$$
Then, using, \eqref{1m7}, \eqref{2m7}, and  \eqref{3.1}, we get 
$$
\begin{aligned}
\frac{\nu_{\om,h}\(B(z,r_N(\om,z)\)}{r_N^h(\om,z)}
&\le \frac{\nu_{\om,h}\(F_\om^{-\ell_N(\om,z)}\(B\(F_\om^{\ell_N(\om,z)}(z),1\)\)\)
}{r_N^h(\om,z)} \\
&\le \frac{\lambda_{h,\om}^{-\ell_N(\om,z)}K^h \big|\(F_\om^{\ell_N(\om,z)}\)'(z)\big|^{-h}
\nu_{\th^{\ell_N(\om,z)}\om,h}\(B\(F_\om^{\ell_N(\om,z)}(z),1\)\)}{r_N^h(\om,z)} \\
&= \frac{(4K)^h\lambda_{h,\om}^{-\ell_N(\om,z)}r_N^h(\om,z)
\nu_{\th^{\ell_N(\om,z)}\om,h}\(B\(F_\om^{\ell_N(\om,z)}(z),1\)\)}
{r_N^h(\om,z)} \\
&= (4K)^h\lambda_{h,\om}^{-\ell_N(\om,z)}
\nu_{\th^{\ell_N(\om,z)}\om,h}\(B\(F_\om^{\ell_N(\om,z)}(z),1\)\)\\
&\le (4K)^h\lambda_{h,\om}^{-\ell_N(\om,z)}
\nu_{\th^{\ell_N(\om,z)}\om,h}(Y_N^+) \\
&\le (4K)^he^{2C}\lambda_{h,\om}^{-k_1(\om,z)}C(M_0)e^{(1-t))\frac{N}2}.
\end{aligned}
$$
Since $1-t<0$, this yields
$$
\varliminf_{r\to 0}\frac{\nu_{\om,h}(B(z,r)}{r^h}
\le \varliminf_{N\to\infty}\frac{\nu_{\om,h}\(B(z,r_N(\om,z)\)}{r_N^h(\om,z)}
\le (4K)^he^{2C}\lambda_{h,\om}^{-k_1(\om,z)}C(M_0)\lim_{N\to\infty} 
e^{(1-t))\frac{N}2}
=0.
$$

{\bf Case 2.} For $m$--a.e $(\omega,z)\in \Omega\times Q$ $$
\limsup_{n\to\infty}\lambda_\omega^n=+\infty.
$$
Let $(\omega, z)$ be a point for which the above upper limit is equal to $+\infty$. There then exists a strictly increasing  sequence $\(n_j(\om,z)\)_{j=1}^\infty$ of positive integers such that 
\begin{equation}\label{3_2017_08_21}
\lim_{n\to\infty} \lambda_\omega^{n_j(\om,z)}=+\infty.
\end{equation}
Fix a radius $0<s<\min\{1,\rho\}/4$, where $\rho$ comes from Lemma~\ref{lem:est_zero}. Fix an integer $j\ge 1$. If 
$$
B(F_\om^{n_j(\om,z)}(z),2s)\cap \big\{F^{n_j(\om,z)-k}_{\theta^k\omega}(0): k=1,\dots n_j-1\big\}=\es,
$$ 
then there exists a holomorphic 
branch $F^{-n_j(\om,z)}_{\omega,z}$ defined on $B\(F^{n_j(\om,z)}(z),2s\)$ and sending $F^{n_j}_\omega(z)$ back to $z$. Analogously as in the previous case, put
$$
r_j(\om,z):=\frac14\big|\(F_\om^{n_j(\om,z)}\)'(z)\big|^{-1}s.
$$
Then by the same token as in the previous case, we get
\begin{equation}\label{1_2017_08_21}
\frac{\nu_{\om,h}\(B(z,r_j(\om,z))}{r_j^h(\om,z)}
\le(4K)^h\lambda_{h,\om}^{-n_j(\om,z)}
\nu_{\th^{n_j(\om,z)}\om}\(B\(F_\om^{n_j(\om,z)}(z),r\)\)
\le (4K)^h\lambda_{h,\om}^{-n_j(\om,z)}.
\end{equation}

Finally, consider the case when the ball $B\(F_\om^{n_j(\om,z)}(z),2s\)$ contains some point from the set $\big\{F^{n_j-k}_{\theta^k\omega}(0): k=1,\dots n_j(\om,z)-1\big\}$. 
Fix such $k\in\{0,\dots n_j-1\}$ with the smallest distance between $F_\om^{n_j(\om,z)}(z)$ and $F^{n_j(\om,z)-k}_{\theta^k\omega}(0)$ in addition.
Denote this distance by $2\hat s$.  Then $2\hat s<2s<\rho/2$ and   
$$
B\(F_\om^{n_j(\om,z)(\om,z)}(z),\hat s\)
\sbt B\(F^{n_j(\om,z)-k}_{\theta^k\omega}(0), 3\hat s\). 
$$
It thus follows from Lemma~\ref{lem:est_zero} that
$$
\nu_{\theta^{n_j(\om,z)}\omega}\(B\(F_\om^{n_j(\om,z)(\om,z)}(z),\hat s\)\)
\le \nu_{\theta^{n_j(\om,z)}\omega}\(B\(F_{\theta^k\omega}^{n_j(\om,z)-k}(0),3\hat s\)\)\preceq \hat r^u \le r^h.
$$
It also follows from the definition of $\hat r$ that there exists a unique holomorphic 
branch $F^{-n_j(\om,z)}_{\omega,z}$ defined on $B\(F^{n_j(\om,z)}(z),2\hat r\)$ and sending $F^{n_j}_\omega(z)$ back to $z$. Analogously as in the previous case, put
$$
\hat r_j(\om,z):=\frac14\big|\(F_\om^{n_j(\om,z)}\)'(z)\big|^{-1}\hat r.
$$
Then, in the same way as \eqref{1_2017_08_21}, we gat 
\begin{equation}\label{2_2017_08_21}
\frac{\nu_\om\(B(z,\hat r_j(\om,z))}{\hat r_j^h(\om,z)}
\le (4K)^h\lambda_{h,\om}^{-n_j(\om,z)}\hat r^{-h}
\nu_{\th^{n_j(\om,z)}\om}\(B\(F_\om^{n_j(\om,z)}(z),\hat r\)\)
\preceq \lambda_{h,\om}^{-n_j(\om,z)}.
\end{equation}
Along with formula \eqref{3_2017_08_21}, formulas \eqref{1_2017_08_21} and \eqref{2_2017_08_21} respectively imply that $\lim_{j\to\infty} r_j(\om,z)=
\lim_{j\to\infty}\hat r_j(\om,z)=0$ and
$$
\liminf_{r\to 0}\frac{\nu_h(B(z,r)}{r^h}=0.
$$
The proof of Proposition~\ref{prop:low_lim} is complete.
\end{proof}

\begin{thm}\label{t1_2017-09_04}
The Hausdorff dimension $h=\HD(J_{r,\omega})$ of the random radial Julia set $J_{r,\omega}$, is constant for $m$--a.e. $\om\in\Om$ and satisfies $1<h<2$. In particular, the $2$--dimensional Lebesgue measure of $m$--a.e. $\om\in\Om$ set $J_{r,\omega}$ is equal to zero.
\end{thm}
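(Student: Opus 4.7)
The proof has four strands: $m$-a.s.\ constancy of $h$, the strict lower bound $h>1$, the strict upper bound $h<2$, and the Lebesgue-measure consequence. Constancy is immediate from Lemma~\ref{l1_2017_05_30}, which already gives $\HD(J_r(\omega))=h$ for $m$-a.e.\ $\omega$, where $h$ is the (deterministic) unique zero of $\mathcal E \Pr$. For $h>1$, Lemma~\ref{prop:pressure_at_1} ($\lim_{t\to 1^+}\mathcal E \Pr(t)=+\infty$) combined with continuity and strict monotonicity (Lemma~\ref{prop:decreasing}) pushes the zero strictly to the right of $1$. The weak inequality $h\le 2$ follows from the already-established $\mathcal E \Pr(2)\le 0$ together with strict monotonicity.

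The heart of the argument is to promote $h\le 2$ to the strict inequality $h<2$, and the plan is a contradiction based on Proposition~\ref{prop:low_lim}. Suppose, toward contradiction, that $h=2$. Applied to $\nu^{(h)}=\nu^{(2)}$, that proposition yields, for $m$-a.e.\ $\omega$ and $\nu_{\omega,2}$-a.e.\ $z$,
$$
\liminf_{r\to 0}\frac{\nu_{\omega,2}(B(z,r))}{r^2}=0.
$$
Since the planar Lebesgue measure on $Q$ satisfies $\mathrm{Leb}(B(z,r))=\pi r^2$ for all sufficiently small $r$ (uniformly in $z$), this is equivalent to
$$
\liminf_{r\to 0}\frac{\nu_{\omega,2}(B(z,r))}{\mathrm{Leb}(B(z,r))}=0\quad\text{for }\nu_{\omega,2}\text{-a.e.\ }z.
$$
Now write the Lebesgue decomposition $\nu_{\omega,2}=\nu_\omega^{ac}+\nu_\omega^{s}$ against $\mathrm{Leb}$, and invoke the standard Lebesgue differentiation theorem for Radon measures on $\mathbb R^2\simeq Q$: the ratio above tends to $d\nu_\omega^{ac}/d\mathrm{Leb}(z)$ at $\mathrm{Leb}$-a.e.\ $z$ and to $+\infty$ at $\nu_\omega^{s}$-a.e.\ $z$. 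The $\liminf=0$ condition on a set of full $\nu_{\omega,2}$-mass forces $\nu_\omega^{s}\equiv 0$ and $d\nu_\omega^{ac}/d\mathrm{Leb}\equiv 0$ on a set of full $\nu_{\omega,2}$-mass; hence $\nu_{\omega,2}\equiv 0$, contradicting that it is a probability measure. This is precisely the geometric-measure-theoretic input already used inside the proof of $\mathcal E \Pr(2)\le 0$.

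Finally, the Lebesgue consequence is automatic: since $\HD(J_r(\omega))=h<2$ for $m$-a.e.\ $\omega$, the $2$-dimensional Hausdorff measure $\mathcal H^2(J_r(\omega))$ vanishes, and $\mathcal H^2$ agrees with planar Lebesgue measure on $Q$ up to a universal multiplicative constant. The main obstacle is the $h=2$ case: one must carefully turn the $\liminf=0$ density statement of Proposition~\ref{prop:low_lim} into vanishing of the conformal measure, via Lebesgue differentiation, which is the geometric-measure-theoretic core of the argument. Everything else is bookkeeping on top of already established results.
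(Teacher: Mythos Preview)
Your proof is correct and rests on the same pillar as the paper's, namely Proposition~\ref{prop:low_lim}. The only difference is in how you finish the strict inequality $h<2$. The paper does not argue by contradiction: it observes directly that Proposition~\ref{prop:low_lim} forces the $h$-dimensional \emph{packing} measure of $Q$ to be locally infinite (via the standard lower-density/packing-measure comparison), whereas the $2$-dimensional packing measure, being a constant multiple of Lebesgue measure, is locally finite; hence $h<2$ without ever assuming $h=2$. Your route instead assumes $h=2$, applies Proposition~\ref{prop:low_lim} at that value, and uses Lebesgue differentiation to annihilate $\nu_{\omega,2}$---exactly the mechanism already exploited in the proof of $\mathcal E\Pr(2)\le 0$. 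Your version is slightly more elementary in that it avoids packing measure altogether; the paper's version is a bit cleaner in that it is a one-line dimension comparison. Either way the content is the same.
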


\begin{proof}
The fact that the function $\Om\ni\om\mapsto \HD(J_{r,\omega})$ is constant for $m$--a.e. $\om\in\Om$, and the inequality $h>1$ is just item (4) of  Theorem~\ref{thm:bowen}.  

Because of Proposition~\ref{prop:low_lim}, $h$--dimensional packing measure of $Q$ is locally infinite for $m$--a.e. $\omega\in \Om$. Since $2$--dimensional packing measure is just the (properly rescaled) $2$--dimensional Lebesgue measure, it is locally finite. Thus $h<2$.

\end{proof}
As a corollary, we obtain the following result about trajectories of (Lebesgue) typical points.

\begin{thm}[Trajectory of a (Lebesgue) typical point I]\label{t320180410}
For $m$--almost every $\omega\in\Omega$ there exists a subset $Q_\omega\subset Q$ with full Lebesgue measure such that for all $z\in Q_\omega$  the following holds. 
\begin{equation}\label{bad}
\begin{aligned}
\forall \delta>0 \ \exists n_z(\delta)\in\mathbb N \ &\forall n\ge n_z(\delta)\  \exists k=k_n(z)\ge 0
\\  
&|F^n_\omega(z)-F^k_{\theta^{n-k}\omega}(0)|<\delta\  \  \text{or}\  \  |F^n_\omega(z)|\ge 1/\delta.
\end{aligned}
\end{equation}
In addition, $\limsup_{n\to\infty}k_n(z)=+\infty$.
\end{thm}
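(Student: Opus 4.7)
The plan is a Borel–Cantelli argument at the Lebesgue level, driven by the exponential decay of $\lambda^n_{2,\omega}$. The three ingredients I would combine are: (i) $\mathcal E\Pr(2) < 0$, which follows from $h < 2$ (Theorem~\ref{t1_2017-09_04}) and strict monotonicity of $\mathcal E\Pr(t)$ in Theorem~\ref{thm:bowen}, and which by Birkhoff applied to $\log\lambda_{2,\cdot}$ forces $\lambda^n_{2,\omega}\to 0$ exponentially for $m$-a.e.\ $\omega$; (ii) the fact that $2$-dimensional Lebesgue measure on $Q$ is itself the natural $2$-conformal density, so that $\text{Leb}(F^{-n}_\omega(B)) = \int_B \mathcal L^n_{2,\omega}\mathbf 1\,d\text{Leb}$ whenever $F^{-n}_\omega$ admits univalent inverse branches on $B$; and (iii) the uniform lower bounds on $\nu^{(2)}$ available from \eqref{2.1}, \eqref{3.1}, Proposition~\ref{prop:supp}, and the summable bounds $b_k$ produced by the conditions $W_n$ in the definition of $\mathcal P$.

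Fix $\delta>0$ and set
$$B_n(\omega,\delta) := \{w\in Q : |w|<1/\delta \text{ and } |w-F^k_{\theta^{n-k}\omega}(0)|\ge\delta \text{ for every } k\ge 0\},$$
so the set on which the $\delta$-level dichotomy fails is exactly $A(\omega,\delta):=\limsup_n F^{-n}_\omega(B_n(\omega,\delta))$. A countable union over $\delta=1/q$ reduces the first assertion of the theorem to showing $\text{Leb}(A(\omega,\delta))=0$. To this end I would cover $B_n(\omega,\delta)$ by $O_\delta(1)$ balls $B(w_i,\delta/4)$ with centers $w_i\in B_n(\omega,\delta)$; on every enlarged ball $B(w_i,\delta/2)$, \emph{every} singular value of $F^n_\omega$ is at distance $\ge\delta/2$, so all inverse branches of $F^n_\omega$ are univalent with Koebe distortion. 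The change-of-variables yields $\text{Leb}(F^{-n}_\omega(B(w_i,\delta/4))) \preceq \delta^2\,\mathcal L^n_{2,\omega}\mathbf 1(w_i)$, and the iterated conformality relation $\mathcal L^{n*}_{2,\omega}\nu^{(2)}_{\theta^n\omega}=\lambda^n_{2,\omega}\nu^{(2)}_\omega$, combined with Koebe distortion on $B(w_i,\delta/2)$, produces the pointwise bound $\mathcal L^n_{2,\omega}\mathbf 1(w_i)\cdot\nu^{(2)}_{\theta^n\omega}(B(w_i,\delta/4))\preceq\lambda^n_{2,\omega}$. A uniform-in-$n$ positivity $\nu^{(2)}_{\theta^n\omega}(B_n(\omega,\delta))\ge 1/2$ follows from \eqref{2.1}, \eqref{3.1} and summability of the $W_n$-constants $b_k$ at $t=2$; Proposition~\ref{prop:supp} then upgrades this to a uniform lower bound on sufficiently many of the individual cover balls (after a Birkhoff density-$1$ reduction over the sets $\Omega(x,R,\varepsilon)$). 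Summing over the cover yields $\text{Leb}(F^{-n}_\omega(B_n(\omega,\delta)))\preceq C(\delta)\,\lambda^n_{2,\omega}$, which is summable in $n$, and Borel–Cantelli gives $\text{Leb}(A(\omega,\delta))=0$.

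For the addendum $\limsup_n k_n(z)=+\infty$, on any subsequence with $|F^n_\omega(z)|\ge 1/\delta$ one simply sets $k_n=n$, so the real content concerns bounded orbits: one must show that for every $K$ the set of $z$ whose orbit eventually lives in $\bigcup_{0\le k\le K}B(F^k_{\theta^{n-k}\omega}(0),\delta)$ has Lebesgue measure zero. I would rerun the Borel–Cantelli scheme on the restricted target $C^K_n(\omega,\delta):=\bigcup_{0\le k\le K}B(F^k_{\theta^{n-k}\omega}(0),\delta)$, using the factorization $F^{-n}_\omega = F^{-(n-k)}_\omega\circ F^{-k}_{\theta^{n-k}\omega}$: with $k\le K$ fixed, the outer factor has distortion bounded independently of $n$, reducing the problem to bounding $\text{Leb}(F^{-(n-k)}_\omega(B(0,\delta')))$ for some $\delta'=\delta'(\delta,K)$. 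This last quantity is controlled by transporting the $r^u$ decay ($u>2t+7$) of Lemma~\ref{lem:est_zero} for $\nu^{(h)}$ back to Lebesgue via the same Koebe/conformality trick used in the previous paragraph.

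The principal technical obstacle is exactly this last transfer step. Since $\mathcal L_{2,\omega}\mathbf 1$ has a non-integrable $|w|^{-2}$ singularity at the origin (cf.\ Lemma~\ref{lower}), the one-step pullback $F^{-1}_\omega(B(0,\delta))$ is a half-cylinder of \emph{infinite} Lebesgue area and cannot be estimated naively. The gain must be extracted either by composing at least two iterates of $F^{-1}_\omega$ before integrating, so that the stronger $r^u$ decay of Lemma~\ref{lem:est_zero} kicks in, or by pairing the Lebesgue estimate with the $2$-conformal measure $\nu^{(2)}$ via $W_n$; reconciling this Lebesgue-level Borel–Cantelli with the fact that $\nu^{(2)}$ itself is carried by the Lebesgue-null radial set $J_r(F)$ (Corollary~\ref{c15_2017_06_19}) is the sharpest point of the argument.
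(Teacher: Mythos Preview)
Your approach differs substantially from the paper's. The paper's proof is two lines: it observes that the set of $z$ failing \eqref{bad} is contained in the radial Julia set $J_r(\omega)$, then quotes the last assertion of Theorem~\ref{t1_2017-09_04} (that $J_{r}(\omega)$ has zero planar Lebesgue measure), and declares the addendum obvious. You instead run a direct Borel--Cantelli argument at the Lebesgue level, driven by the exponential decay of $\lambda^n_{2,\omega}$ that follows from $\mathcal E\Pr(2)<0$ (itself a consequence of $h<2$ and the strict monotonicity in Theorem~\ref{thm:bowen}). In effect you are reproving the relevant Lebesgue-nullity rather than citing it, and your argument naturally targets the ``infinitely-many-good-times'' set $\limsup_n F^{-n}_\omega(B_n(\omega,\delta))$, which is exactly the complement of the set described by \eqref{bad}.

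Your estimate chain is correct except at one link. Proposition~\ref{prop:supp} only yields $\nu^{(2)}_{\theta^n\omega}(B(w_i,\delta/4))\ge\xi$ when $\theta^n\omega$ lies in a set of $m$-measure $>1-\varepsilon$, and your ``Birkhoff density-$1$ reduction'' then controls only a density-$(1-\varepsilon)$ set of times $n$; but Borel--Cantelli requires the \emph{full} series $\sum_n\mathrm{Leb}\big(F^{-n}_\omega(B_n(\omega,\delta))\big)$ to converge, and you offer no bound on the exceptional $n$. The clean fix is to bypass the individual ball-measure lower bounds entirely and use the uniform estimate \eqref{bound_for_L} from the proof of Lemma~\ref{AinQ}: the same Koebe-chain distortion argument, run with $M_1$ replaced by $1/\delta$ and $r_0$ by $\delta$, gives $\hL^n_{2,\omega}\1(w)\le c(\delta)$ for every $w\in B_n(\omega,\delta)$, every $n\ge 0$, and every $\omega$, with the global lower bound $\nu^{(2)}_{\theta^n\omega}(B_n(\omega,\delta))\ge 1/2$ (which you correctly extract from \eqref{2.1} and the $W_0$ bounds) supplying the denominator uniformly. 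This yields $\mathrm{Leb}\big(F^{-n}_\omega(B_n(\omega,\delta))\big)\le c(\delta)\,\mathrm{Leb}(Q_{1/\delta})\,\lambda^n_{2,\omega}$ outright, and summability follows. Your proposed second Borel--Cantelli for the addendum, and the accompanying worry about the $|w|^{-2}$ singularity of $\L_{2,\omega}\1$ at the origin, is machinery the paper does not invoke at all.
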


\begin{proof}
For every $\om\in\Om$, the set of points with trajectories described \eqref{bad} contains the complement of the radial set Julia set $J_r(\omega)$. So, now the first assertion follows immediately from the last assertion of Theorem~\ref{t1_2017-09_04}. The second assertion is obvious.
\end{proof}

\noindent As an immediate consequence of this theorem we get the following.

\begin{cor}[Trajectory of a (Lebesgue) typical point II]\label{c420180410}
For $m$--almost every $\omega\in\Omega$ there exists a subset $Q_\omega\subset Q$ with full Lebesgue measure such that for all $z\in Q_\omega$, the set of accumulation points of the sequence
$$
\(F_\om^n(z)\)_{n=0}^\infty
$$
is contained in $[0,+\infty]\cup\{-\infty\}$ and contains $+\infty$. 
\end{cor}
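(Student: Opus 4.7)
The plan is to deduce this from Theorem~\ref{t320180410} together with two elementary observations about the reference orbit $F^k_{\theta^{n-k}\omega}(0)$. First, because each map $F_\eta(z)=\eta e^z$ with $\eta\in[A,B]\sbt\R$ sends $\R$ into $(0,+\infty)$, every iterate $F^k_{\omega'}(0)$ lies on the positive real axis $\R_+\sbt Q$. Second, defining $e_0:=0$ and $e_{k+1}:=Ae^{e_k}$, one has the uniform lower bound $F^k_{\omega'}(0)\ge e_k$ for every $\omega'\in\Omega$, and $e_k\nearrow +\infty$ (in fact doubly exponentially). I would compactify the cylinder $Q$ to $\ov Q:=Q\cup\{+\infty,-\infty\}$ by adding one point at each end, and work throughout in $\ov Q$.

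For the containment, fix $\omega$ and $z\in Q_\omega$, and let $L\in\ov Q$ be an accumulation point of $(F^n_\omega(z))$. If $L\in Q$ is finite, then $|F^{n_j}_\omega(z)|$ is bounded along the defining subsequence, so for every sufficiently small $\delta>0$ the alternative $|F^{n_j}_\omega(z)|\ge 1/\delta$ in \eqref{bad} eventually fails and the first alternative of \eqref{bad} must hold; hence $F^{n_j}_\omega(z)$ lies within $\delta$ of a point of $\R_+\cup\{0\}$. Letting $\delta\to 0$ gives $L\in[0,+\infty)\sbt Q$. Otherwise $L\in\{+\infty,-\infty\}$. In all cases $L\in[0,+\infty]\cup\{-\infty\}$.

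For the presence of $+\infty$, I would use $\limsup_n k_n(z)=+\infty$ to extract a subsequence with $k_{n_j}(z)\to\infty$, then thin further so that $n_j\ge n_z(1/j)$. For each $j$, the dichotomy \eqref{bad} with $\delta=1/j$ yields either $|F^{n_j}_\omega(z)-F^{k_{n_j}}_{\theta^{n_j-k_{n_j}}\omega}(0)|<1/j$ (case A) or $|F^{n_j}_\omega(z)|\ge j$ (case B). If case A occurs for infinitely many $j$, the reference value is a positive real $\ge e_{k_{n_j}}\to+\infty$, so $\Re F^{n_j}_\omega(z)\to+\infty$ and $\Im F^{n_j}_\omega(z)\to 0$ along those $j$, proving $F^{n_j}_\omega(z)\to+\infty$ in $\ov Q$. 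Otherwise case B holds for all large $j$; pass to a sub--sub--sequence along which $F^{n_j}_\omega(z)$ converges in $\ov Q$ to some point of $\{+\infty,-\infty\}$. If the limit is $+\infty$ we are done; if it is $-\infty$, then $\Re F^{n_j}_\omega(z)\to-\infty$, whence
$$
F^{n_j+1}_\omega(z)=\eta(\theta^{n_j}\omega)\exp(F^{n_j}_\omega(z))\to 0.
$$
For each fixed $m\ge 1$, uniform continuity of $F^m_{\omega'}$ at $0$ (uniform in $\omega'\in\Omega$, because a fixed number of iterations involves only finitely many maps $F_\eta$ with $\eta\in[A,B]$) gives $F^{n_j+1+m}_\omega(z)-F^m_{\theta^{n_j+1}\omega}(0)\to 0$, so $\Re F^{n_j+1+m}_\omega(z)\ge e_m-1$ and $|\Im F^{n_j+1+m}_\omega(z)|\to 0$ for all large $j$. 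A diagonal argument in $m$ then produces a subsequence of $(F^n_\omega(z))$ converging to $+\infty$ in $\ov Q$.

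The hard step is the very last one: neither alternative of Theorem~\ref{t320180410} distinguishes between escape to $+\infty$ and escape to $-\infty$, and I must rule out that $-\infty$ absorbs the entire large--scale behaviour of the orbit. The resolution rests on the fact that $0$ is the omitted (singular) value of every $F_\eta$: a flight of the orbit to the far left of $Q$ is instantly projected onto a neighbourhood of $0$ by a single further application of $F$, and thereafter the orbit is trapped shadowing the random orbit of $0$. The uniform (in $\omega$) lower bound $F^k_{\omega'}(0)\ge e_k\to+\infty$ is the ingredient that makes the diagonal argument conclude $+\infty$ as an accumulation point.
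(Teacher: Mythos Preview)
The paper gives no argument for this corollary beyond calling it ``an immediate consequence'' of Theorem~\ref{t320180410}, so you are supplying details the paper omits. Your containment argument is correct, and your treatment of the $-\infty$ branch (the ``hard step'') is the substantive part and is done well: once a subsequence escapes to $-\infty$, one further iterate lands near $0$, a fixed number $m$ of additional iterates then shadows $F^m_{\omega'}(0)\ge e_m$ uniformly in $\omega'$, and a diagonal extraction yields a subsequence tending to $+\infty$.

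There is, however, a gap in Case~A. You extract $n_j$ with $k_{n_j}(z)\to\infty$ from $\limsup_n k_n(z)=+\infty$ and then invoke the dichotomy \eqref{bad} with $\delta=1/j$, writing the first alternative as $|F^{n_j}_\omega(z)-F^{k_{n_j}}_{\theta^{n_j-k_{n_j}}\omega}(0)|<1/j$. But in \eqref{bad} the witness $k$ is existentially quantified \emph{after} $\delta$, so the $k$ furnished by the $\delta=1/j$ instance need not equal the $k_{n_j}$ you selected (which came from a fixed $\delta$). Hence ``reference value $\ge e_{k_{n_j}}\to+\infty$'' is not justified as written. Your reading --- that $k_n(z)$ is a single $\delta$-independent function --- is suggested by the paper's notation, but the quantifier structure of \eqref{bad} does not guarantee it.

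The cleanest repair is to recast the ``$+\infty$'' part as a proof by contradiction. Suppose $+\infty$ is not an accumulation point, so $\Re F^n_\omega(z)\le M$ for all $n$. If the orbit is bounded in $Q$, pick $\delta_0>0$ with $1/\delta_0>\sup_n|F^n_\omega(z)|$; then the second alternative of \eqref{bad} never occurs, the first alternative holds for all large $n$, and boundedness forces $e_{k_n}\le F^{k_n}_{\theta^{n-k_n}\omega}(0)\le|F^n_\omega(z)|+\delta_0$, so $k_n$ is bounded, contradicting $\limsup k_n=+\infty$. If the orbit is unbounded, then (since $\Re\le M$) a subsequence has $\Re F^{n_j}_\omega(z)\to-\infty$, and your own diagonal argument then manufactures iterates with real part exceeding $M$, again a contradiction. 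This uses $\limsup k_n=+\infty$ only for a single fixed $\delta_0$ and avoids the ambiguity entirely.
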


\section{Random Dynamics on the Complex Plane: \\ the Original Random Dynamical system
$$
f^n_\omega
:=f_{\theta^{n-1}\omega}\circ\cdots\circ f_{\theta\omega}\circ f_\omega:\C\longrightarrow\C.
$$}\label{apendix1}
In this section we will show that both random dynamical systems 
$$
f^n_\omega
:=f_{\theta^{n-1}\omega}\circ\cdots\circ f_{\theta\omega}\circ f_\omega:\C^*\longrightarrow \C^*
$$
and 
$$
F^n_\omega
:=F_{\theta^{n-1}\omega}\circ\cdots\circ F_{\theta\omega}\circ F_\omega:Q\longrightarrow Q,
$$
$\om\in\Om$ are conjugate via conformal (bi-holomorphic) homeomorphisms. As throughout the whole paper, we start with a single exponential map 
$$
f_\eta:\C\lra\C
$$
given by the formula
$$
f_\eta(z)=\eta e^z.
$$
Let 
$$
\C^*:=\C\sms\{0\}.
$$
Since 
$$
\exp:\C\lra\C^*
$$
is a quotient map and $f_\eta$ is constant on each set $\exp^{-1}(z)$, $z\in\C^*$, the map $f_\eta$ induces a unique continuous map
$$
\tilde F_\eta:\C^*\to\C^*
$$
such that the following diagram commutes
\[\begin{tikzcd}
\C \arrow{r}{f_\eta} \arrow[swap]{d}{\exp} & \C\arrow{d}{\exp} \\
{\C^*} \arrow{r}{\tilde F_\eta} & {\C^*}
\end{tikzcd}
\]
i.e.
\begin{equation}\label{120180328}
\tilde F_\eta(\exp(z))=\exp(f_\eta(z)).
\end{equation}
The map $\tilde F_\eta$ can be easily calculated:
$$
\tilde F_\eta(z)=\exp (f_\eta(\exp^{-1}(z)))=\exp(\eta z).
$$
Let 
$$
H_\eta:\C\to\C
$$
be the similarity map given by the formula
$$
H_\eta(z)=z/\eta.
$$
Then 
\begin{equation}\label{220180328}
\tilde F_\eta\circ H_\eta(z)
=\exp(\eta(z/\eta))
=\exp(z)
=\frac1{\eta}f_\eta(z)
=H_\eta\circ f_\eta(z).
\end{equation}

This means that the following diagram commutes
\[\begin{tikzcd}
\C \arrow{r}{f_\eta} \arrow[swap]{d}{H_\eta} & \C \arrow{d}{H_\eta} \\
\C \arrow{r}{{\tilde F}_\eta} & \C
\end{tikzcd}
\]
In other words, the maps $\tilde F_\eta$ and $f_\eta$ are conjugate via the  similarity map $H_\eta$. 

As an immediate consequence of all the above we get the following.

\begin{prop}
For every integer $n\ge 1$,
\begin{equation}\label{320180403}
\tilde F_\eta^n\circ\exp=\exp\circ f_\eta^n,
\end{equation}
i.e. the following diagram commutes
\[\begin{tikzcd}
\C \arrow{r}{f_\eta^n} \arrow[swap]{d}{\exp} & \C\arrow{d}{\exp} \\
{\C^*} \arrow{r}{\tilde F_\eta^n} & {\C^*}
\end{tikzcd}
\]
and 
\begin{equation}\label{420180403}
\tilde F_\eta^n\circ H_\eta=H_\eta\circ f_\eta^n,
\end{equation}
i.e. the following diagram commutes
\[\begin{tikzcd}
\C \arrow{r}{f_\eta^n} \arrow[swap]{d}{H_\eta} & \C \arrow{d}{H_\eta} \\
\C \arrow{r}{{\tilde F}_\eta^n} & \C
\end{tikzcd}
\]
\end{prop}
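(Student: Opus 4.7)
The plan is to establish both identities by induction on $n$, leveraging the $n=1$ cases which are already proved in the excerpt as \eqref{120180328} and \eqref{220180328}. There is essentially no obstacle here; the statement is a formal consequence of the fact that a conjugacy (or semi-conjugacy) relation between two maps automatically passes to all iterates, once one carefully uses associativity of composition.

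For \eqref{320180403}, I would take \eqref{120180328}, namely $\tilde F_\eta\circ\exp=\exp\circ f_\eta$, as the base case $n=1$. For the inductive step, assuming $\tilde F_\eta^n\circ\exp=\exp\circ f_\eta^n$, I would write
$$
\tilde F_\eta^{n+1}\circ\exp
=\tilde F_\eta\circ\bigl(\tilde F_\eta^n\circ\exp\bigr)
=\tilde F_\eta\circ\bigl(\exp\circ f_\eta^n\bigr)
=\bigl(\tilde F_\eta\circ\exp\bigr)\circ f_\eta^n
=\exp\circ f_\eta\circ f_\eta^n
=\exp\circ f_\eta^{n+1},
$$
where the middle equality uses the inductive hypothesis and the next-to-last equality uses \eqref{120180328} again. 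This closes the induction and gives \eqref{320180403}.

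The proof of \eqref{420180403} is entirely analogous: take \eqref{220180328} as the base case $n=1$, and for the inductive step compute
$$
\tilde F_\eta^{n+1}\circ H_\eta
=\tilde F_\eta\circ\bigl(\tilde F_\eta^n\circ H_\eta\bigr)
=\tilde F_\eta\circ\bigl(H_\eta\circ f_\eta^n\bigr)
=\bigl(\tilde F_\eta\circ H_\eta\bigr)\circ f_\eta^n
=H_\eta\circ f_\eta\circ f_\eta^n
=H_\eta\circ f_\eta^{n+1}.
$$
Since $H_\eta$ is a biholomorphism of $\C$, \eqref{420180403} in fact identifies $\tilde F_\eta^n$ and $f_\eta^n$ as conformally conjugate via the similarity $H_\eta$, which is precisely the content the excerpt is using to identify the two random dynamical systems in question. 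The only thing to watch in the write-up is notational: making clear that all compositions are well defined on $\C$ (respectively $\C$ and $\C^*$) so that the associativity manipulations above make sense, but this is immediate from the definitions of $f_\eta$, $\tilde F_\eta$, $\exp$, and $H_\eta$.
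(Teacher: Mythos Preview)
Your proof is correct and is exactly what the paper has in mind: the proposition is stated there as ``an immediate consequence of all the above,'' meaning precisely the routine induction from \eqref{120180328} and \eqref{220180328} that you have written out.
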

We now pass to the non--autonomous case. This means that we fix an element ${\bf a}\in [A,B]^{\N}$ and we consider the non--autonomous compositions
$$
f^n_{\bf a}
:=f_{a_{n-1}}\circ f_{a_{n-2}}\circ\cdots\circ f_{a_1}\circ f_{a_0}:\C\longrightarrow\C.
$$
and likewise with $\tilde F_{\bf a}^n$. Iterating (non-autonomously) \eqref{120180328} and doing straightforward calculations based on \eqref{220180328}, we get the following.
\begin{prop}\label{p920180406}
For every integer $n\ge 1$,
\begin{equation}\label{120180403}
\tilde F_{\bf a}^n\circ\exp=\exp\circ f^n_{\bf a}.
\end{equation}
and
\begin{equation}\label{220180403}
\tilde F_{\bf a}^n\circ H_{a_1}=H_{a_n}\circ f_{\sigma({\bf a})}^n
\end{equation}
\end{prop}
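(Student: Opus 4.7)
The plan is to establish both identities by induction on $n$, using the autonomous single-step relations \eqref{120180328} and \eqref{220180328} as bases and the recursive definitions $\tilde F_{\bf a}^{n+1} = \tilde F_{a_n} \circ \tilde F_{\bf a}^n$ and $f_{\bf a}^{n+1} = f_{a_n} \circ f_{\bf a}^n$ for the inductive step.

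Identity \eqref{120180403} is the straightforward one. Its base case $n=1$ is precisely \eqref{120180328}. Assuming inductively that $\tilde F_{\bf a}^n \circ \exp = \exp \circ f_{\bf a}^n$, I compute
$$
\tilde F_{\bf a}^{n+1} \circ \exp
= \tilde F_{a_n} \circ \tilde F_{\bf a}^n \circ \exp
= \tilde F_{a_n} \circ \exp \circ f_{\bf a}^n
= \exp \circ f_{a_n} \circ f_{\bf a}^n
= \exp \circ f_{\bf a}^{n+1},
$$
where the middle equality is the inductive hypothesis and the penultimate equality is one application of \eqref{120180328} with $\eta = a_n$.

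For identity \eqref{220180403}, the cleanest route is to rewrite $\tilde F_\eta = \exp \circ H_\eta^{-1}$ (immediate from $\tilde F_\eta(z) = e^{\eta z}$ and $H_\eta^{-1}(z) = \eta z$) and to record the dual identity $H_\eta^{-1} \circ \exp = f_\eta$. Iterating the first of these gives
$$
\tilde F_{\bf a}^n = \exp \circ H_{a_{n-1}}^{-1} \circ \exp \circ H_{a_{n-2}}^{-1} \circ \cdots \circ \exp \circ H_{a_0}^{-1},
$$
and collapsing each interior pair $H_{a_{k+1}}^{-1} \circ \exp$ into $f_{a_{k+1}}$ leaves $\tilde F_{\bf a}^n = \exp \circ f_{a_{n-1}} \circ \cdots \circ f_{a_1} \circ H_{a_0}^{-1}$, i.e.\ $\exp \circ f_{\sigma({\bf a})}^{n-1} \circ H_{a_0}^{-1}$. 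Postcomposing with the appropriate $H$ on the right and using $\exp = H_\eta \circ f_\eta$ (valid for every $\eta$) with $\eta$ chosen to match the right-hand index then produces $H_{a_n} \circ f_{\sigma({\bf a})}^n$, which is the stated identity.

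The main technical point is the bookkeeping of the parameter $\eta$ in $H_\eta$, which changes from step to step in the non-autonomous composition and therefore does not telescope if one naively tries to iterate \eqref{220180328} alone. Introducing $\exp$ as an intermediary via the factorization $\tilde F_\eta = \exp \circ H_\eta^{-1}$, and exploiting the identities $H_\eta^{-1} \circ \exp = f_\eta$ and $\exp = H_\eta \circ f_\eta$, resolves this difficulty and reduces the argument to a mechanical collapse of consecutive terms, with only two boundary $H$-factors surviving.
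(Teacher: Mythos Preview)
Your argument is correct and is precisely the ``straightforward calculation'' the paper alludes to without details; the first identity is a direct induction on \eqref{120180328}, and for the second your factorizations $\tilde F_\eta=\exp\circ H_\eta^{-1}$, $H_\eta^{-1}\circ\exp=f_\eta$, and $\exp=H_\eta\circ f_\eta$ are exactly what is needed to make the non--autonomous composition telescope, since as you rightly note \eqref{220180328} alone does not iterate cleanly when the parameter changes.

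One bookkeeping point: with the $0$--indexing adopted in this section (so that $\tilde F_{\bf a}^n=\tilde F_{a_{n-1}}\circ\cdots\circ\tilde F_{a_0}$), your computation actually yields $\tilde F_{\bf a}^n\circ H_{a_0}=H_{a_n}\circ f_{\sigma({\bf a})}^n$, not $H_{a_1}$ on the left as printed. This is a typo in the statement (easily checked at $n=1$), and it is your version with $H_{a_0}$ that is invoked downstream in Lemma~\ref{l720180406}, where it appears as $H_{\theta^n\omega}\circ f_{\theta\omega}^n=\tilde F_\omega^n\circ H_\omega$.
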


We need two more ``little'' results. First recall that the map
$$
\exp: Q\to\C^*
$$
naturally defined from the cylinder $Q$ to $\C^*$ is indeed well defined and is holomorphic. We shall prove the following.

\begin{prop}\label{p620180406}
The map $\exp:Q\lra\C^*$
\begin{enumerate}
\item is a conformal/holomorphic homeomorphism; 

\item transfers the Euclidean metric on $Q$ to the 
conformal metric
$$
|d\rho|:=\frac{|dz|}{|z|}
$$ 
on $\C^*$.

\item conjugates $\tilde F_\eta$ and $F_\eta$, i.e.
$$
\tilde F_\eta\circ \exp=\exp\circ F_\eta
$$
and
\item 
$$
\tilde F_\eta^n\circ \exp=\exp\circ F_\eta^n
$$
for every integer $n\ge 0$. In other words, the following diagram commutes

\[
\begin{tikzcd}
Q \arrow{r}{F_\eta^n} \arrow[swap]{d}{\exp} & Q \arrow{d}{\exp} \\
{\C^*} \arrow{r}{{\tilde F}_\eta^n} & {\C^*}
\end{tikzcd}
\]

\noindent \noindent Furthermore:
\item 
$$
\tilde F_{\bf a}^n\circ \exp=\exp\circ F_{\bf a}^n
$$
for every ${\bf a}\in [A,B]^{\N}$. In other words, the following diagram commutes
\[
\begin{tikzcd}
Q \arrow{r}{F_{\bf a}^n} \arrow[swap]{d}{\exp} & Q \arrow{d}{\exp} \\
{\C^*} \arrow{r}{{\tilde F}_{\bf a}^n} & {\C^*}
\end{tikzcd}
\]

\end{enumerate}
\end{prop}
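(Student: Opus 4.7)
\medskip\noindent\textbf{Proof plan.} The proposition is essentially a bookkeeping verification, so my plan is to assemble it from three independent ingredients: the nature of $\exp$ as a covering map, a direct computation of the pullback metric, and the already-established semiconjugacies between $f_\eta$, $\tilde F_\eta$ via $\exp:\C\to\C^*$ together with the definition of $F_\eta$ on $Q$.

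For item (1), I would first recall that the Euclidean map $\widetilde{\exp}:\C\to\C^*$ is a holomorphic covering whose deck transformation group consists precisely of translations by $2\pi i\Z$, which is exactly the equivalence relation $\sim$ defining $Q$. Hence $\widetilde{\exp}$ factors uniquely through the projection $\pi:\C\to Q$ as $\widetilde{\exp}=\exp\circ\pi$, and the induced map $\exp:Q\to\C^*$ is a bijection. Since $Q$ carries the quotient complex structure making $\pi$ a local biholomorphism and since $\widetilde{\exp}$ is holomorphic with non-vanishing derivative, $\exp:Q\to\C^*$ is a conformal homeomorphism. For item (2), with $w=\exp(z)$, a direct calculation gives $dw=e^z\,dz=w\,dz$, so $|dz|=|dw|/|w|$; thus the Euclidean line element on $Q$ is transferred precisely to the conformal Riemannian metric $|d\rho|=|dw|/|w|$ on $\C^*$, as claimed.

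For item (3), I would invoke \eqref{120180328}, which reads $\tilde F_\eta\circ\widetilde{\exp}=\widetilde{\exp}\circ f_\eta$, and then substitute $\widetilde{\exp}=\exp\circ\pi$ to obtain
$$
\tilde F_\eta\circ\exp\circ\pi=\exp\circ\pi\circ f_\eta.
$$
The definition $F_\eta=\pi\circ f_\eta\circ\pi^{-1}$, equivalently $\pi\circ f_\eta=F_\eta\circ\pi$, turns the right-hand side into $\exp\circ F_\eta\circ\pi$. Since $\pi$ is surjective, we can cancel it on the right and conclude $\tilde F_\eta\circ\exp=\exp\circ F_\eta$.

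For items (4) and (5), I would simply iterate item (3). By induction, assuming $\tilde F_\eta^{n-1}\circ\exp=\exp\circ F_\eta^{n-1}$, we compose once more on the left with $\tilde F_\eta$ and use item (3) to obtain $\tilde F_\eta^n\circ\exp=\tilde F_\eta\circ\exp\circ F_\eta^{n-1}=\exp\circ F_\eta^n$. For the non-autonomous statement (5), the identical argument works verbatim, replacing $f_\eta,\tilde F_\eta,F_\eta$ by their non-autonomous compositions and invoking \eqref{120180403} of Proposition~\ref{p920180406} in place of \eqref{120180328}. Since each constituent step is either a definitional identity or a one-line cancellation, there is no genuine obstacle here; the only thing to be careful about is distinguishing the original $\widetilde{\exp}:\C\to\C^*$ from the induced $\exp:Q\to\C^*$ and consistently factoring through $\pi$.
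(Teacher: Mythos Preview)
Your proposal is correct and follows essentially the same route as the paper's own proof: both use the factorization $\exp_Q\circ\pi=\exp_{\C}$ together with the semiconjugacy \eqref{120180328} to obtain item~(3), then iterate for (4) and (5), and both compute the pushed-forward metric directly for (2). Your treatment is slightly more explicit---you distinguish the covering map $\widetilde{\exp}:\C\to\C^*$ from the induced bijection $\exp:Q\to\C^*$ and spell out item~(1) via deck transformations, whereas the paper declares (1) obvious---but the underlying argument is identical.
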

 
\begin{proof} Item (1) is obvious. In order to prove item (3), we calculate
$$
\exp(F_\eta([w])
=\exp(\pi\circ f_\eta\circ \pi^{-1}([w])
=\exp(\pi\circ f_\eta(w))
=\exp(f_\eta(w)=\tilde F\eta(z)
=\tilde F_\eta(\exp ([w]).
$$
So, 
$$
\exp\circ F_\eta=\tilde F_\eta\circ \exp.
$$
Item (4) is a standard consequence of item (3). Likewise, item (5), follows by a straightforward inductive argument based on (3).

Now, we shall prove item (2). If $[w]\in Q$ and $v$ is a tangent vector at $[w]$ with Euclidean length $1$ then it is mapped by $\exp$ to a tangent vector at the point $z=\exp([w])$, whose Euclidean length is equal to  $|\exp'([w])=|z|$. So, the conformal metric on $\C^*$ which makes the bijection $\exp:Q\to\C^*$ an isometry is exactly the one
$$
d\rho:=\frac{|dz|}{|z|}.
$$
\end{proof}


\section{Random Ergodic Theory and Geometry on the Complex Plane: \\ the Original Random Dynamical system
$$
f^n_\omega
:=f_{\theta^{n-1}\omega}\circ\cdots\circ f_{\theta\omega}\circ f_\omega:\C\longrightarrow\C.
$$}\label{apendix2}

We shall now fully transfer all our results concerning random conformal measures, random invariant measures, Hausdorff dimension of fiber radial Julia sets, and asymptotic behavior of Lebesgue typical points to the case of original random system 
 $$
\(\Omega,\mathcal F,m;\, \theta:\Omega\to \Omega;\, \eta:\Om\to[A,B]\)
$$
and induced by it random dynamics 
$$
\(f^n_\omega:\C\to\C\)_{n=0}^\infty, \  \  \om\in \Om,
$$
given by the formula:
$$
f^n_\omega
:=f_{\theta^{n-1}\omega}\circ\cdots\circ f_{\theta\omega}\circ f_\omega:\C\longrightarrow\C.
$$
We start with the following.

\begin{lem}\label{520180406}
Fix $t>1$. If $\nu=\(\nu_\om\)_{\om\in\Om}$ is a random conformal measure for the random conformal system 
$$
F_\om^n:Q\lra Q, \  \om\in\Om, \ n\ge0,
$$
with a measurable function $\lam:\Om\lra (0,+\infty)$ and the standard Euclidean metric, then the random measure
$$
\tilde\nu:=\(\nu_{\om}\circ\exp^{-1}\)_{\om\in\Om}
$$ 
is a random conformal measure for the random conformal system 
$$
\tilde F_\om^n:\C^*\lra \C^*, \  \om\in\Om, \ n\ge0,
$$
with the same measurable function $\lam:\Om\lra (0,+\infty)$ and the Riemannian metric $\rho$ given by formula (2) of Proposition~\ref{p620180406}. The converse is also true.
\end{lem}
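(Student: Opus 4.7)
The plan is to push the conformality relation for $\nu$ on $Q$ forward along the map $\exp: Q \to \C^*$, using the fact (from Proposition~\ref{p620180406}) that $\exp$ is simultaneously a bijective holomorphic conjugacy between $F_\eta$ and $\tilde F_\eta$ and an isometry from $(Q,|dz|)$ to $(\C^*, |dz|/|z|)$. Because the conformality condition $\nu_{\theta\om}(F_\om(A))=\lambda_{t,\om}\int_A|F'_\om|^t\,d\nu_\om$ is formulated purely in terms of fiber measures, the $F_\om$--image of Borel sets with injective restriction, and the conformal derivative, every ingredient in it is preserved by a conformal isometric conjugacy. So the proof should amount essentially to rewriting both sides via change of variables.

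The concrete steps I would carry out are as follows. First, fix $t>1$, $\om\in\Om$, and a Borel set $B\sbt \C^*$ on which $\tilde F_\om$ is one-to-one; set $A:=\exp^{-1}(B)\sbt Q$. Since $\exp$ is bijective and $\tilde F_\om\circ \exp=\exp\circ F_\om$, the restriction $F_\om|_A$ is also one-to-one, and $\exp(F_\om(A))=\tilde F_\om(B)$. Consequently
\[
\tilde\nu_{\theta\om}\(\tilde F_\om(B)\)
=\nu_{\theta\om}\(\exp^{-1}(\tilde F_\om(B))\)
=\nu_{\theta\om}\(F_\om(A)\),
\]
and by conformality of $\nu$ on $Q$ (with respect to the Euclidean metric and the factor $\lambda_{t,\om}$) this equals $\lambda_{t,\om}\int_A|F'_\om|^t\,d\nu_\om$. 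Second, I would invoke item~(2) of Proposition~\ref{p620180406}: since $\exp$ is an isometry from $(Q,|dz|)$ to $(\C^*,|dz|/|z|)$, its derivative intertwines the Euclidean derivative of $F_\om$ at $w\in Q$ with the $\rho$--derivative $|\tilde F'_\om|_\rho$ of $\tilde F_\om$ at $\exp(w)$; the elementary identity $|\eta|e^{\Re w}=|\eta||\exp(w)|$ makes this pointwise equality transparent. The standard change of variables formula for pushforwards then yields
\[
\int_A|F'_\om|^t\,d\nu_\om
=\int_A\bigl(|\tilde F'_\om|_\rho\circ \exp\bigr)^t\,d\nu_\om
=\int_B|\tilde F'_\om|_\rho^t\,d\tilde\nu_\om,
\]
which combined with the previous display is precisely the $t$--conformality of $\tilde\nu$ in the metric $\rho$ with the same measurable factor $\lam_{t,\om}$.

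For the converse direction I would run the same argument with the roles of $\nu$ and $\tilde\nu$ reversed: given a Borel $A\sbt Q$ with $F_\om|_A$ injective, set $B:=\exp(A)$, apply the conjugacy and the isometry property in the opposite direction, and again use that $\exp$ is a measure--theoretic bijection so that $\tilde\nu\circ\exp=\nu$. No real obstacle is expected here, since all ingredients are genuine bijections/isometries, not quotient maps with branching; the one point that merits a brief sentence is the measurability of $\om\mapsto\tilde\nu_\om$ as a random measure on $\C^*$, which follows from the measurability of $\om\mapsto\nu_\om$ together with the (fixed, deterministic) homeomorphism $\exp$.
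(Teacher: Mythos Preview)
Your proof is correct and follows essentially the same approach as the paper: both push the conformality identity through the bijective conjugacy $\exp:Q\to\C^*$ from Proposition~\ref{p620180406}, using that $\exp$ intertwines $F_\om$ with $\tilde F_\om$ and identifies $|F_\om'|$ with $|\tilde F_\om'|_\rho$. The paper writes the derivative identity explicitly as $|(F_\om^n)'\circ\exp^{-1}(z)|=|\tilde F_\om^n(z)|^{-1}\,|(\tilde F_\om^n)'(z)|\,|z|=|(\tilde F_\om^n)'(z)|_\rho$, whereas you phrase it via the isometry statement in item~(2), but this is the same computation; your remark on measurability of $\om\mapsto\tilde\nu_\om$ is a small addition the paper leaves implicit.
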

\begin{proof} 
Using formula (5) of Proposition~\ref{p620180406} and, of course, the definition of the random measure $\tilde\nu$, we get for every $\om\in\Om$, every integer $n\ge 1$ and every Borel set $A\sbt\C^*$ such that the restricted map $F_\om^n\vert_A$ is 1--to--1, that
\begin{equation}\label{720180406}
\begin{aligned}
\tilde\nu\(\tilde F_\om^n(A)\)
&=\nu\(\exp^{-1}(\tilde F_\om^n(A))\)
 =\nu\(F_\om^n(\exp^{-1}(A)))\) \\
&=\int_{\exp^{-1}(A)}\lam_\om^n\big|\(F_\om^n\)'\big|^t\,d\nu_\om
 =\int_A\lam_\om^n\big|\(F_\om^n\)'\circ\exp^{-1}\big|^t\,d\tilde\nu_\om \\
&=\lam_\om^n\int_A\big|\tilde F_\om^n(z)\big|^{-t}\big|\(\tilde F_\om^n\)'(z)\big|^t|z|^t\,d\tilde\nu_\om(z) \\
&=\lam_\om^n\int_A\big|\(\tilde F_\om^n\)'\big|_\rho^t\,d\tilde\nu_\om.
\end{aligned}
\end{equation}
An analogous calculation gives the converse.
\end{proof}

\begin{lem}\label{l720180406}
Fix $t>1$. If $\tilde\nu=\(\nu_\om\)_{\om\in\Om}$ is a random conformal measure for the random conformal system 
$$
\tilde F_\om^n:\C\lra\C, \  \om\in\Om, \ n\ge0,
$$
with a measurable function $\lam:\Om\lra (0,+\infty)$ and the Riemannian metric $\rho$ given by formula (2) of Proposition~\ref{p620180406}, then the random measure
$$
\hat\nu:=\(\tilde\nu_{\om}\circ H_{\th^{-1}\om}\)_{\om\in\Om}
$$ 
is a random conformal measure for the random conformal system 
$$
f_\om^n:\C\lra \C, \  \om\in\Om, \ n\ge0,
$$
with the same measurable function $\lam:\Om\lra (0,+\infty)$ and the same Riemannian metric $\rho$. The converse is also true.
\end{lem}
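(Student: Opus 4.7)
The plan is to mirror the proof of Lemma~\ref{520180406}, with the biholomorphic isometry $\exp:Q\to(\C^*,\rho)$ there replaced by the family of similarities $H_\eta$. Two fiberwise ingredients will drive the argument. The first is that each $H_\eta$ is a $\rho$-isometry: the one-line calculation
$$
|H_\eta'(z)|_\rho
=|H_\eta'(z)|\cdot\frac{|z|}{|H_\eta(z)|}
=\frac{1}{\eta}\cdot\frac{|z|}{|z|/\eta}
=1
$$
shows that composition with $H_\eta$ preserves $\rho$-derivatives; in particular $|g'(H_\eta(z))|_\rho=|(g\circ H_\eta)'(z)|_\rho$ for any holomorphic $g$. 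The second is the fiberwise intertwining $H_{\eta(\om)}\circ f_\om=\tilde F_\om\circ H_{\eta(\om)}$, obtained by specializing \eqref{220180328} to the random parameter. Together these give the pointwise identity $|f_\om'(z)|_\rho=|\tilde F_\om'(H_{\eta(\om)}(z))|_\rho$.

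With these in hand, the transfer of $t$-conformality is a change-of-variables computation. Given a Borel set $A\sbt\C$ on which $f_\om$ is injective, I will: (i) apply the intertwining to identify $H_{\eta(\om)}(f_\om(A))=\tilde F_\om(H_{\eta(\om)}(A))$; (ii) use the $t$-conformality of $\tilde\nu$ for $\tilde F$ on the set $H_{\eta(\om)}(A)$ to rewrite this as $\lam_\om\int_{H_{\eta(\om)}(A)}|\tilde F_\om'|_\rho^t\,d\tilde\nu_\om$; and (iii) perform the substitution $w=H_{\eta(\om)}(z)$ in the integral, which replaces $d\tilde\nu_\om$ by the pullback measure $d(\tilde\nu_\om\circ H_{\eta(\om)})$ and, by the isometry identity, converts $|\tilde F_\om'|_\rho$ into $|f_\om'|_\rho$. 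Reinterpreting the resulting measures on the two fibers through the defining formula for $\hat\nu$ will then produce the $t$-conformality equation $\hat\nu_{\th\om}(f_\om(A))=\lam_\om\int_A|f_\om'|_\rho^t\,d\hat\nu_\om$, with the same $\lam$ and the same metric $\rho$. The converse implication follows by running the chain of equalities backwards, since $H_\eta^{-1}=H_{1/\eta}$ is again a $\rho$-isometry.

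The main bookkeeping obstacle will be matching the shifted indices of $H$ on the source and target fibers: the twisting by $\th^{-1}$ built into the definition of $\hat\nu$ is exactly what forces the similarity appearing after the shift $\om\mapsto\th\om$ to coincide with the one produced by the intertwining. The substantive point, which keeps the same $\lam$ on both sides and eliminates any $\eta$-dependent correction factor, is that the $\rho$-derivatives of $f_\om$ and $\tilde F_\om$ differ only along the isometry $H_{\eta(\om)}$, so no scaling is introduced by the change of variables.
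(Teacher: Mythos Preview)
Your outline is close to the paper's, and the $\rho$--isometry observation $|H_\eta'|_\rho=1$ is exactly the ingredient the paper uses as well. But the particular intertwining you invoke does not close the loop. Specializing \eqref{220180328} to $\eta=\eta(\om)$ gives $H_{\eta(\om)}\circ f_\om=\tilde F_\om\circ H_{\eta(\om)}$, with the \emph{same} similarity $H_{\eta(\om)}$ sitting on both the source and the target fiber of $f_\om$. Running your change of variables, the target side becomes $\hat\nu_{\th\om}=\tilde\nu_{\th\om}\circ H_{\eta(\om)}$, which is correct by the $\th^{-1}$--shift in the definition of $\hat\nu$; but on the source side you are left with the measure $\tilde\nu_\om\circ H_{\eta(\om)}$, whereas the definition gives $\hat\nu_\om=\tilde\nu_\om\circ H_{\eta(\th^{-1}\om)}$. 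These differ unless $\eta(\om)=\eta(\th^{-1}\om)$. So your final claim, that the $\th^{-1}$--twist ``is exactly what forces the similarity \ldots\ to coincide with the one produced by the intertwining,'' holds on fiber $\th\om$ but fails on fiber $\om$.

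The paper sidesteps this mismatch by using not \eqref{220180328} but the \emph{shifted} intertwining of Proposition~\ref{p920180406}, formula~\eqref{220180403}, which in random notation reads $H_{\eta(\th^n\om)}\circ f_{\th\om}^n=\tilde F_\om^n\circ H_{\eta(\om)}$. Here different similarities appear at the two ends, and they are precisely the ones that the definition of $\hat\nu$ places on the corresponding fibers. Once you replace the autonomous single-map conjugacy by this shifted relation, the rest of your computation (isometry, conformality, change of variables) goes through unchanged.
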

\begin{proof} 
First note that if $s\in\C^*$ and $H_s:\C\lra\C$ is the map given by the formula 
$$
H_s(z)=s^{-1}z,
$$
then 
$$
|(H_s)'(z)|_\rho
=|H_s(z)|^{-1}|\cdot|(H_s)'(z)|\cdot|z|
=\frac{|s|}{|z|}\cdot\frac1{|s|}\cdot|z|. 
=1.
$$
Using this formula, the definition of the random measure $\hat\nu$, and formula \eqref{220180403} of Proposition~\ref{p920180406}, we get for every $\om\in\Om$, every integer $n\ge 1$, and every Borel set $A\sbt\C$ such that the restricted map $f_{\th\om}^n\vert_A$ is 1--to--1, that
\begin{equation}\label{820180406}
\begin{aligned}
\hat\nu_{\th^{n+1}\om}\(f_{\th^n\om}^n(A)\)
&=\tilde\nu_{\th^n\om}\(H_{\th^n\om}\(f_{\th^n\om}^n(A)\)\)
 =\tilde\nu_{\th^n\om}\(\tilde F_\om^n(H_\om(A)\) \\
&=\lam_\om^n\int_{H_\om(A)}\big|\(\tilde F_\om^n\)'\big|_\rho^t|
   \,d\tilde\nu_\om
 =\lam_\om^n\int_A\big|\(\tilde F_\om^n\)'\big|_\rho^t\circ H_\om
   \,d\hat\nu_{\th\om} \\
&=\lam_\om^n\int_A\big|\(f_{\th\om}^n\)'\big|_\rho^t
   \,d\hat\nu_{\th\om}.
\end{aligned}
\end{equation}
\end{proof}

\medskip Now we pass to transferring of invariant random measures. This is even easier. We shall prove the following two lemmas.

\begin{lem}\label{l1020180406}
If $\mu=\(\mu_\om\)_{\om\in\Om}$ is an invariant random measure for the random conformal system 
$$
F_\om^n:Q\lra Q, \  \om\in\Om, \ n\ge0,
$$
then the random measure
$$
\tilde\mu:=\(\mu_{\om}\circ\exp^{-1}\)_{\om\in\Om}
$$ 
is an invariant random measure for the random conformal system 
$$
\tilde F_\om^n:\C^*\lra \C^*, \  \om\in\Om, \ n\ge0,
$$
The converse is also true.
\end{lem}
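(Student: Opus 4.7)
The plan is to reduce $\tilde F$--invariance to $F$--invariance by a direct change of variables using the conjugating homeomorphism $\exp\colon Q\to\C^*$ from Proposition~\ref{p620180406}. Recall that $F$--invariance of the random measure $\mu=(\mu_\om)_{\om\in\Om}$ means that $\mu_\om\circ F_\om^{-1}=\mu_{\theta\om}$ for $m$--a.e.\ $\om\in\Om$, and that by item (3) of Proposition~\ref{p620180406} we have the conjugacy identity
\begin{equation}\label{conj}
\tilde F_\om\circ\exp=\exp\circ F_\om
\end{equation}
for every $\om\in\Om$, so in particular $\exp^{-1}\circ\tilde F_\om^{-1}=F_\om^{-1}\circ\exp^{-1}$ on Borel subsets of $\C^*$.

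First I would verify that $\tilde\mu$ is a random measure in $\mathcal M_m$: for every Borel $B\subset \C^*$ the map $\om\mapsto \tilde\mu_\om(B)=\mu_\om(\exp^{-1}(B))$ is measurable because $\om\mapsto\mu_\om(\exp^{-1}(B))$ is, each $\tilde\mu_\om$ is a Borel probability on $\C^*$ since $\mu_\om$ is a Borel probability on $Q$ and $\exp\colon Q\to\C^*$ is a homeomorphism, and the projection onto $\Om$ is still $m$. Then, for $m$--a.e.\ $\om\in\Om$ and any Borel set $A\subset\C^*$, I would compute using \eqref{conj} and $F$--invariance of $\mu$:
\begin{equation*}
\tilde\mu_\om(\tilde F_\om^{-1}(A))
=\mu_\om\(\exp^{-1}(\tilde F_\om^{-1}(A))\)
=\mu_\om\(F_\om^{-1}(\exp^{-1}(A))\)
=\mu_{\theta\om}(\exp^{-1}(A))
=\tilde\mu_{\theta\om}(A).
\end{equation*}
This establishes $\tilde\mu_\om\circ\tilde F_\om^{-1}=\tilde\mu_{\theta\om}$, i.e.\ $\tilde F$--invariance of $\tilde\mu$.

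For the converse, starting from an $\tilde F$--invariant random measure $\tilde\mu$ on $\C^*$, define $\mu_\om:=\tilde\mu_\om\circ\exp$, i.e.\ the pull--back measure via the homeomorphism $\exp$; this is well defined because $\exp\colon Q\to\C^*$ is a bijection, and the same argument as above (with the roles of $F$ and $\tilde F$ swapped, using \eqref{conj} in the form $\exp\circ F_\om=\tilde F_\om\circ\exp$) yields $\mu_\om\circ F_\om^{-1}=\mu_{\theta\om}$. There is no substantial obstacle here; the only point requiring a brief check is the measurability of $\om\mapsto\tilde\mu_\om(B)$ and the Borel bijectivity of $\exp$, both of which are immediate from Proposition~\ref{p620180406}(1).
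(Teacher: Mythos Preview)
Your proof is correct and follows essentially the same approach as the paper: both rely on the conjugacy $\tilde F_\om\circ\exp=\exp\circ F_\om$ from Proposition~\ref{p620180406}. The paper's proof is a single sentence citing item~(5) of that proposition, whereas you spell out the change-of-variables computation explicitly and additionally verify that $\tilde\mu$ is a random measure in $\mathcal M_m$; this extra care is not misplaced, but the underlying idea is identical.
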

\begin{proof}
The proof is an immediate consequence of Proposition~\ref{p620180406} (5).
\end{proof}

\begin{lem}\label{l120180410}
If $\tilde\mu=\(\tilde\mu_\om\)_{\om\in\Om}$ is an invariant random measure for the random conformal system 
$$
\tilde F_\om^n:\C\lra \C, \  \om\in\Om, \ n\ge0,
$$
then the random measure
$$
\hat\mu:=\(\tilde\mu_{\om}\circ H_{\th^{-1}\om}\)_{\om\in\Om}
$$ 
is an invariant random measure for the random conformal system 
$$
f_\om^n:\C\lra \C, \  \om\in\Om, \ n\ge0,
$$
The converse is also true.
\end{lem}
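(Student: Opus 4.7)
This lemma is a near--verbatim analogue of Lemma~\ref{l720180406} in the invariant--measure setting, and in fact is simpler because no derivative factors $|(\cdot)'|_\rho^t$ need to be tracked. The whole content is algebraic, driven by the fiberwise single--step conjugacy $\tilde F_\omega\circ H_\omega=H_\omega\circ f_\omega$ from \eqref{220180328}, together with the equivalent $\theta$--shifted identity $H_\omega\circ f_\omega=\tilde F_{\theta^{-1}\omega}\circ H_{\theta^{-1}\omega}$ (both sides collapse to the map $z\mapsto e^z$, independently of $\omega$, because of the particular form of $f_\eta$ and $\tilde F_\eta$).

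My plan is to verify fiberwise $f$--invariance $\hat\mu_{\theta\omega}\circ f_\omega^{-1}=\hat\mu_\omega$ by direct computation. I would fix $\omega\in\Omega$ and a Borel set $A\subset\C$ on which $f_\omega$ is one--to--one. Unpacking the definition of $\hat\mu_{\theta\omega}$ rewrites the left--hand side as a $\tilde\mu$--measure of a set of the form $H_\bullet(f_\omega(A))$, and the conjugacy above turns this into a set of the form $\tilde F_\bullet(H_\bullet(A))$. The hypothesis that $\tilde\mu$ is $\tilde F$--invariant then peels off one iterate of $\tilde F$, and a second appeal to the definition of $\hat\mu$ identifies the result with $\hat\mu_\omega(A)$. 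This is structurally the same move as the computation \eqref{820180406} already carried out in the proof of Lemma~\ref{l720180406}, with the conformal factor replaced by $1$.

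Measurability of $\omega\mapsto\hat\mu_\omega$ is automatic, since $\tilde\mu$ is a random measure and the map $(\omega,z)\mapsto H_{\theta^{-1}\omega}(z)=z/\eta(\theta^{-1}\omega)$ is jointly measurable and continuous in $z$; moreover $\hat\mu\circ\pi_1^{-1}=m$ is inherited from the analogous projection of $\tilde\mu$, so $\hat\mu\in\mathcal M_m$. For the converse direction, starting from an $f$--invariant random measure $\hat\mu$ on $\C$, I would set $\tilde\mu_\omega:=\hat\mu_\omega\circ H_{\theta^{-1}\omega}^{-1}$ and run the same line of algebra in reverse, using the same conjugacy. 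The only genuine bookkeeping point, and therefore the only place where the calculation requires care, will be the $\theta$--shift of indices linking the similarity $H_{\theta^{-1}\omega}$ with the fiber index of $\tilde\mu$: one has to pin down that $H_\omega(f_\omega(\cdot))$ corresponds to $\tilde F_{\theta^{-1}\omega}(H_{\theta^{-1}\omega}(\cdot))$ rather than to $\tilde F_\omega(H_\omega(\cdot))$. Once this alignment is secured, the proof reduces to a two--line verification with no further technical difficulty.
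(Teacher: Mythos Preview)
Your proposal is correct and takes essentially the same approach as the paper: a direct fiberwise calculation using the conjugacy $H_{\theta^n\omega}\circ f_{\theta\omega}^n=\tilde F_\omega^n\circ H_\omega$ (this is formula \eqref{220180403} of Proposition~\ref{p920180406} in random notation, and your observation that both sides collapse to $z\mapsto e^z$ is exactly what makes the $\theta$--shifted indexing work), combined with the assumed $\tilde F$--invariance of $\tilde\mu$. The paper's proof is literally the single line $\hat\mu_{\theta\omega}\circ f_{\theta\omega}^{-n}=\tilde\mu_\omega\circ H_\omega\circ f_{\theta\omega}^{-n}=\tilde\mu_{\theta^n\omega}\circ H_{\theta^n\omega}=\hat\mu_{\theta^{n+1}\omega}$; just be aware that the paper's fiberwise invariance convention is $\mu_\omega\circ F_\omega^{-1}=\mu_{\theta\omega}$, so your stated target $\hat\mu_{\theta\omega}\circ f_\omega^{-1}=\hat\mu_\omega$ has the $\theta$--shift reversed.
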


\begin{proof}
The proof is carried through by an explicate direct calculation based on formula \eqref{220180403} of Proposition~\ref{p920180406}.
$$
\hat\mu_{\th\om}\circ f_{\th\om}^{-n}
=\tilde\mu_{\om}\circ H_\om\circ f_{\th\om}^{-n}
=\tilde\mu_{\th^n\om}\circ H_{\th^n\om}
=\hat\mu_{\th^{n+1}\om}.
$$
\end{proof}

As an immediate consequence of all the lemmas proven in this section and Theorem~\ref{t1_2016_10_08} along with Theorem~\ref{t1im3B}, we get the following.

\begin{thm}\label{t220180410}
For every $t>1$ there exists a random $t$--conformal measure $\hat\nu^{(t)}$, the one resulting from Theorem~\ref{1_2017_03_28}, Lemma~\ref{520180406} and Lemma~\ref{l720180406}, for the random conformal system 
$$
f_\om^n:\C\lra \C, \  \om\in\Om, \ n\ge0,
$$
with with respect to the Riemannian metric $\rho$ defined in item (2) of Proposition~\ref{p620180406}. This means that formula \eqref{820180406} holds. 

Furthermore, there exists a Borel probability $f$--invariant measure $\hat\mu=\hat\mu^{(t)}$ absolutely continuous with respect to $\hat\nu^{(t)}$. It has the following further properties. 

\medskip\begin{itemize}
\item[(a)] $\hat\mu^{(t)}$ is equivalent to $\hat\nu^{(t)}$,

\medskip\item[(b)] $\hat\mu^{(t)}$ is ergodic.

\medskip\item[(c)] $\hat\mu^{(t)}
$ is is the only Borel probability $f$--invariant measure absolutely continuous with respect to $\hat\nu^{(t)}$. 
\end{itemize}
\end{thm}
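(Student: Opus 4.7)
The theorem is essentially a transfer statement: every ingredient has already been proved on the cylinder $Q$, and we only need to push the objects through the two conjugacies introduced in Section~\ref{apendix1}, namely $\exp\colon Q\to\C^*$ (Proposition~\ref{p620180406}) and the similarity $H_\eta\colon\C\to\C$ (formula \eqref{220180403} of Proposition~\ref{p920180406}). My plan is therefore to carry out, in order, the following four steps and then check that the ergodic/uniqueness properties survive the transfer.

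\emph{Existence of the conformal measure.} I would start from the random $t$--conformal measure $\nu^{(t)}=(\nu_\omega^{(t)})_{\omega\in\Omega}$ for $F$ on $Q$ delivered by Theorem~\ref{t1_2016_10_08}. Applying Lemma~\ref{520180406} to this measure produces a random $t$--conformal measure $\tilde\nu^{(t)}:=(\nu_\omega^{(t)}\circ\exp^{-1})_{\omega\in\Omega}$ for $\tilde F$ on $\C^*$ with respect to the metric $\rho$ and with the same eigenfunction $\lambda_{t,\omega}$. A second application, this time of Lemma~\ref{l720180406}, yields
$$
\hat\nu^{(t)}:=\bigl(\tilde\nu_\omega^{(t)}\circ H_{\theta^{-1}\omega}\bigr)_{\omega\in\Omega},
$$
a random $t$--conformal measure for $f$ on $\C$, again with respect to $\rho$ and with the same $\lambda_{t,\omega}$; the conformality equation is precisely \eqref{820180406}.

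\emph{Existence of the invariant measure.} For the absolutely continuous invariant measure I would parallel the same transfer, starting from the $F$--invariant random measure $\mu^{(t)}$ of Theorem~\ref{t2im2}. Lemma~\ref{l1020180406} converts it into a $\tilde F$--invariant measure $\tilde\mu^{(t)}:=(\mu_\omega^{(t)}\circ\exp^{-1})_{\omega\in\Omega}$, and Lemma~\ref{l120180410} then produces the $f$--invariant measure
$$
\hat\mu^{(t)}:=\bigl(\tilde\mu_\omega^{(t)}\circ H_{\theta^{-1}\omega}\bigr)_{\omega\in\Omega}.
$$
Because $\exp$ and each $H_\eta$ are bi-measurable bijections (on the appropriate domains) and the two lemmas transfer $\mu^{(t)}$ and $\nu^{(t)}$ by the \emph{same} formula, the fiberwise Radon--Nikodym derivatives are simply transported; in particular $\hat\mu^{(t)}$ is absolutely continuous with respect to $\hat\nu^{(t)}$ (yielding the first claim of the theorem) and, thanks to item~(a) of Theorem~\ref{t1im3B}, actually equivalent to it, giving (a).

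\emph{Ergodicity and uniqueness.} For (b), suppose $A\subset\Omega\times\C$ is an $f$--invariant set. Pulling it back under $\mathrm{id}_\Omega\times H_\omega$ and then under $\mathrm{id}_\Omega\times\exp$ produces an $F$--invariant measurable subset of $\Omega\times Q$, whose $\mu^{(t)}$--measure equals $\hat\mu^{(t)}(A)$ by the very definition of the pushed-forward measures. Item~(b) of Theorem~\ref{t1im3B} then forces $\hat\mu^{(t)}(A)\in\{0,1\}$. For (c), any $f$--invariant Borel probability measure $\hat\eta\in\mathcal M_m$ on $\Omega\times\C$ absolutely continuous with respect to $\hat\nu^{(t)}$ can be pulled back through the same two conjugacies (using the converse statements in Lemmas~\ref{l1020180406} and \ref{l120180410}, together with the fact that absolute continuity is preserved under bi-measurable bijections) to obtain an $F$--invariant element of $\mathcal M_m$ absolutely continuous with respect to $\nu^{(t)}$; by item~(c) of Theorem~\ref{t1im3B} this pull-back coincides with $\mu^{(t)}$, hence $\hat\eta=\hat\mu^{(t)}$.

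\emph{Anticipated difficulty.} There is no real analytical obstacle here: the deep work was done in establishing Theorems~\ref{t1_2016_10_08} and \ref{t1im3B}. The only points that require a touch of care are (i) that the factor $|H_\omega'|_\rho=1$ computed inside the proof of Lemma~\ref{l720180406} is what makes the conformal factor $\lambda_{t,\omega}$ unchanged under the $H_\omega$ transfer, and (ii) that the assignments $\omega\mapsto H_{\theta^{-1}\omega}$ and $\omega\mapsto\nu_\omega^{(t)}\circ\exp^{-1}$ are measurable, so that the transferred objects genuinely lie in $\mathcal M_m$; both follow from measurability of $\eta:\Omega\to[A,B]$ and the fact that $\exp$ is a deterministic homeomorphism.
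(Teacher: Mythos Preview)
Your proposal is correct and follows exactly the route the paper intends: the paper's own ``proof'' consists solely of the sentence that the theorem is an immediate consequence of the lemmas in this section together with Theorems~\ref{t1_2016_10_08} and~\ref{t1im3B}, and your four steps spell out precisely that transfer through $\exp$ and $H_{\theta^{-1}\omega}$ via Lemmas~\ref{520180406}, \ref{l720180406}, \ref{l1020180406}, \ref{l120180410}. Your additional remarks on why ergodicity and uniqueness survive the conjugacies, and the measurability checks in your final paragraph, simply make explicit what the paper leaves to the reader.
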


\medskip Turning to geometry, we now define random radial (conical) Julia sets on the complex plane $\C$ for the random conformal system 
$$
f_\om^n:\C\lra \C, \  \om\in\Om, \ n\ge0.
$$
These sets are defined analogously as the radial random sets for the random conformal system 
$$
F_\om^n:Q\lra Q, \  \om\in\Om, \ n\ge0.
$$
The definition follows.
\begin{equation}
J_r(f)(\om)
:=\big\{z\in\C: \lim_{N\to\infty}\un\rho(N_\om(z,N))=1\big\},
\end{equation}
where $N_\om(z,N)$ is the set of all integers $n\ge 0$ such that there exists a (unique) holomorphic inverse branch 
$$
f_{\om,z}^{-n}:B(f_\om^n(z),2/N)\lra\C
$$
of $f_\om^n:\C\to\C$ sending $f_\om^n(z)$ to $z$ and such that $|F_\om^n(z)|\le N$. 
The set $J_r(f)(\om)$ is said to be the set of radial (or conical) points of $f$ at $\om$. We further denote:
$$
J_r(f):=\bu_{\om\in\Om}\{\om\}\times J_r(f)(\om).
$$
Based on the propositions proved in this section, it is easy to prove that for every $\om\in\Om$,
\begin{equation}
J_r(f)(\om)
= H_{\th^{-1}\om}^{-1}\circ\exp\(J_r(\th^{-1}\om)\).
\end{equation}
Having this and all the propositions proved in this section, as an immediate consequence of Theorem~\ref{thm:bowen}, Theorem~\ref{t1_2017-09_04}, Theorem~\ref{t320180410}, and Corollary\ref{c420180410}, we get the following.

\begin{thm}\label{thm:bowenB}
For the random conformal system 
$$
f_\om^n:\C\lra \C, \  \om\in\Om, \ n\ge0.
$$
we have that
\begin{enumerate}
\item
$$
\HD\(J_r(f)(\omega)\)=h
$$ 
for $m$--a.e.$\omega\in\Omega$, where $h\in(1,2)$ is the number coming from item (4) of Theorem~\ref{thm:bowen}. In particular:

\item The $2$--dimensional Lebesgue measure of $m$--a.e. $\om\in\Om$ set $J_{r,\omega}$ is equal to zero.

\item For $m$--almost every $\omega\in\Omega$ there exists a subset $\C_\omega\subset\C$ with full Lebesgue measure such that for all $z\in\C_\omega$  the following holds. 
$$
\begin{aligned}
\forall \delta>0 \ \exists n_z(\delta)\in\mathbb N \ &\forall n\ge n_z(\delta)\  \exists k=k_n(z)\ge 0
\\  
&|f^n_\omega(z)-f^k_{\theta^{n-k}\omega}(0)|<\delta\  \  \text{or}\  \  |f^n_\omega(z)|\ge 1/\delta.
\end{aligned}
$$
In addition, $\limsup_{n\to\infty}k_n(z)=+\infty$. In consequence,

\item The set of accumulation points of the sequence
$$
\(f_\om^n(z)\)_{n=0}^\infty
$$
is contained in $[0,+\infty]\cup\{-\infty\}$ and contains $+\infty$. 
\end{enumerate}
\end{thm}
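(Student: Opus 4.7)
\textbf{Proof proposal for Theorem~\ref{thm:bowenB}.}

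My approach is to transfer each assertion of Theorem~\ref{thm:bowenB} from its already-established cylinder counterpart (Theorem~\ref{thm:bowen}, Theorem~\ref{t1_2017-09_04}, Theorem~\ref{t320180410} and Corollary~\ref{c420180410}) via the conjugacies developed in Sections~\ref{apendix1}--\ref{apendix2}. The starting point is the identity cited immediately before the theorem,
$$
J_r(f)(\omega) \;=\; H^{-1}_{\theta^{-1}\omega}\circ\exp\bigl(J_r(\theta^{-1}\omega)\bigr),
$$
which itself follows from iterating the two basic conjugacies $\tilde F_\eta\circ H_\eta=H_\eta\circ f_\eta$ and $\tilde F_\eta\circ\exp=\exp\circ F_\eta$ (Proposition~\ref{p620180406}) along a random orbit, combined with the fact that radiality---existence of holomorphic inverse branches on balls $B(\cdot,2/N)$ with bounded iterate modulus---is preserved under conjugation by biholomorphic maps that are locally bi-Lipschitz on bounded regions.

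For item (1), the similarity $H^{-1}_{\theta^{-1}\omega}$ has ratio $\eta(\theta^{-1}\omega)\in[A,B]$ and hence preserves Hausdorff dimension. The biholomorphism $\exp:Q\to\C^*$ has derivative $e^z$ uniformly bounded away from $0$ and $\infty$ on each slab $Q_k=\{|\Re z|\le k\}$, so it is bi-Lipschitz on every $Q_k$ with constants depending only on $k$. Writing $J_r(\theta^{-1}\omega)=\bigcup_{k\ge 1} J_r(\theta^{-1}\omega)\cap Q_k$ and using the countable stability of Hausdorff dimension yields $\HD\bigl(\exp(J_r(\theta^{-1}\omega))\bigr)=\HD\bigl(J_r(\theta^{-1}\omega)\bigr)$. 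Combining this with Theorem~\ref{thm:bowen}(4) and the $\theta$-invariance of $m$ gives $\HD(J_r(f)(\omega))=h$ for $m$-a.e.\ $\omega$. Item (2) is immediate from item (1) together with the strict inequality $h<2$ from Theorem~\ref{t1_2017-09_04}.

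For items (3) and (4), the key tool is the explicit identity, valid for every $n\ge 1$,
$$
f^n_\omega(z) \;=\; \eta(\theta^{n-1}\omega)\cdot\exp\bigl(F^{n-1}_\omega(\pi(z))\bigr),
$$
obtained by induction from the $2\pi i$-periodicity of $f_\omega$ together with $F_\omega\circ\pi=\pi\circ f_\omega$. The same identity applied to $z=0$ gives $f^k_{\theta^{n-k}\omega}(0)=\eta(\theta^{n-1}\omega)\cdot\exp(F^{k-1}_{\theta^{n-k}\omega}(0))$. Since $\exp$ is a locally bi-Lipschitz biholomorphism, the closeness condition of Theorem~\ref{t320180410}---closeness of $F^{n-1}_\omega(\pi(z))$ to $F^{k-1}_{\theta^{n-k}\omega}(0)$ in $Q$---transfers to closeness of $f^n_\omega(z)$ to $f^k_{\theta^{n-k}\omega}(0)$ in $\C$, while the escape condition $|F^n_\omega(\pi(z))|\to\infty$ (equivalent to $|\Re F^n_\omega(\pi(z))|\to\infty$) propagates via the exponential to $|f^{n+1}_\omega(z)|\to\infty$. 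I then take $\C_\omega:=\pi^{-1}(Q_\omega)$ with $Q_\omega$ the set from Theorem~\ref{t320180410}; since $\pi:\C\to Q$ is a countable-to-one local isometry, $\C_\omega$ has full Lebesgue measure. Item (4) then follows from Corollary~\ref{c420180410} via the identity above, using that the positive real ray $[0,+\infty)\subset Q$ is mapped by $\eta\cdot\exp$ into $[A,+\infty)\subset\R_+\subset[0,+\infty]$ and that the cylinder ends $\pm\infty$ are mapped to $\{0\}$ and $\{+\infty\}$ in $\C$, both lying in $[0,+\infty]\cup\{-\infty\}$.

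The principal obstacle is the $2\pi i\Z$-lifting ambiguity inherent in $\pi:\C\to Q$: a priori, closeness in $Q$ only yields closeness modulo $2\pi i$ in $\C$. For item (3), this is resolved by the crucial observation that the orbit $\{f^k_{\theta^{n-k}\omega}(0)\}_k$ of the origin stays in $\R_+$ (since the multipliers $\eta\in[A,B]\subset\R_+$ keep iteration of $0$ on the positive real axis), so $\exp(F^{k-1}_{\theta^{n-k}\omega}(0))$ equals, as an actual positive real number, $f^k_{\theta^{n-k}\omega}(0)/\eta(\theta^{n-1}\omega)$; hence the transferred closeness in $\C$ is to the genuine iterate, not to a $2\pi i$-shift of it. The most delicate point is the direction control needed for item (4): I expect the hardest step to be showing that, along the Lebesgue-typical trajectory, the ``escape'' part of the dichotomy in Theorem~\ref{t320180410} is accompanied by $\Im F^{n-1}_\omega(\pi(z))\to 0$ modulo $2\pi$, so that $\arg f^n_\omega(z)\to 0$ and the boundary accumulation in the compactification of $\C$ stays on the real axis rather than spreading around the circle at infinity. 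This will be handled by combining the quantitative closeness estimate in item (3) (which forces $\Im F^{n-1}$ into shrinking neighborhoods of $0$ modulo $2\pi$ whenever the trajectory is not escaping) with the continuity of $\exp$ on such shrinking neighborhoods.
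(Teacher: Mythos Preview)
Your approach is exactly the paper's: transfer each assertion from the cylinder via the conjugacies of Sections~\ref{apendix1}--\ref{apendix2}. The paper in fact gives no argument at all beyond ``as an immediate consequence of Theorem~\ref{thm:bowen}, Theorem~\ref{t1_2017-09_04}, Theorem~\ref{t320180410}, and Corollary~\ref{c420180410}'', so your detailed working-out of items (1)--(2) via local bi-Lipschitzness of $\exp$ on slabs and countable stability of $\HD$, and of item (3) via the identity $f^n_\omega(z)=\eta(\theta^{n-1}\omega)\exp\bigl(F^{n-1}_\omega(\pi(z))\bigr)$, is more than the paper supplies.

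Two small corrections. First, your sentence ``the escape condition $|F^n_\omega(\pi(z))|\to\infty$ \dots propagates via the exponential to $|f^{n+1}_\omega(z)|\to\infty$'' is only half right: large $|F^{n-1}|$ in $Q$ means large $|\Re F^{n-1}|$, and when $\Re F^{n-1}\to -\infty$ you get $|f^n_\omega(z)|\to 0$, not $\infty$. This is harmless, since $0=f^0_{\theta^n\omega}(0)$, so you land in the closeness branch of the dichotomy with $k=0$; but you should say so explicitly. Second, your ``most delicate point'' --- controlling $\arg f^n_\omega(z)$ along escaping subsequences --- is not actually needed. Item (4) follows directly from item (3): every $f^k_{\theta^{n-k}\omega}(0)$ is a non-negative real number (the orbit of $0$ under positive-$\eta$ exponentials stays on $[0,+\infty)$), so the finite accumulation points of $(f^n_\omega(z))$ lie in $[0,+\infty)$, while $\limsup k_n=+\infty$ forces $\infty$ to be an accumulation point. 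The ``$\{-\infty\}$'' in the statement is a vestige of the cylinder formulation and should be read as the single point at infinity in $\hat\C$; no argument control is required.
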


\

\

\










\end{document}